\newcommand{\dt}{\partial_t}
\renewcommand{\u}{\mathfrak{u}}
\newcommand{\N}{\mathbb{N}}
\newcommand{\R}{\mathbb{R}}
\newcommand{\beno}{\begin{eqnarray*}}
\newcommand{\eeno}{\end{eqnarray*}}
\def\curl{\mathop{\rm curl}\nolimits}
\newcommand{\vv}[1]{\boldsymbol{#1}}
\newtheorem{theorem}{Theorem}[section]
\newtheorem{lemma}[theorem]{Lemma}
\newtheorem{corollary}[theorem]{Corollary}
\theoremstyle{remark}
\newtheorem{remark}{Remark}[section]
\theoremstyle{definition}
\newtheorem{definition}[theorem]{Definition}
\numberwithin{equation}{section}
\begin{document}

\title[Boussinesq systems II]{The Cauchy problem on large time for surface waves type Boussinesq systems II}

\author[J.-C. Saut]{Jean-Claude Saut}
\address{Laboratoire de Math\' ematiques, UMR 8628\\
Universit\' e Paris-Sud et CNRS\\ 91405 Orsay, France}
\email{jean-claude.saut@math.u-psud.fr}

\author[C. Wang]{Chao Wang}
\address{School of Mathematical Sciences\\ Peking University\\ Beijing 100871,China}
\email{wangchao@math.pku.edu.cn}

\author[L. Xu]{Li Xu}
\address{LSEC, Institute of Computational Mathematics\\ Academy of Mathematics and Systems Science, CAS\\
Beijing 100190, China}
\email{ xuliice@lsec.cc.ac.cn}

\date{November 26th, 2015}
\maketitle

\begin{abstract}
This paper is a continuation of a  previous work by two of  the Authors \cite{SX} on long time existence for Boussinesq systems modeling the propagation of long, weakly nonlinear water waves. We provide proofs on examples not considered in \cite{SX} in particular we prove a long time well-posedness result for a delicate "strongly dispersive" Boussinesq system.
\end{abstract}

\section{Introduction}
One aim of this paper is to  complete  the results  obtained in a previous paper {\cite{SX} on the Cauchy theory for some  (a,b,c,d) Boussinesq systems for surface water waves
\begin{equation}
    \label{abcd2}
    \left\lbrace
    \begin{array}{l}
    \eta_t+\nabla \cdot {\vv u}+\epsilon \nabla\cdot(\eta {\vv u})+\mu\lbrack a \nabla\cdot \Delta{\vv u}-b\Delta \eta_t\rbrack=0 \\
    {\vv u}_t+\nabla \eta+\epsilon \frac{1}{2}\nabla |{\vv u}|^2+\mu\lbrack c\nabla \Delta \eta-d\Delta {\vv u}_t\rbrack=0.
\end{array}\right.
    \end{equation}

Here $\mu$ and $\epsilon$ are the small parameters (shallowness and nonlinearity parameters respectively) defined as
$$\mu=\frac{h^2}{\lambda^2}, \quad \epsilon= \frac{\alpha}{h}$$
where  $\alpha$ is a typical amplitude of the wave, $h$ a typical depth and $\lambda$ a typical horizontal wavelength.

In the Boussinesq regime, $\epsilon$ and $\mu$ are supposed to be of same order, $\epsilon\sim\mu\ll1,$ and we will take for simplicity $\epsilon=\mu,$  writing \eqref{abcd2} as
\begin{equation}
    \label{abcd}
    \left\lbrace
    \begin{array}{l}
    \eta_t+\nabla \cdot {\vv u}+\epsilon \lbrack\nabla\cdot(\eta {\vv u})+a \nabla\cdot \Delta{\vv u}-b\Delta \eta_t\rbrack=0 \\
    {\vv u}_t+\nabla \eta+\epsilon\lbrack \frac{1}{2}\nabla |{\vv u}|^2+c\nabla \Delta \eta-d\Delta {\vv u}_t\rbrack=0,
\end{array}\right.
    \end{equation}

The class of systems \eqref{abcd2}, \eqref{abcd} models  water waves on a flat bottom propagating in both directions in the aforementioned  regime (see \cite{BCS1, BCS2, BCL}). We will focus here on the  {\it strongly dispersive} case, corresponding to particular choices of the modeling parameters (a,b,c,d) (see below).
%More precisely, if $\alpha$ is a typical amplitude of the wave, $h$ a typical depth and $\lambda$ a typical horizontal wavelength, one should %have

%$$\frac{\alpha}{h}\sim \frac{h^2}{\lambda^2}\sim \epsilon\ll 1 .$$

One could also derive similar systems with a non trivial bathymetry (non flat bottom), see \cite{Ch}, and one has then to distinguish between the case when the bottom varies slowly and the case where it is strongly varying. In the former case, \eqref{abcd} has to be slightly modified and becomes
\begin{equation}
    \label{abcdBot}
    \left\lbrace
    \begin{array}{l}
    \eta_t+\nabla \cdot {\vv u}+\epsilon \lbrack\nabla\cdot((\eta-\beta) {\vv u})+a \nabla\cdot \Delta{\vv u}-b\Delta \eta_t\rbrack=0 \\
    {\vv u}_t+\nabla \eta+\epsilon\lbrack \frac{1}{2}\nabla |{\vv u}|^2+c\nabla \Delta \eta-d\Delta {\vv u}_t\rbrack=0,
 \end{array}\right.
    \end{equation}
where $\beta$ is a smooth function on $\R^d, d=1,2,$ bounded together with its derivatives. In this case, the results in \cite{SX} and those of the present paper extend easily.
In the second case one gets  much more complicated systems \cite{Ch}. We refer to \cite{MG} for long time existence results in this case.

\vspace{0.3cm}
Recall (see \cite{BCS1, DD}) that the modeling parameters are constrained by the relation
$$a+b+c+d=\frac{1}{3}-\tau,$$
where $\tau\geq 0$ is the surface tension parameter (Bond number).

Recall also \cite{BCS1} that \eqref{abcd} is linearly well-posed when
$$a\leq 0, c\leq 0, b\geq 0, d\geq 0,$$
and when
$$a=c, b\geq 0, d\geq 0.$$

%The class of systems \eqref{abcd} models  water waves on a flat bottom propagating in both directions in %the so-called {\it Boussinesq regime}. More precisely, if $a$ is a typical amplitude of the wave, $h$ a typical %depth and $\lambda$ a typical horizontal wvelength, one should have

%$$\frac{a}{h}\sim \frac{h^2}{\lambda^2}\sim \epsilon\ll 1 .$$

An important step to justify rigorously \eqref{abcd} as an asymptotic model for water waves is to establish the well-posedness of the Cauchy problem on time scales of order $1/\epsilon,$ with uniform bounds in suitable Sobolev spaces, the error estimate being then  (see \cite {BCL, La1}).
$$||U_{\text{Boussinesq}}-U_{\text{Euler}}||=O(\epsilon^2t)$$
in suitable Sobolev norms.

This step has been established in \cite{SX} (see also \cite{MSZ}) for most of Boussinesq systems with and without surface tension. The idea in \cite{SX} is to find an appropriate symmetrization of the system and  this is not a straightforward task since one cannot obviously use the classical symmetrizer of the underlying  Saint-Venant (shallow water) hyperbolic system. This will be reviewed in the first section of this paper. A  complete proof of cases that were not fully developed in \cite{SX} will be given in  Section 4. \footnote{Due to the large number of cases to be considered, we chosed in \cite{SX} to give complete proofs for a limited number of cases.}

 Introducing surface tension enlarges the range of physically admissible parameters  (a,b,c,d) and so even a local  theory \footnote{That is not taking care of the dependence of the lifespan of the solution with respect to $\epsilon$} for a few linearly well -posed systems is still missing, for instance the cases $b=d=0, a<0, c=0$ and $b=d=0, a=0, c<0$). Both cases will be considered here but  the later leads to serious difficulties and the long time existence for it is the main result of the present paper.

 Note also that the (linearly well-posed)  "exceptional KdV-KdV" case $b=d=0, a=c>0$ which is studied  in \cite{LPS}  leading to well-posedness on time scales of order $1/\sqrt \epsilon $ in Sobolev spaces $H^s(\R^2), s>3/2$ which are larger than the "hyperbolic" one $H^s(\R^2), s>2$ is not covered neither  in \cite{SX} nor in the present paper so that a long time existence is still open in this case \footnote{However, the case $b=d=0, a<0,c<0$ that can only occur with a strong surface tension is covered by the theory in \cite{SX}.}.

An important mathematical issue concerning Boussinesq systems \eqref{abcd} is that despite  they  describe the same dynamics of water waves, their {\it mathematical} properties are rather different, due essentially to their different linear dispersion relations. Of course those dispersion relations all coincide in the long wave limit but there are quite different in the short wave limit. A convenient way to classify the system is according to the order of the Fourier multiplier operator given by the eigenvalues of the linearized operator (see \cite{BCS1}). The order can be $-1,0,1,2$ or $3$. The two last cases are referred to as the {\it strongly dispersive} ones.

After a brief review of our previous results,   we will consider in the third section the ("local") Cauchy problem for two  strongly dispersive Boussinesq systems of {\it Schr\"{o}dinger type}, namely  $b=d=c=0, a<0$ and $a=b=d=0, c<0,$ two situations that are admissible in case of strong surface tension and that have not been considered before. In the first case, it turns out that the local (that is on time scales of order $1/\sqrt \epsilon)$ Cauchy theory  can be obtained by "elementary" energy methods on the original formulation, as in   the purely gravity waves  cases  $a<0, c<0, b=0, d>0$ or $a<0, c<0, b>0, d=0$ considered in \cite{LPS}.  On the other hand, the second case $a=b=d=0, c<0,$ leads to serious difficulties that are explained in this section.
%One has here to work on an equivalent system obtained after diagonalization of the linear part.

In the fourth  section we provide detailed proofs (not given in \cite{SX}) for the long time well-posedness of the strongly dispersive case  $b=d=c=0, a<0$ and for two systems which can be viewed as {\it weakly dispersive}, namely $ b>0, a=c=d=0$ and $d>0, a=b=c=0.$
We conclude this section by establishing long time existence for the difficult case $a=b=d=0, c<0 $ by a quasilinearization method quite different from the other cases. As was aforementioned this is the main result of the present paper (see Theorems 4.6 and 4.7). We explain first how to get  the needed  a priori estimates, the complete proof being given in the next section.

Finally we show  in Section 6 that the symmetrization method can be used to obtain  long time existence results for a fifth order Boussinesq system and we briefly allude to possible extensions to nonlocal {\it Full dispersion} Boussinesq type systems.
% Lastly we review  full dispersion Boussinesq systems for which the methods in the present paper might likely  be applied.

During the completion of the present paper we were informed of the very interesting paper \cite{Bu} where an alternative proof of long time existence for
most of the Boussinesq systems is provided (excluding the "strongly dispersive" ones $b=d=0,$ thus the "difficult case" $a=b=d=0, c<0$). This proof also relaxes the non-cavitation condition on the initial data $\eta_0.$

We were also informed by Vincent Duch\^{e}ne of the article \cite{DIT} which contains in the one-dimensional case (see Appendix A) results related to ours in Subsection 4.4.

\vspace{0.3cm}
\noindent{\bf Notations.} We will denote $|\cdot|_p$ the norm in the Lebesgue space $L^p(\R),\; 1\leq p\leq \infty$ and $\|\cdot\|_s$ the norm in the Sobolev space $H^s(\R^d),\; s\in \R.$ $(\cdot | \cdot)_2$ denotes the scalar product in $L^2.$We will denote $\hat {f}$ or $\mathcal F(f)$ the Fourier transform of a tempered distribution $f.$ For any $s\in \R,$ we define $|D|^s f$ by its Fourier transform $\widehat{|D|^s f}(\xi)=|\xi|^s \hat{f}(\xi).$ We also denote $|D_x|^s f=\mathcal F^{-1}(|\xi_1|\hat {f})$ and $|D_y|^s f=\mathcal F^{-1}(|\xi_2|\hat {f}).$ Finally we will denote $\Lambda=(I-\Delta)^{1/2}$ and $J_\epsilon=(I-\epsilon \Delta)^{1/2}.$

\section{ A review of long time well-posed Boussinesq systems}

As recalled previously, in order to fully justify the Boussinesq systems, one needs to prove the well-posedness of the Cauchy problem on time scales of order  at least $O(1/\epsilon)$ (together with the relevant uniform bounds). This would be achieved of course  if one could obtain the {\it global well-posedness} (also with uniform bounds). This is only known however for a very limited number of Boussinesq systems in one-dimension. A first idea would be  to use appropriate conservation laws, but contrary to the {\it one-directional or quasi one-directional} equations such as the Korteweg- de Vries or the Kadomtsev-Petviashvili equations which are derived in the same regime, the Boussinesq systems do not possess the two invariants ($L^2$ norm and energy) that provide useful a priori bounds.

Nevertheless, when $b=d,$ the Boussinesq systems are endowed with an Hamiltonian structure. More precisely, denoting by $J$ the skew adjoint matrix operator

\begin{displaymath}
J=\begin{pmatrix} 0 & \partial_{x} (I-\epsilon b\Delta)^{-1}& \partial_{y}(I-\epsilon b\Delta)^{-1} \\
                 \partial_{x}(I-\epsilon b\Delta)^{-1} & 0 & 0 \\
                 \partial_{y} (I-\epsilon b\Delta)^{-1}& 0 & 0 \end{pmatrix},
\end{displaymath}
and
\begin{displaymath}
U=\begin{pmatrix} \eta  \\
                {\vv u}
                  \end{pmatrix},
\end{displaymath}
the Boussinesq systems write in this case
\begin{displaymath}
\partial_t U=-J (\text{grad} \, H_{\epsilon})(U),
\end{displaymath}
where $H_{\epsilon}(U)$ is the Hamiltonian given by
\begin{displaymath}
H_{\epsilon}(U)=\frac12 \int_{\mathbb R^2}\big(-c\epsilon|\nabla
\eta|^2-a\epsilon|\nabla {\vv u}|^2+\eta^2+|{\vv u}|^2+\epsilon
\eta|{\vv u}|^2\big)dxdy,
\end{displaymath}
so that $H_{\epsilon}(U)$ is conserved by the flow. This can be used (see \cite{BCS2}) in the one dimensional case where $b=d>0, a\leq 0, c\leq 0$ or $b=d>0, a=0, c<0$ to establish the global well-posedness of the corresponding Boussinesq systems provided   $H_\epsilon(\eta_0, u_0)$ is small enough and the non cavitation condition $\inf_x (1+\epsilon \eta_0(x)) >0$ is satisfied. The proof uses in a crucial way the fact that $b=d>0$ and the Sobolev embedding $H^1(\R)\subset L^{\infty}(\R)$ thus it does not work in two dimensions.

Another one-dimensional situation leading to global well-posedneess is when $a=b=c=0, d>0.$ Then Amick and Schonbeck \cite {A,Sc} use the underlying hyperbolic structure of the shallow-water (Saint-Venant) system to get a priori bounds stemming from an entropy functional. This allows to prove the global well-posedness under the condition $\inf_{x\in \R}(1+\epsilon\eta_0(x))>0$ \footnote{Contrary to what was claimed in \cite{SX}, the results in  \cite{A, Sc} do not need a smallness assumption on the initial data.} but again the extension of this result to the two-dimensional case is unclear. We will prove in this case the large time existence in Section 4.

As far as  long time results are concerned, it has been claimed in \cite{SX}  that the Boussinesq systems \eqref{abcd} are well-posed in a suitable Sobolev setting (with uniform bounds) on time scales of order $1/\epsilon$ in the following cases:
\vspace{0.3cm}
 \begin{itemize}
\item[(1)]  $b>0,d=0, a,c<0$;
\item[(2)]   $b>0 ,d=0, a=0, c<0$;
\item[(3)]  $b=0, d>0, a,c<0$;
\item[(4)] $b\neq d$, $b,d>0, a,c<0$ or $b=0, d>0, a=0, c<0$;
\item[(5)]  $b\neq d, b,d>0, a=0, c<0$;
\item[(6)]  $b=d>0, a,c<0$ or $b>0, d=0, a<0, c=0$;
\item[(7)] $b>0, d=0, a=c=0$ or $b=d>0, a=0, c<0$;
\item[(8)]  $b,d>0, a<0, c=0$
or $b=0, d>0, a=c=0$;
\item[(9)]  $b=0, d>0, a<0, c=0$;
\item[(10)]  $b,d>0, a=c=0$;
\item[(11)] $b=d=0, a,c<0$.
\end{itemize}

Note that the last case can occur only in case of a strong surface tension, as the two following that were not considered in \cite{SX} :
\begin{itemize}
\item[(12)]  $b=d=0, a=0,c<0$;
\item[(13)]  $b=d=0, a<0,c=0$.
\end{itemize}

Actually, the same  scheme  of proof (by symmetrization) is used in \cite{SX} but because of the many different cases to be dealt with (the technical details  cannot be treated in an unified way), we only provided a complete proof in \cite{SX} for  cases (4) ("generic case"), (1)  and (11), which are "strongly dispersive". The other cases can be dealt with by similar symmetrization techniques but the proofs  for some of them need more  explanations that we detail below.

\section{Some strongly dispersive Boussinesq systems}

We will study here the local well-posedness (that is on time scales of order $1/\sqrt \epsilon$) of the Cauchy problem  for two  strongly dispersive  Boussinesq systems having an order two dispersion. They occur only for capillary-gravity waves when the surface tension parameter is greater than $1/3.$ Two (purely gravity waves ) systems having also an order two dispersion corresponding respectively to $a<0, c<0, b=0, d>0$ and $a<0, c<0, d=0, b>0$ have been studied in \cite{LPS} under a curl free condition in the later case. The local well-posedness on time scale of order $1/\sqrt \epsilon$ was proven there while well-posedness on time scales of order $1/\epsilon$ is established in \cite{SX} together with the appropriate uniform bounds. As in \cite {LPS} we will use somehow the dispersive properties of the systems which allows to enlarge the space of resolution but will not provide existence  on  the "long" time scale $1/\epsilon$ which will be considered in Section 4.

Those systems will be referred to as "Schr\"{o}dinger type" since in space dimension two their dispersion relations for large frequencies are reminiscent of the Schr\"{o}dinger one (in one dimension, the analogy is with the Benjamin-Ono equation). This will be made clear when rewriting the systems in an equivalent form after diagonalizing the linear part.

\subsection{A first  Schr\"{o}dinger type system}
We consider the Boussinesq systems when $a<0, b=c=d=0,$ a case which occurs for capillary surface waves with strong enough surface tension, $\tau>1/3$ and which was not considered in \cite{SX}. One can obviously restrict to the case where $a=-1$ and  we consider first the one-dimensional   system
\begin{equation}
    \label{a<0}
    \left\lbrace
    \begin{array}{l}
    \eta_t+u_x+\epsilon(u\eta)_x-\epsilon u_{xxx}=0, \\
    u_t+\eta_x+\epsilon uu_x=0.
    \end{array}\right.
    \end{equation}

Note that this system has the hamiltonian structure
\begin{displaymath}
\partial_t\begin{pmatrix} \eta\\u
\end{pmatrix}+J\text{grad}\;H_\epsilon(\eta,u)=0
\end{displaymath}
where
\begin{displaymath}
J=\begin{pmatrix} 0&\partial_x\\
\partial_x&0
\end{pmatrix}
\end{displaymath}
and
$$H_\epsilon(\eta,u)=\frac{1}{2}\int_\R (\epsilon u_x^2+\eta^2+u^2+\epsilon u^2\eta)dx.$$

Unfortunately the formal conservation of the Hamiltonian cannot be used to get a global $L^2\times H^1$ bound.

As for other order two Boussinesq systems (see \cite{BCS2, LPS}) one can solve the local Cauchy problem for \eqref{a<0} by "elementary" energy methods.

For $U=(\eta, u)^T$ we define

$$||U||_{X^s_\epsilon}=\left(\int_\R (\eta^2+u^2+|D^s_x\eta|^2+|D^s_x u|^2+\epsilon |D^{s+1}_xu|^2)dx\right)^{1/2}.$$

\begin{theorem}\label{a<0elem}
Let $s>1/2$ and  $(\eta_0,u_0)\in X_\epsilon^s.$ There exists $T_\epsilon=O(1/\sqrt \epsilon)$ and a unique solution $(\eta,u)\in   C(\lbrack 0,T_\epsilon\rbrack); X_\epsilon^s)$ of \eqref{a<0} with initial data $(\eta_0,u_0).$

%Let  $s>1/2$ and $(\eta_0,u_0)\in H^s(\R)\times H^{s+1}(\R).$ There exists $T_\epsilon=O(1/\sqrt \epsilon)$ and a unique solution $(\eta,u)\in C(\lbrack 0,T_\epsilon\rbrack); H^s%(\R)\times  C(\lbrack 0,T_\epsilon\rbrack); H^{s+1}(\R))$ of \eqref{a<0} with initial data $(\eta_0,u_0).$
\end{theorem}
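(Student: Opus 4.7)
The plan is to prove an $\epsilon$-uniform a priori energy estimate on a Friedrichs-mollified version of \eqref{a<0}, obtaining local solutions of the regularized system as ODEs in $X^s_\epsilon$, and then passing to the limit by standard compactness. The target a priori estimate is
\begin{equation*}
\frac{d}{dt}E_s(t)\leq C\sqrt\epsilon\,\bigl(1+E_s(t)\bigr)^{3/2},
\end{equation*}
with $E_s(t)$ equivalent to $\|U(t)\|_{X^s_\epsilon}^2$; integrating this yields a lifespan $T_\epsilon = O(1/\sqrt\epsilon)$ on which the energy remains bounded, and the usual arguments produce a solution $(\eta,u)\in C([0,T_\epsilon];X^s_\epsilon)$.

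The core of the proof is the choice of a symmetrizing energy. I would use
\begin{equation*}
E_s(t) := \|\eta\|_2^2 + \|u\|_2^2 + \||D_x|^s\eta\|_2^2 + \||D_x|^s u\|_2^2 + \epsilon\|u_x\|_2^2 + \epsilon\||D_x|^{s+1}u\|_2^2,
\end{equation*}
which is equivalent to $\|U\|_{X^s_\epsilon}^2$ since $\epsilon\|u_x\|_2^2 \leq \|u\|_2^2 + \epsilon\||D_x|^{s+1}u\|_2^2$ by a direct Plancherel estimate. Testing the $\eta$-equation against $\eta$ and $|D_x|^{2s}\eta$, and the $u$-equation against $u$, $|D_x|^{2s}u$, $-\epsilon u_{xx}$ and $\epsilon|D_x|^{2(s+1)}u$, the transport cross-terms cancel by integration by parts, while the dispersive contributions produced by $-\epsilon u_{xxx}$ group into two frequency-level pairs
\begin{equation*}
2\epsilon(u_{xxx},\eta) - 2\epsilon(\eta_{xx},u_x)\quad\text{and}\quad 2\epsilon(|D_x|^s u_{xxx},|D_x|^s\eta) - 2\epsilon(|D_x|^{s+1}\eta_x,|D_x|^{s+1}u),
\end{equation*}
each of which vanishes by a one-line Fourier computation using the identity $\xi^3|\xi|^{2s}=\xi|\xi|^{2s+2}$. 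This is the essential symmetrization behind the weighted norm.

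The nonlinear contributions from $\epsilon(u\eta)_x$ and $\epsilon uu_x$ are estimated by Kato--Ponce product and commutator estimates. Each resulting term is of the form $\epsilon\|u_x\|_{L^\infty}\,E_s$ or $\epsilon\|\eta\|_{L^\infty}\||D_x|^{s+1}u\|_2\,E_s^{1/2}$, and the 1D Sobolev embedding $H^{s+1}(\R)\hookrightarrow W^{1,\infty}(\R)$ (valid for $s>1/2$) combined with $\|u\|_{H^{s+1}}\leq C\epsilon^{-1/2}E_s^{1/2}$, a direct consequence of the weight $\epsilon\||D_x|^{s+1}u\|_2^2$, turns the naive $O(\epsilon)$ bound into $O(\sqrt\epsilon\, E_s^{3/2})$, exactly the growth compatible with a $1/\sqrt\epsilon$ lifespan. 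Uniqueness follows from the analogous energy estimate on the difference of two solutions, carried out at the $X^0_\epsilon$ level to avoid commutators. The main technical obstacle I foresee is the term $\epsilon|D_x|^s(u\eta_x)$ in the $\eta$-equation energy: the naive Kato--Ponce bound on $[|D_x|^s,u]\eta_x$ calls for $\||D_x|^{s+1}\eta\|$ or $\|\eta_x\|_{L^\infty}$, neither of which the energy controls, so one must exploit the commutator identity $\partial_x[|D_x|^s,u]=[|D_x|^s,u_x]+[|D_x|^s,u]\partial_x$ together with an additional integration by parts to transfer the surplus derivative onto the $u$-factor, where the $\sqrt\epsilon$-gain can absorb it.
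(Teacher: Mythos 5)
Your proposal is correct and follows essentially the same route as the paper: a weighted energy equivalent to $\|U\|_{X^s_\epsilon}^2$ whose linear (dispersive) contributions cancel exactly under the symmetrizing multipliers $(\eta,(I-\epsilon\partial_x^2)u)$ at each frequency level, with the $\sqrt\epsilon$-weighted top derivative of $u$ converting the cubic terms into $O(\sqrt\epsilon\,E_s^{3/2})$ and hence a lifespan $O(1/\sqrt\epsilon)$; the paper writes this out only for $s=1$ by direct integration by parts and defers the general case to exactly the Kato--Ponce/commutator machinery you invoke. Your handling of $[\,|D_x|^s,u]\eta_x$ — shifting the surplus derivative onto $u$ where the $\sqrt\epsilon$ weight absorbs it, as in Lemma \ref{L1} with $t_0=s$ — is the right resolution of the only genuinely delicate low-regularity term.
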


\begin{proof}
In order to restrict the technicalities, we consider only the case $s=1$ and we derive only the suitable a priori estimates. The complete proof would use various Kato-Ponce type commutator estimates and an approximation argument. \footnote{We will treat the general situation in the two-dimensional case.}

We take successively the $L^2$ scalar product of the first equation in \eqref{a<0} by $\eta-\eta_{xx}$ and of the second equation by $(I-\epsilon \partial_x^2)(u-u_{xx}).$ After several integrations by parts we obtain by adding the resulting equations :
\begin{equation}\label{e1}
\begin{split}
\frac{1}{2}\frac{d}{dt}\int_\R (\eta^2&+u^2+\eta_x^2+(1+\epsilon)u_x^2+\epsilon u^2_{xx})dx \\
&=-\epsilon\int_\R \lbrack \frac{1}{2}u_x\eta^2+\frac{3}{2}u_x\eta_x^2+\eta \eta_xu_{xx}+\frac{1+\epsilon}{2}u_x^3+\frac{5\epsilon}{2}u_xu_{xx}^2\rbrack dx.
\end{split}
\end{equation}

We now use H\"{o}lder inequality, the standard inequality $|u|_\infty\lesssim |u|_2^{1/2}|u_x|_2^{1/2}$ and that
\beno
|\eta|_\infty,\ |\eta|_2, |\eta_x|_2, |u_x|_2, \sqrt \epsilon|u_x|_\infty, \sqrt \epsilon |u_{xx}|_2 \lesssim  ||U||_{X^1_\epsilon}
\eeno
to obtain from \eqref{e1}  that $||U||_{X_\epsilon^1}\leq C$ on the maximal existence time interval $\lbrack 0,T_\epsilon\rbrack $ of the ODE
\beno
y'\leq C\sqrt \epsilon \;y^2
\eeno
and one readily checks that $T_\epsilon=O(\frac{1}{\sqrt\epsilon}).$

This leads to the existence of a weak solution $U\in L^\infty(0,T_\epsilon;X^1_\epsilon)$  (with an uniform $H^1$ bound).

To prove uniqueness, we set $N=\eta_1-\eta_2,\; V=u_1-u_2$ where $(\eta_1,u_1),\; (\eta_2,u_2)$ are two solutions in $  C(\lbrack 0,T_\epsilon\rbrack); X_\epsilon^s)$. Thus
\begin{equation}
    \label{uniq}
    \left\lbrace
    \begin{array}{l}
    N_t+V_x+\epsilon \lbrack (V\eta_1)_x+(u_2N)_x\rbrack-\epsilon V_{xxx}=0, \\
    V_t+N_x+\epsilon\lbrack  Vu_{1x}+u_2V_x\rbrack=0.
    \end{array}\right.
    \end{equation}

One takes the $L^2$ product scalar of the first equation by $N$ and successively the $L^2$ scalar product of the second equation by $V$ and $-\epsilon V_{xx}.$
Adding the resulting equalities we obtain
\begin{equation}\label{e2}
\begin{split}
&\frac{1}{2}\frac{d}{dt}\int_\R (N^2+V^2+\epsilon V_x^2)dx\\
&=\int_\R\{-\epsilon
\lbrack (V\eta_1)_xN+(u_2N)_xN\rbrack
-\epsilon \lbrack V^2u_{1x}+u_2VV_x\rbrack\\
&\quad+\epsilon^2(VV_{xx}u_{1x}+u_2V_xV_{xx})\}dx ,
\end{split}
\end{equation}
from which we deduce
\begin{equation}\label{e3}
\begin{split}
&\frac{1}{2}\frac{d}{dt}\int_\R (N^2+V^2+\epsilon V_x^2)dx\\
&\leq C\epsilon \lbrack |\eta_1|_\infty|V_x|_2|N|_2+|V|_2^{1/2}|V_x|_2^{1/2}|N|_2
|\eta_{1x}|_2\\
&\quad+|u_{2x}|_\infty|N|^2_2+|u_{1x}|_\infty|V|^2_2+|u_{2x}|_\infty|V|_2^2\\
&\quad+\epsilon((|u_{1x}|_\infty+|u_{2x}|_\infty)|V_x|_2^2)+|V|_2^{1/2}|V_x|_2^{3/2}|u_{1xx}|_2)\rbrack,
\end{split}
\end{equation}
so that
\begin{equation}
\frac{1}{2}\frac{d}{dt}\int_\R (N^2+V^2+\epsilon V_x^2)dx\leq C(|N|^2_2+|V|_2^2+\epsilon |V_x|_2^2)
\end{equation}
and $N=V=0$ by Gronwall's lemma.

\vspace{0.3cm}
It remains to prove the strong continuity in time of the solution with value in $X_\epsilon^1$ and the continuity of the flow, but this results from the Bona-Smith approximation procedure \cite{BS}.
\end{proof}

\vspace{0.5cm}

The local well-posedness of the Cauchy problem \eqref{a<0}  for data of low regularity weighted Sobolev spaces was proven in \cite{KS} by analogy with the DNLS equation.
We indicate now another possible method to obtain the local well-posedness of \eqref{a<0} in a different functional setting by reducing it to a system of Benjamin-Ono type equations.
A natural idea is to transform \eqref{a<0} by diagonalizing the dispersive part. We denote
\begin{displaymath}
\widehat{A}(\xi)=i\xi\begin{pmatrix} 0 & 1+\epsilon |\xi|^2  \\
                 1  & 0  \\

                  \end{pmatrix}.
\end{displaymath}
the Fourier transform of the dispersion matrix with eigenvalues $\pm i\xi(1+\epsilon |\xi|^2)^{1/2}.$
In what follows we will denote $J_\epsilon =(I-\epsilon\partial_x^2)^{1/2}.$

Setting
\begin{displaymath}
U=\begin{pmatrix} \eta\\u
\end{pmatrix}
\end{displaymath}
and
\begin{displaymath}
W=\begin{pmatrix} \zeta \\ v
\end{pmatrix}=P^{-1}U, \quad
P^{-1}=\frac{1}{2}\begin{pmatrix} 1 & J_\epsilon \\
1 & -J_\epsilon
\end{pmatrix},
\end{displaymath}
the linear part of \eqref {a<0} is diagonalized as
$$W_t+\partial_xDW=0,$$
where
\begin{displaymath}
D=\begin{pmatrix} J_\epsilon &0\\
0&-J_\epsilon
\end{pmatrix}.
\end{displaymath}

Since $U=PW$, where
\begin{displaymath}
P=\begin{pmatrix} 1 &1\\
J_\epsilon^{-1}&-J_\epsilon^{-1}
\end{pmatrix}.
\end{displaymath}
one can therefore reduce \eqref{a<0} to the equivalent form
\begin{equation}
    \label{bis}
    \left\lbrace
    \begin{array}{l}
    \zeta_t+J_\epsilon \zeta_x+\frac{\epsilon}{2} N_1(\zeta,v)=0, \\
    v_t-J_\epsilon v_x+\frac{\epsilon}{2} N_2(\zeta,v)=0.
    \end{array}\right.
    \end{equation}
where
$$N_1(\zeta,v)=\partial_x\lbrack (\zeta+v)J_\epsilon^{-1}(\zeta-v)\rbrack+J_\epsilon\lbrack J_\epsilon^{-1}(\zeta-v)J_\epsilon^{-1}(\zeta_x-v_x)\rbrack $$
and
$$ N_2(\zeta,v)= \partial_x\lbrack (\zeta+v)J_\epsilon^{-1}(\zeta-v)\rbrack-J_\epsilon\lbrack J_\epsilon^{-1}(\zeta-v)J_\epsilon^{-1}(\zeta_x-v_x)\rbrack  $$

Since
$$(1+\epsilon \xi^2)^{1/2}-\epsilon^{1/2}|\xi|=\frac{1}{(1+\epsilon\xi^2)^{1/2}+\epsilon^{1/2}|\xi|},$$
\eqref{bis} writes
\begin{equation}
    \label{ter}
    \left\lbrace
    \begin{array}{l}
    \zeta_t+\epsilon^{1/2}\mathcal H\zeta_{xx}+R_\epsilon \zeta+\frac{\epsilon}{2}N_1(\zeta,v)=0, \\
    v_t-\epsilon^{1/2}\mathcal H v_{xx}-R_\epsilon v+\frac{\epsilon}{2}N_2(\zeta,v)=0.
    \end{array}\right.
    \end{equation}
where $R_\epsilon$ is the  (order zero) skew-adjoint  operator with symbol $\frac{i\xi}{(1+\epsilon\xi^2)^{1/2}+\epsilon^{1/2}|\xi|}.$

Note that the nonlinear term are similar but in a sense nicer than the quadratic term $uu_x$ of the Benjamin-Ono equation and thus we should apply for instance the method Ponce \cite{P} used to solve the Cauchy problem for the Benjamin-Ono equation, that is the dispersive estimates on the group $e^{it\partial _x\mathcal H},$ since this method does not used the specific structure of the nonlinear term in the  Benjamin-Ono equation. This would imply local well-posedness for $(\zeta_0, v_0)\in H^s(\R)^2, s\geq 3/2,$ which corresponds to $ (\eta_0,u_0)=(\zeta_0+ v_0, J_\epsilon^{-1}(\zeta_0-v_0))\in H^s(\R)\times H^{s-1}(\R).$ Note the difference  with the functional setting of Theorem \ref{a<0elem}. Similarly, it is likely that the method in \cite{KK} which leads to a local well-posedness theory in $H^s(\R), s>9/8$ for the Benjamin-Ono equation can be applied to \eqref{a<0} leading to a $H^s(\R)\times H^{s-1}(\R), s>9/8$ theory. Also the new method in \cite{MV} could lead to the
  resolution of the Cauchy problem in the energy space $H^{1/2}(\R)$ in the  $(\zeta,v)$ variables. Those methods would however not enlarge the $O(1/\sqrt \epsilon)$ lifespan.\footnote{On the other hand, Tao's method \cite{T} which leads to a $H^1(\R)$ well-posedness theory for the Benjamin-Ono equation uses a gauge transform which strongly relies on the specific structure of the Benjamin-Ono equation and its generalization to \eqref{a<0} is problematic.}

%We diagonalize the linear part, to find the equivalent system, where\R)\times H

%$$\eta=\mathcal K(v+w)\quad u=v-w,$$

%$\mathcal K$ being the Fourier multiplier $\widehat{\mathcal K f}(\xi)=(1+\epsilon\xi^2)^{1/2}\hat f(\xi),$

% \begin{equation}
   % \label{BOlike}
  %  \left\lbrace
  %  \begin{array}{l}
   % v_t+\partial_x(1-\epsilon\partial_x^2)^{1/2}v+\frac{1}{2}\epsilon\partial_x\left((I-\epsilon\partial^2_x)^{1/2}\lbrack (v-w)\mathcal K(v+w)\rbrack+\frac{1}{2}(v+w)^2\right)=0, \\
   % w_t-\partial_x(1-\epsilon\partial_x^2)^{1/2}w+\frac{1}{2}\epsilon\partial_x\left((I-\epsilon\partial^2_x)^{1/2}\lbrack (v-w)\mathcal K(v+w)\rbrack-\frac{1}{2}(v+w)^2\right)=0.
   % \end{array}\right.
   % \end{equation}

%Since

%$$(1+\epsilon \xi^2)^{1/2}-\epsilon^{1/2}|\xi|=\frac{1}{(1+\epsilon\xi^2)^{1/2}+\epsilon^{1/2}|\xi|},$$

%one can write \eqref{BOlike} as

  % \begin{equation}
  %  \label{BOlikebis}
  %  \left\lbrace
   % \begin{array}{l}
   % v_t+\epsilon^{1/2}\mathcal Hv_{xx}+R_\epsilon v+\frac{1}{2}\epsilon\partial_x\left((I-\epsilon\partial^2_x)^{1/2}\lbrack (v-w)\mathcal K(v+w)\rbrack+\frac{1}{2}(v+w)^2\right)=0, \\
  %  w_t-\epsilon^{1/2}\mathcal H_{xx}w-R_\epsilon w+\frac{1}{2}\epsilon\partial_x\left((I-\epsilon\partial^2_x)^{1/2}\lbrack (v-w)\mathcal K(v+w)\rbrack-\frac{1}{2}(v+w)^2\right)=0,

   % \end{array}\right.
    %\end{equation}

%where $\mathcal H$ is the Hilbert transform and $R_\epsilon$ a skew-adjoint order zero operator with %symbol $\frac{i\xi}{(1+\epsilon\xi^2)^{1/2}+\epsilon^{1/2}|\xi|}.$

%\vspace{0.3cm}
%All the "before Tao" results on the local Cauchy problem for BO should apply therefore to \eqref{BOlike}.

\vspace{0.5cm}
In the two-dimensional case, the system writes
\begin{equation}
    \label{2D}
    \left\lbrace
    \begin{array}{l}
    \eta_t+\nabla\cdot u+\epsilon\nabla\cdot (\eta {\vv u})-\epsilon \nabla\cdot\Delta {\vv u}=0, \\
    {\vv u}_t+\nabla\eta+\frac{\epsilon}{2} \nabla|{\vv u}|^2=0.
    \end{array}\right.
    \end{equation}

\vspace{0.3cm}

%\textcolor{red}{Question : can we apply elementary energy estimates methods as in the 1D case, leading to a $H^s(\R^2)\times H^{s+1}(\R^2) s>1$ $1/\sqrt \epsilon$ theory?}

\vspace{0.3cm}
This system has also the Hamiltonian structure
\begin{displaymath}
\partial_t\begin{pmatrix} \eta\\{\vv u}
\end{pmatrix}+J\text{grad}\;H_\epsilon(\eta,{\vv u})=0
\end{displaymath}
where
\begin{displaymath}
J=\begin{pmatrix} 0 & \partial_{x} & \partial_{y} \\
                 \partial_{x} & 0 & 0 \\
                 \partial_{y} & 0 & 0 \end{pmatrix}.
\end{displaymath}
and
$$H(\eta,{\vv u})=\frac{1}{2}\int_{\R^2} (\epsilon|\nabla{\vv u}|^2+\eta^2+|{\vv u}|^2+\epsilon|{\vv u}|^2\eta)dxdy.$$

\vspace{0.3cm}

Under a curl free assumption on ${\vv u}$ (which is natural since the Boussinesq systems are derived for potential flows), one can obtain the local well-posedness of \eqref {2D} by "elementary" methods. Actually, when $\text{curl}\; {\vv u}=0,$ \eqref{2D} becomes
\begin{equation}\label{d1}\left\{\begin{aligned}
&\partial_t\eta+\nabla\cdot {\vv u}+\epsilon\nabla\cdot(\eta {\vv u})-\epsilon\nabla\cdot\Delta {\vv u}=0,\\
&\partial_t{\vv u}+\nabla\eta+\epsilon {\vv u}\cdot\nabla {\vv u}=0.
\end{aligned}\right.
\end{equation}

Before going further, we present the following commutator estimates (see Theorems 3 and 6 in \cite{Lannes1}).
\begin{lemma}\label{L1}
Let $t_0>\frac{n}{2}$, $-t_0<r\leq t_0+1$. Then for all $s\geq 0$, $f\in H^{t_0+1}\cap H^{s+r}(\R^n)$ and $u\in H^{s+r-1}(\R^n)$, there holds:
\begin{equation}\label{b10}
|[\Lambda^s, f]u|_{H^r}\leq C(|\nabla f|_{H^{t_0}}|u|_{H^{s+r-1}}+\langle|\nabla f|_{H^{s+r-1}}|u|_{H^{t_0}}\rangle_{s>t_0+1-r}),
\end{equation}
where $a+\langle b\rangle_{s>s_0}$ equals $a$ if $s\leq s_0$ while equals $a+b$ if $s>s_0$.
\end{lemma}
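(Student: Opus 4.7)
The plan is to prove the estimate by a Littlewood--Paley decomposition combined with Bony's paraproduct, which is the standard machinery for Kato--Ponce type commutator bounds. I would introduce dyadic projectors $\Delta_q$ and low-frequency cut-offs $S_q$, then decompose the product $fu$ as a paraproduct plus a remainder, $fu = T_f u + T_u f + R(f,u)$, and expand $f\Lambda^s u$ in the same way. Subtracting yields the splitting
\[
[\Lambda^s, f]u = [\Lambda^s, T_f]u + \bigl(\Lambda^s T_u f - T_{\Lambda^s u}f\bigr) + \bigl(\Lambda^s R(f,u) - R(f,\Lambda^s u)\bigr),
\]
which I would estimate term by term in $H^r$.

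For the first piece, the standard paraproduct commutator lemma gives a gain of one full derivative on the low-frequency factor $f$; a Taylor expansion of the symbol of $\Lambda^s$ around the high frequency produces $\|[\Lambda^s,T_f]u\|_{H^r}\lesssim \|\nabla f\|_{L^\infty}\|u\|_{H^{s+r-1}}$, and the Sobolev embedding $H^{t_0}\hookrightarrow L^\infty$ (from $t_0>n/2$) converts this into the first term on the right-hand side of \eqref{b10}. The third piece $\Lambda^s R(f,u)-R(f,\Lambda^s u)$ is handled in the same spirit: the two factors being at comparable frequencies, one gains a derivative on the less regular one, and the hypothesis $r>-t_0$ provides the dyadic summability needed to close it with the same bound $\|\nabla f\|_{H^{t_0}}\|u\|_{H^{s+r-1}}$. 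The structural point for both pieces is that the derivatives always fall on $f$, so a single $L^\infty$-type norm on $\nabla f$ suffices.

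The middle piece $\Lambda^s T_u f - T_{\Lambda^s u}f$ is the one that carries the dichotomy $\langle\,\cdot\,\rangle_{s>t_0+1-r}$. Here $f$ sits at the high frequency and $u$ at the low, so $\Lambda^s$ acts essentially on $f$; the natural bound is $\|u\|_{L^\infty}\|f\|_{H^{s+r}}\sim \|u\|_{H^{t_0}}\|\nabla f\|_{H^{s+r-1}}$. The question is whether this output can be absorbed into the first term of \eqref{b10}, i.e. whether the high-regularity norm on $f$ can be exchanged for a lower one at the price of a stronger norm on $u$. Carrying out the paraproduct counting, such an absorption is possible precisely when $s+r-1\le t_0$, namely $s\le t_0+1-r$; in the opposite regime $s>t_0+1-r$ the contribution is genuinely of size $\|\nabla f\|_{H^{s+r-1}}\|u\|_{H^{t_0}}$ and must be retained, which explains the bracket threshold.

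The main obstacle will be the careful bookkeeping in this threshold analysis: one must verify that when $s\le t_0+1-r$ the output of $\Lambda^s T_u f - T_{\Lambda^s u}f$ is indeed controlled by $\|\nabla f\|_{H^{t_0}}\|u\|_{H^{s+r-1}}$ alone, which requires a sharp use of Bernstein's inequality on the dyadic blocks, and, for negative $r$, dualization against an $H^{-r}$ test function using that $-t_0<r$ ensures $-r<t_0$ so the dual multiplication law is available. The whole argument is a standard but somewhat delicate dyadic summation; combining the three bounds and taking the supremum over test functions of unit $H^{-r}$ norm yields \eqref{b10}.
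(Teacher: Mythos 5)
The paper does not prove this lemma at all: it is imported verbatim from Theorems 3 and 6 of the cited reference \cite{Lannes1}, so there is no in-paper argument to compare yours against. Your sketch is, in substance, the route that the cited source itself takes -- a paradifferential proof based on Bony's decomposition -- and the three structural points you isolate are the correct ones: the paraproduct commutator $[\Lambda^s,T_f]u$ and the remainder piece gain one derivative on $f$ and are closed with $|\nabla f|_{H^{t_0}}|u|_{H^{s+r-1}}$ using $H^{t_0}\hookrightarrow L^\infty$ and the summability furnished by $r>-t_0$, while the term $\Lambda^s T_u f-T_{\Lambda^s u}f$ is the sole source of the bracket $\langle\,\cdot\,\rangle_{s>t_0+1-r}$. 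One caution on the part you rightly flag as delicate: in the regime $s\le t_0+1-r$ the absorption of the $T_uf$ contribution into $|\nabla f|_{H^{t_0}}|u|_{H^{s+r-1}}$ is \emph{not} a monotonicity-of-norms argument -- the naive bound $|u|_{L^\infty}|\nabla f|_{H^{s+r-1}}$ has both factors pointing the wrong way, since $|u|_{H^{s+r-1}}\le|u|_{H^{t_0}}$ there. It genuinely requires redistributing regularity inside the dyadic sum via Bernstein (estimating $|S_{q-1}u|_\infty$ by $2^{q(n/2-(s+r-1))}|u|_{H^{s+r-1}}$ and transferring the excess weight onto the blocks of $f$), and checking that the resulting exponent on $f$ stays at or below $t_0+1$ exactly up to the threshold $s=t_0+1-r$. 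So the plan is sound and matches the method of the source, but as written it is a proof outline rather than a proof; the threshold bookkeeping is where the real work lives, and for the purposes of this paper one may simply continue to cite \cite{Lannes1}.
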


Consequently, taking $t_0=s>1$ and $r=0$ in \eqref{b10}, we have the following corollary.
\begin{corollary}\label{dL1}
For $s>1$, $f\in H^{s+1}(\R^2)$, $g\in H^{s-1}(\R^2)$, then
\begin{equation}\label{d3}
|[\Lambda^s, f]g|_2\lesssim |\nabla f|_{H^s}|g|_{H^{s-1}}.
\end{equation}
\end{corollary}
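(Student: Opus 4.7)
The plan is to deduce Corollary \ref{dL1} as a direct specialization of Lemma \ref{L1}. I will set $n=2$, $t_0=s$, and $r=0$ in \eqref{b10}, then check that all the hypotheses are met and that the conditional term drops out.

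First I would verify the hypotheses of Lemma \ref{L1}. Since we are in $\R^2$ the threshold is $t_0>n/2=1$, which holds because we assume $s>1$ and we are taking $t_0=s$. The range constraint $-t_0<r\leq t_0+1$ becomes $-s<0\leq s+1$, which is clear. The required regularity $f\in H^{t_0+1}=H^{s+1}$ and $u\in H^{s+r-1}=H^{s-1}$ matches exactly the assumptions $f\in H^{s+1}(\R^2)$ and $g\in H^{s-1}(\R^2)$ in the corollary.

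Next I would plug these parameters into \eqref{b10}. The left-hand side becomes $|[\Lambda^s,f]g|_{H^0}=|[\Lambda^s,f]g|_2$. On the right-hand side, the first term reads $|\nabla f|_{H^{t_0}}|g|_{H^{s+r-1}}=|\nabla f|_{H^s}|g|_{H^{s-1}}$, which is exactly what appears in \eqref{d3}. The conditional term $\langle |\nabla f|_{H^{s+r-1}}|g|_{H^{t_0}}\rangle_{s>t_0+1-r}$ is activated only when $s>t_0+1-r$, i.e.\ when $s>s+1$; since this inequality is impossible, the bracketed term vanishes and only the first term survives.

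There is essentially no obstacle here: the corollary is a cosmetic restatement of the $r=0$ case of the lemma under the choice $t_0=s$, and the only point that needs a moment's thought is confirming that the conditional term is inactive. Once that is checked, the estimate \eqref{d3} follows immediately with a constant depending only on $s$.
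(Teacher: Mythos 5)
Your proposal is correct and is exactly the paper's own derivation: the corollary is obtained by specializing Lemma \ref{L1} with $t_0=s$ and $r=0$, and your verification that the hypotheses hold and that the conditional term is inactive (since $s>s+1$ is impossible) is precisely the check implicit in the paper's one-line statement. Nothing further is needed.
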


%We remark that Lemma \ref{L1} and Corollary \ref{dL1} are also correct with $\Lambda^s$ being replaced by $\partial^k$ for $k\in\N $.

\vspace{0.3cm}

Going back to \eqref{d1},
similarly to the one-dimensional case, we denote by $U=(\eta,{\vv u})^T,$ and then define
\begin{equation}
\|U\|_{X_{\epsilon}^s}=(|\Lambda^s\eta|_2^2+|\Lambda^s{\vv u}|_2^2+\epsilon|\Lambda^s\nabla {\vv u}|_2^2)^{\frac{1}{2}},
\end{equation}
and we obtain the following theorem.

\begin{theorem}\label{a<0-2D}
Let $s>1$ and $(\eta_0,{\vv u}_0)\in X^s_\epsilon. $ Then there exists $T_\epsilon=O(1/\sqrt \epsilon)$ and a unique solution $(\eta,{\vv u})\in C(\lbrack 0,T_\epsilon\rbrack; X^s_\epsilon)$ of \eqref{d1} with initial data $(\eta_0,{\vv u}_0).$
Moreover,
$$\sup_{t\in \lbrack 0,T_\epsilon\rbrack}\|(\eta(\cdot,t),{\vv u}(\cdot,t))\|_{X^s_\epsilon}< c \|(\eta_0,\vv u_0))\|_{X^s_\epsilon}.$$
\end{theorem}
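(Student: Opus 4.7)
My plan mirrors the 1D argument: work on a Bona--Smith type regularization of \eqref{d1}, derive a closed energy inequality of the form $\frac{d}{dt}\|U\|_{X^s_\epsilon}^2\leq C\sqrt{\epsilon}\,\|U\|_{X^s_\epsilon}^3$ for smooth solutions, deduce $T_\epsilon=O(1/\sqrt{\epsilon})$ and the stated uniform bound from ODE comparison, prove uniqueness by running the same scheme at low regularity on the difference of two solutions (mimicking \eqref{e2}--\eqref{e3}), and recover strong time-continuity together with continuity of the data-to-solution map via the Bona--Smith procedure \cite{BS}, exactly as at the end of the proof of Theorem \ref{a<0elem}. The only new content with respect to the 1D case is the a priori estimate itself.

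To recover $\tfrac12\tfrac{d}{dt}\|U\|_{X^s_\epsilon}^2$ on the left hand side, I apply $\Lambda^s$ to the first equation of \eqref{d1} and take the $L^2$ scalar product with $\Lambda^s\eta$, and $\Lambda^s$ to the second equation and take the $L^2$ scalar product with $(I-\epsilon\Delta)\Lambda^s{\vv u}$. Summing the two identities, every linear contribution cancels in pairs by a single integration by parts, since $(\Lambda^s\nabla\cdot{\vv u},\Lambda^s\eta)+(\Lambda^s\nabla\eta,\Lambda^s{\vv u})=0$ and $-\epsilon(\Lambda^s\nabla\cdot\Delta{\vv u},\Lambda^s\eta)-\epsilon(\Lambda^s\nabla\eta,\Lambda^s\Delta{\vv u})=0$. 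Three cubic terms remain,
\begin{equation*}
I_1=-\epsilon(\Lambda^s\nabla\cdot(\eta{\vv u}),\Lambda^s\eta),\quad I_2=-\epsilon(\Lambda^s({\vv u}\cdot\nabla{\vv u}),\Lambda^s{\vv u}),\quad I_3=\epsilon^2(\Lambda^s({\vv u}\cdot\nabla{\vv u}),\Lambda^s\Delta{\vv u}),
\end{equation*}
and each must be bounded by $C\sqrt{\epsilon}\,\|U\|_{X^s_\epsilon}^3$.

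For $I_1$ I split $\nabla\cdot(\eta{\vv u})={\vv u}\cdot\nabla\eta+\eta\nabla\cdot{\vv u}$. The transport piece $({\vv u}\cdot\nabla\Lambda^s\eta,\Lambda^s\eta)=-\tfrac12(\nabla\cdot{\vv u}\,\Lambda^s\eta,\Lambda^s\eta)$ is bounded by $|\nabla{\vv u}|_\infty\|\eta\|_s^2$; the commutator $([\Lambda^s,{\vv u}]\cdot\nabla\eta,\Lambda^s\eta)$ is estimated by Corollary \ref{dL1} (with $f={\vv u}$, $g=\nabla\eta$) yielding $\|{\vv u}\|_{s+1}\|\eta\|_s^2$; and $(\Lambda^s(\eta\nabla\cdot{\vv u}),\Lambda^s\eta)$ is handled by the Moser product inequality. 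When $s$ is close to $1$, $|\nabla{\vv u}|_\infty$ is not tamed by $\|U\|_{X^s_\epsilon}$ alone, so I interpolate between $\|\nabla{\vv u}\|_{s-1}\lesssim\|U\|_{X^s_\epsilon}$ and $\sqrt{\epsilon}\,\|\nabla{\vv u}\|_s\lesssim\|U\|_{X^s_\epsilon}$ through $H^{1+\delta}\hookrightarrow L^\infty$ (with $\delta<s-1$) to obtain $\epsilon|\nabla{\vv u}|_\infty\lesssim\sqrt{\epsilon}\,\|U\|_{X^s_\epsilon}$. Term $I_2$ is treated in the same spirit after rewriting ${\vv u}\cdot\nabla{\vv u}=\tfrac12\nabla|{\vv u}|^2$ (curl-free identity) and performing one integration by parts.

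The main obstacle is $I_3$, in which $\Lambda^s\Delta{\vv u}$ formally demands three derivatives on ${\vv u}$ and neither $\|{\vv u}\|_s$ nor $\sqrt{\epsilon}\,\|\nabla{\vv u}\|_s$ carries enough regularity. The curl-free hypothesis is used twice: first, $\Delta{\vv u}=\nabla(\nabla\cdot{\vv u})$ permits the integration by parts
\begin{equation*}
I_3=-\epsilon^2\bigl(\Lambda^s\nabla\cdot({\vv u}\cdot\nabla{\vv u}),\,\Lambda^s\nabla\cdot{\vv u}\bigr);
\end{equation*}
second, the symmetry $\partial_iu^k=\partial_ku^i$ turns $\nabla\cdot({\vv u}\cdot\nabla{\vv u})$ into $|\nabla{\vv u}|^2+{\vv u}\cdot\nabla(\nabla\cdot{\vv u})$. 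Writing $w=\nabla\cdot{\vv u}$, the would-be worst contribution $-\epsilon^2(\Lambda^s({\vv u}\cdot\nabla w),\Lambda^s w)$ is now a transport of $w$ by ${\vv u}$, handled by the same commutator and symmetrization as in $I_1$; using $\|w\|_s\leq\|{\vv u}\|_{s+1}\leq\epsilon^{-1/2}\|U\|_{X^s_\epsilon}$ this piece is $\lesssim\epsilon^2\|{\vv u}\|_{s+1}^3\lesssim\sqrt{\epsilon}\,\|U\|_{X^s_\epsilon}^3$. The residual $-\epsilon^2(\Lambda^s|\nabla{\vv u}|^2,\Lambda^s w)$ is controlled by Moser combined with the same interpolation estimate for $|\nabla{\vv u}|_\infty$. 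Assembling the bounds closes the differential inequality; the ODE comparison, uniqueness and continuity statements then conclude the proof exactly as in Theorem \ref{a<0elem}.
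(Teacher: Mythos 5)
Your proposal is correct and follows essentially the same route as the paper: the same test functions $\Lambda^{2s}\eta$ and $(1-\epsilon\Delta)\Lambda^{2s}{\vv u}$, the same commutator-plus-symmetrization treatment of the transport terms via Corollary \ref{dL1}, and the same closure $\frac{d}{dt}\|U\|_{X^s_\epsilon}\leq C\sqrt{\epsilon}\,\|U\|_{X^s_\epsilon}^2$ followed by ODE comparison, a regularization for existence, Gronwall for uniqueness and Bona--Smith for continuity. The only cosmetic difference is in the $\epsilon^2$-term, where you pass to the scalar $w=\nabla\cdot{\vv u}$ via the curl-free identity $\Delta{\vv u}=\nabla(\nabla\cdot{\vv u})$, whereas the paper integrates by parts against $\Delta\Lambda^s{\vv u}$ componentwise without invoking curl-freeness there; both land on the same transport structure and the same bound.
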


\begin{proof}

As in the one-dimensional case we will only provide the suitable a priori estimate.

Taking the $L^2$ inner product of the first equation in \eqref{d1} by $\Lambda^{2s}\eta$ and of the second equation by $(1-\epsilon\Delta)\Lambda^{2s}{\vv u}$, and then integrating by parts, it results
\begin{equation}\label{d2}\begin{split}
&\frac{1}{2}\frac{d}{dt}\bigl(|\Lambda^s\eta|_2^2+|\Lambda^s{\vv u}|_2^2+\epsilon|\Lambda^s\nabla {\vv u}|_2^2\bigr)\\
&=-\epsilon(\Lambda^s\nabla\cdot(\eta {\vv u})\,|\,\Lambda^s\eta)_2
-\epsilon(\Lambda^s({\vv u}\cdot\nabla {\vv u})\,|\,(1-\epsilon\Delta)\Lambda^s{\vv u})_2,
\end{split}\end{equation}

Now, we deal with the r.h.s terms in \eqref{d2}. We first get that
\begin{equation}\begin{aligned}
&(\Lambda^s\nabla\cdot(\eta {\vv u})\,|\,\Lambda^s\eta)_2=(\Lambda^s({\vv u}\cdot\nabla\eta)\,|\,\Lambda^s\eta)_2+(\Lambda^s(\eta\nabla\cdot {\vv u})\,|\,\Lambda^s\eta)_2\\
&=([\Lambda^s,{\vv u}]\cdot\nabla\eta)\,|\,\Lambda^s\eta)_2-\frac{1}{2}(\nabla\cdot {\vv u}\Lambda^s\eta\,|\,\Lambda^s\eta)_2+(\Lambda^s(\eta\nabla\cdot {\vv u})\,|\,\Lambda^s\eta)_2,
\end{aligned}\end{equation}
which together with \eqref{d3} implies that
\begin{equation}\label{d7}\begin{aligned}
&|(\Lambda^s\nabla\cdot(\eta {\vv u})\,|\,\Lambda^s\eta)_2|\\
&\lesssim|[\Lambda^s,{\vv u}]\cdot\nabla\eta)|_2|\Lambda^s\eta|_2+|\nabla\cdot {\vv u}|_{\infty}|\Lambda^s\eta|_2^2
+|\Lambda^s(\eta\nabla\cdot{\vv u})|_2|\Lambda^s\eta|_2\\
&\lesssim|\nabla {\vv u}|_{H^s}|\nabla\eta|_{H^{s-1}}|\eta|_{H^s}+|\nabla {\vv u}|_{H^s}|\eta|_{H^s}^2\lesssim|\nabla {\vv u}|_{H^s}|\eta|_{H^s}^2.
\end{aligned}\end{equation}

For the second term on the r.h.s of \eqref{d2}, we have
\begin{equation}\begin{aligned}
&\ \ (\Lambda^s({\vv u}\cdot\nabla {\vv u})\,|\,(1-\epsilon\Delta)\Lambda^s{\vv u})_2=([\Lambda^s,{\vv u}]\cdot\nabla {\vv u}\,|\,\Lambda^s{\vv u})_2\\
&\qquad-\frac{1}{2}(\nabla\cdot {\vv u}\Lambda^s{\vv u}\,|\,\Lambda^s{\vv u})_2+\epsilon\sum_{i=1}^2(\Lambda^s\nabla(u_i\partial_i\vv u)\,|\,\Lambda^s\nabla\vv u)_2\\
&=([\Lambda^s,{\vv u}]\cdot\nabla {\vv u}\,|\,\Lambda^s{\vv u})_2-\frac{1}{2}(\nabla\cdot{\vv u}\Lambda^s{\vv u}\,|\,\Lambda^s{\vv u})_2\\
&\quad+\epsilon\sum_{i=1}^2([\Lambda^s,u_i]\nabla\partial_i{\vv u}\,|\,\Lambda^s\nabla {\vv u})_2-\frac{1}{2}\epsilon(\nabla\cdot{\vv u}\Lambda^s\nabla{\vv u}\,|\,\Lambda^s\nabla{\vv u})_2\\
&\quad+\epsilon\sum_{i=1}^2(\Lambda^s(\nabla u_i\partial_i{\vv u})\,|\,\Lambda^s\nabla{\vv u})_2
\end{aligned}\end{equation}
which along with \eqref{d3} implies that
\begin{equation}\label{d8}
|(\Lambda^s({\vv u}\cdot\nabla {\vv u})\,|\,(1-\epsilon\Delta)\Lambda^s{\vv u})_2|\lesssim|\nabla{\vv u}|_{H^s}\bigl(|{\vv u}|_{H^s}^2+\epsilon|\nabla {\vv u}|_{H^s}^2\bigr).
\end{equation}

Denoting again  $U=(\eta,\vv u)^T,$
%and
%\begin{equation}
%\|U\|_{X_{\epsilon}^s}=(|\Lambda^s\eta|_2^2+|\Lambda^su|_2^2+\epsilon|\Lambda^s\nabla u|_2^2)^{\frac{1}{2}},
%\end{equation}
we deduce from \eqref{d2}, \eqref{d7} and \eqref{d8} that
\begin{equation}\label{d9}
\frac{d}{dt}\|U(t)\|_{X_{\epsilon}^s}\leq C\sqrt\epsilon\|U(t)\|_{X_{\epsilon}^s}^2,
\end{equation}
from which, we infer that the maximal existence time interval is $[0,T_\epsilon]$ with $T_\epsilon=O(1/{\sqrt\epsilon})$.

As in the one-dimensional case one justifies the a priori estimates by a suitable approximation of the system (for instance by adding $(-\delta \Delta \eta_t, -\delta \Delta{\vv u}_t)^T,\; \delta>0$). Uniqueness is obtained again by a Gronwall type argument and the strong continuity in time and the continuity of the flow map result from the Bona-Smith trick.
\end{proof}

\vspace{0.3cm}
As in the one-dimensional case, one has a better insight on the system by diagonalizing the linear part. The dispersion  matrix is in Fourier variables
 \begin{displaymath}
\widehat{A}(\xi_1,\xi_2)=i\begin{pmatrix} 0 & \xi_1(1+\epsilon |\xi|^2) & \xi_2(1+\epsilon |\xi|^2) \\
                 \xi_1  & 0 & 0 \\
                 \\
                 \xi_2    & 0 & 0 \end{pmatrix}.
\end{displaymath}

\vspace{0.3cm}
The corresponding   eigenvalues are zero and
$$ \lambda_{\pm}=\pm i |\xi|(1+\epsilon|\xi|^2)^{1/2}$$
with corresponding eigenvectors
\begin{displaymath}
E_0=\begin{pmatrix} 0 \\ -\frac{\xi_2}{|\xi|}(1+\epsilon|\xi|^2)^{-1/2} \\ \frac{\xi_1}{|\xi|}(1+\epsilon|\xi|^2)^{-1/2}
\end{pmatrix}, \quad E_1=\begin{pmatrix} 1 \\ \frac{\xi_1}{|\xi|}(1+\epsilon|\xi|^2)^{-1/2} \\ \frac{\xi_2}{|\xi|}(1+\epsilon|\xi|^2)^{-1/2}
\end{pmatrix},
\end{displaymath}
\begin{displaymath}
\text{and}
 \quad E_2=\begin{pmatrix} -1 \\
\frac{\xi_1}{|\xi|}(1+\epsilon|\xi|^2)^{-1/2} \\ \frac{\xi_2}{|\xi|}(1+\epsilon|\xi|^2)^{-1/2}
\end{pmatrix}.
\end{displaymath}

Now we set $J_\epsilon=(I-\epsilon\Delta)^{1/2}$ and $R_1,R_2$ the Fourier multiplier operator with respective symbols $i\xi_1/|\xi|, i\xi_2/|\xi|.$

We also denote
\begin{displaymath}
P=\begin{pmatrix}0 &i &-i\\
-R_2J_\epsilon^{-1} & R_1J_\epsilon^{-1} & R_1J_\epsilon^{-1}\\
R_1J_\epsilon^{-1} & R_2J_\epsilon^{-1} & R_2J_\epsilon^{-1} \end{pmatrix}
\end{displaymath}
and
\begin{displaymath}
P^{-1}=\frac{1}{2} \begin{pmatrix} 0 &2 R_2J_\epsilon & -2 R_1J_\epsilon\\
-i& -R_1J_\epsilon  &   -R_2J_\epsilon \\
i& -R_1J_\epsilon  &   -R_2J_\epsilon
\end{pmatrix}.
\end{displaymath}

Setting  $U=(\eta,{\vv u})^{T}$ and $V=(\zeta,{\bf v})^{T}=P^{-1} U,$
 \eqref{2D} writes as
$$U_t+A U+\epsilon N(U)=0,$$
which is transformed after diagonalizing the linear part,
$$V_t+D V+\epsilon P^{-1}N(PV)=0,$$
or, setting $P^{-1}N(PV)=\tilde{N}(V),$
\begin{equation}\label{NLSlike}
V_t+D V+\epsilon\tilde{N}(V)=0,
\end{equation}
where
\begin{displaymath}
D=\begin{pmatrix} 0 &0&0\\
0&i(-\Delta)^{1/2}J_\epsilon &0\\
0&0&-i(-\Delta)^{1/2}J_\epsilon
\end{pmatrix}
\end{displaymath}

We turn now to the nonlinear part. $N$ is given as a function of $U$ by
\begin{displaymath}
N(U)=\begin{pmatrix} \nabla\cdot(\eta{\vv u})\\
\frac{1}{2}\partial_x(|{\vv u}|^2)\\
\frac{1}{2}\partial_y(|{\vv u}|^2)
\end{pmatrix}=\begin{pmatrix} \nabla\cdot(\eta{\vv u})\\
{\vv u}\cdot\nabla u_1\\
{\vv u}\cdot\nabla u_2
\end{pmatrix},
\end{displaymath}
where we used the condition $\curl{\vv u}=0$ in the second equality.

On the other hand, $P^{-1}N(U)$ is given by
\begin{displaymath}
-\frac{1}{2}\begin{pmatrix} 0\\
i\nabla\cdot(\eta{\vv u})+R_1J_\epsilon({\vv u}\cdot\nabla u_1)+R_2J_\epsilon({\vv u}\cdot\nabla u_2)\\
-i\nabla\cdot(\eta{\vv u})+R_1J_\epsilon({\vv u}\cdot\nabla u_1)+R_2J_\epsilon({\vv u}\cdot\nabla u_2)
\end{pmatrix}.
\end{displaymath}
To obtain the expression of $\tilde{N}(V)$, we should express $(\eta, u_1, u_2)$ as
$$\eta=i(v_1-v_2),\quad  u_1=-R_2J_\epsilon^{-1}\zeta+R_1J_\epsilon^{-1}(v_1+v_2),$$
$$u_2=R_1J_\epsilon^{-1}\zeta+R_2J_\epsilon^{-1}(v_1+v_2),$$
and the nonlinearity is of the same type as in the one-dimensional case.

\begin{remark}
As in the case of the "KdV-KdV" system ($a=c=1/6, b=d=0$) studied in \cite{LPS}, it follows from our analysis that $\zeta=0$ if $\zeta_0$ is
smooth enough, since $\partial_t\zeta=0$. Moreover,
\begin{displaymath}
\zeta=0 \quad \iff \quad R_2u_1=R_1u_2 \quad \iff \text{curl}\; {\vv u}=0.
\end{displaymath}
We observe that this condition makes sense, since our system is
derived from the water waves equations in the irrotational case. Note that ${\vv u}$ is the horizontal velocity at a certain height and it differs from the horizontal velocity at the free surface by an $O(\epsilon^2)$ term. Also, since the equation for ${\vv u}$ writes $\partial_t {\vv u}= \nabla F$, the condition
$\text{curl}\; {\vv u} =0$ is preserved by the evolution.
\end{remark}

\begin{remark}
The linear part in \eqref{NLSlike} is  "Schr\"{o}dinger like"  for large frequencies (the symbol behaves as $\pm i\epsilon^{1/2}|\xi|^2$ as $|\xi|\to +\infty),$ and "wave like" for small frequencies (the symbol behaves as $\pm i|\xi|$ when $|\xi|\to 0).$

The quadratic terms however involves order one operators and this one could a priori think of applying  the results on the Cauchy problem for quasilinear Schr\"{o}dinger type equations (see for instance \cite{KPV}). Those methods however necessitate a high regularity on the data and it is unlikely that they could improve our local result.
\end{remark}

\subsection{A second  Schr\"{o}dinger type system}

We consider here the situation where $a=b=d=0, c<0, $ say $c=-1$ which again may occur only in the case of strong surface tension. It turns out that this system leads to serious difficulties, which are not present in the other Boussinesq systems,  and that we will describe below, even to obtain  the local well-posedness by "elementary" or more sophisticated methods using dispersion. We refer to Section 4 and 5 for
the long time existence issues.

We  consider first the one-dimensional   system
\begin{equation}
    \label{c<0}
    \left\lbrace
    \begin{array}{l}
    \eta_t+u_x+\epsilon(u\eta)_x=0, \\
    u_t+\eta_x+\epsilon uu_x-\epsilon\eta_{xxx}=0.
    \end{array}\right.
    \end{equation}

 The hamiltonian structure is now
\begin{displaymath}
\partial_t\begin{pmatrix} \eta\\u
\end{pmatrix}+J\text{grad}\;H_\epsilon(\eta,u)=0
\end{displaymath}
where
\begin{displaymath}
J=\begin{pmatrix} 0&\partial_x\\
\partial_x&0
\end{pmatrix}
\end{displaymath}
and
$$H_\epsilon(\eta,u)=\frac{1}{2}\int_\R (\epsilon| \eta_x|^2+\eta^2+u^2+\epsilon u^2\eta)dx.$$

Similarly to the case $b=d>0, a=0, c=-1$ considered in \cite{BCS2} and for a related system in \cite{BS2} , one can use the formal conservation of $H_\epsilon$ to derive a
global a priori estimate when $H_\epsilon (\eta_0,u_0)$ is small enough and $\inf_{x\in \R}(1+\epsilon \eta_0(x))>0$. First we derive as in \cite{BCS2,BS} a   $L^\infty$ bound on $\eta $ for solutions $(\eta,u)\in C(\lbrack 0,T\rbrack; H^1(\R))\times C(\lbrack 0,T\rbrack ;L^2(\R)) $ satisfying the above non-cavitation condition. Actually, one writes
\begin{equation}\label{eta}
\begin{split}
\eta^2(x,t)&\leq \int_\R |\eta ||\eta_x|dx=\frac{1}{\sqrt \epsilon}\int_\R \sqrt \epsilon |\eta||\eta_x|dx\leq\frac{1}{2\sqrt \epsilon}\int_R(\eta^2+\epsilon|\eta_x|^2)dx\\
&\leq\frac{1}{\sqrt \epsilon}|H_\epsilon(\eta,u)|=\frac{1}{\sqrt \epsilon}|H_\epsilon(\eta_0,u_0)| \quad t\in \lbrack 0, T\rbrack.
\end{split}
\end{equation}

Using \eqref{eta} and the conservation of $H_\epsilon$ imply a (formal )  $H^1\times L^2$ bound on $(\eta,u)$ provided $H_\epsilon(\eta_0, u_0)$ is small enough, that is
\begin{equation}\label{cond}
H_\epsilon(\eta_0,u_0)<\epsilon^{-3/2}.
\end{equation}

Unfortunately, and contrary to the case $b=d>0, a=0, c<0$ (see \cite{BCS2}), one cannot use the above bounds to get a global well-posedness result, say by a compactness method applied to a regularization of the system. The obstruction is that one cannot pass to the limit of the term $\frac{1}{2}\partial _x(u^2)$ in the second equation only from a $L^2$ bound on $u.$

\vspace{0.3cm}
To obtain an equivalent "diagonal" system, we proceed as in the other "Schr\"{o}dinger type system", setting now
\begin{displaymath}
\widehat{A}(\xi)=i\xi\begin{pmatrix} 0 & 1  \\
                 1+\epsilon|\xi|^2  & 0  \\

                  \end{pmatrix}.
\end{displaymath}
with the same  eigenvalues $\pm i\xi(1+\epsilon |\xi|^2)^{1/2}.$

In the Notation of the previous section, one has still
\begin{displaymath}
D=\begin{pmatrix} J_\epsilon &0\\
0&-J_\epsilon
\end{pmatrix}.
\end{displaymath}
and now
\begin{displaymath}
P=\begin{pmatrix} J_\epsilon^{-1} &-J_\epsilon^{-1}\\
1&1
\end{pmatrix},
\end{displaymath}
\begin{displaymath}
P^{-1}=\frac{1}{2}\begin{pmatrix} J_\epsilon & 1 \\
-J_\epsilon & 1
\end{pmatrix},
\end{displaymath}

Setting again
\begin{displaymath}
U=\begin{pmatrix} \eta\\u
\end{pmatrix}
\end{displaymath}
and
\begin{displaymath}
W=\begin{pmatrix} \zeta \\ v
\end{pmatrix}=P^{-1}U,
\end{displaymath}
one can therefore reduce \eqref{c<0} to the equivalent form
\begin{equation}
    \label{bis2}
    \left\lbrace
    \begin{array}{l}
    \zeta_t+J_\epsilon \zeta_x+\frac{\epsilon}{2} N_1(\zeta,v)=0, \\
    v_t-J_\epsilon v_x+\frac{\epsilon}{2} N_2(\zeta,v)=0.
    \end{array}\right.
    \end{equation}
where
$$N_1(\zeta,v)=\partial_xJ_\epsilon\lbrack (\zeta+v)J_\epsilon^{-1}(\zeta-v)\rbrack+(\zeta+v)(\zeta+v)_x $$
and
$$ N_2(\zeta,v)=- \partial_xJ_\epsilon\lbrack (\zeta+v)J_\epsilon^{-1}(\zeta-v)\rbrack+(\zeta+v)(\zeta+v)_x .$$

We can also write  \eqref{bis2} as
\begin{equation}\label{hard}
     \left\lbrace
    \begin{array}{l}
    \zeta_t+\epsilon^{1/2}\mathcal H\zeta_{xx}+R_\epsilon \zeta+\frac{\epsilon}{2}N_1(\zeta,v)=0, \\
    v_t-\epsilon^{1/2}\mathcal H v_{xx}-R_\epsilon v+\frac{\epsilon}{2}N_2(\zeta,v)=0.
    \end{array}\right.
    \end{equation}
where again $R_\epsilon$ is the  order zero skew-adjoint  operator with symbol $\frac{i\xi}{(1+\epsilon\xi^2)^{1/2}+\epsilon^{1/2}|\xi|}.$

 \vspace{0.3cm}
 Note that the nonlinearity is worse than in the case $a=-1$ and even the local theory does not seem to be straightforward using this formulation.

 \vspace{0.3cm}
We turn now to the two-dimensional case that is
\begin{equation}
    \label{2Dbis}
    \left\lbrace
    \begin{array}{l}
    \eta_t+\nabla\cdot{\vv u}+\epsilon\nabla\cdot (\eta {\vv u})=0, \\
    {\vv u}_t+\nabla\eta+\frac{\epsilon}{2} \nabla|{\vv u}|^2-\epsilon \nabla \Delta \eta=0.
    \end{array}\right.
    \end{equation}

The Hamiltonian structure is now
\begin{displaymath}
\partial_t\begin{pmatrix} \eta\\{\vv u}
\end{pmatrix}+J\text{grad}\;H_\epsilon(\eta,{\vv u})=0
\end{displaymath}
where
\begin{displaymath}
J=\begin{pmatrix} 0 & \partial_{x} & \partial_{y} \\
                 \partial_{x} & 0 & 0 \\
                 \partial_{y} & 0 & 0 \end{pmatrix}.
\end{displaymath}
and
$$H(\eta,{\vv u})=\frac{1}{2}\int_{\R^2} (\epsilon|\nabla \eta|^2+\eta^2+|{\vv u}|^2+\epsilon|{\vv u}|^2\eta)dxdy.$$

\vspace{0.3cm}
%The non zero eigenvalues of the dispersion matrix
%\begin{displaymath}
%\widehat{A}(\xi)=\begin{pmatrix}0 &i\xi_1 &i\xi_2\\
%(1+\epsilon|\xi|^2)i\xi_1 & 0 & 0\\
%(1+\epsilon|\xi|^2)i\xi_2 & 0 & 0 \end{pmatrix}
%\end{displaymath}
%in Fourier variables are still
%$$ \lambda_{\pm}=\pm i |\xi|(1+\epsilon|\xi|^2)^{1/2}$$
%with corresponding eigenvectors
%\begin{displaymath}
%E_0=\begin{pmatrix} 0 \\ -\frac{\xi_2}{|\xi|} \\ \frac{\xi_1}{|\xi|}
%\end{pmatrix}, \quad E_1=\begin{pmatrix} (1+\epsilon|\xi|^2)^{-1/2} \\ \frac{\xi_1}{|\xi|} \\ \frac{\xi_2}{|\xi|}
%\end{pmatrix}, \quad \text{and} \quad E_2=\begin{pmatrix} -(1+\epsilon|\xi|^2)^{-1/2} \\
%\frac{\xi_1}{|\xi|} \\ \frac{\xi_2}{|\xi|}
%\end{pmatrix}.
%\end{displaymath}

%\vspace{0.3cm}
%$A$ is diagonalized as $P^{-1}AP=D$ where
%\begin{displaymath}
%P=\begin{pmatrix}0 &i J_\epsilon^{-1} &-iJ_\epsilon^{-1}\\
%-R_2 & R_1 & R_1\\
%R_1 & R_2 & R_2 \end{pmatrix}
%\end{displaymath}
%and
%\begin{displaymath}
%P^{-1}=\frac{1}{2} \begin{pmatrix} 0 &2 R_2 & -2 R_1\\
%-iJ_\epsilon & -R_1  &   -R_2 \\
%iJ_\epsilon & -R_1  &   -R_2
%\end{pmatrix}
%\end{displaymath}

As in the one-dimensional case,
one can express \eqref{c<0}  on the equivalent form
\begin{equation}\label{NLSlike2}
V_t+D V+\tilde{N}(V)=0,
\end{equation}
where again $U=(\eta,{\vv u})^T$, $V=(\zeta,{\bf v})^T$,
%=P^{-1}U$ and
\begin{displaymath}
D=\begin{pmatrix} 0 &0&0\\
0&i(-\Delta)^{1/2}J_\epsilon &0\\
0&0&-i(-\Delta)^{1/2}J_\epsilon
\end{pmatrix}
\end{displaymath}
%One has $\tilde N(V)=P^{-1}N(U)=P^{-1}N(PV)$ where again
%\begin{displaymath}
%N(U)=\begin{pmatrix} \partial_x (u_1\eta)+\partial_y(u_2\eta)\\
%\frac{1}{2}\partial_x |{\vv u}|^2\\
%\frac{1}{2}\partial_y |{\vv u}|^2
%\end{pmatrix}
%\end{displaymath}
and
% $P^{-1}N(U)$ is
  $\tilde N(V)$ is expressed as
\begin{displaymath}
\frac{1}{2}\begin{pmatrix} 0\\
-iJ_\epsilon\lbrack \partial_x(u_1\eta)+\partial_y(u_2\eta)\rbrack -\frac{1}{2} \lbrack R_1\partial_x |{\vv u}|^2+R_2\partial_y|{\vv u}|^2\rbrack\\
iJ_\epsilon\lbrack \partial_x(u_1\eta)+\partial_y(u_2\eta)\rbrack -\frac{1}{2} \lbrack R_1\partial_x |{\vv u}|^2+R_2\partial_y|{\vv u}|^2\rbrack\
\end{pmatrix}
\end{displaymath}
with
$$\eta=i J_\epsilon^{-1}(v_1-v_2),\quad u_1=-R_2\zeta+R_1(v_1+v_2),\quad u_2=R_1\zeta+R_2(v_1+v_2).$$

\subsection{Comparison between the two  Schr\"{o}dinger type systems}
The previous considerations display the difficulties of the Cauchy problem in the case $a=b=d=0, c<0.$ We indicate here how to reduce it to the case
$b=d=c=0, a<0$ modulo $O(\epsilon^2)$ terms.

Let us consider for instance the one-dimensional case
\begin{equation}\label{1dc<0}\left\{\begin{aligned}
&\eta_t+u_x+\epsilon(\eta u)_x=0,\\
&u_t+\eta_x+\epsilon uu_x-\epsilon \eta_{xxx}= 0.
\end{aligned}\right.\end{equation}

Setting
$$\tilde{\eta}=(1-\epsilon\partial_x^2)\eta=J_\epsilon^2\eta,$$
\eqref{1dc<0} can be rewritten as follows :
\begin{equation}\label{1dc<0bis}\left\{\begin{aligned}
&\tilde \eta_t+(1-\epsilon\partial_x^2)u_x+\epsilon(\tilde\eta u)_x=\epsilon^2(2\eta_{xx}u_x+3\eta_xu_{xx}+\eta u_{xxx}),\\
&u_t+\tilde\eta_x+\epsilon uu_x= 0,
\end{aligned}\right.\end{equation}
that is
\begin{equation}\label{1dc<0ter}\left\{\begin{aligned}
&\tilde \eta_t+(1-\epsilon\partial_x^2)u_x+\epsilon(\tilde\eta u)_x\\
&\quad=\epsilon^2(2(J_\epsilon^{-2}\tilde\eta_{xx})u_x+3(J_\epsilon^{-2}\tilde\eta_{x})u_{xx}+
(J_\epsilon^{-2}\tilde \eta)u_{xxx}),\\
&u_t+\tilde\eta+\epsilon uu_x= 0,
\end{aligned}\right.\end{equation}

Discarding the $O(\epsilon^2)$ terms, \eqref{1dc<0bis} reduces to
\begin{equation}\label{1dnew}\left\{\begin{aligned}
&\tilde\eta_t+(1-\epsilon\partial_x^2)u_x+\epsilon(\tilde\eta u)_x=0,\\
&u_t+\epsilon\tilde\eta_x+\epsilon u u_x= 0.
\end{aligned}\right.\end{equation}
which is exactly the case $b=c=d=0, a=-1.$

\medskip

Similarly, we can consider the two-dimensional case
\begin{equation}\label{2dc<0}\left\{\begin{aligned}
&\eta_t+\nabla\cdot {\vv u}+\epsilon\nabla\cdot(\eta {\vv u})=0,\\
&{\vv u}_t+\nabla\eta+\frac{\epsilon}{2}\nabla(|{\vv u}|^2)-\epsilon\nabla\Delta\eta={\bf 0}.
\end{aligned}\right.\end{equation}

Setting
$$\tilde{\eta}=(1-\epsilon\Delta)\eta= J_\epsilon^2\eta,$$
\eqref{2dc<0} can be rewritten as follows :
\begin{equation}\label{2dc<0bis}\left\{\begin{aligned}
&\tilde\eta_t+(1-\epsilon\Delta)\nabla\cdot{\vv u}+\epsilon\nabla\cdot(\tilde\eta{\vv u})=\epsilon^2\bigl(\Delta\nabla\cdot(\eta{\vv u})-\nabla\cdot(\Delta\eta{\vv u})\bigr),\\
&{\vv u}_t+\nabla\tilde\eta+\frac{\epsilon}{2}\nabla(|{\vv u}|^2)= 0,
\end{aligned}\right.\end{equation}
that is
\begin{equation}\label{2dc<0ter}\left\{\begin{aligned}
&\tilde\eta_t+(1-\epsilon\Delta)\nabla\cdot{\vv u}+\epsilon\nabla\cdot(\tilde\eta{\vv u})=\epsilon^2\bigl(\Delta\nabla\cdot({\vv u}J_\epsilon^{-2}\tilde\eta)-\nabla\cdot({\vv u}\Delta J_\epsilon^{-2}\tilde\eta\bigr),\\
&{\vv u}_t+\nabla\tilde\eta+\frac{\epsilon}{2}\nabla(|{\vv u}|^2)= 0,
\end{aligned}\right.\end{equation}

Discarding the $O(\epsilon^2)$ terms, \eqref{2dc<0bis} reduces to
\begin{equation}\label{2dnew}\left\{\begin{aligned}
&\tilde\eta_t+(1-\epsilon\Delta)\nabla\cdot{\vv u}+\epsilon\nabla\cdot(\tilde\eta{\vv u})=0,\\
&{\vv u}_t+\nabla\tilde\eta+\frac{\epsilon}{2}\nabla(|{\vv u}|^2)={\bf 0}
\end{aligned}\right.\end{equation}
which is exactly the case $b=c=d=0, a=-1.$

\vspace{0.3cm}
The bad structure of the nonlinear terms in \eqref{1dc<0ter}, \eqref{2dc<0ter} (or \eqref{hard}, \eqref{NLSlike2}) explain why solving the Cauchy problem for systems \eqref{c<0} or \eqref{2Dbis} is so difficult, despite their apparent simplicity. One could notice that we always lose one derivative for $\eta$ or $\vv u$. Thus, to solve the case $a=b=d=0,c<0$, we turn to quasilinearize the system by applying $\partial_t^k$ instead of the usual $\partial_x^{\alpha}$. We shall discuss details in the following section.

\section{Long time existence for some Boussinesq systems}

We first give a complete proof for some systems considered in \cite{SX} (in particular the two-dimensional version of the system considered in \cite{A,Sc}) and apply the same symmetrization techniques to study the long time existence of solutions (in a smaller Sobolev space) of one of the "Schr\"{o}dinger type systems" described in the previous section. We then consider the more delicate case $a=b=d=0, c<0.$

\vspace{0.3cm}
We associate to \eqref{abcd} the initial data
%\begin{equation}\label{rme1}\left\{\begin{aligned}
%&\partial_t\eta+\nabla\cdot {\vv u}+\epsilon\nabla\cdot(\eta {\vv u})+\epsilon\bigl(a\nabla\cdot\Delta {\vv u}-b\Delta\partial_t\eta\bigr)=0,\\
%&\partial_t {\vv u}+\nabla\eta+\frac{\epsilon}{2}\nabla(| {\vv u}|^2)+\epsilon\bigl(c\nabla\Delta\eta-d\Delta\partial_t {\vv u}\bigr)= {\bf 0},
%\end{aligned}\right.\end{equation}
%associated with the initial data
\begin{equation}\label{rme2}
\eta|_{t=0}=\eta_0,\quad {\vv u}|_{t=0}= {\vv u}_0.
\end{equation}

\subsection{The case  $a=c=d=0, b>0$ with condition $ \curl\; {\vv u}=0$ }

Before going further, we state some technical lemmas and definitions.

\begin{definition}\label{def1}
For any $s\in\R$, $k\in\N$, $\epsilon\in(0, 1)$, the Banach space $X^s_{\epsilon^k}(\R^n)$
is defined as
$H^{s+k}(\R^n)$ equipped with the norm:
\beno
|u|_{X^s_{\epsilon^k}}^2=|u|_{H^s}^2+\epsilon^k|u|_{H^{s+k}}^2
\eeno
\end{definition}

\begin{lemma}\label{lem1}
For any $i,k\in\N$ and $0<i<k$,
there holds the following interpolation inequality:
\begin{equation}\label{int1}
\epsilon^{\frac{i}{2}}|f|_{H^{s+i}}\lesssim|f|_{H^s}^{1-\frac{i}{k}}\bigl(\epsilon^{\frac{k}{2}}|f|_{H^{s+k}}\bigr)^{\frac{i}{k}}\lesssim|f|_{X^s_{\epsilon^k}}.
\end{equation}
\end{lemma}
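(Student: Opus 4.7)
The plan is to prove both inequalities by working on the Fourier side and then combining them with an elementary $\epsilon$-rescaling and a Young-type bound.

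For the first inequality, I would start from the Fourier characterization
\[
|f|_{H^{s+i}}^2=\int_{\R^n}(1+|\xi|^2)^{s+i}|\hat f(\xi)|^2\,d\xi,
\]
and write $(1+|\xi|^2)^{s+i}=\bigl[(1+|\xi|^2)^{s}\bigr]^{1-i/k}\bigl[(1+|\xi|^2)^{s+k}\bigr]^{i/k}$. Setting $\theta=i/k\in(0,1)$ and applying Hölder's inequality with conjugate exponents $p=k/(k-i)$ and $q=k/i$ to the measure $|\hat f(\xi)|^2\,d\xi$ gives
\[
|f|_{H^{s+i}}^2\le\Bigl(\int(1+|\xi|^2)^{s}|\hat f|^2\,d\xi\Bigr)^{1-i/k}\Bigl(\int(1+|\xi|^2)^{s+k}|\hat f|^2\,d\xi\Bigr)^{i/k}=|f|_{H^s}^{2(1-i/k)}|f|_{H^{s+k}}^{2i/k}.
\]
Taking square roots yields the Gagliardo--Nirenberg type bound $|f|_{H^{s+i}}\lesssim|f|_{H^s}^{1-i/k}|f|_{H^{s+k}}^{i/k}$.

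The $\epsilon$-weighted version follows by the elementary identity $\epsilon^{i/2}=(\epsilon^{k/2})^{i/k}$. Indeed, multiplying the previous inequality by $\epsilon^{i/2}=\epsilon^{(k/2)(i/k)}$ distributes the $\epsilon$ factor into the second term:
\[
\epsilon^{i/2}|f|_{H^{s+i}}\lesssim|f|_{H^s}^{1-i/k}\bigl(\epsilon^{k/2}|f|_{H^{s+k}}\bigr)^{i/k},
\]
which is exactly the first inequality in \eqref{int1}.

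For the second inequality, the bound $a^{1-\theta}b^{\theta}\le\max(a,b)\le\sqrt{a^2+b^2}$ (valid for $a,b\ge0$ and $\theta\in[0,1]$, and immediate from Young's inequality) applied with $a=|f|_{H^s}$, $b=\epsilon^{k/2}|f|_{H^{s+k}}$, $\theta=i/k$ gives
\[
|f|_{H^s}^{1-i/k}\bigl(\epsilon^{k/2}|f|_{H^{s+k}}\bigr)^{i/k}\le\bigl(|f|_{H^s}^2+\epsilon^k|f|_{H^{s+k}}^2\bigr)^{1/2}=|f|_{X^s_{\epsilon^k}},
\]
by Definition \ref{def1}. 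Chaining the two estimates produces the required inequality, so there is no real obstacle here; the entire argument reduces to the standard log-convexity of Sobolev norms together with a bookkeeping of the $\epsilon$ powers.
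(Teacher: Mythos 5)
Your proof is correct: the Hölder/log-convexity argument on the Fourier side, the observation $\epsilon^{i/2}=(\epsilon^{k/2})^{i/k}$, and the bound $a^{1-\theta}b^{\theta}\le(a^2+b^2)^{1/2}$ together give exactly \eqref{int1}. The paper states Lemma \ref{lem1} without proof, and your argument is the standard one it implicitly relies on, so there is nothing to add.
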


\

\begin{theorem}\label{rmT1}
Let $b>0, a=c=d=0$. $n=1,2$, $s>1+\frac{n}{2}$ .
Assume that $\eta_0\in X^s_{\epsilon^2}(\R^n),{\vv u}_0\in X^s_{\epsilon}(\R^n)$ with $\curl{\vv u}_0=0$ when $n=2$, satisfy the (non-cavitation) condition
\begin{equation}\label{rme5}
1+\epsilon\eta_0\geq H>0,\quad H\in(0,1),
\end{equation}
Then there exists a constant $\tilde{c}_0$  such that for any $\epsilon\leq\epsilon_0=\frac{1-H}{\tilde{c}_0(| \eta_0|_{X^s_{\epsilon^2}}+|{\bf  u}_0|_{X^s_{\epsilon}})}$, there
exists $T>0$ independent of $\epsilon$, such that \eqref{abcd}-\eqref{rme2} has a unique solution $(\eta, {\vv u})^T$ with
$\eta\in C([0,T/\epsilon];X^s_{\epsilon^2}(\R^n))$ and $ {\vv u}\in C([0,T/\epsilon];X^s_{\epsilon}(\R^n))$. Moreover,
\begin{equation}\label{rme6}
\max_{t\in[0,T/\epsilon]} (|\eta|_{X^s_{\epsilon^2}}+| {\vv u}|_{X^s_{\epsilon}})\leq \tilde c (|\eta_0|_{X^s_{\epsilon^2}}+| {\vv u}_0|_{X^s_{\epsilon}}).
\end{equation}
Here  $\tilde c=C(H^{-1})$ and $\tilde c_0=C(H^{-1})$ are nondecreasing functions of their argument.
And in what follows, without confusion, we denote $\tilde c=C(H^{-1})$ a
nondecreasing constant depending on $H^{-1}$.
 Otherwise, we denote $\tilde{c}_i$ (i=0,1,2,...)  constants having the same
properties as $\tilde{c}$.
\end{theorem}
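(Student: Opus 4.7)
The plan is a two-tier energy estimate built on a shallow-water symmetrization. Using the non-cavitation hypothesis \eqref{rme5} and the identity $\tfrac12\nabla|{\vv u}|^2={\vv u}\cdot\nabla{\vv u}$ valid when $\curl{\vv u}=0$ (a property propagated by the flow, since taking the curl of the second equation yields $\partial_t\curl{\vv u}=0$), I rewrite the system as
\begin{equation*}
(1-\epsilon b\Delta)\eta_t+\nabla\cdot[(1+\epsilon\eta){\vv u}]=0,\qquad
{\vv u}_t+\nabla\eta+\epsilon({\vv u}\cdot\nabla){\vv u}=\mathbf{0},
\end{equation*}
and introduce, for $\sigma\in\{s,s+1\}$, the functionals
\begin{equation*}
\mathcal{F}_\sigma(U)^2=|\Lambda^\sigma\eta|_2^2+\epsilon b|\Lambda^\sigma\nabla\eta|_2^2+\bigl((1+\epsilon\eta)\Lambda^\sigma{\vv u}\,|\,\Lambda^\sigma{\vv u}\bigr)_2,\qquad \mathcal{E}(U)^2=\mathcal{F}_s(U)^2+\epsilon\,\mathcal{F}_{s+1}(U)^2.
\end{equation*}
Under \eqref{rme5} the functional $\mathcal{E}(U)$ is equivalent to $|\eta|_{X^s_{\epsilon^2}}+|{\vv u}|_{X^s_\epsilon}$: the $\epsilon^2|\eta|^2_{H^{s+2}}$ piece of the target norm is produced by the $\epsilon\cdot\epsilon b|\Lambda^{s+1}\nabla\eta|_2^2$ contribution, and the intermediate $\epsilon|\eta|^2_{H^{s+1}}$ term is absorbed via Lemma \ref{lem1}.

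For each tier $\sigma$, I would apply $\Lambda^\sigma$ to the system, test the mass equation against $\Lambda^\sigma\eta$ and the momentum equation against $(1+\epsilon\eta)\Lambda^\sigma{\vv u}$, and sum. The principal coupling $(\Lambda^\sigma\nabla\cdot{\vv u}\,|\,\Lambda^\sigma\eta)+(\Lambda^\sigma\nabla\eta\,|\,\Lambda^\sigma{\vv u})$ cancels by integration by parts. The shallow-water symmetrization then produces a further cancellation: the worst nonlinear contribution $\epsilon(\eta\,\Lambda^\sigma\nabla\cdot{\vv u}\,|\,\Lambda^\sigma\eta)$ coming from $\epsilon\Lambda^\sigma\nabla\cdot(\eta{\vv u})$ is exactly matched, up to sign, by the term $-\epsilon(\eta\Lambda^\sigma\eta\,|\,\Lambda^\sigma\nabla\cdot{\vv u})$ generated by the $\epsilon\eta$-piece of the coupling $((1+\epsilon\eta)\Lambda^\sigma{\vv u}\,|\,\nabla\Lambda^\sigma\eta)$. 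The surviving terms are the commutators $\epsilon([\Lambda^\sigma,\eta]\nabla\cdot{\vv u}\,|\,\Lambda^\sigma\eta)$ and $\epsilon([\Lambda^\sigma,{\vv u}]\cdot\nabla\eta\,|\,\Lambda^\sigma\eta)$, the Euler-type quantity $\epsilon((1+\epsilon\eta)\Lambda^\sigma{\vv u}\,|\,\Lambda^\sigma({\vv u}\cdot\nabla){\vv u})$ (handled by a transport-type integration by parts, available thanks to $\curl{\vv u}=0$), and the term $\tfrac\epsilon2(\eta_t\,|\,|\Lambda^\sigma{\vv u}|^2)$ arising from the time dependence of the weight (in which $\eta_t$ is controlled through the mass equation). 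Corollary \ref{dL1} together with the standard Kato--Ponce inequality bound each residual by $\tilde c\,|\nabla U|_{H^s}\mathcal{F}_\sigma(U)^2$, and summing the two tiers yields
\begin{equation*}
\frac{d}{dt}\mathcal{E}(U)\leq \tilde c\,\epsilon\,\mathcal{E}(U)^2.
\end{equation*}
A standard continuity argument then gives a lifespan of size $T/\epsilon$ with $T$ depending only on the initial norm, while the non-cavitation hypothesis is propagated because $\epsilon|\eta(t)|_\infty\leq C\epsilon|\eta(t)|_{H^s}\leq(1-H)/2$ on $[0,T/\epsilon]$ once $\epsilon\leq\epsilon_0$ as in the statement.

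Existence of a classical solution is produced by regularizing the system (for instance by Friedrichs mollification or by adding a small viscous term to each equation), propagating the above a priori bound uniformly in the regularization parameter, and passing to the limit; uniqueness follows from a Gronwall estimate applied to the shallow-water-symmetrized difference of two solutions, and strong continuity in time and continuity of the flow map are obtained via the Bona--Smith approximation procedure \cite{BS}.

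The main obstacle is closing the $\sigma=s+1$ estimate without a derivative loss on ${\vv u}$: the commutator $[\Lambda^{s+1},{\vv u}]\nabla\eta$ and the non-cancelled pieces of $\Lambda^{s+1}(\eta\nabla\cdot{\vv u})$ naively demand control of $|{\vv u}|_{H^{s+2}}$, which is not available. The $(1+\epsilon\eta)$-symmetrization cancels precisely the top-order part, and Kato--Ponce rewrites the remaining commutator as $|\nabla{\vv u}|_\infty|\Lambda^{s+1}\eta|_2+|\nabla\eta|_\infty|\Lambda^{s+1}{\vv u}|_2$; each $\Lambda^{s+1}$ factor is then paired against the $\epsilon^{1/2}$-weight available in $\mathcal{E}(U)$, and the overall $\epsilon$-weight of the tier converts the resulting $\mathcal{E}^3$ bound into the $\epsilon\mathcal{E}^3$ needed for the $O(1/\epsilon)$ lifespan.
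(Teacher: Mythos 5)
Your proposal is correct, and at its core it is the same argument as the paper's: the curl-free reduction $\tfrac12\nabla|{\vv u}|^2={\vv u}\cdot\nabla{\vv u}$ (propagated since $\partial_t\curl{\vv u}=0$), the shallow-water symmetrizer weight $1+\epsilon\eta$ on the velocity block, the cancellation of the principal coupling and of the top-order $\epsilon\eta$-terms, Kato--Ponce/Lannes commutator bounds, and the inequality $\frac{d}{dt}\mathcal{E}\lesssim\epsilon\,\mathcal{E}^2$ giving the $O(1/\epsilon)$ lifespan. The one genuine difference is the bookkeeping of the $\epsilon$-weighted derivatives: the paper stays at the single level $\Lambda^s$ and folds $1-b\epsilon\Delta$ into both the tested quantity and the symmetrizer, so its energy $E_s(U)=((1-b\epsilon\Delta)\Lambda^sU\,|\,S_U(D)\Lambda^sU)_2$ carries $(1-b\epsilon\Delta)^2$ on $\eta$ and $(1-b\epsilon\Delta)$ on ${\vv u}$ in one stroke, whereas you run two Sobolev tiers $\sigma=s$ and $\sigma=s+1$ glued by an explicit $\epsilon$-weight. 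The two energies are equivalent (both $\sim|\eta|_{X^s_{\epsilon^2}}^2+|{\vv u}|_{X^s_\epsilon}^2$), and you correctly isolate the only delicate point your version introduces: the tier-$(s+1)$ residuals must be organized with one low factor (an $L^\infty$ norm of $\nabla U$, or an $H^s$ norm) and at most two $\Lambda^{s+1}$ factors, each absorbing one $\epsilon^{1/2}$ from the energy — a crude trilinear bound with three top-order factors would only return $\epsilon^{1/2}\mathcal{E}^3$ and hence merely an $O(1/\sqrt\epsilon)$ lifespan. (Your intermediate phrase bounding residuals by $|\nabla U|_{H^s}\mathcal{F}_\sigma^2$ is too coarse for exactly this reason, but your closing paragraph supplies the correct refinement.) The remaining ingredients — regularization, Gronwall uniqueness, Bona--Smith continuity, propagation of non-cavitation via $\epsilon|\eta|_\infty\lesssim\epsilon|\eta|_{H^s}$ — are sketched at the same level of detail as the paper, which likewise presents only the a priori estimates for this case.
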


\begin{proof} The proof follows the same method used in \cite{SX}, that is to obtain energy estimates on a suitable  symmetrized  linearized system followed by an iterative scheme. Here we only give the {\it a priori estimates} on the full nonlinear system and in the two-dimensional case.
Since $c=d=0$ and $\curl {\vv u}_0=0$, we deduce from the second equation of \eqref{abcd} that
\begin{equation}\label{rme4}
\curl {\vv u}=0,\quad\text{for}\quad t>0.
\end{equation}
Then using \eqref{rme4},  \eqref{abcd} becomes
\begin{equation}\label{rme3}\left\{\begin{aligned}
&\partial_t\eta+\nabla\cdot {\vv u}+\epsilon\nabla\cdot(\eta {\vv u})-b\epsilon\Delta\partial_t\eta=0,\\
&\partial_t {\vv u}+\nabla\eta+\epsilon {\vv u}\cdot\nabla {\vv u}= {\bf 0}.
\end{aligned}\right.\end{equation}
Denoting by $U=(\eta, {\vv u})$, \eqref{rme3} is rewritten in the condensed form as
\begin{equation}\label{rme7}
(1-b\epsilon \Delta)\partial_t U+M(U,D)U= {\bf 0},
\end{equation}
where
\beno
M(U,D)=
\begin{pmatrix}
\epsilon {\vv u}\cdot\nabla&(1+\epsilon\eta)\partial_1&(1+\epsilon\eta)\partial_2\\
(1-b\epsilon\Delta)\partial_1&(1-b\epsilon\Delta)(\epsilon {\vv u}\cdot\nabla)&0\\
(1-b\epsilon\Delta)\partial_2&0&(1-b\epsilon\Delta)(\epsilon {\vv u}\cdot\nabla)
\end{pmatrix}.\eeno
The symmetrizer of $M(U,D)$ is
\beno
S_U(D)=\begin{pmatrix}
1-b\epsilon\Delta&0&0\\
0&1+\epsilon\eta&0\\
0&0&1+\epsilon\eta
\end{pmatrix}.
\eeno

We define the energy functional associated to \eqref{rme7} as
\begin{equation}\label{rme8}\begin{aligned}
E_s(U)&=((1-b\epsilon\Delta)\Lambda^sU\,|\,S_U(D)\Lambda^sU)_2\\
&=((1-b\epsilon\Delta)\Lambda^s\eta\,|\,(1-b\epsilon\Delta)\Lambda^s\eta)_2+((1-b\epsilon\Delta)\Lambda^s {\vv u}\,|\,(1+\epsilon\eta)\Lambda^s {\vv u})_2.
\end{aligned}\end{equation}

Assume that
\begin{equation}\label{rme9}
1+\epsilon\eta\geq H>0,\quad \epsilon|\eta|_{W^{1,\infty}}\leq \kappa_{H}\quad\text{for}\quad t\in[0,\bar{T}]
\end{equation}
with $\kappa_{H}$ sufficiently small,
and
\begin{equation}\label{rme10}
\max_{0\leq t\leq\bar{T}}E_s(U)\leq C_0,
\end{equation}
for some constant $C_0$.
The assumptions \eqref{rme9} and \eqref{rme10} hold provided that \eqref{rme5} holds and $\epsilon\leq\epsilon_0\ll1$ (one can refer to \cite{SX}).

Under the conditions \eqref{rme9}, it is easy to check that
\begin{equation}\label{rme11}
E_s(U)\sim|\eta|_{X_{\epsilon^2}^s}^2+| {\vv u}|_{X_{\epsilon}^s}^2.
\end{equation}
The proof of \eqref{rme11} is similar to that in \cite{SX} and we omit it.

A standard energy estimate leads to
\begin{equation}\label{rme12}\begin{aligned}
&\frac{d}{dt}E_s(U)=2((1-b\epsilon\Delta)\Lambda^s\partial_tU\,|\,S_U(D)\Lambda^sU)_2\\
&\quad+((1-b\epsilon\Delta)\Lambda^sU\,|\,\partial_tS_U(D)\Lambda^sU)_2
-b\epsilon\bigl([S_U(D),\Delta]\Lambda^sU\,|\,\Lambda^sU_t\bigr)_2\\
&=-2(\Lambda^s\bigl(M(U,D)U\bigr)\,|\,S_U(D)\Lambda^sU)_2+((1-b\epsilon\Delta)\Lambda^s {\vv u}\,|\,\epsilon\partial_t\eta\Lambda^s {\vv u})_2\\
&\quad-b\epsilon^2\bigl([\eta,\Delta]\Lambda^s{\vv u}\,|\,\Lambda^s\partial_t{\vv u}\bigr)_2\\
&\underset{\text{def}}= I+II+III.
\end{aligned}\end{equation}

{\bf Estimate for $I$.} Firstly, one gets
\beno
I=-2([\Lambda^s,M(U,D)]U\,|\,S_U(D)\Lambda^sU)_2-2(M(U,D)\Lambda^sU\,|\,S_U(D)\Lambda^sU)_2\underset{\text{def}}= I_1+I_2.
\eeno
For $I_1$, one has
\beno\begin{aligned}
I_1&=-2([\Lambda^s,\epsilon {\vv u}]\cdot\nabla\eta+[\Lambda^s,\epsilon\eta]\nabla\cdot {\vv u}\,|\,(1-b\epsilon\Delta)\Lambda^s\eta)_2\\
&\quad-2((1-b\epsilon\Delta)\bigl([\Lambda^s,\epsilon {\vv u}]\cdot\nabla {\vv u}\bigr)\,|\,(1+\epsilon\eta)\Lambda^s {\vv u})_2\\
&\underset{\text{def}}= I_{11}+I_{12}.
\end{aligned}\eeno
Thanks to Lemma \ref{L1}, it is easy to get that for $s>2$,
\begin{equation}\label{rme13}\begin{aligned}
|I_{11}|&\lesssim\bigl(|[\Lambda^s,\epsilon {\vv u}]\cdot\nabla\eta|_2+|[\Lambda^s,\epsilon\eta]\nabla\cdot {\vv u}|_2\bigr)|(1-b\epsilon\Delta)\Lambda^s\eta|_2\\
&\lesssim\epsilon| {\vv u}|_{H^s}|\eta|_{H^s}(|\eta|_{H^s}+\epsilon|\eta|_{H^{s+2}})\lesssim\epsilon| {\vv u}|_{X^s_{\epsilon}}|\eta|_{X^s_{\epsilon^2}}^2.
\end{aligned}\end{equation}
For $I_{12}$, integrating by parts, there holds
\beno
I_{12}=-2([\Lambda^s,\epsilon {\vv u}]\cdot\nabla {\vv u}\,|\,(1+\epsilon\eta)\Lambda^s {\vv u})_2-2b\epsilon(\nabla\bigl([\Lambda^s,\epsilon {\vv u}]\cdot\nabla {\vv u}\bigr)\,|\,\nabla \bigl((1+\epsilon\eta)\Lambda^s {\vv u}\bigr))_2
\eeno
which along with \eqref{rme9} and Lemma \ref{L1} implies that
\begin{equation}\label{rme14}\begin{aligned}
|I_{12}|&\lesssim(1+\epsilon|\eta|_\infty)\epsilon| {\vv u}|_{H^s}^3+\epsilon^2|{\bf  u}|_{X^s_\epsilon}^2\bigl((1+\epsilon|\eta|_\infty)|\nabla\Lambda^s {\vv u}|_2+\epsilon|\nabla\eta|_\infty|\Lambda^s {\vv u}|_2\bigr)\\
&\lesssim\epsilon| {\vv u}|_{X^s_\epsilon}^3.
\end{aligned}\end{equation}

Then we get by \eqref{rme13} and \eqref{rme14} that
\begin{equation}\label{rme15}
|I_1|\lesssim\epsilon| {\vv u}|_{X^s_\epsilon}(|\eta|_{X^s_{\epsilon^2}}^2+| {\vv u}|_{X^s_\epsilon}^2).
\end{equation}

For $I_2$, due to the expressions of $M(U,D)$ and $S_U(D)$, we get that
\beno\begin{aligned}
I_2=&-2(\epsilon {\vv u}\cdot\nabla\Lambda^s\eta\,|\,(1-b\epsilon\Delta)\Lambda^s\eta)_2-2((1-b\epsilon\Delta)(\epsilon {\vv u}\cdot\nabla\Lambda^s {\vv u})\,|\,(1+\epsilon\eta)\Lambda^s {\vv u})_2\\
&-2\{((1+\epsilon\eta)\nabla\cdot\Lambda^s {\vv u}\,|\,(1-b\epsilon\Delta)\Lambda^s\eta)_2+((1-b\epsilon\Delta)\nabla\Lambda^s\eta\,|\,(1+\epsilon\eta)\Lambda^s {\vv u})_2\}\\
\underset{\text{def}}=&I_{21}+I_{22}+I_{23}.
\end{aligned}\eeno
Integrating by parts, one gets that
\beno\begin{aligned}
&I_{21}=\epsilon(\nabla\cdot {\vv u}\Lambda^s\eta\,|\,\Lambda^s\eta)_2+b\epsilon^2(\nabla\cdot {\vv u}\nabla\Lambda^s\eta\,|\,\nabla\Lambda^s\eta)_2-2b\epsilon^2\sum_{i=1}^2(\partial_i{\vv u}\cdot\nabla\Lambda^s\eta\,|\,\partial_i\Lambda^s\eta)_2\\
&I_{22}=\epsilon(\nabla\cdot\bigl((1+\epsilon\eta) {\vv u}\bigr)\Lambda^s {\vv u}\,|\,\Lambda^s{\bf  u})_2
+2b\epsilon^3(( {\vv u}\cdot\nabla)\Lambda^s {\vv u}\,|\,\sum_{i=1}^2\partial_i(\partial_i\eta\Lambda^s {\vv u}))_2\\
&\qquad-2b\epsilon^2\sum_{i=1}^2(\partial_i {\vv u}\cdot\nabla\Lambda^s {\vv u}\,|\,(1+\epsilon\eta)\partial_i\Lambda^s {\vv u})_2
+b\epsilon^2(\nabla\cdot\bigl((1+\epsilon\eta) {\vv u}\bigr)\nabla\Lambda^s {\vv u}\,|\,\nabla\Lambda^s {\vv u})_2,\\
&I_{23}=2\epsilon(\nabla\eta\cdot\Lambda^s{\vv u}\,|\,(1-b\epsilon\Delta)\Lambda^s\eta)_2.
\end{aligned}\eeno
Then thanks to \eqref{rme9}, \eqref{rme10}, \eqref{rme11} and \eqref{int1}, there holds
\begin{equation}\label{rme16}
|I_2|\lesssim\epsilon| {\vv u}|_{X^s_\epsilon}\bigl(|\eta|_{X^s_{\epsilon^2}}^2+| {\vv u}|_{X^s_\epsilon}^2\bigr).
\end{equation}

Thanks to \eqref{rme15} and \eqref{rme16}, we obtain
\begin{equation}\label{rme17}
|I|\lesssim\epsilon| {\vv u}|_{X^s_\epsilon}\bigl(|\eta|_{X^s_{\epsilon^2}}^2+| {\vv u}|_{X^s_\epsilon}^2\bigr).
\end{equation}

{\bf Estimate for $II$.} Integrating by parts, we have
\beno
II=\epsilon(\Lambda^s {\vv u}\,|\,\partial_t\eta\Lambda^s {\vv u})_2+b\epsilon^2(\nabla\Lambda^s {\vv u}\,|\,\nabla(\partial_t\eta\Lambda^s {\vv u}))_2
\eeno
which along with \eqref{int1} implies that
\begin{equation}\label{rme18}
|II|\lesssim\epsilon|\partial_t\eta|_{X^{s-1}_{\epsilon^2}}| {\vv u}|_{X^s_\epsilon}^2.
\end{equation}

{\bf Estimate for $III$.} Thanks to Lemma \ref{L1}, we get that
\begin{equation}\label{rme18a}
|III|\lesssim\epsilon^2|\nabla\eta|_{H^{s-1}}|{\vv u}|_{H^{s+1}}|\Lambda^s\partial_t{\vv u}|_2
\lesssim\epsilon|\eta|_{X^s_{\epsilon^2}}|{\vv u}|_{X^s_\epsilon}|\partial_t{\vv u}|_{X^{s-1}_\epsilon}.
\end{equation}

Combining \eqref{rme12}, \eqref{rme17}, \eqref{rme18} and \eqref{rme18a}, we obtain that
\begin{equation}\label{rme19}
\frac{d}{dt}E_s(U)\lesssim\epsilon\bigl(|\eta|_{X^s_{\epsilon^2}}+| {\vv u}|_{X^s_\epsilon}\bigr)\bigl(|\eta|_{X^s_{\epsilon^2}}^2+| {\vv u}|_{X^s_\epsilon}^2
+|\partial_t\eta |_{X^{s-1}_{\epsilon^2}}^2+|\partial_t{\vv u}|_{X^{s-1}_\epsilon}^2\bigr).
\end{equation}

Thanks to the equations of \eqref{rme3}, one gets by using \eqref{rme9}  and \eqref{rme10} that
\begin{equation}\label{rme20}\begin{aligned}
|\partial_t\eta|_{X^{s-1}_{\epsilon^2}}+|\partial_t\vv u|_{X^{s-1}_{\epsilon}}
&\lesssim(1+\epsilon|\eta|_\infty)| {\vv u}|_{X^s_\epsilon}+\epsilon| {\vv u}|_\infty|\eta|_{H^s}
+|\eta|_{X^s_{\epsilon^2}}+| {\vv u}|_{X^s_\epsilon}^2\\
&\lesssim|\eta|_{X^s_{\epsilon^2}}+| {\vv u}|_{X^s_\epsilon},
\end{aligned}\end{equation}
which along with \eqref{rme19} implies that
\begin{equation}\label{rme21}
\frac{d}{dt}E_s(U)\lesssim\epsilon\bigl(|\eta|_{X^s_{\epsilon^2}}+|{\bf  u}|_{X^s_\epsilon}\bigr)\bigl(|\eta|_{X^s_{\epsilon^2}}^2+| {\vv u}|_{X^s_\epsilon}^2\bigr).
\end{equation}
Then due to \eqref{rme11}, there holds
\beno
\frac{d}{dt}\bigl(E_s(U)\bigr)^{\frac{1}{2}}\leq C_1\epsilon E_s(U),
\eeno
which gives rise to
\begin{equation}\label{rme22a}
\bigl(E_s(U)\bigr)^{\frac{1}{2}}\leq\frac{\bigl(E_s(U_0)\bigr)^{\frac{1}{2}}}{1-C_1\epsilon t\bigl(E_s(U_0)\bigr)^{\frac{1}{2}}}\leq 2\bigl(E_s(U_0)\bigr)^{\frac{1}{2}},
\end{equation}
for any $t\leq\frac{1}{2C_1\bigl(E_s(U_0)\bigr)^{\frac{1}{2}}}\frac{T}{\epsilon}$ with $T=\frac{1}{2C_1\bigl(E_s(U_0)\bigr)^{\frac{1}{2}}}$.
%\begin{equation}\label{rme22}\begin{aligned}
%\max_{0<t<\bar{T}}E_s(U)&\leq E_s(U_0)+C_1\epsilon\bar{T}\max_{0<t<\bar{T}}E_s(U)^{\frac{3}{2}}\\
%&\leq E_s(U_0)+C_1C_0^{\frac{1}{2}}\epsilon\bar{T}\max_{0<t<\bar{T}}E_s(U).
%\end{aligned}\end{equation}
%Taking $C_1C_0^{\frac{1}{2}}\epsilon\bar{T}=\frac{1}{2}$, i.e.,
%\beno
%\bar{T}=\frac{(C_1C_0^{\frac{1}{2}})^{-1}}{\epsilon},
%\eeno
%we deduce from \eqref{rme22} that
%\beno
%\max_{0<t<\bar{T}}E_s(U)\leq 2E_s(U_0).
%\eeno
%Setting
%\beno
%T=(2\sqrt{ 2E_s(U_0)}C_1)^{-1},
%\eeno
%Then
%\beno
%\max_{0<t<T/\epsilon}E_s(U)\leq 2E_s(U_0),
%\eeno
%which along with \eqref{rme11} implies \eqref{rme6}.
This completes the proof of Theorem \ref{rmT1}.
\end{proof}

\subsection{Case $d>0, a=b=c=0$. }
\begin{theorem}\label{rmT2}
Let $d>0, a=b=c=0$. $n=1,2$, $s>1+\frac{n}{2}$ .
Assume that $\eta_0\in X^s_{\epsilon}(\R^n),\vv u_0\in X^s_{\epsilon^2}(\R^n)$  satisfy the (non-cavitation) condition
\begin{equation}\label{rme24}
1+\epsilon\eta_0\geq H>0,\quad H\in(0,1),
\end{equation}
Then there exists a constant $\tilde{c}_0$  such that for any $\epsilon\leq\epsilon_0=\frac{1-H}{\tilde{c}_0(| \eta_0|_{X^s_{\epsilon}}+|\vv u_0|_{X^s_{\epsilon^2}})}$, there
exists $T>0$ independent of $\epsilon$, such that \eqref{abcd}-\eqref{rme2} has a unique solution $(\eta,\vv u)^T$ with
$\eta\in C([0,T/\epsilon];X^s_{\epsilon}(\R^n))$ and $\vv u\in C([0,T/\epsilon];X^s_{\epsilon^2}(\R^n))$. Moreover,
\begin{equation}\label{rme25}
\max_{t\in[0,T/\epsilon]} (|\eta|_{X^s_{\epsilon}}+|\vv u|_{X^s_{\epsilon^2}})\leq \tilde c (|\eta_0|_{X^s_{\epsilon}}+|\vv u_0|_{X^s_{\epsilon^2}}).
\end{equation}
\end{theorem}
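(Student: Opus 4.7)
The plan is to repeat the symmetrization argument of Theorem \ref{rmT1}, with the roles of $\eta$ and $\vv u$ interchanged to reflect the fact that the Laplacian regularization now acts on $\partial_t\vv u$ instead of $\partial_t\eta$. I would first apply $(1-d\epsilon\Delta)$ to the first equation of \eqref{abcd} in order to recast the system in the symmetric condensed form
\[
(1-d\epsilon\Delta)\partial_t U + N(U,D)U = 0, \qquad U = (\eta,\vv u)^T,
\]
with
\[
N(U,D) = \begin{pmatrix} (1-d\epsilon\Delta)(\epsilon\vv u\cdot\nabla) & (1-d\epsilon\Delta)(1+\epsilon\eta)\partial_1 & (1-d\epsilon\Delta)(1+\epsilon\eta)\partial_2 \\ \partial_1 & \epsilon u_1\partial_1 & \epsilon u_2\partial_1 \\ \partial_2 & \epsilon u_1\partial_2 & \epsilon u_2\partial_2 \end{pmatrix}.
\]
A principal-symbol analysis, dual to the one in \cite{SX}, identifies the symmetrizer $S_U(D) = \mathrm{diag}(1+\epsilon\eta,\,1-d\epsilon\Delta,\,1-d\epsilon\Delta)$, and I would take as energy functional $E_s(U) = ((1-d\epsilon\Delta)\Lambda^s U\,|\,S_U(D)\Lambda^s U)_2$. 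An integration by parts on the $\eta$-block gives, under the non-cavitation condition \eqref{rme24},
\[
E_s(U)\sim \int (1+\epsilon\eta)\bigl(|\Lambda^s\eta|^2+d\epsilon|\nabla\Lambda^s\eta|^2\bigr)\,dx + |(1-d\epsilon\Delta)\Lambda^s\vv u|_2^2\sim |\eta|_{X^s_\epsilon}^2+|\vv u|_{X^s_{\epsilon^2}}^2.
\]

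Next I would differentiate $E_s$ in time and use the equation to rewrite the top-order contributions, exactly along the lines of \eqref{rme12}. As in Theorem \ref{rmT1}, the linear couplings cancel: at order one, $-2(\Lambda^s\nabla\cdot\vv u\,|\,\Lambda^s\eta)_2-2(\Lambda^s\nabla\eta\,|\,\Lambda^s\vv u)_2=0$ by integration by parts, and the surviving third-order pieces $2d\epsilon(\Lambda^s\nabla\eta\,|\,\Delta\Lambda^s\vv u)_2$ and $-2d\epsilon(\nabla\Lambda^s\eta\,|\,\nabla\Lambda^s\nabla\cdot\vv u)_2$ also cancel after one more integration by parts (using the self-adjointness of $1-d\epsilon\Delta$ and the commutation of $\nabla$ with $\Delta$). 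What remains is a sum of nonlinear terms. Those arising from $\tfrac{\epsilon}{2}\nabla|\vv u|^2$ paired with $(1-d\epsilon\Delta)\Lambda^s\vv u$, together with the $\partial_tS_U$ and $[S_U,\Delta]$ pieces, are controlled exactly as $I_{12}$, $II$ and $III$ in the proof of Theorem \ref{rmT1}: a combination of commutator estimates (Lemma \ref{L1}, Corollary \ref{dL1}), Kato--Ponce products, and the interpolation $\epsilon|\vv u|_{H^{s+1}}^2\lesssim|\vv u|_{X^s_{\epsilon^2}}^2$ of Lemma \ref{lem1} yields the bound $\lesssim \epsilon E_s^{3/2}$; for $\partial_t S_U$ one uses in addition $|\partial_t\eta|_{X^{s-1}_\epsilon}\lesssim E_s^{1/2}$, read off the first equation of \eqref{abcd}.

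The main obstacle is to control the terms coming from $\epsilon(1-d\epsilon\Delta)\nabla\cdot(\eta\vv u)$ paired with $(1+\epsilon\eta)\Lambda^s\eta$. A naive expansion of $(1-d\epsilon\Delta)$ produces, at the highest order, pieces of the form $\epsilon^2(\vv u\cdot\nabla\Lambda^s\eta\,|\,\Delta\Lambda^s\eta)_2$ and $\epsilon^2(\eta\nabla\Lambda^s\nabla\cdot\vv u\,|\,\nabla\Lambda^s\eta)_2$, the first of which would a priori require the norm $|\eta|_{H^{s+2}}$, which is \emph{not} controlled by $E_s$ (only $\epsilon^{1/2}|\eta|_{H^{s+1}}$ is). The remedy, following \cite{SX}, is to integrate by parts carefully so that every occurrence of $\Delta$ is absorbed into a transport-type expression in $|\nabla\Lambda^s\eta|^2$. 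Typically,
\[
2d\epsilon^2(\vv u\cdot\nabla\Lambda^s\eta\,|\,\Delta\Lambda^s\eta)_2 = -d\epsilon^2\!\int(\nabla\cdot\vv u)|\nabla\Lambda^s\eta|^2\,dx - 2d\epsilon^2\!\sum_{i,j}\!\int(\partial_i u_j)(\partial_j\Lambda^s\eta)(\partial_i\Lambda^s\eta)\,dx,
\]
and each right-hand side is $\lesssim \epsilon^2|\nabla\vv u|_\infty|\eta|_{H^{s+1}}^2\lesssim \epsilon|\vv u|_{H^s}|\eta|_{X^s_\epsilon}^2\lesssim \epsilon E_s^{3/2}$ thanks to the Sobolev embedding $H^s\hookrightarrow W^{1,\infty}$ (valid since $s>1+n/2$) and Lemma \ref{lem1}. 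The remaining high-derivative pieces can be reduced, by the same procedure, either to the previous form or to terms where the two extra derivatives fall on $\Lambda^s\vv u$, so that $|\vv u|_{H^{s+2}}\lesssim \epsilon^{-1}\sqrt{E_s}$ combines with the $\epsilon^2$ prefactor to give again $\epsilon E_s^{3/2}$.

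Putting everything together yields $\tfrac{d}{dt}E_s(U)\leq C_1\epsilon\, E_s(U)^{3/2}$, whence the ODE comparison of \eqref{rme22a} gives $E_s(U(t))\leq 4E_s(U_0)$ on $[0,T/\epsilon]$ with $T=1/(2C_1 E_s(U_0)^{1/2})$ independent of $\epsilon$. Propagation of the non-cavitation condition and of a uniform bound on $\epsilon|\eta|_{W^{1,\infty}}$ on the same interval follows the standard argument of \cite{SX} provided $\epsilon\leq\epsilon_0$. Uniqueness is obtained by a Gronwall estimate on the difference of two solutions, and strong continuity of $t\mapsto U(t)$ into $X^s_\epsilon\times X^s_{\epsilon^2}$ together with continuity of the flow map result from the Bona--Smith approximation procedure \cite{BS}.
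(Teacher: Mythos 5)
Your overall architecture (condensed form, energy $E_s(U)=((1-d\epsilon\Delta)\Lambda^sU\,|\,S_U(D)\Lambda^sU)_2$, commutator and interpolation estimates, the ODE $\frac{d}{dt}E_s\lesssim\epsilon E_s^{3/2}$, Bona--Smith) is the paper's, but your symmetrizer is wrong, and this is not a cosmetic issue. The system is not symmetric under exchanging $\eta$ and $\vv u$ --- the variable coefficient $(1+\epsilon\eta)$ always sits in front of $\nabla\cdot\vv u$ in the first equation --- so the ``dual'' guess $S_U(D)=\mathrm{diag}(1+\epsilon\eta,\,1-d\epsilon\Delta,\,1-d\epsilon\Delta)$ does not symmetrize $N(U,D)$ to the required accuracy. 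Concretely, pair $(1-d\epsilon\Delta)\bigl((1+\epsilon\eta)\nabla\cdot\Lambda^s\vv u\bigr)$ with $(1+\epsilon\eta)\Lambda^s\eta$ and $\nabla\Lambda^s\eta$ with $(1-d\epsilon\Delta)\Lambda^s\vv u$ and integrate by parts: the constant-coefficient parts cancel, but the $\epsilon\eta$ part of the weight in the \emph{first} slot leaves the residue
\begin{equation*}
-\epsilon\bigl(\Lambda^s\vv u\,\big|\,(1-d\epsilon\Delta)\nabla(\eta\,\Lambda^s\eta)\bigr)_2 .
\end{equation*}
However you redistribute the derivatives here, one factor carries $s+1$ derivatives ($\eta$ or $\vv u$) with only a prefactor $\epsilon$; since $X^s_{\epsilon}$ and $X^s_{\epsilon^2}$ control only $\epsilon^{1/2}|\eta|_{H^{s+1}}$ and $\epsilon^{1/2}|\vv u|_{H^{s+1}}$, the best bound is $O(\epsilon^{1/2})E_s^{3/2}$, not $O(\epsilon)E_s^{3/2}$. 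The same half-power loss occurs in the $\vv u$-block: the cross pairings $\epsilon\bigl((1-d\epsilon\Delta)(u_j\partial_i\Lambda^su_j)\,|\,\Lambda^su_i\bigr)_2$ leave residues of the type $\epsilon\int\Lambda^su_j\,u_j\,\nabla\cdot\Lambda^s\vv u$, again $O(\epsilon^{1/2})E_s^{3/2}$. Your differential inequality would then read $\frac{d}{dt}E_s\lesssim\sqrt{\epsilon}\,E_s^{3/2}$ and yield only a lifespan $O(1/\sqrt{\epsilon})$.

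The paper's symmetrizer is non-diagonal and weighted the other way: $S_{11}=1$, $S_{1,j+1}=S_{j+1,1}=\epsilon u_j$, $S_{j+1,j+1}=(1+\epsilon\eta)(1-d\epsilon\Delta)$, plus the corrections $d\epsilon^3u_iu_j\Delta$ in the $\vv u$-block. Keeping the $(1+\epsilon\eta)$ weight on the $\vv u$-block (as in Theorem \ref{rmT1} --- this part should \emph{not} be interchanged) matches the $(1+\epsilon\eta)$ already present in $(1-d\epsilon\Delta)\bigl((1+\epsilon\eta)\partial_j\bigr)$ from the first equation, so the corresponding residue becomes $-\epsilon\bigl((1-d\epsilon\Delta)(\partial_j\eta\,\Lambda^su_j)\,|\,\Lambda^s\eta\bigr)_2$, where the extra derivative lands on the low-order factor $\partial_j\eta$ and the $O(\epsilon)E_s^{3/2}$ bound follows; the off-diagonal entries $\epsilon u_j$ play the analogous role for the quadratic transport terms $\epsilon u_j\partial_iu_j$. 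Without this symmetrizer the central estimate, and hence the $O(1/\epsilon)$ existence time, is not reached.
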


\begin{remark}
As was previously mentioned, one gets global well-posedness in the one-dimensional case (\cite{A, Sc}) in a different functional setting though but the method of proof in \cite{A,Sc} does not seem to adapt to the two-dimensional case since it relies strongly on properties of the {\it one-dimensional}  hyperbolic Saint-Venant (shallow water) system.
\end{remark}
\begin{proof}
The proof also follows the same method used in \cite{SX}. Here we only give the {\it a priori estimates}. For $d>0, a=b=c=0$, we rewrite \eqref{abcd} in two-dimensional space as follows:
\begin{equation}\label{rme26}\left\{\begin{aligned}
&\partial_t\eta+\nabla\cdot\vv u+\epsilon\nabla\cdot(\eta\vv u)=0,\\
&\partial_t\vv u+\nabla\eta+\frac{\epsilon}{2}\nabla(|\vv u|^2)-d\epsilon\Delta\partial_t\vv u=\vv 0,
\end{aligned}\right.\end{equation}
Denoting by $U=(\eta,\vv u)$, \eqref{rme26} is equivalent to the following condensed system
\begin{equation}\label{rme27}
(1-d\epsilon\Delta)\partial_tU+M(U,D)U=0,
\end{equation}
where
\beno
M(U,D)=\begin{pmatrix}
\epsilon(1-d\epsilon\Delta)(\vv u\cdot\nabla)&(1-d\epsilon\Delta)\bigl((1+\epsilon\eta)\partial_1\bigr)&(1-d\epsilon\Delta)\bigl((1+\epsilon\eta)\partial_2\bigr)\\
\partial_1&\epsilon u_1\partial_1&\epsilon u_2\partial_1\\
\partial_2&\epsilon u_1\partial_2&\epsilon u_2\partial_2
\end{pmatrix}.\eeno
The symmetrizer $S_U(D)$ for $M(U,D)$ is defined by
\beno\begin{pmatrix}
1&\epsilon u_1&\epsilon u_2\\
\epsilon u_1&(1+\epsilon\eta)(1-d\epsilon\Delta)&0\\
\epsilon u_2&0&(1+\epsilon\eta)(1-d\epsilon\Delta)
\end{pmatrix}
+\begin{pmatrix}
0&0&0\\
0&d\epsilon^3u_1u_1\Delta&d\epsilon^3u_1u_2\Delta\\
0&d\epsilon^3u_1u_2\Delta&d\epsilon^3u_2u_2\Delta
\end{pmatrix}.\eeno

We define the energy functional associated to \eqref{rme27} as
\begin{equation}\label{rme28}
E_s(U)=((1-d\epsilon\Delta)\Lambda^sU\,|\,S_U(D)\Lambda^sU)_2
\end{equation}
Assume that
\begin{equation}\label{rme29}
1+\epsilon\eta\geq H>0,\quad \epsilon|\vv u|_{W^{1,\infty}}\leq \kappa_{H}\quad\text{for}\quad t\in[0,\bar{T}]
\end{equation}
with $\kappa_{H}$ sufficiently small, and
\begin{equation}\label{rme30}
\max_{0\leq t\leq\bar{T}}E_s(U)\leq C_0,
\end{equation}
for some constants $C_0$.
The assumptions \eqref{rme29} and \eqref{rme30} also hold provided that \eqref{rme24} holds and $\epsilon\leq\epsilon_0\ll1$ (one can refer to \cite{SX}).
Under the assumption \eqref{rme29}, it is not difficult to check that
\begin{equation}\label{rme31}
E_s(U)\sim|\eta|_{X^s_\epsilon}^2+|\vv u|_{X^s_{\epsilon^2}}^2.
\end{equation}

As usual, a standard computation shows that
\begin{equation}\label{rme32}\begin{aligned}
&\frac{d}{dt}E_s(U)=-(\Lambda^s\bigl(M(U,D)U\bigr)\,|\,\bigl(S_U(D)+S_U(D)^*\bigr)\Lambda^sU)_2\\
&\qquad-d\epsilon([S_U(D)^*,\Delta]\Lambda^sU\,|\,\Lambda^s\partial_tU)_2
+((1-d\epsilon\Delta)\Lambda^sU\,|\,\partial_tS_U(D)\Lambda^sU)_2\\
& \underset{\text{def}} = I+II+III,
\end{aligned}\end{equation}
where $S_U(D)^*$ is the adjoint matrix of $S_U(D)$.

{\bf Estimate for $I$.} One has that
\beno\begin{aligned}
I=&-([\Lambda^s,M(U,D)]U\,|\,\bigl(S_U(D)+S_U(D)^*\bigr)\Lambda^sU)_2\\
&-(\Lambda^sU\,|\,\bigl(S_U(D)+S_U(D)^*\bigr)\bigl(M(U,D)\Lambda^sU\bigr))_2\\
\underset{\text{def}} =&I_1+I_2
\end{aligned}\eeno

{\it Estimate for $I_1$.} Using the expressions of $M(U,D)$ and $S_U(D)$, one gets that
\beno\begin{aligned}
&([\Lambda^s,M(U,D)]U\,|\,S_U(D)\Lambda^sU)_2=([\Lambda^s,\epsilon(1-d\epsilon\Delta)(\vv u\cdot\nabla)]\eta\,|\,\Lambda^s\eta+\epsilon\vv u\cdot\Lambda^s\vv u)_2\\
&\quad+([\Lambda^s,\epsilon(1-d\epsilon\Delta)(\eta\nabla)]\cdot\vv u\,|\,\Lambda^s\eta+\epsilon\vv u\cdot\Lambda^s\vv u)_2\\
&\quad+\sum_{i,j=1}^2([\Lambda^s,\epsilon u_j]\partial_iu_j\,|\,\epsilon u_i\Lambda^s\eta+(1+\epsilon\eta)(1-d\epsilon\Delta)\Lambda^s u_i
+d\epsilon^3u_i\vv u\cdot\Delta\Lambda^s\vv u)_2\\
&\underset{\text{def}} = I_{11}+I_{12}+I_{13}.
\end{aligned}\eeno
Integrating by parts, there hold
\beno\begin{aligned}
&I_{11}=\epsilon([\Lambda^s,\vv u]\cdot\nabla\eta\,|\,\Lambda^s\eta+\epsilon\vv u\cdot\Lambda^s\vv u)_2+d\epsilon^2(\nabla\bigl([\Lambda^s,\vv u]\cdot\nabla\eta\bigr)\,|\,\nabla\bigl(\Lambda^s\eta+\epsilon\vv u\cdot\Lambda^s\vv u\bigr))_2\\
&I_{12}=\epsilon([\Lambda^s,\eta]\nabla\cdot\vv u\,|\,\Lambda^s\eta+\epsilon\vv u\cdot\Lambda^s\vv u)_2
+d\epsilon^2(\nabla\bigl([\Lambda^s,\eta]\nabla\cdot\vv u\bigr)\,|\,\nabla\bigl(\Lambda^s\eta+\epsilon\vv u\cdot\Lambda^s\vv u\bigr))_2,
\end{aligned}\eeno
which along with \eqref{rme29}, \eqref{rme30}, \eqref{b10} and \eqref{int1} imply that
\begin{equation}\label{rme33}\begin{aligned}
&|I_{11}|+|I_{12}|\lesssim\epsilon|\vv u|_{H^s}|\eta|_{H^s}\bigl(|\eta|_{H^s}+\epsilon|\vv u|_{H^s}^2\bigr)\\
&\qquad
+\epsilon^2\bigl(|\nabla\vv u|_{H^{s-1}}|\nabla\eta|_{H^s}+|\nabla\vv u|_{H^s}|\nabla\eta|_{H^{s-1}}\bigr)\bigl(|\eta|_{H^{s+1}}+\epsilon|\vv u|_{H^s}|\vv u|_{H^{s+1}}\bigr)\\
&\lesssim\epsilon|\vv u|_{X^s_{\epsilon^2}}\bigl(|\eta|_{X^s_\epsilon}^2+|\vv u|_{X^s_{\epsilon^2}}^2\bigr).
\end{aligned}\end{equation}
Thanks to \eqref{b10}, \eqref{int1}, \eqref{rme30} and \eqref{rme31}, there holds
\beno
|I_{13}|\lesssim\epsilon|\vv u|_{H^s}^2\bigl(\epsilon|\vv u|_{H^s}|\eta|_{H^s}+|\vv u|_{X^s_{\epsilon^2}}+d\epsilon^3|\vv u|_{X^s_{\epsilon^2}}^3\bigr)
\lesssim\epsilon|\vv u|_{X^s_{\epsilon^2}}^2\bigl(|\eta|_{X^s_\epsilon}+|\vv u|_{X^s_{\epsilon^2}}\bigr),
\eeno
which along with \eqref{rme33} shows that
\beno
|([\Lambda^s,M(U,D)]U\,|\,S_U(D)\Lambda^sU)_2|\lesssim\epsilon|\vv u|_{X^s_{\epsilon^2}}\bigl(|\eta|_{X^s_\epsilon}^2+|\vv u|_{X^s_{\epsilon^2}}^2\bigr).
\eeno
The same estimate holds for term $([\Lambda^s,M(U,D)]U\,|\,S_U(D)^*\Lambda^sU)_2$. Then we obtain that
\begin{equation}\label{rme34}
|I_1|\lesssim\epsilon|\vv u|_{X^s_{\epsilon^2}}\bigl(|\eta|_{X^s_\epsilon}^2+|\vv u|_{X^s_{\epsilon^2}}^2\bigr).
\end{equation}

{\it Estimate for $I_2$.} For $I_2$, we first calculate $S_U(D)(M(U,D))=A(U,D)=(a_{ij})$ as follows
\beno\begin{aligned}
&a_{11}=\epsilon(1-d\epsilon\Delta)(\vv u\cdot\nabla)+\epsilon\vv u\cdot\nabla=2\epsilon\vv u\cdot\nabla-d\epsilon^2\Delta(\vv u\cdot\nabla),\\
&a_{12}=(1-d\epsilon\Delta)((1+\epsilon\eta)\partial_1)+\epsilon^2u_1\vv u\cdot\nabla,\\
&a_{13}=(1-d\epsilon\Delta)((1+\epsilon\eta)\partial_2)+\epsilon^2u_2\vv u\cdot\nabla,\\
&a_{21}=(1+\epsilon\eta)(1-d\epsilon\Delta)\partial_1+\epsilon^2u_1\vv u\cdot-d\epsilon^3u_1[\Delta,\vv u]\cdot\nabla,\\
&a_{22}=\epsilon u_1(1-d\epsilon\Delta)((1+\epsilon\eta)\partial_1)+\epsilon(1+\epsilon\eta)(1-d\epsilon\Delta)(u_1\partial_1)+d\epsilon^4u_1\vv u\cdot\Delta(u_1\nabla),\\
&a_{23}=\epsilon u_1(1-d\epsilon\Delta)((1+\epsilon\eta)\partial_2)+\epsilon(1+\epsilon\eta)(1-d\epsilon\Delta)(u_2\partial_1)+d\epsilon^4u_1\vv u\cdot\Delta(u_2\nabla),\\
&a_{31}=(1+\epsilon\eta)(1-d\epsilon\Delta)\partial_2+\epsilon^2u_2(1-d\epsilon\Delta)(\vv u\cdot\nabla)+d\epsilon^3u_2\vv u\cdot\nabla\Delta,\\
&a_{32}=\epsilon u_2(1-d\epsilon\Delta)((1+\epsilon\eta)\partial_1)+\epsilon(1+\epsilon\eta)(1-d\epsilon\Delta)(u_1\partial_2)+d\epsilon^4u_2\vv u\cdot\Delta(u_1\nabla),\\
&a_{33}=\epsilon u_2(1-d\epsilon\Delta)((1+\epsilon\eta)\partial_2)+\epsilon(1+\epsilon\eta)(1-d\epsilon\Delta)(u_2\partial_2)+d\epsilon^4u_2\vv u\cdot\Delta(u_2\nabla).
\end{aligned}\eeno

Now , we calculate $(S_U(D)(M(U,D)\Lambda^sU)\,|\,\Lambda^sU)_2=(A(U,D)\Lambda^sU\,|\,\Lambda^sU)_2$.

For $a_{11}$, one has
\beno\begin{aligned}
&(a_{11}\Lambda^s\eta\,|\,\Lambda^s\eta)_2=2\epsilon(\vv u\cdot\nabla\Lambda^s\eta\,|\,\Lambda^s\eta)_2-d\epsilon^2(\Delta(\vv u\cdot\nabla\Lambda^s\eta)\,|\,\Lambda^s\eta)_2\\
&=-\epsilon(\nabla\cdot\vv u\Lambda^s\eta\,|\,\Lambda^s\eta)_2
-\frac{1}{2}d\epsilon^2(\nabla\cdot\vv u\nabla\Lambda^s\eta\,|\,\nabla\Lambda^s\eta)_2
+d\epsilon^2\sum_{i=1}^2(\partial_i\vv u\cdot\nabla\Lambda^s\eta\,|\,\partial_i\Lambda^s\eta)_2,
\end{aligned}\eeno
which shows that
\begin{equation}\label{rme35}
|(a_{11}\Lambda^s\eta\,|\,\Lambda^s\eta)_2|\lesssim\epsilon|\vv u|_{H^s}|\eta|_{X^s_\epsilon}^2.
\end{equation}

For $a_{22}$, one gets
\beno\begin{aligned}
&(a_{22}\Lambda^su_1\,|\,\Lambda^su_1)_2=\epsilon\{(u_1(1-d\epsilon\Delta)\bigl((1+\epsilon\eta)\partial_1\Lambda^su_1\bigr)\,|\,\Lambda^su_1)_2\\
&\quad
+((1+\epsilon\eta)(1-d\epsilon\Delta)(u_1\partial_1\Lambda^su_1)\,|\,\Lambda^su_1)_2\}+d\epsilon^4(u_1\vv u\cdot\Delta(u_1\nabla\Lambda^su_1)\,|\,\Lambda^su_1)_2\\
&=-\epsilon(\Lambda^su_1\,|\,\epsilon\partial_1\eta(1-d\epsilon\Delta)(u_1\Lambda^su_1)
+(1+\epsilon\eta)(1-d\epsilon\Delta)(\partial_1u_1\Lambda^su_1))_2\\
&\quad-d\epsilon^4\sum_{i=1}^2(\partial_i(u_1\nabla\Lambda^su_1)\,|\,\partial_i(u_1\vv u\Lambda^su_1))_2
\end{aligned}\eeno
which along with \eqref{rme29},\eqref{rme30} and \eqref{int1} gives rise to
\begin{equation}\label{rme36}
|(a_{22}\Lambda^su_1\,|\,\Lambda^su_1)_2|\lesssim\epsilon|\vv u|_{H^s}|\vv u|_{X^s_{\epsilon^2}}^2.
\end{equation}
The same estimate holds for term $(a_{33}\Lambda^su_2\,|\,\Lambda^su_2)_2$.

For $a_{12}$ and $a_{21}$, one calculates that
\beno\begin{aligned}
&(a_{12}\Lambda^su_1\,|\,\Lambda^s\eta)_2+(a_{21}\Lambda^s\eta\,|\,\Lambda^su_1)_2\\
&=\{((1-d\epsilon\Delta)\bigl((1+\epsilon\eta)\partial_1\Lambda^su_1\bigr)\,|\,\Lambda^s\eta)_2
+((1+\epsilon\eta)(1-d\epsilon\Delta)\partial_1\Lambda^s\eta\,|\,\Lambda^su_1)_2\}\\
&\quad+\{\epsilon^2(u_1\vv u\cdot\nabla\Lambda^su_1\,|\,\Lambda^s\eta)_2+\epsilon^2(u_1\vv u\cdot\nabla\Lambda^s\eta\,|\,\Lambda^su_1)_2
-d\epsilon^3(u_1[\Delta,\vv u]\cdot\nabla\Lambda^s\eta\,|\,\Lambda^su_1)_2\}\\
&=-\epsilon((1-d\epsilon\Delta)(\partial_1\eta\Lambda^su_1)\,|\,\Lambda^s\eta)_2-\epsilon^2(\nabla\cdot(u_1\vv u)\Lambda^su_1\,|\,\Lambda^s\eta)_2\\
&\quad+\epsilon^3(\nabla\Lambda^s\eta\,|\,[\Delta,\vv u](u_1\Lambda^su_1))_2,
\end{aligned}\eeno
which along with \eqref{rme29}, \eqref{rme30} and \eqref{int1} implies
\begin{equation}\label{rme37}\begin{aligned}
&|(a_{12}\Lambda^su_1\,|\,\Lambda^s\eta)_2+(a_{21}\Lambda^s\eta\,|\,\Lambda^su_1)_2|\\
&\lesssim\epsilon|\eta|_{X^s_\epsilon}^2|\vv u|_{X^s_{\epsilon^2}}+\epsilon^2|\vv u|_{X^s_{\epsilon^2}}^3|\eta|_{X^s_\epsilon}\\
&\lesssim\epsilon|\vv u|_{X^s_{\epsilon^2}}
\bigl(|\eta|_{X^s_\epsilon}^2+|\vv u|_{X^s_{\epsilon^2}}^2\bigr).
\end{aligned}\end{equation}
The same estimate holds for $(a_{13}\Lambda^su_2\,|\,\Lambda^s\eta)_2+(a_{31}\Lambda^s\eta\,|\,\Lambda^su_2)_2$.

At last, for $a_{23}$ and $a_{32}$, one estimates that
\beno\begin{aligned}
&(a_{23}\Lambda^su_2\,|\,\Lambda^su_1)_2+(a_{32}\Lambda^su_1\,|\,\Lambda^su_2)_2\\
&=\epsilon\{(u_1(1-d\epsilon\Delta)\bigl((1+\epsilon\eta)\partial_2\Lambda^su_2\bigr)\,|\,\Lambda^su_1)_2
+((1+\epsilon\eta)(1-d\epsilon\Delta)(u_1\partial_2\Lambda^su_1)\,|\,\Lambda^su_2)_2\}\\
&\quad+\epsilon\{((1+\epsilon\eta)(1-d\epsilon\Delta)(u_2\partial_1\Lambda^su_2)\,|\,\Lambda^su_1)_2
+(u_2(1-d\epsilon\Delta)\bigl((1+\epsilon\eta)\partial_1\Lambda^su_1\bigr)\,|\,\Lambda^su_2)_2\}\\
&\quad+d\epsilon^4\{(u_1\vv u\cdot\Delta(u_2\nabla\Lambda^su_2)\,|\,\Lambda^su_1)_2+(u_2\vv u\cdot\Delta(u_1\nabla\Lambda^su_1)\,|\,\Lambda^su_2)_2\}\\
&=-\epsilon(\epsilon\partial_2\eta(1-d\epsilon\Delta)(u_1\Lambda^su_1)+(1+\epsilon\eta)(1-d\epsilon\Delta)(\partial_2u_1\Lambda^su_1)\,|\,\Lambda^su_2)_2\\
&\quad-\epsilon(\epsilon\partial_1\eta(1-d\epsilon\Delta)(u_2\Lambda^su_2)+(1+\epsilon\eta)(1-d\epsilon\Delta)(\partial_1u_2\Lambda^su_2)\,|\,\Lambda^su_1)_2\\
&\quad-d\epsilon^4\{(\nabla u_2\cdot\Delta(u_1\vv u\Lambda^su_1)+u_2\Delta\bigl(\nabla\cdot(u_1\vv u)\Lambda^su_1\bigr)\,|\,\Lambda^su_2)_2\\
&\quad-(2u_2\sum_{i=1}^2\partial_i\vv u\cdot\partial_i(u_1\nabla\Lambda^su_1)+u_2\Delta\vv u\cdot u_1\nabla\Lambda^su_1\,|\,\Lambda^su_2)_2\},
\end{aligned}\eeno
which together with \eqref{rme29}, \eqref{rme30} and \eqref{int1} leads to
\begin{equation}\label{rme38}\begin{aligned}
&|(a_{23}\Lambda^su_2\,|\,\Lambda^su_1)_2+(a_{32}\Lambda^su_1\,|\,\Lambda^su_2)_2|\\
&\lesssim\epsilon(1+\epsilon|\eta|_{X^s_\epsilon})|\vv u|_{X^s_{\epsilon^2}}^3+\epsilon^3|\vv u|_{X^s_{\epsilon^2}}^5\lesssim\epsilon|\vv u|_{X^s_{\epsilon^2}}^3.
\end{aligned}\end{equation}

Thanks to \eqref{rme35}, \eqref{rme36}, \eqref{rme37} and \eqref{rme38}, we obtain that
\beno
|(S_U(D)(M(U,D)\Lambda^sU)\,|\,\Lambda^sU)_2|\lesssim\epsilon\bigl(|\eta|_{X^s_\epsilon}+|\vv u|_{X^s_{\epsilon^2}}\bigr)^3,
\eeno
provided that there hold \eqref{rme29} and \eqref{rme30}. The same estimate holds for $(S_U(D)^*(M(U,D)\Lambda^sU)\,|\,\Lambda^sU)_2$. Then we obtain
that
\begin{equation}\label{rme39}
|I_2|\lesssim\epsilon\bigl(|\eta|_{X^s_\epsilon}+|\vv u|_{X^s_{\epsilon^2}}\bigr)^3.
\end{equation}

Due to \eqref{rme34} and \eqref{rme39}, we get that
\begin{equation}\label{rme40}
|I|\lesssim\epsilon\bigl(|\eta|_{X^s_\epsilon}+|\vv u|_{X^s_{\epsilon^2}}\bigr)^3.
\end{equation}

{\bf Estimate for $II$.} Using the expression of $S_U(D)$, one obtains that
\beno\begin{aligned}
&II=-d\epsilon\Bigl(\epsilon([\vv u,\Delta]\cdot\Lambda^s\vv u\,|\,\Lambda^s\partial_t\eta)_2+\epsilon([\vv u,\Delta]\Lambda^s\eta\,|\,\Lambda^s\partial_t\vv u)_2\\
&\quad+\epsilon((1-d\epsilon\Delta)\bigl([\eta,\Delta]\Lambda^s\vv u\bigr)\,|\,\Lambda^s\partial_t\vv u)_2+d\epsilon^3\sum_{i,j=1}^2(\Delta\bigl([u_iu_j,\Delta]\Lambda^su_j\bigr)\,|\,\Lambda^s\partial_tu_i)_2\Bigr).
\end{aligned}\eeno
Along the same line as previous work, by virtue of \eqref{b10}, \eqref{int1}, \eqref{rme29}, \eqref{rme30} and integrating by parts, we finally get that
\begin{equation}\label{rme41}
|II|\lesssim\epsilon\bigl(|\eta|_{X^s_\epsilon}^2+|\vv u|_{X^s_{\epsilon^2}}^2\bigr)\bigl(|\partial_t\eta|_{X^{s-1}_\epsilon}+|\partial_t\vv u|_{X^{s-1}_{\epsilon^2}}\bigr).
\end{equation}

{\bf Estimate for $III$.}  Using the expression of $S_U(D)$ again, one gets that
\beno\begin{aligned}
&III=\epsilon((1-d\epsilon\Delta)\Lambda^s\eta\,|\,\partial_t\vv u\cdot\Lambda^s\vv u)_2
+\epsilon((1-d\epsilon\Delta)\Lambda^s\vv u\,|\,\partial_t\vv u\Lambda^s\eta)_2\\
&\qquad
+\epsilon((1-d\epsilon\Delta)\Lambda^s\vv u\,|\,\partial_t\eta(1-d\epsilon\Delta)\Lambda^s\vv u)_2
+d\epsilon^3\sum_{j=1}^2((1-d\epsilon\Delta)\Lambda^su_j\,|\,\partial_t(u_j\vv u)\cdot\Delta\Lambda^s\vv u)_2.
\end{aligned}\eeno
Note that
\beno
((1-d\epsilon\Delta)\Lambda^s\eta\,|\,\partial_t\vv u\cdot\Lambda^s\vv u)_2=(\Lambda^s\eta\,|\,\partial_t\vv u\cdot\Lambda^s\vv u)_2
+d\epsilon(\nabla\Lambda^s\eta\,|\,\nabla(\partial_t\vv u\cdot\Lambda^s\vv u))_2
\eeno
Then \eqref{rme29}, \eqref{rme30} and \eqref{int1} leads to
\begin{equation}\label{rme42}
|III|\lesssim\epsilon\bigl(|\eta|_{X^s_\epsilon}^2+|\vv u|_{X^s_{\epsilon^2}}^2\bigr)\bigl(|\partial_t\eta|_{X^{s-1}_{\epsilon}}+|\partial_t\vv u|_{X^{s-1}_{\epsilon^2}}\bigr)
\end{equation}

Combining \eqref{rme32}, \eqref{rme40}, \eqref{rme41} and \eqref{rme42}, we obtain that
\begin{equation}\label{rme43}
\frac{d}{dt}E_s(U)\lesssim\epsilon\bigl(|\eta|_{X^s_\epsilon}+|\vv u|_{X^s_{\epsilon^2}}\bigr)
\bigl(|\eta|_{X^s_\epsilon}^2+|\vv u|_{X^s_{\epsilon^2}}^2+|\partial_t\eta|_{X^{s-1}_\epsilon}^2+|\partial_t\vv u|_{X^{s-1}_{\epsilon^2}}^2\bigr).
\end{equation}

Thanks to \eqref{rme26}, we get by using \eqref{rme29} and \eqref{rme30} that
\begin{equation}\label{rme44}
|\partial_t\eta|_{X^{s-1}_\epsilon}+|\partial_t\vv u|_{X^{s-1}_{\epsilon^2}}\lesssim|\eta|_{X^s_\epsilon}+|\vv u|_{X^s_{\epsilon^2}},
\end{equation}
which along with \eqref{rme43} implies
\begin{equation}\label{rme45}
\frac{d}{dt}E_s(U)\lesssim\epsilon\bigl(|\eta|_{X^s_\epsilon}+|\vv u|_{X^s_{\epsilon^2}}\bigr)
\bigl(|\eta|_{X^s_\epsilon}^2+|\vv u|_{X^s_{\epsilon^2}}^2\bigr).
\end{equation}

Then due to \eqref{rme31}, there holds
\beno
\frac{d}{dt}\bigl(E_s(U)\bigr)^{\frac{1}{2}}\leq C_1\epsilon E_s(U).
\eeno
Similarly as the proof to Theorem \ref{rmT1}, there exists $T=\frac{1}{2C_1\bigl(E_s(U_0)\bigr)^{\frac{1}{2}}}$ such that \eqref{rme25} holds.
%\begin{equation}\label{rme46}
%\max_{0<t<\bar{T}}E_s(U)\leq E_s(U_0)+C_1\epsilon\bar{T}\max_{0<t<\bar{T}}E_s(U)^{\frac{3}{2}}
%\end{equation}
%Taking $C_1C_0^{\frac{1}{2}}\epsilon\bar{T}=\frac{1}{2}$, i.e.,
%\beno
%\bar{T}=\frac{(C_1C_0^{\frac{1}{2}})^{-1}}{2\epsilon},
%\eeno
%we deduce from \eqref{rme46} that
%\beno
%\max_{0<t<\bar{T}}E_s(U)\leq 2E_s(U_0).
%\eeno
%Setting
%\beno
%T=(2\sqrt{ 2E_s(U_0)}C_1)^{-1},
%\eeno
%Then
%\beno
%\max_{0<t<T/\epsilon}E_s(U)\leq 2E_s(U_0),
%\eeno
%which along with \eqref{rme31} implies \eqref{rme25}.
This completes the proof of Theorem \ref{rmT2}.
\end{proof}
%\subsection{The case $a=c=b=0, d>0$}

We now turn to the "Schr\"{o}dinger like" Boussinesq systems.

\subsection{The case  $b=d=c=0, a<0$}

This case can be treated by following the lines developed in \cite{SX}. For the sake of completeness we provide some details now.
\begin{theorem}\label{rmT3}
Let $b=c=d=0, a=-1$, $n=1,2$, $s>2+\frac{n}{2}$ .
Assume that $\eta_0\in H^s(\R^n),\vv u_0\in X^s_{\epsilon}(\R^n)$  satisfy the (non-cavitation) condition
\begin{equation}\label{arme24}
1+\epsilon\eta_0\geq H>0,\quad H\in(0,1),
\end{equation}
Then there exists a constant $\tilde{c}_0$  such that for any $\epsilon\leq\epsilon_0=\frac{1-H}{\tilde{c}_0(| \eta_0|_{H^s}+|\vv u_0|_{X^s_{\epsilon}})}$, there
exists $T>0$ independent of $\epsilon$, such that \eqref{abcd}-\eqref{rme2} has a unique solution $(\eta,\vv u)^T$ with
$\eta\in C([0,T/\epsilon];H^s(\R^n))$ and $\vv u\in C([0,T/\epsilon];X^s_{\epsilon}(\R^n))$. Moreover,
\begin{equation}\label{arme25}
\max_{t\in[0,T/\epsilon]} (|\eta|_{H^s}+|\vv u|_{X^s_{\epsilon}})\leq \tilde c (|\eta_0|_{H^s}+|\vv u_0|_{X^s_{\epsilon}}).
\end{equation}
\end{theorem}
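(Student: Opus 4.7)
The plan is to follow the same symmetrized energy scheme as in Theorems \ref{rmT1} and \ref{rmT2} and in \cite{SX} (case (11)), adapted to the fact that the dispersion $-\epsilon\nabla\cdot\Delta\vv u$ now sits in the $\eta$-equation rather than as a BBM regularization of a time derivative. I would first recast \eqref{abcd} with $a=-1$, $b=c=d=0$ as a quasilinear system $\partial_tU+M(U,D)U=\vv 0$ with $U=(\eta,\vv u)^T$ and
\begin{displaymath}
M(U,D)=\begin{pmatrix}\epsilon\vv u\cdot\nabla & (1+\epsilon\eta)\partial_1-\epsilon\partial_1\Delta & (1+\epsilon\eta)\partial_2-\epsilon\partial_2\Delta\\
\partial_1 & \epsilon u_1\partial_1 & \epsilon u_2\partial_1\\
\partial_2 & \epsilon u_1\partial_2 & \epsilon u_2\partial_2\end{pmatrix},
\end{displaymath}
where $\frac12\partial_j|\vv u|^2=u_i\partial_ju_i$ has been expanded. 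The dispersion relation $\pm i|\xi|(1+\epsilon|\xi|^2)^{1/2}$ together with the shallow-water weight $1+\epsilon\eta$ dictate the choice of symmetrizer
\begin{displaymath}
S_U(D)=\begin{pmatrix}1&0&0\\ 0&(1+\epsilon\eta)(1-\epsilon\Delta)&0\\ 0&0&(1+\epsilon\eta)(1-\epsilon\Delta)\end{pmatrix},
\end{displaymath}
and of the energy $E_s(U)=(\Lambda^sU\,|\,S_U(D)\Lambda^sU)_2$, which, under the non-cavitation condition \eqref{arme24}, is equivalent to $|\eta|_{H^s}^2+|\vv u|_{X^s_\epsilon}^2$ modulo an $O(\epsilon)$ relative correction coming from the non-commutation of $1+\epsilon\eta$ with $\Delta$.

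The next step is to differentiate $E_s$ in time and split the result as in \eqref{rme12}, \eqref{rme32} into a principal bilinear piece $-2(\Lambda^s(MU)\,|\,S_U(D)\Lambda^sU)_2$, a coefficient-variation term $((\partial_tS_U(D))\Lambda^sU\,|\,\Lambda^sU)_2$, and dispersive commutators. In the principal piece the top-order symbols cancel: the cross-term $(1+\epsilon\eta)\Lambda^s\nabla\cdot\vv u$ coming from the $\eta$-equation is absorbed by $(1+\epsilon\eta)\Lambda^s\nabla\eta$ coming from the $\vv u$-equation because both carry the \emph{same} weight $1+\epsilon\eta$, and the dispersive cross-terms $-2\epsilon(\Lambda^s\nabla\cdot\Delta\vv u\,|\,\Lambda^s\eta)_2$ and $2\epsilon((1-\epsilon\Delta)\Lambda^s\nabla\eta\,|\,\Lambda^s\vv u)_2$ cancel by self-adjointness of $\Delta$ modulo $O(\epsilon^2)$ commutators between $\nabla\eta$ and the weight. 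All the remaining pieces (Kato-Ponce commutators $[\Lambda^s,\eta]$ and $[\Lambda^s,\vv u]$ controlled by Lemma \ref{L1} and Corollary \ref{dL1}, and the genuinely nonlinear contribution of $\epsilon\nabla|\vv u|^2$) carry an explicit $\epsilon$ factor; the time derivative appearing in $\partial_tS_U(D)$ is recovered from the first equation via $|\partial_t\eta|_{H^{s-1}}\lesssim|\vv u|_{X^s_\epsilon}+\epsilon|\eta|_{H^s}|\vv u|_{X^s_\epsilon}$, after absorbing the bad dispersive contribution using $\epsilon|\vv u|_{H^{s+1}}\le\sqrt\epsilon\,|\vv u|_{X^s_\epsilon}$ together with the interpolation \eqref{int1}. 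Collecting all the bounds should yield
\begin{displaymath}
\frac{d}{dt}E_s(U)\lesssim \epsilon\bigl(|\eta|_{H^s}+|\vv u|_{X^s_\epsilon}\bigr)\bigl(|\eta|_{H^s}^2+|\vv u|_{X^s_\epsilon}^2\bigr)\lesssim \epsilon\,E_s(U)^{3/2},
\end{displaymath}
and Gronwall delivers the lifespan $T/\epsilon$ together with \eqref{arme25}. The a priori estimate is then turned into an actual solution by a Friedrichs-type parabolic regularization (adding $-\delta\Delta\eta_t$, $-\delta\Delta\vv u_t$ and letting $\delta\to 0$), or by solving the associated linearized symmetric system by iteration as in \cite{SX}; uniqueness is obtained by a Gronwall argument at the lower level $H^{s-1}\times X^{s-1}_\epsilon$, and strong continuity in time and continuity of the flow map follow from the Bona-Smith trick.

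The main technical obstacle I expect is the control of the \emph{dispersive commutators} produced by the factor $(1-\epsilon\Delta)$ inside $S_U(D)$: terms such as $\epsilon([\eta,\Delta]\Lambda^s\vv u\,|\,\Lambda^s\partial_t\vv u)_2$ or $\epsilon^2(\Lambda^s(\eta\nabla\cdot\vv u)\,|\,\Delta\Lambda^s\eta)_2$ involve two extra spatial derivatives on either $\vv u$ or $\eta$ which are not directly controlled by the $H^s\times X^s_\epsilon$ energy. The key point, already used in \eqref{rme17}-\eqref{rme19} and \eqref{rme40}-\eqref{rme42}, is to systematically trade each such extra derivative for a factor of $\sqrt\epsilon$ via the interpolation $\epsilon|\vv u|_{H^{s+1}}\le\sqrt\epsilon\,|\vv u|_{X^s_\epsilon}$ and then use the $\epsilon$ prefactor from the nonlinearity to rebuild an $O(\epsilon)$ right-hand side, which is what eventually yields the $1/\epsilon$ lifespan. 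It is precisely this derivative loss on $\vv u$ that forces the regularity threshold to be raised to $s>2+\frac{n}{2}$, one derivative more than in Theorems \ref{rmT1} and \ref{rmT2}.
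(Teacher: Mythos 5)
Your overall strategy (recast the system as $\partial_tU+M(U,D)U=0$, symmetrize, form the $\epsilon$-weighted energy $E_s(U)=(\Lambda^sU\,|\,S_U(D)\Lambda^sU)_2$, show $\frac{d}{dt}E_s\lesssim\epsilon E_s^{3/2}$ and conclude by Gronwall, then regularize and use Bona--Smith) is exactly the paper's, but the symmetrizer you propose is not the right one and the key estimate does not close with it. The first defect is that you take the $(2,2)$ and $(3,3)$ entries to be the \emph{product} $(1+\epsilon\eta)(1-\epsilon\Delta)$, whereas the paper uses the \emph{sum} $1+\epsilon\eta-\epsilon\Delta$. With the product, the dispersive cross terms do not cancel: the $\eta$-equation contributes $\epsilon(\nabla\cdot\Delta\Lambda^s\vv u\,|\,\Lambda^s\eta)_2=-\epsilon(\Delta\Lambda^s\vv u\,|\,\nabla\Lambda^s\eta)_2$, while the symmetrized $\vv u$-equation contributes $+\epsilon((1+\epsilon\eta)\Delta\Lambda^s\vv u\,|\,\nabla\Lambda^s\eta)_2$, leaving the residual $\epsilon^2(\eta\,\Delta\Lambda^s\vv u\,|\,\nabla\Lambda^s\eta)_2$. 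This is not an ``$O(\epsilon^2)$ commutator between $\nabla\eta$ and the weight'': however you distribute the derivatives, it requires either $|\eta|_{H^{s+1}}$ or $\sqrt\epsilon\,|\vv u|_{H^{s+2}}$, and neither is controlled by the energy, since in this case $\eta$ carries no $\epsilon$-weighted extra derivative (unlike in Theorems \ref{rmT1} and \ref{rmT2}) and $\vv u$ carries only one. With the additive weight the two dispersive cross terms both appear with the bare coefficient $-\epsilon\Delta$, with no $(1+\epsilon\eta)$ factor in front, and cancel identically after integration by parts; this is precisely why the paper writes $1+\epsilon\eta-\epsilon\Delta$ and not $(1+\epsilon\eta)(1-\epsilon\Delta)$.

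The second defect is that your symmetrizer is diagonal, while the paper's carries the Saint-Venant off-diagonal entries $\epsilon u_1,\epsilon u_2$. These are not decorative: the nonlinearity in the $\vv u$-equation is the gradient $\frac{\epsilon}{2}\nabla|\vv u|^2$, i.e. the block $\epsilon u_j\partial_i$, not the transport $\epsilon\vv u\cdot\nabla$ (the statement of Theorem \ref{rmT3} does not assume $\curl\vv u_0=0$, so you cannot rewrite it as a transport term). With a purely diagonal weight the off-diagonal part of this block leaves, after integration by parts, a term of the form $\epsilon\bigl((1+\epsilon\eta)(u_1\partial_2-u_2\partial_1)\Lambda^su_1\,|\,\Lambda^su_2\bigr)_2\lesssim\epsilon|\vv u|_{H^s}^2|\vv u|_{H^{s+1}}\lesssim\sqrt\epsilon\,E_s^{3/2}$, which only yields a lifespan of order $1/\sqrt\epsilon$. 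In the paper this block is paired, through the entries $\epsilon u_j$, with the term $(1+\epsilon\eta-\epsilon\Delta)\nabla\cdot\Lambda^s\vv u$ of the $\eta$-equation tested against $\epsilon\vv u\cdot\Lambda^s\vv u$, and the two combine into an exact divergence plus genuinely $O(\epsilon)$ remainders (the terms $I_{23}$, $I_{24}$ in \eqref{aa4}). The remaining ingredients of your outline (Kato--Ponce commutators via Lemma \ref{L1}, the interpolation $\epsilon|\vv u|_{H^{s+1}}\le\sqrt\epsilon\,|\vv u|_{X^s_\epsilon}$, the recovery of $\partial_t\eta$ and $\partial_t\vv u$ from the equations, the approximation and uniqueness arguments) do match the paper, but without correcting the symmetrizer on these two points the inequality $\frac{d}{dt}E_s\lesssim\epsilon E_s^{3/2}$ is not reached.
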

\begin{proof}
We only sketch the proof of the two-dimensional case. For $b=c=d=0, a=-1$, we firstly rewrite the two-dimensional version of \eqref{abcd} in the following condensed system
\begin{equation}\label{aa1}
\partial_tU+M(U,D)U=0,
\end{equation}
where $U=(\eta,\vv u)^T$, and
\beno
M(U,D)=\begin{pmatrix}
\epsilon\vv u\cdot\nabla&(1+\epsilon\eta-\epsilon\Delta)\partial_1&(1+\epsilon\eta-\epsilon\Delta)\partial_2\\
\partial_1&\epsilon u_1\partial_1&\epsilon u_2\partial_1\\
\partial_2&\epsilon u_1\partial_2&\epsilon u_2\partial_2
\end{pmatrix}.\eeno
The symmetrizer $S_U(D)$ for $M(U,D)$ is defined by
\beno
S_U(D)=\begin{pmatrix}
1&\epsilon u_1&\epsilon u_2\\
\epsilon u_1&1+\epsilon\eta-\epsilon\Delta&0\\
\epsilon u_2&0&1+\epsilon\eta-\epsilon\Delta.
\end{pmatrix}
\eeno

We define the energy functional associated to \eqref{aa1} as
\begin{equation}\label{arme28}
E_s(U)=(\Lambda^sU\,|\,S_U(D)\Lambda^sU)_2
\end{equation}
Assume that
\begin{equation}\label{arme29}
1+\epsilon\eta\geq H>0,\quad \epsilon|\eta|_{W^{1,\infty}}+\epsilon|\vv u|_{W^{1,\infty}}\leq \kappa_{H}\quad\text{for}\quad t\in[0,\bar{T}]
\end{equation}
with $\kappa_{H}$ sufficiently small, and
\begin{equation}\label{arme30}
\max_{0\leq t\leq\bar{T}}E_s(U)\leq C_0,
\end{equation}
for some constants $C_0$.
The assumptions \eqref{arme29} and \eqref{arme30} also hold provided that \eqref{arme24} holds and $\epsilon\leq\epsilon_0\ll1$ (one can refer to \cite{SX}).
Under the assumption \eqref{arme29}, it is not difficult to check that
\begin{equation}\label{arme31}
E_s(U)\sim|\eta|_{H^s}^2+|\vv u|_{X^s_{\epsilon}}^2.
\end{equation}

As usual, we get by a standard energy estimate that
\begin{equation}\label{arme12}\begin{aligned}
&\frac{d}{dt}E_s(U)=2(\Lambda^s\partial_tU\,|\,S_U(D)\Lambda^sU)_2+(\Lambda^sU\,|\,\partial_tS_U(D)\Lambda^sU)_2\\
&=-2(\Lambda^s\bigl(M(U,D)U\bigr)\,|\,S_U(D)\Lambda^sU)_2+(\Lambda^sU\,|\,\partial_tS_U(D)\Lambda^sU)_2\\
&\underset{\text{def}}= I+II.
\end{aligned}\end{equation}

{\bf Estimate for $II$.} Using the expression of $S_U(D)$ yields that
\beno
II=2\epsilon(\Lambda^s\eta\,|\,\partial_t\vv u\cdot\Lambda^s\vv u)_2
+\epsilon(\Lambda^s\vv u\,|\,\partial_t\eta\Lambda^s\vv u)_2,
\eeno
which implies that for $s>3$,
\begin{equation}\label{aa2}
|II|\lesssim\epsilon|\partial_t\vv u|_{H^{s-1}}|\eta|_{H^s}|\vv u|_{H^s}+\epsilon|\partial_t\eta|_{H^{s-2}}|\vv u|_{H^s}^2.
\end{equation}

{\bf Estimate for $I$.} We first have that
\beno
I=-2([\Lambda^s,M(U,D)]U\,|\,S_U(D)\Lambda^sU)_2-2(M(U,D)\Lambda^sU\,|\,S_U(D)\Lambda^sU)_2\underset{\text{def}}= I_1+I_2.
\eeno

{\it Estimate for $I_1$.} For $I_1$, we get that
\beno\begin{aligned}
I_1&=-2\epsilon([\Lambda^s,\vv u]\cdot\nabla\eta+[\Lambda^s,\eta]\nabla\cdot\vv u\,|\,\Lambda^s\eta+\epsilon\vv u\cdot\Lambda^s\vv u)_2\\
&\quad-2\epsilon\sum_{j=1}^2([\Lambda^s,\vv u]\cdot\partial_j\vv u\,|\,u_j\Lambda^s\eta+(1+\epsilon\eta-\epsilon\Delta)\Lambda^s u_j)_2\\
&\underset{\text{def}}= I_{11}+I_{12}.
\end{aligned}\eeno
Thanks to Lemma \ref{L1} and \eqref{arme29}, it is easy to get that for $s>2$,
\begin{equation}\label{arme13}\begin{aligned}
|I_{11}|&\lesssim\epsilon\bigl(|[\Lambda^s,\vv u]\cdot\nabla\eta|_2+|[\Lambda^s,\eta]\nabla\cdot\vv u|_2\bigr)\bigl(|\Lambda^s\eta|_2+|\epsilon\vv u\cdot\Lambda^s\vv u|_2\bigr)\\
&\lesssim\epsilon| {\vv u}|_{H^s}|\eta|_{H^s}(|\eta|_{H^s}+\epsilon|\vv u|_{H^s}^2)\\
&\lesssim\epsilon| {\vv u}|_{H^s}(|\eta|_{H^s}^2+|\vv u|_{H^s}^2).
\end{aligned}\end{equation}
For $I_{12}$, integrating by parts, there holds
\beno\begin{aligned}
I_{12}=&-2\epsilon\sum_{j=1}^2([\Lambda^s,\vv u]\cdot\partial_j\vv u\,|\,\epsilon u_j\Lambda^s\eta+(1+\epsilon\eta)\Lambda^s u_j)_2\\
&\quad-2\epsilon^2\sum_{j=1}^2(\nabla\bigl([\Lambda^s,\vv u]\cdot\partial_j\vv u\bigr)\,|\,\nabla\Lambda^s u_j)_2
\end{aligned}\eeno
which along with \eqref{arme29} and Lemma \ref{L1} implies that
\begin{equation}\label{arme14}\begin{aligned}
|I_{12}|&\lesssim\epsilon|\vv u|_{H^s}^2\bigl(\epsilon|\vv u|_\infty|\eta|_{H^s}+(1+\epsilon|\eta|_\infty)|\vv u|_{H^s}\bigr)
+\epsilon^2|\vv u|_{H^s}|\vv u|_{H^{s+1}}^2\\
&\lesssim\epsilon|{\vv u}|_{H^s}|\vv u|_{X^s_\epsilon}^2.
\end{aligned}\end{equation}

Thanks to \eqref{arme13} and \eqref{arme14}, we get that
\begin{equation}\label{aa3}
|I_1|\lesssim\epsilon|{\vv u}|_{H^s}\bigl(|\eta|_{H^s}^2+|\vv u|_{X^s_\epsilon}^2\bigr).
\end{equation}

{\it Estimate for $I_2$.} For $I_2$, using the expressions of $M(U,D)$ and $S_U(D)$, we obtain that
\begin{equation}\label{aa4}\begin{aligned}
I_2=&-4\epsilon(\vv u\cdot\nabla\Lambda^s\eta\,|\,\Lambda^s\eta)_2-2\{((1+\epsilon\eta-\epsilon\Delta)\nabla\cdot\Lambda^s\vv u\,|\,\Lambda^s\eta)_2\\
&\quad+(\nabla\Lambda^s\eta\,|\,(1+\epsilon\eta-\epsilon\Delta)\Lambda^s\vv u)_2\}\\
&-2\epsilon^2\{(\vv u\cdot\nabla\Lambda^s\eta\,|\,\vv u\cdot\Lambda^s\vv u)_2+\sum_{i=1}^2(\vv u\cdot\partial_i\Lambda^s\vv u\,|\,u_i\Lambda^s\eta)_2\}\\
&-2\epsilon\{((1+\epsilon\eta-\epsilon\Delta)\nabla\cdot\Lambda^s\vv u\,|\,\vv u\cdot\Lambda^s\vv u)_2\\
&\quad+\sum_{i=1}^2(\vv u\cdot\partial_i\Lambda^s\vv u\,|\,(1+\epsilon\eta-\epsilon\Delta)\Lambda^su_i)_2\}\\
&\underset{\text{def}}=I_{21}+I_{22}+I_{23}+I_{24}.
\end{aligned}\end{equation}
Integrating by parts, we get that
\beno\begin{aligned}
&I_{21}=2\epsilon(\nabla\cdot\vv u\Lambda^s\eta\,|\,\Lambda^s\eta)_2,\quad I_{22}=2\epsilon(\nabla\eta\cdot\Lambda^s\vv u\,|\,\Lambda^s\eta)_2,\\
&
I_{23}=2\epsilon^2(\nabla\cdot\vv u\Lambda^s\eta\,|\,\vv u\cdot\Lambda^s\vv u)_2+2\epsilon^2(\Lambda^s\vv u\,|\,(\vv u\cdot\nabla)\vv u\Lambda^s\eta)_2,\\
&I_{24}=2\epsilon^2(\nabla\eta\cdot\Lambda^s\vv u\,|\,\vv u\cdot\Lambda^s\vv u)_2
+2\epsilon\sum_{i=1}^2((1+\epsilon\eta)\Lambda^su_i\,|\,\partial_i\vv u\cdot\Lambda^s\vv u)_2\\
&\qquad\quad+2\epsilon^2\sum_{i=1}^2(\nabla\Lambda^su_i\,|\,\nabla(\partial_i\vv u\cdot\Lambda^s\vv u))_2,
\end{aligned}\eeno
which along with \eqref{arme29} and  \eqref{arme30} implies that
\begin{equation}\label{aa5}
|I_2|\lesssim\epsilon|{\vv u}|_{H^s}\bigl(|\eta|_{H^s}^2+|\vv u|_{X^s_\epsilon}^2\bigr).
\end{equation}

Thanks to \eqref{aa3} and \eqref{aa5}, we obtain that
\begin{equation}\label{aa6}
|I|\lesssim\epsilon|{\vv u}|_{H^s}\bigl(|\eta|_{H^s}^2+|\vv u|_{X^s_\epsilon}^2\bigr).
\end{equation}

\vspace{0.3cm}

Due to \eqref{abcd} with $b=c=d=0,a=-1$, we deduce by using \eqref{arme29} and \eqref{arme30} that
\begin{equation}\label{aa7}
|\partial_t\eta|_{H^{s-2}}+|\partial_t\vv u|_{H^{s-1}}\lesssim|\eta|_{H^s}+|\vv u|_{X^s_\epsilon}.
\end{equation}

\vspace{0.3cm}

Combining \eqref{arme12}, \eqref{aa2}, \eqref{aa6} and \eqref{aa7}, we finally get that
\begin{equation}\label{arme45}
\frac{d}{dt}E_s(U)\lesssim\epsilon\bigl(|\eta|_{H^s}+|\vv u|_{X^s_{\epsilon}}\bigr)
\bigl(|\eta|_{H^s}^2+|\vv u|_{X^s_{\epsilon}}^2\bigr).
\end{equation}

Due to \eqref{arme31}, there holds
\begin{equation}\label{arme46}
\frac{d}{dt}\bigl(E_s(U)\bigr)^{\frac{1}{2}}\leq C_1\epsilon E_s(U).
\end{equation}

Then following the same line as the proofs of Theorems \ref{rmT1} and \ref{rmT2}, one obtains that there exists $T>0$ independent of $\epsilon$ such that \eqref{abcd}-\eqref{rme2} has a unique solution on time interval $[0,T/\epsilon]$. Moreover,  \eqref{arme25} holds and Theorem \ref{rmT3}
is proved.
\end{proof}

\begin{remark}
The modelling of internal waves at the interface of a two-fluid system with different densities and in the presence of a rigid top leads, in an appropriate regime, to Boussinesq systems that are similar to those studied in the previous sections (see \cite{BLS}, section 3.1.3) and for which one can obtain the same results as in \cite{SX} or in the present paper. The same regime for  a two-fluid system  but with a free upper surface has been considered in \cite{Dudu}, section 2.3.2. One gets a system of four equations for which the methods of \cite{SX} and of the present paper are likely to work, including the case of a slowing varying bottom.  We also refer to \cite {Dudu2} for further investigations on those extended Boussinesq systems, in particular for a construction of symmetrizable ones (modulo $\epsilon^2$ terms).
\end{remark}

\subsection{The difficult case $a=b=d=0, c<0$.} The method to solve the long time existence for this case is quite different from the other cases we dealt in the previous subsections and in the paper \cite{SX}. We will now  quasilinearize the system  by applying time together with space derivatives. The key point here is that we improve the regularity in space by improving the regularity in time (applying space derivatives to the system would cause a loss of  derivatives).

We first state the long time existence result in the one dimensional case :

\begin{theorem}\label{long time existence for case c=-1}
Let $a=b=d=0, c=-1$.
Assume that $\eta_0\in X^2_{\epsilon^3}(\R),u_0\in X^2_{\epsilon^2}(\R)$  satisfy the (non-cavitation) condition
\begin{equation}\label{lte1}
1+\epsilon\eta_0\geq H>0,\quad H\in(0,1),
\end{equation}
Then there exists a constant $\tilde{c}_0$  such that for any $\epsilon\leq\epsilon_0=\frac{1}{\tilde{c}_0(| \eta_0|_{X^2_{\epsilon^3}}+| u_0|_{X^2_{\epsilon^2}})}$, there
exists $T>0$ independent of $\epsilon$, such that \eqref{c<0}-\eqref{rme2} has a unique solution $(\eta,u)$ with
$\eta\in C([0,T/\epsilon];X^2_{\epsilon^3}(\R))$ and $u\in C([0,T/\epsilon];X^2_{\epsilon^2}(\R))$. Moreover,
\begin{equation}\label{lte2}
\begin{aligned}
&\sup_{t\in[0,T/\epsilon]}\bigl(|\eta|_{X^2_{\epsilon^3}}^2+|\eta_t|_{X^1_{\epsilon^2}}^2+|\eta_{tt}|_{X^0_{\epsilon}}^2
+|u|_{X^2_{\epsilon^2}}^2+|u_t|_{X^1_{\epsilon}}^2+|u_{tt}|_2^2\bigr)\\
&\leq C\bigl(|\eta_0|_{X^2_{\epsilon^3}}^2+|u_0|_{X^2_{\epsilon^2}}^2\bigr).
\end{aligned}\end{equation}
\end{theorem}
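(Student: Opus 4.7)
A direct $\partial_x^\alpha$-energy estimate is bound to fail: the $-\epsilon\eta_{xxx}$ term in the second equation of \eqref{c<0} forces one to control $\partial_x^{\alpha+3}\eta$ in order to close an $H^\alpha$-estimate on $u$, a loss the natural symmetrizer $S(U)=\mathrm{diag}(1-\epsilon\partial_x^2,\,1+\epsilon\eta)$ cannot remove. The plan is to \emph{quasilinearize in time}: apply $\partial_t^k$ for $k=0,1,2$ to \eqref{c<0}, run a symmetrized $H^{2-k}$-energy estimate at each time level, and recover the high-weight parts of \eqref{lte2} algebraically from the equations. Because the linearized dispersion $\tau\sim\sqrt{\epsilon}\,\xi^2$ at large $\xi$ makes one time derivative scale like $\sqrt{\epsilon}\,\partial_x^2$, the weights $\epsilon,\epsilon^2,\epsilon^3$ appearing in \eqref{lte2} are exactly those compatible with this trade.

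\textbf{Energy estimates at each time level.} Writing $U=(\eta,u)^T$, the system reads $\partial_tU+M(U,\partial_x)U=0$ with $M_{11}=M_{22}=\epsilon u\partial_x$, $M_{12}=(1+\epsilon\eta)\partial_x$, $M_{21}=(1-\epsilon\partial_x^2)\partial_x$, and $S(U)M(U,\partial_x)$ is symmetric modulo an $O(\epsilon\eta_x)$ zeroth-order remainder. For $k=0,1,2$ I set
\[E_k(U)=\bigl(S(U)\Lambda^{2-k}\partial_t^kU\,\bigl|\,\Lambda^{2-k}\partial_t^kU\bigr)_2\simeq |\partial_t^k\eta|_{H^{2-k}}^2+\epsilon|\partial_t^k\eta|_{H^{3-k}}^2+|\partial_t^k u|_{H^{2-k}}^2\]
under the bootstrap assumption $1+\epsilon\eta\ge H/2$. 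Differentiating $E_k$ in time and using the $\partial_t^k$-differentiated system gives an identity in which the principal skew-symmetric part cancels; commutators are handled by Lemma \ref{L1}, and the Leibniz-expanded nonlinear contributions $\partial_t^k((u\eta)_x)$, $\partial_t^k(uu_x)$ are controlled by triple products of the $\partial_t^i$-quantities composing $\mathcal{E}:=E_0+E_1+E_2$, via repeated integration by parts and the embedding $H^2(\R)\hookrightarrow W^{1,\infty}(\R)$. The net outcome at each level is $\tfrac{d}{dt}E_k\lesssim\epsilon\,\mathcal{E}^{3/2}$.

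\textbf{Algebraic conversion from time to space regularity.} The sum $\mathcal{E}$ controls exactly the low-weight parts $|\partial_t^k\eta|_{H^{2-k}}^2,\,\epsilon|\partial_t^k\eta|_{H^{3-k}}^2,\,|\partial_t^k u|_{H^{2-k}}^2$ of the target norm. For the high-weight parts, the second equation gives $\epsilon\eta_{xxx}=u_t+\eta_x+\epsilon uu_x$, which naively produces the useless circular bound $\epsilon^3|\eta|_{H^5}^2\lesssim\epsilon|u_t|_{H^2}^2$ and vice versa. Instead, I differentiate the first equation in $t$ and substitute $\partial_x u_t$ from the second to obtain the key identity
\[\epsilon\,\eta_{xxxxx}=\eta_{xxx}-\eta_{ttx}+\epsilon(uu_x)_{xx}-\epsilon(u\eta)_{txx},\]
from which $\epsilon^3|\eta|_{H^5}^2\lesssim\epsilon|\eta|_{H^3}^2+\epsilon|\eta_{tt}|_{H^1}^2+(\text{nonlinear remainder})\lesssim\mathcal{E}+\mathcal{E}^2$. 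Similarly, the first equation yields $\epsilon^2|u|_{H^4}^2\lesssim\epsilon^2|\eta_t|_{H^3}^2+\cdots$, the $t$-differentiated second equation yields $\epsilon^2|\eta_t|_{H^3}^2\lesssim|u_{tt}|_2^2+|\eta_t|_{H^1}^2+\cdots$, and $\epsilon|u_t|_{H^2}^2$ follows from the second equation once $\epsilon^3|\eta|_{H^5}^2$ is known. Intermediate weighted Sobolev norms such as $\epsilon|\eta|_{H^3}^2$ are absorbed by the interpolation inequality \eqref{int1}.

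\textbf{Closure and main obstacle.} Combining the energy identities with the algebraic conversions yields $\tfrac{d}{dt}\mathcal{E}\lesssim\epsilon\,\mathcal{E}^{3/2}$; a continuity argument preserving $1+\epsilon\eta\ge H/2$ then gives the existence time $T/\epsilon$ and the bound \eqref{lte2}, while existence/uniqueness follow from the regularized-system plus Bona--Smith machinery used in the earlier theorems. The main obstacle lies at $k=2$: expanding $\partial_t^2(u\eta)_x$ and $\partial_t^2(uu_x)$ produces terms in which $u_{tt}$ (only in $L^2$) or $\eta_{tt}$ (only in $X^0_\epsilon$) is paired with a derivative factor, typically $u_{tt}\eta_x$, $\eta_{tt}u_x$, or $u\,\eta_{ttx}$. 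Each such term must be rewritten, by integration by parts, by the algebraic identities above, or by the embedding $H^2\hookrightarrow W^{1,\infty}$, so as to be controlled by $\epsilon\,\mathcal{E}^{3/2}$ rather than by a derivative-loss term; this is where the time-quasilinearization truly pays off, since any $\partial_t^2$-expression is, to leading order, $\epsilon$ times a $\partial_x^4$-expression, providing exactly the $\sqrt{\epsilon}$ saving required to close the a priori estimate.
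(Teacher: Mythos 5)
Your overall strategy---quasilinearize in time rather than in space, close an estimate of the form $\tfrac{d}{dt}\mathcal{E}\lesssim\epsilon\,\mathcal{E}^{3/2}$, and recover the $\epsilon$-weighted spatial regularity algebraically from the equations---is exactly the mechanism of the paper's proof, and your identity for $\epsilon\eta_{xxxxx}$ is the analogue of the relations \eqref{transfer}. The genuine gap is in the energy estimates themselves, and it comes from working in the original unknowns $(\eta,u)$. Your claim that $S(U)M(U,\partial_x)$ is ``symmetric modulo an $O(\epsilon\eta_x)$ zeroth-order remainder'' is false: with $S=\mathrm{diag}(1-\epsilon\partial_x^2,\,1+\epsilon\eta)$ one finds $(SM)_{12}+(SM)_{21}^{*}=-(1-\epsilon\partial_x^2)\circ(\epsilon\eta_x\,\cdot\,)$, a \emph{second-order} operator whose top part carries only an $\epsilon^2$ prefactor. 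More generally, every pairing of the quadratic term $\epsilon(u\eta)_x$ of the first equation (and of its time derivatives $\epsilon\partial_x(\eta u_{tt}+2u_t\eta_t+u\eta_{tt})$, etc.) against the $-\epsilon\partial_x^2$ part of the multiplier produces expressions such as $\epsilon^2\bigl(\Lambda^{2-k}\partial_x^{2}\partial_t^k(u\eta)\,\big|\,\partial_x\Lambda^{2-k}\partial_t^k\eta\bigr)_2$, carrying three more spatial derivatives than the base regularity. Since each extra derivative costs $\epsilon^{-1/2}$ against the norms in \eqref{lte2}, a naive count gives $\epsilon^{2}\cdot\epsilon^{-3/2}=\epsilon^{1/2}$, i.e.\ a lifespan $O(1/\sqrt{\epsilon})$ only. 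These terms can be brought down to $\epsilon\,\mathcal{E}^{3/2}$, but only by exploiting, term by term, transport-type cancellations (the pieces $u\,\partial_x(\cdots)$ paired with $(\cdots)$, and the explicit form $[\partial_x^2,\eta]u=\eta_{xx}u+2\eta_xu_x$) together with the precise ladder of weights $\epsilon^{j/2}|\eta|_{H^{2+j}}$, $j\le 3$, and $\epsilon^{j/2}|u|_{H^{2+j}}$, $j\le 2$. Nothing in your write-up carries out this verification, and your stated reason for closure (``any $\partial_t^2$-expression is $\epsilon$ times a $\partial_x^4$-expression'') is irrelevant to these terms, which are worst at the levels $k=0,1$ where $\Lambda^{2-k}$ is applied. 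This is precisely the derivative loss that makes the case $a=b=d=0$, $c<0$ difficult, so asserting the inequality $\tfrac{d}{dt}E_k\lesssim\epsilon\,\mathcal{E}^{3/2}$ without proof begs the main question.

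The paper removes this difficulty with one preliminary step you are missing: the change of unknown $v=(1+\epsilon\eta)u$, after which the equation carrying the $(1-\epsilon\partial_x^2)$ weight becomes exactly linear, $\eta_t+v_x=0$, so no nonlinearity ever meets the $-\epsilon\partial_x^2$ multiplier, and the recovery identities $v_x=-\eta_t$ and $(1-\epsilon\partial_x^2)\eta_x=-(1+\epsilon\eta)^{-1}v_t+\cdots$ are clean. You should either adopt that substitution or supply the full term-by-term verification at each level $k=0,1,2$. Separately, the existence step deserves more than an appeal to ``Bona--Smith machinery'': because the energy structure here is quasilinear in time, the paper needs an entire section with carefully placed Friedrichs mollifiers so that the regularized system inherits the same cancellations; a generic parabolic regularization would reintroduce the derivative loss.
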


\begin{remark} System \eqref{c<0} can be viewed as the Saint-Venant (shallow water) system with surface tension and corresponds to system (A.1) in \cite{DIT} with $\mu=0, \delta=1.$ Thus the previous theorem can be compared to Theorem A.3 in \cite{DIT}.
\end{remark}

\begin{remark}
It will be clear in the  following proof  that the regularity we choose for the initial data is the lowest possible one. One could also impose higher regularity on the initial data such as $\eta_0\in X^{2+k}_{\epsilon^{3+k}}(\R),u_0\in X^{2+k}_{\epsilon^{2+k}}(\R)$ for $k\in\N$. In that case, one has to apply $\partial_t$ to  \eqref{abcd} for $k+2$ times. For simplicity, we only consider the case $k=0$.
\end{remark}
\begin{proof}
We shall divide the proof into several steps. We first  indicate how to obtain the a priori energy estimates. We shall use the a priori estimates to prove the existence of the solutions, in Section 5.

{\bf Step 1. Reduction of the system.}
Since $a=b=d=0, c=-1$, we rewrite  \eqref{abcd} in the form \eqref{c<0}. Setting
\beno
v\underset{\text{def}}=(1+\epsilon\eta)u,
\eeno
the first equation of \eqref{c<0} becomes
\beno
\eta_t+v_x=0.
\eeno
Elementary calculations and the  use of \eqref{c<0}   yield the evolution equation for $v :$
\beno
v_t+(1+\epsilon\eta)\eta_x-\epsilon(1+\epsilon\eta)\eta_{xxx}+\epsilon\Bigl(\frac{v^2}{1+\epsilon\eta}\Bigr)_x=0.
\eeno
Indeed, we have
\beno\begin{aligned}
\partial_tv&=(1+\epsilon\eta)u_t+\epsilon u\eta_t\\
&=-(1+\epsilon\eta)(\eta_x-\epsilon\eta_{xxx})-\epsilon(1+\epsilon\eta)uu_x-\epsilon u v_x\\
&=-(1+\epsilon\eta)(\eta_x-\epsilon\eta_{xxx})-\epsilon \Bigl(\frac{v^2}{1+\epsilon\eta}\Bigr)_x.
\end{aligned}\eeno

Then \eqref{c<0} is rewritten in terms of $(\eta,v)$ as follows
\begin{equation}\label{reduction form 1}\left\{\begin{aligned}
&\eta_t+v_x=0,\\
&\frac{1}{1+\epsilon\eta}v_t+\eta_x-\epsilon\eta_{xxx}+\frac{\epsilon}{1+\epsilon\eta}\Bigl(\frac{v^2}{1+\epsilon\eta}\Bigr)_x=0.
\end{aligned}\right.\end{equation}

We shall derive  energy estimates for this system.

\medskip

{\bf Step 2. Quasilinearization of \eqref{reduction form 1}.} In this step, we shall  quasilinearize the system \eqref{reduction form 1}
by applying to it  $\partial_t$ and $\partial_t^2$. Applying $\partial_t$ to the first equation of \eqref{reduction form 1} leads to
\beno\begin{aligned}
\partial_t^2\eta&=-\partial_x v_t=\bigl((1+\epsilon\eta)\eta_x\bigr)_x-\epsilon\bigl((1+\epsilon\eta)\eta_{xxx}\bigr)_x+\epsilon\Bigl(\frac{v^2}{1+\epsilon\eta}\Bigr)_{xx}\\
&=\bigl((1+\epsilon\eta)\eta_x\bigr)_x-\epsilon\bigl((1+\epsilon\eta)\eta_{xxx}\bigr)_x+\frac{2\epsilon v}{1+\epsilon\eta}v_{xx}\\
&\qquad+2\epsilon\frac{v_x^2}{1+\epsilon\eta}-\frac{2\epsilon^2}{(1+\epsilon\eta)^2}vv_x\eta_x
-\epsilon^2\Bigl(\frac{\eta_xv^2}{(1+\epsilon\eta)^2}\Bigr)_x.
\end{aligned}\eeno
One notices that the last term $\frac{2\epsilon v}{1+\epsilon\eta}v_{xx}$ in the second line of the above equality is the higher order term. Since by \eqref{reduction form 1} $v_x=-\eta_t$, we rewrite this term as
\beno
\frac{2\epsilon v}{1+\epsilon\eta}v_{xx}=-\frac{2\epsilon v}{1+\epsilon\eta}\partial_x\eta_t.
\eeno
Then we obtain
\begin{equation}\label{lte3}
\eta_{tt}-\bigl((1+\epsilon\eta)\eta_x\bigr)_x+\epsilon\bigl((1+\epsilon\eta)\eta_{xxx}\bigr)_x+\frac{2\epsilon v}{1+\epsilon\eta}\partial_x\eta_t=f,
\end{equation}
with
\begin{equation}\label{reduction 2}
f\underset{\text{def}}=2\epsilon\frac{v_x^2}{1+\epsilon\eta}-\frac{2\epsilon^2}{(1+\epsilon\eta)^2}vv_x\eta_x
-\epsilon^2\Bigl(\frac{\eta_xv^2}{(1+\epsilon\eta)^2}\Bigr)_x.
\end{equation}

Applying $\partial_t$ to the second equation of \eqref{reduction form 1} one obtains
\beno\begin{aligned}
\partial_t^2v&=-(1+\epsilon\eta)\partial_x\eta_t+\epsilon(1+\epsilon\eta)\partial_x^3\eta_t-\epsilon\eta_t(\eta_x-\epsilon\eta_{xxx})
-\epsilon\Bigl(\frac{v^2}{1+\epsilon\eta}\Bigr)_{xt}\\
&=(1+\epsilon\eta)v_{xx}-\epsilon(1+\epsilon\eta)v_{xxxx}-\epsilon\eta_t(\eta_x-\epsilon\eta_{xxx})\\
&\qquad-2\epsilon v\partial_x\bigl(\frac{v_t}{1+\epsilon\eta}\Bigr)- \frac{2\epsilon v_xv_t}{1+\epsilon\eta}+\epsilon^2\Bigl(\frac{v^2\eta_t}{(1+\epsilon\eta)^2}\Bigr)_x.
\end{aligned}\eeno
Then we get
\begin{equation}\label{lte4}
\frac{1}{1+\epsilon\eta}v_{tt}-v_{xx}+\epsilon v_{xxxx}+\frac{2\epsilon v}{1+\epsilon\eta}\partial_x\bigl(\frac{v_t}{1+\epsilon\eta}\Bigr)=g,
\end{equation}
with
\begin{equation}\label{reduction 3}
g\underset{\text{def}}=-\frac{\epsilon\eta_t}{1+\epsilon\eta}(\eta_x-\epsilon\eta_{xxx})- \frac{2\epsilon v_xv_t}{(1+\epsilon\eta)^2}+\frac{\epsilon^2}{1+\epsilon\eta}\Bigl(\frac{v^2\eta_t}{(1+\epsilon\eta)^2}\Bigr)_x.
\end{equation}

Combining \eqref{lte3} and \eqref{lte4}, we obtain
\begin{equation}\label{quasilinear 1}\left\{\begin{aligned}
&\eta_{tt}-\bigl((1+\epsilon\eta)\eta_x\bigr)_x+\epsilon\bigl((1+\epsilon\eta)\eta_{xxx}\bigr)_x+\frac{2\epsilon v}{1+\epsilon\eta}\partial_x\eta_t=f,\\
&\frac{1}{1+\epsilon\eta}v_{tt}-v_{xx}+\epsilon v_{xxxx}+\frac{2\epsilon v}{1+\epsilon\eta}\partial_x\bigl(\frac{v_t}{1+\epsilon\eta}\bigr)=g,
\end{aligned}\right.\end{equation}
with $(f,g)$ being defined in \eqref{reduction 2} and \eqref{reduction 3}.

\smallskip

We remark here that \eqref{quasilinear 1} is a diagonalization of \eqref{reduction form 1} and that the principal linear part for both equations of \eqref{quasilinear 1} is the dispersive wave equation
\beno
(\partial_t^2-\partial_x^2+\epsilon\partial_x^4)\Psi.
%=\text{nonlinear terms}.
\eeno
The source terms $(f,g)$ are of lower order. One can then derive the $L^2$ energy estimate for \eqref{quasilinear 1}.

However, if we want to derive  higher order energy estimates, it is not successful to apply $\partial_x$ to the second equation of \eqref{quasilinear 1} since when $\partial_x$ acts on  the term $\frac{1}{1+\epsilon\eta}v_{tt}$, it will turn out an uncontrolled term $-\frac{\epsilon\eta_x}{(1+\epsilon\eta)^2}v_{tt}$.
One has  to apply instead $\partial_t$ to \eqref{quasilinear 1}. In  other words, we shall improve the regularity of the unknowns by applying $\partial_t^k$ (not $\partial_x^{\alpha}$) to \eqref{reduction form 1}.

\medskip

Denoting by $\eta'=\partial_t\eta$ and $v'=\partial_t v$, applying $\partial_t$ to \eqref{quasilinear 1}, it transpires  that $(\eta',v')$ satisfies the following system
\begin{equation}\label{quasilinear 2}\left\{\begin{aligned}
&\eta'_{tt}-\bigl((1+\epsilon\eta)\eta'_x\bigr)_x+\epsilon\bigl((1+\epsilon\eta)\eta'_{xxx}\bigr)_x+\frac{2\epsilon v}{1+\epsilon\eta}\partial_x\eta'_t=f',\\
&\frac{1}{1+\epsilon\eta}v'_{tt}-v'_{xx}+\epsilon v'_{xxxx}+\frac{2\epsilon v}{1+\epsilon\eta}\partial_x\bigl(\frac{v'_t}{1+\epsilon\eta}\Bigr)=g',
\end{aligned}\right.\end{equation}
where
\begin{equation}\label{reduction 4}\begin{aligned}
&f'\underset{\text{def}}=\partial_t f+\epsilon(\eta_t\eta_x)_x-\epsilon^2(\eta_t\eta_{xxx})_x-2\epsilon\Bigl(\frac{v}{1+\epsilon\eta}\Bigr)_t\eta_{tx},\\
&g'\underset{\text{def}}=\partial_t g+\frac{\epsilon\eta_t}{(1+\epsilon\eta)^2}v_{tt}+\frac{2\epsilon^2 v}{1+\epsilon\eta}\Bigl(\frac{\eta_tv_t}{(1+\epsilon\eta)^2}\Bigr)_x-2\epsilon\Bigl(\frac{v}{1+\epsilon\eta}\Bigr)_t
\Bigl(\frac{v_t}{1+\epsilon\eta}\Bigr)_x.
\end{aligned}\end{equation}
The principal part of \eqref{quasilinear 2} is the same as that of  \eqref{quasilinear 1}.

\medskip

{\bf Step 3. Energy estimates for the quasilinear system \eqref{reduction form 1}-\eqref{quasilinear 1}-\eqref{quasilinear 2}.}
We shall derive  energy estimates for \eqref{reduction form 1}, \eqref{quasilinear 1} and \eqref{quasilinear 2}  under the assumptions
\begin{equation}\label{ansatz 1}
1+\epsilon\eta\geq H>0,
\end{equation}
and
\begin{equation}\label{ansatz}
|\eta(\cdot,t)|_{W^{1,\infty}}+|v(\cdot,t)|_{W^{1,\infty}}+|\eta(\cdot,t)_t|_{\infty}+|v(\cdot,t)_t|_{\infty}\leq c,\quad \text{for}\; t\in [0,T],
\end{equation}
where the constant $c$ is independent of $\epsilon$ but depends on the initial data. We remark that \eqref{ansatz 1} and \eqref{ansatz} are  consequences of the assumption \eqref{lte1} and the {\it a priori} estimate \eqref{total energy estimate} for $(\eta,v)$.

\smallskip

{\it Step 3.1. Estimates for \eqref{reduction form 1}.} We notice that the symmetrizer for the linear part of \eqref{reduction form 1} is the matrix
 $\text{diag}(1-\epsilon\partial_x^2,1)$. Then taking the $L^2$ inner product of  \eqref{reduction form 1} by $\bigl((1-\epsilon\partial_x^2)\eta,v\bigr)^T$ leads to
 \begin{equation}\label{lte5}
 \frac{1}{2}\frac{d}{dt}E_0(t)
 =-\frac{\epsilon}{2}\bigl(\frac{\eta_tv}{(1+\epsilon\eta)^2}\,|\,v\bigr)_2
 -\epsilon\bigl(\Bigl(\frac{v^2}{1+\epsilon\eta}\Bigr)_x\,|\,\frac{v}{1+\epsilon\eta}\bigr)_2,
 \end{equation}
  where
   \beno
   E_0(t)\underset{\text{def}}=|\eta|_2^2+\epsilon|\eta_x|_2^2+(\frac{v}{1+\epsilon\eta}\,|\,v)_2.
    \eeno
Thanks to \eqref{ansatz 1} and \eqref{ansatz}, we have
  \begin{equation}\label{lte6}
E_0(t)\sim|\eta|_2^2+\epsilon|\eta_x|_2^2+|v|_{L^2}^2 %\sim|\eta|_{X^0_{\epsilon}}^2+|v|_{L^2}^2.
  \end{equation}
 By  \eqref{ansatz 1}, the first term on the r.h.s of \eqref{lte5} is estimated as
 \beno
 |-\frac{\epsilon}{2}\bigl(\frac{\eta_tv}{(1+\epsilon\eta)^2}\,|\,v\bigr)_2|\lesssim\epsilon|v|_{\infty}|\eta_t|_2|v|_2
 \lesssim\epsilon|v|_{\infty}(|\eta_t|_2^2+|v|_2^2),
 \eeno
 while by integration by parts and \eqref{ansatz 1}, the second term on the r.h.s of \eqref{lte5} is estimated as
 \beno\begin{aligned}
 &|-\epsilon\bigl(\Bigl(\frac{v^2}{1+\epsilon\eta}\Bigr)_x\,|\,\frac{v}{1+\epsilon\eta}\bigr)_2|
 =\frac{\epsilon}{2}|\bigl(v_x\,|\,|\frac{v}{1+\epsilon\eta}|^2\bigr)_2|\\
 &\lesssim\epsilon|v|_{\infty}|v|_2|v_x|_2
 \lesssim\epsilon|v|_{\infty}(|v|_2^2+|v_x|_2^2).
\end{aligned}\eeno
 Then we obtain
 \begin{equation}\label{lowest estimate}
 \frac{1}{2}\frac{d}{dt}E_0(t)\lesssim\epsilon|v|_{\infty}\bigl(|\eta_t|_2^2+|v|_2^2+|v_x|_2^2\bigr).
 \end{equation}

 \smallskip

 {\it Step 3.2. Estimates for \eqref{quasilinear 1}.}  Taking the  $L^2$ scalar product of the first equation of \eqref{quasilinear 1} by $(1-\epsilon\partial_x^2)\eta_t$, we obtain
  \beno\begin{aligned}
&(\eta_{tt}\,|\,(1-\epsilon\partial_x^2)\eta_t)_2-(\bigl((1+\epsilon\eta)\eta_x\bigr)_x\,|\,(1-\epsilon\partial_x^2)\eta_t)_2\\
&\quad+\epsilon(\bigl((1+\epsilon\eta)\eta_{xxx}\bigr)_x\,|\,(1-\epsilon\partial_x^2)\eta_t)_2+(\frac{2\epsilon v}{1+\epsilon\eta}\partial_x\eta_t\,|\,(1-\epsilon\partial_x^2)\eta_t)_2=(f\,|\,(1-\epsilon\partial_x^2)\eta_t)_2.
 \end{aligned}\eeno
 Integration by parts gives
 \beno
 (\eta_{tt}\,|\,(1-\epsilon\partial_x^2)\eta_t)_2=\frac{1}{2}\frac{d}{dt}\bigl(|\eta_t|_2^2+\epsilon|\eta_{tx}|_2^2\bigr),
 \eeno
 \beno\begin{aligned}
 &-(\bigl((1+\epsilon\eta)\eta_x\bigr)_x\,|\,(1-\epsilon\partial_x^2)\eta_t)_2
 =\frac{1}{2}\frac{d}{dt}\bigl(((1+\epsilon\eta)\eta_x\,|\,\eta_x)_2+\epsilon((1+\epsilon\eta)\eta_{xx}\,|\,\eta_{xx})_2\bigr)\\
 &\qquad-\frac{\epsilon}{2}(\eta_t\eta_x\,|\,\eta_x)_2-\frac{\epsilon^2}{2}(\eta_t\eta_{xx}\,|\,\eta_{xx})_2
 -\epsilon^2(\partial_x(\eta_x\eta_x)\,|\,\eta_{tx})_2,
 \end{aligned}\eeno
 \beno\begin{aligned}
 &\epsilon(\bigl((1+\epsilon\eta)\eta_{xxx}\bigr)_x\,|\,(1-\epsilon\partial_x^2)\eta_t)_2
 =\frac{\epsilon}{2}\frac{d}{dt}\bigl(((1+\epsilon\eta)\eta_{xx}\,|\,\eta_{xx})_2+\epsilon((1+\epsilon\eta)\eta_{xxx}\,|\,\eta_{xxx})_2\bigr)\\
 &\qquad-\frac{\epsilon^2}{2}(\eta_t\eta_{xx}\,|\,\eta_{xx})_2-\frac{\epsilon^3}{2}(\eta_t\eta_{xxx}\,|\,\eta_{xxx})_2
 +\epsilon^2(\eta_x\eta_{xx}\,|\,\eta_{tx})_2.
 \end{aligned}\eeno
Then we obtain that
  \begin{equation}\label{lte7}\begin{aligned}
 &\frac{1}{2}\frac{d}{dt}E_{11}(t)+(\frac{2\epsilon v}{1+\epsilon\eta}\partial_x\eta_t\,|\,(1-\epsilon\partial_x^2)\eta_t)_2\\
 &=\frac{\epsilon}{2}(\eta_t\eta_x\,|\,\eta_x)_2+\epsilon^2(\eta_t\eta_{xx}\,|\,\eta_{xx})_2
 +\epsilon^2(\eta_{xx}\eta_x\,|\,\eta_{tx})_2\\
 &\quad+\frac{\epsilon^3}{2}(\eta_t\eta_{xxx}\,|\,\eta_{xxx})_2
 +(f\,|\,(1-\epsilon\partial_x^2)\eta_t)_2
 \end{aligned}\end{equation}
where
\beno\begin{aligned}
 E_{11}(t)&\underset{\text{def}}=|\eta_t|_2^2+\epsilon|\eta_{tx}|_2^2
 +((1+\epsilon\eta)\eta_x\,|\,\eta_x)_2+2\epsilon((1+\epsilon\eta)\eta_{xx}\,|\,\eta_{xx})_2\\
 &\quad+\epsilon^2((1+\epsilon\eta)\eta_{xxx}\,|\,\eta_{xxx})_2.
\end{aligned}\eeno

By \eqref{ansatz 1} and \eqref{ansatz}, we have
 \begin{equation}\label{lte8}
 E_{11}(t)\sim|\eta_t|_2^2+\epsilon|\eta_{tx}|_2^2+|\eta_x|_2^2+\epsilon|\eta_{xx}|_2^2+\epsilon^2|\eta_{xxx}|_2^2.
 \end{equation}

Now, we estimate the second term on the l.h.s of \eqref{lte7}. Integrating by parts, we have
\beno\begin{aligned}
&-(\frac{2\epsilon v}{1+\epsilon\eta}\partial_x\eta_t\,|\,(1-\epsilon\partial_x^2)\eta_t)_2=-\epsilon (\frac{v}{1+\epsilon\eta}\,|\,\partial_x(|\eta_t|^2)-\epsilon\partial_x(|\eta_{tx}|^2))_2\\
&
=\epsilon (\partial_x\Bigl(\frac{v}{1+\epsilon\eta}\Bigr)\,|\,|\eta_t|^2-\epsilon|\eta_{tx}|^2)_2,
\end{aligned}\eeno
which along with \eqref{ansatz 1} and \eqref{ansatz} implies that
\begin{equation}\label{lte9}
|(\frac{2\epsilon v}{1+\epsilon\eta}\partial_x\eta_t\,|\,(1-\epsilon\partial_x^2)\eta_t)_2|
\lesssim\epsilon\bigl(|\eta_x|_\infty+|v_x|_\infty\bigr)\bigl(|\eta_t|_2^2+\epsilon|\eta_{tx}|_2^2\bigr).
\end{equation}

Due to \eqref{lte7} and \eqref{lte9}, we get
 \begin{equation}\label{lte10}\begin{aligned}
& \frac{1}{2}\frac{d}{dt}E_{11}(t)\lesssim\epsilon\bigl(|\eta_t|_\infty+|\eta_x|_\infty+|v_x|_\infty\bigr)
\bigl(|\eta_x|_2^2+\epsilon|\eta_{xx}|_2^2\\
&\quad+\epsilon^2|\eta_{xxx}|_2^2+|\eta_t|_2^2+\epsilon|\eta_{tx}|_2^2\bigr)
 +|f|_2|\eta_t|_2+\epsilon|f_x|_2|\eta_{tx}|_2.
 \end{aligned}\end{equation}

Taking the $L^2$ scalar product of the second equation of \eqref{quasilinear 1} by $v_t$ yields
 \begin{equation}\label{lte11}
 \frac{1}{2}\frac{d}{dt}E_{12}(t)+(\frac{2\epsilon v}{1+\epsilon\eta}\partial_x\Bigl(\frac{v_t}{1+\epsilon\eta}\Bigr)\,|\,v_t)_2
 =-\frac{1}{2}(\partial_t\bigl(\frac{1}{1+\epsilon\eta}\bigr)v_t\,|\,v_t)_2
 +(g\,|\,v_t)_2
 \end{equation}
where
\beno
 E_{12}(t)\underset{\text{def}}=(\frac{v_t}{1+\epsilon\eta}\,|\,v_t)_2+|v_x|_2^2+\epsilon|v_{xx}|_2^2.
\eeno
Thanks to \eqref{ansatz 1}, we have
 \begin{equation}\label{lte12}
 E_{12}(t)\sim|v_t|_2^2+|v_x|_2^2+\epsilon|v_{xx}|_2^2.
 \end{equation}
Similarly  as for  \eqref{lte9}, integration by parts on the second term on the l.h.s of \eqref{lte11} leads to
\begin{equation}\label{lte13}
|(\frac{2\epsilon v}{1+\epsilon\eta}\partial_x\Bigl(\frac{v_t}{1+\epsilon\eta}\Bigr)\,|\,v_t)_2|=\epsilon|(\partial_xv\,|\,\bigl|\frac{v_t}{1+\epsilon\eta}\bigr|^2)_2|
\lesssim\epsilon|v_x|_\infty|v_t|_2^2.
\end{equation}
We can also bound the first term on the r.h.s of \eqref{lte11} as follows
\beno
|-\frac{1}{2}(\partial_t\bigl(\frac{1}{1+\epsilon\eta}\bigr)v_t\,|\,v_t)_2|\lesssim\epsilon|\eta_t|_\infty|v_t|_2^2,
\eeno
which along with \eqref{lte11} and \eqref{lte13} gives rise to
\begin{equation}\label{lte14}
\frac{1}{2}\frac{d}{dt}E_{12}(t)\lesssim\epsilon\bigl(|v_x|_\infty+|\eta_t|_\infty\bigr)|v_t|_2^2+|g|_2|v_t|_2.
\end{equation}

\smallskip

Thanks to the expressions \eqref{reduction 2} and \eqref{reduction 3}, using the assumptions \eqref{ansatz 1} and \eqref{ansatz}, we estimate the source terms $|f|_2+\epsilon^{\frac{1}{2}}|f_x|_2$ and $|g|_2$ as follows
\begin{equation}\label{lte15}\begin{aligned}
&|f|_2+\epsilon^{\frac{1}{2}}|f_x|_2+|g|_2\lesssim\epsilon\bigl(|\eta_x|_\infty+|v|_\infty+|v_x|_\infty+|\eta_t|_\infty\bigr)\\
&\quad\times\bigl(|\eta_x|_2+\epsilon^{\frac{1}{2}}|\eta_{xx}|_2+\epsilon|\eta_{xxx}|_2+|v_t|_2+|v_x|_2+\epsilon|v_{xx}|_2\bigr).
\end{aligned}\end{equation}

Now, we define $E_1(t)\underset{\text{def}}=E_{11}(t)+E_{12}(t)$. Then \eqref{lte8} and \eqref{lte12} yields
\begin{equation}\label{lte16}\begin{aligned}
E_1(t)&\sim|\eta_x|_{X^0_{\epsilon^2}}^2+|\eta_t|_{X^0_\epsilon}^2
+|v_x|_{X^0_\epsilon}^2+|v_t|_2^2.
\end{aligned}\end{equation}
where $|\cdot|_{X^s_{\epsilon^k}}^2=|\cdot|_{H^s}^2+\epsilon^k|\cdot|_{H^{s+k}}^2$.

Combining estimates \eqref{lte10}, \eqref{lte14} and \eqref{lte15},  using \eqref{lte16}, we obtain
\begin{equation}\label{first order estimate}\begin{aligned}
\frac{1}{2}&\frac{d}{dt}E_1(t)\lesssim
\epsilon\bigl(|\eta_t|_\infty+|\eta_x|_\infty+|v|_\infty+|v_x|_\infty\bigr)E_1(t), \quad t\in [0,T].
\end{aligned}\end{equation}

\smallskip

 {\it Step 3.3. Estimates for \eqref{quasilinear 2}.} Since \eqref{quasilinear 2} has the same form as \eqref{quasilinear 1}, we have a similar estimate as \eqref{lte14} for the second equation of \eqref{quasilinear 2}, that is,
\begin{equation}\label{lte17}
\frac{1}{2}\frac{d}{dt}E_{22}(t)\lesssim\epsilon\bigl(|v_x|_\infty+|\eta_t|_\infty\bigr)|v_t'|_2^2+|g'|_2|v_t'|_2,
\end{equation}
where
\begin{equation}\label{lte18}\begin{aligned}
 E_{22}(t)&\underset{\text{def}}=(\frac{v_t'}{1+\epsilon\eta}\,|\,v_t')_2+|v_x'|_2^2+\epsilon|v_{xx}'|_2^2\\
 &\sim|v_t'|_2^2+|v_x'|_2^2+\epsilon|v_{xx}'|_2^2\sim|v_t'|_2^2+|v'_x|_{X^0_{\epsilon}}^2.
\end{aligned}\end{equation}
%where $|\cdot|_{X^s_{\epsilon^k}}^2=|\cdot|_{H^s}^2+\epsilon^k|\cdot|_{H^{s+k}}^2$.

Taking the $L^2$ scalar product of the first equation of \eqref{quasilinear 2} with $(1-\epsilon\partial_x^2)\eta_t'$,  we obtain (see the similar derivation of \eqref{lte7}) that
% \beno\begin{aligned}
%&(\eta_{tt}'\,|\,(1-\epsilon\partial_x^2)\eta_t')_2-(\bigl((1+\epsilon\eta)\eta_x'\bigr)_x\,|\,(1-\epsilon\partial_x^2)\eta_t')_2\\
%&\quad+\epsilon(\bigl((1+\epsilon\eta)\eta_{xx}'\bigr)_{xx}\,|\,(1-\epsilon\partial_x^2)\eta_t')_2+(\frac{2\epsilon v}{1+\epsilon\eta}\partial_x\eta_t'\,|\,(1-\epsilon\partial_x^2)\eta_t')_2=(f'\,|\,\eta_t')_2.
% \end{aligned}\eeno
% Integration by parts gives rise to
% \beno
% (\eta_{tt}'\,|\,(1-\epsilon\partial_x^2)\eta_t')_2=\frac{1}{2}\frac{d}{dt}\bigl(|\eta_t'|_2^2+\epsilon|\eta_{tx}'|_2^2\bigr),
% \eeno
% \beno\begin{aligned}
% &-(\bigl((1+\epsilon\eta)\eta_x'\bigr)_x\,|\,(1-\epsilon\partial_x^2)\eta_t')_2
% =\frac{1}{2}\frac{d}{dt}\bigl(((1+\epsilon\eta)\eta_x'\,|\,\eta_x')_2+\epsilon((1+\epsilon\eta)\eta_{xx}'\,|\,\eta_{xx}')_2\bigr)\\
% &\qquad-\frac{\epsilon}{2}(\eta_t\eta_x'\,|\,\eta_x')_2-\frac{\epsilon^2}{2}(\eta_t\eta_{xx}'\,|\,\eta_{xx}')_2
% -\epsilon^2(\partial_x(\eta_x\eta_x')\,|\,\eta_{tx}')_2,
% \end{aligned}\eeno
% \beno\begin{aligned}
% &\epsilon(\bigl((1+\epsilon\eta)\eta_{xx}'\bigr)_{xx}\,|\,(1-\epsilon\partial_x^2)\eta_t')_2
% =\frac{\epsilon}{2}\frac{d}{dt}\bigl(((1+\epsilon\eta)\eta_{xx}'\,|\,\eta_{xx}')_2+\epsilon((1+\epsilon\eta)\eta_{xxx}'\,|\,\eta_{xxx}')_2\bigr)\\
% &\qquad-\frac{\epsilon^2}{2}(\eta_t\eta_{xx}'\,|\,\eta_{xx}')_2-\frac{\epsilon^3}{2}(\eta_t\eta_{xxx}'\,|\,\eta_{xxx}')_2
% -\epsilon^3(\partial_x(\eta_x\eta_{xx}')\,|\,\eta_{txx}')_2.
% \end{aligned}\eeno
% Then we obtain that
  \begin{equation}\label{lte19}\begin{aligned}
 &\frac{1}{2}\frac{d}{dt}E_{21}(t)+(\frac{2\epsilon v}{1+\epsilon\eta}\partial_x\eta_t'\,|\,(1-\epsilon\partial_x^2)\eta_t')_2\\
 &=\frac{\epsilon}{2}(\eta_t\eta_x'\,|\,\eta_x')_2+\epsilon^2(\eta_t\eta_{xx}'\,|\,\eta_{xx}')_2
 +\epsilon^2(\eta_{xx}\eta_x')\,|\,\eta_{tx}')_2\\
 &\quad
 +\frac{\epsilon^3}{2}(\eta_t\eta_{xxx}'\,|\,\eta_{xxx}')_2
 +(f'\,|\,(1-\epsilon\partial_x^2)\eta_t')_2
 \end{aligned}\end{equation}
where
\beno\begin{aligned}
 E_{21}(t)&\underset{\text{def}}=|\eta_t'|_2^2+\epsilon|\eta_{tx}'|_2^2
 +((1+\epsilon\eta)\eta_x'\,|\,\eta_x')_2+2\epsilon((1+\epsilon\eta)\eta_{xx}'\,|\,\eta_{xx}')_2\\
 &\quad+\epsilon^2((1+\epsilon\eta)\eta_{xxx}'\,|\,\eta_{xxx}')_2.
\end{aligned}\eeno

Thanks to \eqref{lte1} and \eqref{ansatz}, we have
 \begin{equation}\label{lte20}
 E_{21}(t)\sim|\eta_t'|_2^2+\epsilon|\eta_{tx}'|_2^2+|\eta_x'|_2^2+\epsilon|\eta_{xx}'|_2^2+\epsilon^2|\eta_{xxx}'|_2^2
 \sim|\eta_t'|_{X^0_\epsilon}^2+|\eta'_x|_{X^0_{\epsilon^2}}^2.
 \end{equation}

% Similarly to \eqref{lte9}, using integration by parts, we could estimate the second term on the l.h.s of \eqref{lte19} as follows:
% \beno
% -(\frac{2\epsilon v}{1+\epsilon\eta}\partial_x\eta_t'\,|\,(1-\epsilon\partial_x^2)\eta_t')_2
% =\epsilon(\partial_x\Bigl(\frac{v}{1+\epsilon\eta}\Bigr)\,|\,|\eta_t'|^2+\epsilon|\eta_{tx}'|^2)_2,
% \eeno
% which along with \eqref{lte1} and \eqref{ansatz} implies
% \beno
% | -(\frac{2\epsilon v}{1+\epsilon\eta}\partial_x\eta_t'\,|\,(1-\epsilon\partial_x^2)\eta_t')_2|
% \lesssim\epsilon\bigl(|\eta_x|_\infty+|v_x|_\infty\bigr)\bigl(|\eta_t'|^2+\epsilon|\eta_{tx}'|^2\bigr).
% \eeno
%
%We can also estimate the terms on the r.h.s of \eqref{lte19} by using  H\"older inequality. Then we finally obtain that

Similarly to the derivation of \eqref{lte10}, we obtain that
 \begin{equation}\label{lte21}\begin{aligned}
 \frac{1}{2}\frac{d}{dt}E_{21}(t)\lesssim&\epsilon\bigl(|\eta_t|_\infty+|\eta_x|_\infty+\epsilon^{\frac{1}{2}}|\eta_{xx}|_\infty+|v_x|_\infty\bigr)
E_{21}(t)\\
&\quad
 +|f'|_2|\eta_t'|_2+\epsilon|f_x'|_2|\eta_{tx}'|_2.
 \end{aligned}\end{equation}

 \smallskip

In order to get the final estimate on system \eqref{quasilinear 2}, we have to estimate the source terms $|f'|_2+\epsilon^{\frac{1}{2}}|f_x'|_2$
and $|g'|_2$. Thanks to the expressions of $f'$ and $g'$ in \eqref{reduction 4} and the expressions of $f$ and $g$ in \eqref{reduction 2} and \eqref{reduction 3}, using \eqref{ansatz 1} and \eqref{ansatz}, after  tedious but elementary calculations, we obtain that
\begin{equation}\label{lte22}\begin{aligned}
&|f'|_2+\epsilon^{\frac{1}{2}}|f_x'|_2+|g'|_2\lesssim\epsilon\bigl(|\eta_x|_\infty+\epsilon^{\frac{1}{2}}|\eta_{xx}|_\infty
+\epsilon|\eta_{xxx}|_\infty+|\eta_t|_\infty\\
&\quad+\epsilon^{\frac{1}{2}}|\eta_{tx}|_\infty+|v_x|_\infty+\epsilon^{\frac{1}{2}}|v_{xx}|_\infty+|v_t|_\infty\bigr)
\bigl(|\eta_{xx}|_2+\epsilon^{\frac{1}{2}}|\eta_{xxx}|_2\\
&\quad+\epsilon|\eta_{xxxx}|_2+\epsilon^{\frac{3}{2}}|\eta_{xxxxx}|_2+|\eta_t|_2+|\eta_{tx}|_2+\epsilon^{\frac{1}{2}}|\eta_{txx}|_2\\
&\quad
 +\epsilon|\eta_{txxx}|_2+|v_{xx}|_2+|v_t|_2+|v_{tx}|_2+\epsilon^{\frac{1}{2}}|v_{txx}|_2+|v_{tt}|_2\bigr),
\end{aligned}\end{equation}
where we used $\eta'=\eta_t$ and $v'=v_t$.

Now, we define $E_2(t)\underset{\text{def}}=E_{21}(t)+E_{22}(t)$. Then \eqref{lte18} and \eqref{lte20} yields
\begin{equation}\label{lte23}
E_2(t)\sim|\eta_t'|_{X^0_\epsilon}^2+|\eta'_x|_{X^0_{\epsilon^2}}^2+|v_t'|_2^2+|v'_x|_{X^0_{\epsilon}}^2.
\end{equation}

 Thanks to \eqref{lte17}, \eqref{lte21} and \eqref{lte22}, using the interpolation inequality \eqref{int1}, we obtain that
\begin{equation}\label{top order estimate}\begin{aligned}
 &\frac{1}{2}\frac{d}{dt}E_2(t)\lesssim\epsilon\bigl(|\eta_x|_\infty+\epsilon^{\frac{1}{2}}|\eta_{xx}|_\infty
+\epsilon|\eta_{xxx}|_\infty+|\eta_t|_\infty\\
&\quad+\epsilon^{\frac{1}{2}}|\eta_{tx}|_\infty+|v_x|_\infty+\epsilon^{\frac{1}{2}}|v_{xx}|_\infty+|v_t|_\infty\bigr)\\
&\quad\times\bigl(|\eta|_{X^2_{\epsilon^3}}^2+|\eta_t|_{X^1_{\epsilon^2}}^2+|\eta_{tt}|_{X^0_{\epsilon}}^2
+|v|_{H^2}^2+|v_t|_{X^1_{\epsilon}}^2+|v_{tt}|_2^2\bigr),
\end{aligned}\end{equation}
where we replaced $\eta',\ v'$ by $\eta_t, \ v_t$ respectively in the bound.

{\bf Step 4. The final estimate  on \eqref{reduction form 1}.} Before closing the {\it a priori estimates}, we first define the energy functional associated to the quasilinear system \eqref{reduction form 1}-\eqref{quasilinear 1}-\eqref{quasilinear 2} as
\begin{equation}\label{functional 1}
E(t)\underset{\text{def}}=E_0(t)+E_1(t)+E_2(t).
\end{equation}
Notice that $\eta'=\eta_t$ and $v'=v_t$. Then \eqref{lte6}, \eqref{lte16} and \eqref{lte23} yield that
\begin{equation}\label{functional 2}\begin{aligned}
E(t)\sim|\eta|_{X^1_{\epsilon^2}}^2+|\eta_t|_{X^1_{\epsilon^2}}^2+|\eta_{tt}|_{X^0_{\epsilon}}^2
+|v|_{X^1_{\epsilon}}^2+|v_t|_{X^1_{\epsilon}}^2+|v_{tt}|_2^2.
\end{aligned}\end{equation}

In order to close the energy estimate, we also need to define the total energy functional for \eqref{reduction form 1} as follows:
\begin{equation}\label{functional 3}\begin{aligned}
\mathcal{E}(t)\underset{\text{def}}=|\eta|_{X^2_{\epsilon^3}}^2+|\eta_t|_{X^1_{\epsilon^2}}^2+|\eta_{tt}|_{X^0_{\epsilon}}^2
+|v|_{X^2_{\epsilon^2}}^2+|v_t|_{X^1_{\epsilon}}^2+|v_{tt}|_2^2.
\end{aligned}\end{equation}

With the definitions \eqref{functional 1} and \eqref{functional 3}, using the interpolation inequality \eqref{int1} and the Sobolev inequality $|\cdot|_{L^\infty(\R)}\lesssim|\cdot|_{H^1(\R)}$, the energy estimates \eqref{lowest estimate}, \eqref{first order estimate} and \eqref{top order estimate} give rise to
\begin{equation}\label{energy estimate 1}
\frac{1}{2}\frac{d}{dt}E(t)\lesssim\epsilon\mathcal{E}(t)^{\frac{3}{2}}.
\end{equation}

To finish the proof, we have to show that
\begin{equation}\label{sim}
\mathcal{E}(t)\sim E(t).
\end{equation}
Indeed, thanks to \eqref{functional 2} and \eqref{functional 3}, we have
\beno
\mathcal{E}(t)\sim E(t)+|\eta|_{X^2_{\epsilon^3}}^2+|v|_{X^2_{\epsilon^2}}^2.
\eeno
Then we only need to show that
\beno
|\eta|_{X^2_{\epsilon^3}}^2+|v|_{X^2_{\epsilon^2}}^2\lesssim E(t).
\eeno
That is to say, we shall recover the regularity in space through the  regularity in time.
More precisely, \eqref{reduction form 1} yields
\begin{equation}\label{transfer}
v_x=-\eta_t,\quad (1-\epsilon\partial_x^2)\eta_x=-\frac{v_t}{1+\epsilon\eta}-\frac{\epsilon}{1+\epsilon\eta}\Bigl(\frac{v^2}{1+\epsilon\eta}\Bigr)_x.
\end{equation}

The first equation of \eqref{transfer} shows
\begin{equation}\label{improve 1}
|v|_{X_{\epsilon^2}^2}^2=|v|_{H^2}^2+\epsilon^2|v|_{H^{2+2}}^2\lesssim |v|_{H^1}^2+|\eta_{tx}|_2^2+\epsilon^2|\eta_{txxx}|_2^2
\lesssim E(t),
\end{equation}
where we used \eqref{ansatz}.
 While the second equation of \eqref{transfer}, \eqref{ansatz} and \eqref{improve 1} imply
\beno\begin{aligned}
&|\eta|_{X_{\epsilon^3}^2}^2\sim|\eta|_{H^1}^2+|(1-\epsilon\partial_x^2)\eta_{xx}|_2^2+\epsilon|(1-\epsilon\partial_x^2)\eta_{xxx}|_2^2\\
&\lesssim |\eta|_{X^1_{\epsilon^2}}^2+|v|_{X^1_{\epsilon^2}}^2+|v_t|_{X^1_{\epsilon}}^2
\lesssim E(t),
\end{aligned}
\eeno
which achieves the proof of  \eqref{sim}. Due to \eqref{sim} and \eqref{energy estimate 1}, we have
\begin{equation}\label{energy estimate 2}
\frac{1}{2}\frac{d}{dt}E(t)\lesssim\epsilon E(t)^{\frac{3}{2}}.
\end{equation}

{\bf Step 5. Initial data for the quasilinear system and final estimate.}
In this step, we have to derive the regularity of the initial data to the quasilinear system through the system \eqref{reduction form 1} and the regularity of the initial data  $(\eta_0,v_0)$. The first equation of \eqref{reduction form 1} shows that
\beno
|\eta'|_{t=0}|_{X^1_{\epsilon^2}}=|\eta_t|_{t=0}|_{X^1_{\epsilon^2}}=|\partial_x v_0|_{X^1_{\epsilon^2}}\lesssim|v_0|_{X^2_{\epsilon^2}},
\eeno
while the second equation of \eqref{reduction form 1} shows that
\beno\begin{aligned}
&\quad|v'|_{t=0}|_{X^1_{\epsilon}}=|v_t|_{t=0}|_{X^1_{\epsilon}}\\
&\lesssim|(1+\epsilon\eta_0)(1-\epsilon\partial_x^2)\partial_x\eta_0|_{X^1_{\epsilon}}
+\epsilon|\Bigl(\frac{v_0^2}{1+\epsilon\eta_0}\Bigr)_x|_{X^1_{\epsilon}}\\
&\lesssim|\eta_0|_{X^2_{\epsilon^3}}+|v_0|_{X^2_{\epsilon^2}},
\end{aligned}\eeno
where we assume that $|\eta_0|_{X^2_{\epsilon^3}}+|v_0|_{X^2_{\epsilon^2}}\leq C$ and $\epsilon\leq\epsilon_0$ with $\epsilon_0$ small enough.

Thanks to \eqref{quasilinear 2}, we can also infer that
\beno\begin{aligned}
&|\eta'_t|_{t=0}|_{X^0_{\epsilon}}+|v'_t|_{t=0}|_2=|\eta_{tt}|_{t=0}|_{X^0_{\epsilon}}+|v_{tt}|_{t=0}|_2\\
&\lesssim|\eta_0|_{X^2_{\epsilon^3}}+|v_0|_{X^2_{\epsilon^2}}
\end{aligned}\eeno
provided that $|\eta_0|_{X^2_{\epsilon^3}}+|v_0|_{X^2_{\epsilon^2}}\leq C$ and $\epsilon\leq\epsilon_0$ with $\epsilon_0$ small enough.

Thus, we have
\begin{equation}\label{regularity for initial data}
E(0)\sim\mathcal{E}(0)\lesssim|\eta_0|_{X^2_{\epsilon^3}}^2+|v_0|_{X^2_{\epsilon^2}}^2.
\end{equation}

\medskip

{\bf Step 6. Existence and uniqueness.} The estimates  \eqref{energy estimate 2} and \eqref{regularity for initial data} are crucial to prove  the existence of  $T>0$ independent of $\epsilon$ such that \eqref{reduction form 1} has a unique solution $(\eta,v)$ on a  time interval $[0,T/\epsilon]$ with initial data $(\eta_0,v_0)\in X^2_{\epsilon^3}\times X^2_{\epsilon^2}$  satisfying moreover by \eqref{energy estimate 2} and \eqref{sim} the estimate
\begin{equation}\label{total energy estimate}
\sup_{t\in [0,T/\epsilon]}\mathcal{E}(t)\lesssim|\eta_0|_{X^2_{\epsilon^3}}^2+|v_0|_{X^2_{\epsilon^2}}^2.
\end{equation}

We shall precise the existence proof in the following Section 5.

\medskip

Notice that $v=(1+\epsilon\eta)u$. Then we have  obtained the long time estimate of solutions to  the original Boussinesq system \eqref{abcd}-\eqref{rme2} with $a=b=d=0, c=-1$ together to  the energy estimate \eqref{lte2}.
\end{proof}

Now we state the long time existence result for the two-dimensional case.
\begin{theorem}\label{long time existence for case c=-1 2D}
Let $a=b=d=0, c=-1$.
Assume that $\eta_0\in X^3_{\epsilon^4}(\R^2),\vv u_0\in X^3_{\epsilon^3}(\R^2)$  satisfy the curl free condition $\curl\vv u_0=0$ and the (non-cavitation) condition
\begin{equation}\label{lte1a}
1+\epsilon\eta_0\geq H>0,\quad H\in(0,1).
\end{equation}
Then there exists a constant $\tilde{c}_0$  such that for any $\epsilon\leq\epsilon_0=\frac{1}{\tilde{c}_0(| \eta_0|_{X^3_{\epsilon^4}}+| u_0|_{X^3_{\epsilon^3}})}$, there
exists $T>0$ independent of $\epsilon$, such that \eqref{abcd}-\eqref{rme2} has a unique solution $(\eta,\vv u)^T$ with
$\eta\in C([0,T/\epsilon];X^3_{\epsilon^4}(\R^2))$ and $\vv u\in C([0,T/\epsilon];X^3_{\epsilon^3}(\R^2))$. Moreover,
\begin{equation}\label{lte2a}
\begin{aligned}
&\sup_{t\in[0,T/\epsilon]}\bigl(|\eta|_{X^3_{\epsilon^4}}^2+|\eta_t|_{X^2_{\epsilon^3}}^2+|\eta_{tt}|_{X^1_{\epsilon^2}}^2
+|\eta_{ttt}|_{X^0_{\epsilon}}^2
+|\vv u|_{X^3_{\epsilon^3}}^2\\
&\qquad+|\vv u_t|_{X^2_{\epsilon^2}}^2+|\vv u_{tt}|_{X^1_{\epsilon}}^2+|\vv u_{ttt}|_2^2\bigr)
\leq C\bigl(|\eta_0|_{X^3_{\epsilon^4}}^2+|\vv u_0|_{X^3_{\epsilon^3}}^2\bigr).
\end{aligned}\end{equation}
\end{theorem}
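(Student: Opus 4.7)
The plan is to follow the same quasilinearization-by-time-derivatives strategy used for Theorem \ref{long time existence for case c=-1}, with two structural modifications dictated by the two-dimensional setting: (i) we exploit the curl-free condition, which is preserved by the flow since $\vv u_t = -\nabla(\eta+\tfrac{\epsilon}{2}|\vv u|^2-\epsilon\Delta\eta)$ is a gradient, so we may replace $\vv u\cdot\nabla\vv u$ by $\tfrac12\nabla|\vv u|^2$ in all computations; (ii) because the Sobolev embedding $H^s(\R^2)\subset L^\infty$ requires $s>1$ (versus $s>1/2$ on $\R$), we must climb one more rung on the regularity ladder, which is exactly what the statement $\eta_0\in X^3_{\epsilon^4}$, $\vv u_0\in X^3_{\epsilon^3}$ and the control of $\partial_t^3$-derivatives in \eqref{lte2a} reflect.

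First I would introduce the good unknown $\vv v=(1+\epsilon\eta)\vv u$, which satisfies $\eta_t+\nabla\cdot\vv v=0$ together with (using $\curl\vv u=0$)
\begin{equation*}
\frac{1}{1+\epsilon\eta}\vv v_t+\nabla\eta-\epsilon\nabla\Delta\eta+\frac{\epsilon}{1+\epsilon\eta}\nabla\cdot\Bigl(\frac{\vv v\otimes\vv v}{1+\epsilon\eta}\Bigr)=\vv 0,
\end{equation*}
the analogue of \eqref{reduction form 1}. Then I would differentiate this system once, twice and three times in $t$, obtaining wave-type equations for $(\eta,\vv v)$, $(\eta_t,\vv v_t)$, $(\eta_{tt},\vv v_{tt})$ whose principal linear part is in each case the \emph{same} dispersive wave operator
\begin{equation*}
\partial_t^2-\nabla\cdot\bigl((1+\epsilon\eta)\nabla\,\cdot\,\bigr)+\epsilon\nabla\cdot\bigl((1+\epsilon\eta)\nabla\Delta\,\cdot\,\bigr)+\frac{2\epsilon\vv v}{1+\epsilon\eta}\cdot\nabla\partial_t,
\end{equation*}
with controlled source terms analogous to \eqref{reduction 2}, \eqref{reduction 3}, \eqref{reduction 4}. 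The crucial observation, exactly as in the one-dimensional case, is that differentiating in $t$ preserves the structure, while differentiating in $x$ would produce an uncontrollable $-\tfrac{\epsilon\nabla\eta}{(1+\epsilon\eta)^2}\vv v_{tt}$ term in the second equation.

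Next I would define three energy functionals $E_0(t)$, $E_1(t)$, $E_2(t)$, $E_3(t)$, the last of which tests the triply-differentiated system against $((1-\epsilon\Delta)\eta_{ttt},\vv v_{ttt})$. Under a priori assumptions $1+\epsilon\eta\geq H/2$ and smallness of $\epsilon$ times the $W^{1,\infty}$-norms of $\eta,\vv u,\eta_t,\vv v_t,\eta_{tt}$ (all now controlled by the energy via $H^s(\R^2)\subset L^\infty$ for $s>1$), the integrations by parts used in Step 3 of the one-dimensional proof go through verbatim; the extra transport terms produced by $\nabla\cdot\vv v/(1+\epsilon\eta)$ (second component of the symmetrizer) are absorbed in the usual way. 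The total energy $E(t)=\sum_{k=0}^{3}E_k(t)$ is coercive for $|\eta_0|_{X^3_{\epsilon^4}}+|\vv u_0|_{X^3_{\epsilon^3}}$ small $\cdot\,\epsilon$, and the energy inequality takes the form $\tfrac{d}{dt}E(t)\lesssim\epsilon\, \mathcal E(t)^{3/2}$, where $\mathcal E(t)$ is the full norm on the left-hand side of \eqref{lte2a}.

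The main obstacle, and the step I would devote most care to, is the \emph{recovery of spatial regularity from temporal regularity}, the 2D analogue of Step 4 of the previous proof. Using $\nabla\cdot\vv v=-\eta_t$ and the divergence-curl decomposition (the curl-free condition on $\vv u$, hence on $\vv v$ modulo lower-order $\epsilon$-terms, gives $\vv v$ from its divergence via Riesz projectors), together with the elliptic identity
\begin{equation*}
(1-\epsilon\Delta)\nabla\eta=-\frac{\vv v_t}{1+\epsilon\eta}-\frac{\epsilon}{1+\epsilon\eta}\nabla\cdot\Bigl(\frac{\vv v\otimes\vv v}{1+\epsilon\eta}\Bigr),
\end{equation*}
one trades every spatial derivative on $(\eta,\vv v)$ for a time derivative (plus lower order). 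The care required is that the curl part of $\vv v$ is $O(\epsilon)$ (it is $\epsilon\eta\curl\vv u$-type) and must be estimated separately; this is the point where the higher regularity $X^3_{\epsilon^4}\times X^3_{\epsilon^3}$ (compared to $X^2_{\epsilon^3}\times X^2_{\epsilon^2}$ in 1D) is actually used. Once $\mathcal E(t)\sim E(t)$ is established, the bootstrap argument and the construction of approximate solutions proceed as in Section 5, and the initial-data estimate $E(0)\lesssim|\eta_0|_{X^3_{\epsilon^4}}^2+|\vv u_0|_{X^3_{\epsilon^3}}^2$ is obtained by reading off $(\partial_t^k\eta,\partial_t^k\vv v)|_{t=0}$ for $k=1,2,3$ from the equations, yielding the long-time bound \eqref{lte2a} on $[0,T/\epsilon]$.
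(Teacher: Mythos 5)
Your proposal follows essentially the same route as the paper's proof: the good unknown $\vv v=(1+\epsilon\eta)\vv u$, quasilinearization by applying $\partial_t^k$ ($k\leq 3$) to preserve the structure of the principal part, energy functionals $E_0,\dots,E_3$ under the non-cavitation and smallness assumptions, recovery of spatial from temporal regularity via $\nabla\cdot\vv v=-\eta_t$, the elliptic identity for $(1-\epsilon\Delta)\nabla\eta$ and a div--curl decomposition in which the curl part of $\vv v$ is $O(\epsilon)$, and finally a mollified approximation scheme. This matches the paper's argument, including the key two-dimensional subtlety you correctly identify.
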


\begin{remark}
By simplicity, we assume that $\curl \vv u_0=0$. Actually, the equation of $\vv u$ shows that $\partial_t\curl\vv u(t,\cdot)=0$ so that
$\curl\vv u$ is preserved as time evolves.  In fact, as pointed out to us by Vincent Duch\^{e}ne, considering the term $ \nabla(|{\bf u}|^2)$ is not physically relevant outside the irrotational case. When $\curl {\bf u}\neq 0,$ one should use instead the term  ${\bf u}\cdot \nabla {\bf u},$ but then the corresponding system is to our knowledge not rigorously justified (see \cite{CL} for Green-Naghdi type systems).
\end{remark}

\begin{proof}
Since the proof is similar to that of Theorem \ref{long time existence for case c=-1}, we only sketch  it. We also divide the proof into several steps. Again we only indicate how to obtain the {\it a priori} estimates. The existence proof which is similar to the one-dimensional case is postponed to the following Section 5.

\smallskip

{\bf Step 1. Reduction of the system.}
Since $a=b=d=0, c=-1$, we first rewrite  \eqref{abcd} in the form \eqref{2Dbis}. Setting
\beno
\vv v\underset{\text{def}}=(1+\epsilon\eta)\vv u,
\eeno
the first equation of \eqref{2Dbis} becomes
\beno
\eta_t+\nabla\cdot\vv v=0.
\eeno
To get the evolution equation for $\vv v$, we first get from the second equation of \eqref{2Dbis} that
\beno
\partial_t\curl\vv u=0,
\eeno
which along with the assumption that $\curl\vv u_0=0$ implies that $\curl\vv u=0$. Then  $\nabla(|\vv u|^2)=2\vv u\cdot\nabla\vv u$ and the second equation of \eqref{2Dbis} becomes to
\beno
\partial_t\vv u+\nabla\eta-\epsilon\nabla\Delta\eta+\epsilon\vv u\cdot\nabla\vv u=\vv 0.
\eeno
Similarly as one-dimensional case, elementary calculations and the use of the above equation yield the evolution equation for $\vv v :$
\beno
\vv v_t+(1+\epsilon\eta)\nabla\eta-\epsilon(1+\epsilon\eta)\nabla\Delta\eta+\epsilon\nabla\cdot\Bigl(\frac{\vv v}{1+\epsilon\eta}\otimes\vv v\Bigr)=\vv 0,
\eeno
where $\Bigl(\nabla\cdot(\vv u\otimes\vv v)\Bigr)^i\underset{\text{def}}=\partial_j(u^iv^j)$.
Then \eqref{2Dbis} is rewritten in terms of $(\eta,\vv v)$ as follows
\begin{equation}\label{reduction form 1 for 2D}\left\{\begin{aligned}
&\eta_t+\nabla\cdot\vv v=0,\\
&\frac{1}{1+\epsilon\eta}\vv v_t+\nabla\eta-\epsilon\nabla\Delta\eta+\frac{\epsilon}{1+\epsilon\eta}\nabla\cdot\Bigl(\frac{\vv v}{1+\epsilon\eta}\otimes\vv v\Bigr)=\vv 0.
\end{aligned}\right.\end{equation}

We shall derive  energy estimates for this system.

\medskip

{\bf Step 2. Quasilinearization of \eqref{reduction form 1 for 2D}.} In this step, we shall  quasilinearize the system \eqref{reduction form 1 for 2D}
by applying to it  $\partial_t$, $\partial_t^2$ and $\partial_t^3$. Applying $\partial_t$ to the first equation of \eqref{reduction form 1 for 2D} leads to
\beno\begin{aligned}
\partial_t^2\eta&=-\nabla\cdot\vv v_t=\nabla\cdot\bigl((1+\epsilon\eta)\nabla\eta\bigr)-\epsilon\nabla\cdot\bigl((1+\epsilon\eta)\nabla\Delta\eta\bigr)
+\epsilon\nabla\cdot\Bigl[\nabla\cdot\Bigl(\frac{\vv v}{1+\epsilon\eta}\otimes\vv v\Bigr)\Bigr]\\
&=\nabla\cdot\bigl((1+\epsilon\eta)\nabla\eta\bigr)-\epsilon\nabla\cdot\bigl((1+\epsilon\eta)\nabla\Delta\eta\bigr)+\frac{2\epsilon\vv v}{1+\epsilon\eta}\cdot\nabla (\nabla\cdot\vv v)+\frac{\epsilon|\nabla\cdot\vv v|^2}{1+\epsilon\eta}\\
&\qquad+2\epsilon\vv v\cdot\nabla\bigl(\frac{1}{1+\epsilon\eta}\bigr)(\nabla\cdot\vv v) +\epsilon\vv v\cdot\nabla\bigl[\vv v\cdot\nabla\bigl(\frac{1}{1+\epsilon\eta}\bigr)\bigr]+\sum_{i,j=1,2}\partial_j\bigl(\frac{v^i}{1+\epsilon\eta}\bigr)\partial_iv^j.
\end{aligned}\eeno
One notices that the third term $\frac{2\epsilon\vv v}{1+\epsilon\eta}\cdot\nabla (\nabla\cdot\vv v)$ in the second line of the above equality is the higher order term. Since by \eqref{reduction form 1 for 2D} $\nabla\cdot\vv v=-\eta_t$, we rewrite this term as
\beno
\frac{2\epsilon\vv v}{1+\epsilon\eta}\cdot\nabla (\nabla\cdot\vv v)=-\frac{2\epsilon\vv v}{1+\epsilon\eta}\cdot\nabla\eta_t.
\eeno
Then we obtain
\begin{equation}\label{lte3a}
\eta_{tt}-\nabla\cdot\bigl((1+\epsilon\eta)\nabla\eta\bigr)+\epsilon\nabla\cdot\bigl((1+\epsilon\eta)\nabla\Delta\eta\bigr)
+\frac{2\epsilon\vv v}{1+\epsilon\eta}\cdot\nabla\eta_t=f,
\end{equation}
with
\begin{equation}\label{reduction 2a}\begin{aligned}
f\underset{\text{def}}=&\frac{\epsilon|\nabla\cdot\vv v|^2}{1+\epsilon\eta}+2\epsilon\vv v\cdot\nabla\bigl(\frac{1}{1+\epsilon\eta}\bigr)(\nabla\cdot\vv v)\\
&\quad+\epsilon\vv v\cdot\nabla\bigl[\vv v\cdot\nabla\bigl(\frac{1}{1+\epsilon\eta}\bigr)\bigr]+\epsilon\sum_{i,j=1,2}\partial_j\bigl(\frac{v^i}{1+\epsilon\eta}\bigr)\partial_iv^j.
\end{aligned}\end{equation}

Applying $\partial_t$ to the second equation of \eqref{reduction form 1 for 2D} one obtains
\beno\begin{aligned}
\partial_t^2\vv v&=-(1+\epsilon\eta)\nabla\eta_t+\epsilon(1+\epsilon\eta)\nabla\Delta\eta_t-\epsilon\eta_t(\nabla\eta-\epsilon\nabla\Delta\eta)
-\epsilon\nabla\cdot\Bigl(\frac{\vv v}{1+\epsilon\eta}\otimes\vv v\Bigr)_t.
\end{aligned}\eeno

Notice that $\eta_t=-\nabla\cdot\vv v$,
 %and
%\beno\begin{aligned}
%\nabla\cdot\Bigl(\frac{\vv v}{1+\epsilon\eta}\otimes\vv v\Bigr)_t=&(\vv v\cdot\nabla)\Bigl(\frac{\vv v_t}{1+\epsilon\eta}\Bigr)+\frac{\vv v}{1+\epsilon\eta}(\nabla\cdot\vv v_t)
%\\
%&-\epsilon(\vv v\cdot\nabla)\Bigl(\frac{\vv v\eta_t}{(1+\epsilon\eta)^2}\Bigr)
%+\partial_t\Bigl(\frac{\vv v}{1+\epsilon\eta}\Bigr)(\nabla\cdot\vv v).
%\end{aligned}\eeno
we obtain the reduced equation for $\vv v$
\begin{equation}\label{lte4a}\begin{aligned}
&\frac{1}{1+\epsilon\eta}\vv v_{tt}-\nabla(\nabla\cdot\vv v)+\epsilon \nabla\Delta(\nabla\cdot\vv v)+(\frac{\epsilon\vv v}{1+\epsilon\eta}\cdot\nabla)\Bigl(\frac{\vv v_t}{1+\epsilon\eta}\Bigr)\\
&\qquad+\frac{\epsilon\vv v}{(1+\epsilon\eta)^2}(\nabla\cdot\vv v_t)=\vv g,
\end{aligned}\end{equation}
with
\begin{equation}\label{reduction 3a}\begin{aligned}
\vv g\underset{\text{def}}=&-\frac{\epsilon\eta_t}{1+\epsilon\eta}(\nabla\eta-\epsilon\nabla\Delta\eta)+(\frac{\epsilon^2\vv v}{1+\epsilon\eta}\cdot\nabla)\Bigl(\frac{\vv v\eta_t}{(1+\epsilon\eta)^2}\Bigr)\\
&
-\frac{\epsilon}{1+\epsilon\eta}\partial_t\Bigl(\frac{\vv v}{1+\epsilon\eta}\Bigr)(\nabla\cdot\vv v).
\end{aligned}\end{equation}

Combining \eqref{lte3a} and \eqref{lte4a}, we obtain
\begin{equation}\label{quasilinear 1a}\left\{\begin{aligned}
&\eta_{tt}-\nabla\cdot\bigl((1+\epsilon\eta)\nabla\eta\bigr)+\epsilon\nabla\cdot\bigl((1+\epsilon\eta)\nabla\Delta\eta\bigr)
+\frac{2\epsilon\vv v}{1+\epsilon\eta}\cdot\nabla\eta_t=f,\\
&\frac{1}{1+\epsilon\eta}\vv v_{tt}-\nabla(\nabla\cdot\vv v)+\epsilon \nabla\Delta(\nabla\cdot\vv v)\\
&\qquad+(\frac{\epsilon\vv v}{1+\epsilon\eta}\cdot\nabla)\Bigl(\frac{\vv v_t}{1+\epsilon\eta}\Bigr)+\frac{\epsilon\vv v}{(1+\epsilon\eta)^2}(\nabla\cdot\vv v_t)=\vv g,
\end{aligned}\right.\end{equation}
with $(f,\vv g)$ being defined in \eqref{reduction 2a} and \eqref{reduction 3a}.

\smallskip

We  also remark here that \eqref{quasilinear 1a} is a diagonalization of \eqref{reduction form 1 for 2D} and that the principal linear part for both equations of \eqref{quasilinear 1a} is the dispersive wave
\beno
(\partial_t^2-\Delta+\epsilon\Delta^2)\Psi.
\eeno
The source terms $(f,\vv g)$ are of lower order. One can then derive the $L^2$ energy estimate for \eqref{quasilinear 1a}.

\medskip

Denoting by $\eta'=\partial_t\eta$ and $\vv v'=\partial_t\vv v$, applying $\partial_t$ to \eqref{quasilinear 1a}, it transpires  that $(\eta',\vv v')$ satisfies the following system
\begin{equation}\label{quasilinear 2a}\left\{\begin{aligned}
&\eta_{tt}'-\nabla\cdot\bigl((1+\epsilon\eta)\nabla\eta'\bigr)+\epsilon\nabla\cdot\bigl((1+\epsilon\eta)\nabla\Delta\eta'\bigr)
+\frac{2\epsilon\vv v}{1+\epsilon\eta}\cdot\nabla\eta_t'=f',\\
&\frac{1}{1+\epsilon\eta}\vv v_{tt}'-\nabla(\nabla\cdot\vv v')+\epsilon \nabla\Delta(\nabla\cdot\vv v')\\
&\qquad+(\frac{\epsilon\vv v}{1+\epsilon\eta}\cdot\nabla)\Bigl(\frac{\vv v_t'}{1+\epsilon\eta}\Bigr)+\frac{\epsilon\vv v}{(1+\epsilon\eta)^2}(\nabla\cdot\vv v_t')=\vv g',
\end{aligned}\right.\end{equation}
where
\begin{equation}\label{reduction 4a}\begin{aligned}
&f'\underset{\text{def}}=\partial_t f+\epsilon\nabla\cdot(\eta_t\nabla\eta)-\epsilon^2\nabla\cdot(\eta_t\nabla\Delta\eta)-2\epsilon\Bigl(\frac{\vv v}{1+\epsilon\eta}\Bigr)_t\cdot\nabla\eta_t\\
&\vv g'\underset{\text{def}}=\partial_t\vv g+\frac{\epsilon\eta_t}{(1+\epsilon\eta)^2}\vv v'_t-
\epsilon(\partial_t\Bigl(\frac{\vv v}{1+\epsilon\eta}\Bigr)\cdot\nabla)\Bigl(\frac{\vv v_t}{1+\epsilon\eta}\Bigr)\\
&\qquad+\epsilon^2\Bigl(\frac{\vv v}{1+\epsilon\eta}\cdot\nabla\Bigr)\Bigl(\frac{\eta_t\vv v_t}{(1+\epsilon\eta)^2}\Bigr)-\epsilon\partial_t\Bigl(\frac{\vv v}{(1+\epsilon\eta)^2}\Bigr)(\nabla\cdot\vv v_t).
\end{aligned}\end{equation}
The principal part of \eqref{quasilinear 2a} is the same as that of  \eqref{quasilinear 1a}.

Similarly, denoting by $\eta''=\partial_t^2\eta=\partial_t\eta'$ and $\vv v''=\partial_t^2\vv v=\partial_t\vv v'$, applying $\partial_t$ to \eqref{quasilinear 2a}, it transpires  that $(\eta'',\vv v'')$ satisfies the following system
\begin{equation}\label{quasilinear 2aa}\left\{\begin{aligned}
&\eta_{tt}''-\nabla\cdot\bigl((1+\epsilon\eta)\nabla\eta''\bigr)+\epsilon\nabla\cdot\bigl((1+\epsilon\eta)\nabla\Delta\eta''\bigr)
+\frac{2\epsilon\vv v}{1+\epsilon\eta}\cdot\nabla\eta_t''=f'',\\
&\frac{1}{1+\epsilon\eta}\vv v_{tt}''-\nabla(\nabla\cdot\vv v'')+\epsilon \nabla\Delta(\nabla\cdot\vv v'')\\
&\qquad+\epsilon(\frac{\vv v}{1+\epsilon\eta}\cdot\nabla)\Bigl(\frac{\vv v_t''}{1+\epsilon\eta}\Bigr)+\frac{\epsilon\vv v}{(1+\epsilon\eta)^2}(\nabla\cdot\vv v_t'')=\vv g'',
\end{aligned}\right.\end{equation}
where
\begin{equation}\label{reduction 4aa}\begin{aligned}
&f''\underset{\text{def}}=\partial_t f'+\epsilon\nabla\cdot(\eta_t\nabla\eta')-\epsilon^2\nabla\cdot(\eta_t\nabla\Delta\eta')-2\epsilon\Bigl(\frac{\vv v}{1+\epsilon\eta}\Bigr)_t\cdot\nabla\eta_t'\\
&\vv g''\underset{\text{def}}=\partial_t\vv g'+\frac{\epsilon\eta_t}{(1+\epsilon\eta)^2}\vv v'_{tt}-
\epsilon(\partial_t\Bigl(\frac{\vv v}{1+\epsilon\eta}\Bigr)\cdot\nabla)\Bigl(\frac{\vv v_t'}{1+\epsilon\eta}\Bigr)\\
&\qquad+\epsilon^2\Bigl(\frac{\vv v}{1+\epsilon\eta}\cdot\nabla\Bigr)\Bigl(\frac{\eta_t\vv v'_t}{(1+\epsilon\eta)^2}\Bigr)-\epsilon\partial_t\Bigl(\frac{\vv v}{(1+\epsilon\eta)^2}\Bigr)(\nabla\cdot\vv v_t').
\end{aligned}\end{equation}
The principal part of \eqref{quasilinear 2aa} is also the same as that of  \eqref{quasilinear 1a}.

\medskip

{\bf Step 3. Energy estimates for the quasilinear system \eqref{reduction form 1 for 2D}-\eqref{quasilinear 1a}-\eqref{quasilinear 2a}-\eqref{quasilinear 2aa}.}
We shall derive  energy estimates for \eqref{reduction form 1 for 2D}, \eqref{quasilinear 1a}, \eqref{quasilinear 2a}, \eqref{quasilinear 2aa}  under the assumptions
\begin{equation}\label{ansatz a1}
1+\epsilon\eta\geq H>0,
\end{equation}
 and for all $t\in [0,T]$,
\begin{equation}\label{ansatz a}
|\eta(\cdot,t)|_{W^{1,\infty}}+|\vv v(\cdot,t)|_{W^{1,\infty}}+|\eta(\cdot,t)_t|_{\infty}+|\vv v(\cdot,t)_t|_{\infty}+|\eta(\cdot,t)|_{X^3_{\epsilon^4}}\leq c.
\end{equation}
where $|\cdot|_{X^s_{\epsilon^k}}^2=|\cdot|_{H^s}^2+\epsilon^k|\cdot|_{H^{s+k}}^2$ and the constant $c$ independent of $\epsilon$ depends on the initial data. We remark that \eqref{ansatz a1} and \eqref{ansatz a} are consequence of the assumption \eqref{lte1a} and the {\it a priori} estimate \eqref{total energy estimate for 2D} for $(\eta,\vv v)$.

\smallskip

{\it Step 3.1. Estimates for \eqref{reduction form 1 for 2D}.} Similarly as \eqref{reduction form 1}, taking the $L^2$ inner product of  \eqref{reduction form 1 for 2D} by $\bigl((1-\epsilon\Delta)\eta,\vv v\bigr)^T$ leads to
 \begin{equation}\label{lte5a}
 \frac{1}{2}\frac{d}{dt}E_0(t)
 =-\frac{\epsilon}{2}\bigl(\frac{\eta_t\vv v}{(1+\epsilon\eta)^2}\,|\,\vv v\bigr)_2
 -\epsilon\bigl(\nabla\cdot\Bigl(\frac{\vv v}{1+\epsilon\eta}\otimes\vv v\Bigr)\,|\,\frac{\vv v}{1+\epsilon\eta}\bigr)_2,
 \end{equation}
  where
   \beno
   E_0(t)\underset{\text{def}}=|\eta|_2^2+\epsilon|\nabla\eta|_2^2+(\frac{\vv v}{1+\epsilon\eta}\,|\,\vv v)_2.
    \eeno
Thanks to \eqref{ansatz a1} and \eqref{ansatz a}, we have
  \begin{equation}\label{lte6a}
E_0(t)\sim|\eta|_2^2+\epsilon|\nabla\eta|_2^2+|\vv v|_{L^2}^2.
  \end{equation}
 By \eqref{ansatz a1}, the first term on the r.h.s of \eqref{lte5a} is estimated as
 \beno
 |-\frac{\epsilon}{2}\bigl(\frac{\eta_t\vv v}{(1+\epsilon\eta)^2}\,|\,\vv v\bigr)_2|\lesssim\epsilon|\vv v|_{\infty}|\eta_t|_2|\vv v|_2
 \lesssim\epsilon|\vv v|_{\infty}(|\eta_t|_2^2+|\vv v|_2^2),
 \eeno
 while by integration by parts and \eqref{ansatz a1}, the second term on the r.h.s of \eqref{lte5a} is estimated as
 \beno\begin{aligned}
 &| -\epsilon\bigl(\nabla\cdot\Bigl(\frac{\vv v}{1+\epsilon\eta}\otimes\vv v\Bigr)\,|\,\frac{\vv v}{1+\epsilon\eta}\bigr)_2|
 =\frac{\epsilon}{2}|\bigl(\nabla\cdot\vv v\,|\,|\frac{\vv v}{1+\epsilon\eta}|^2\bigr)_2|\\
 &\lesssim\epsilon|\vv v|_{\infty}|\vv v|_2|\nabla\vv v|_2
 \lesssim\epsilon|\vv v|_{\infty}(|\vv v|_2^2+|\nabla\vv v|_2^2).
\end{aligned}\eeno
 Then we obtain
 \begin{equation}\label{lowest estimate for 2D}
 \frac{1}{2}\frac{d}{dt}E_0(t)\lesssim\epsilon|\vv v|_{\infty}\bigl(|\eta_t|_2^2+|\vv v|_2^2+|\nabla\vv v|_2^2\bigr).
 \end{equation}

 \smallskip

 {\it Step 3.2. Estimates for \eqref{quasilinear 2aa}.}  Taking the  $L^2$ scalar product of the first equation of \eqref{quasilinear 2aa} with $(1-\epsilon\Delta)\eta_t''$, we obtain
  \beno\begin{aligned}
&(\eta_{tt}''\,|\,(1-\epsilon\Delta)\eta_t'')_2-(\nabla\cdot\bigl((1+\epsilon\eta)\nabla\eta''\bigr)\,|\,(1-\epsilon\Delta)\eta_t'')_2\\
&\quad+\epsilon(\nabla\cdot\bigl((1+\epsilon\eta)\nabla\Delta\eta''\bigr)\,|\,(1-\epsilon\Delta)\eta_t'')_2+(\frac{2\epsilon \vv v}{1+\epsilon\eta}\cdot\nabla\eta_t''\,|\,(1-\epsilon\Delta)\eta_t'')_2\\
&=(f''\,|\,(1-\epsilon\Delta)\eta_t'')_2.
 \end{aligned}\eeno
Similar to the derivation of \eqref{lte7}, using integration by parts, we obtain
  \begin{equation}\label{lte7a}\begin{aligned}
 &\frac{1}{2}\frac{d}{dt}E_{31}(t)+(\frac{2\epsilon \vv v}{1+\epsilon\eta}\cdot\nabla\eta_t''\,|\,(1-\epsilon\Delta)\eta_t'')_2\\
 &=\frac{\epsilon}{2}(\eta_t\nabla\eta''\,|\,\nabla\eta'')_2+\epsilon^2(\eta_t\Delta\eta''\,|\,\Delta\eta'')_2
 +\epsilon^2(\nabla(\nabla\eta\cdot\nabla\eta'')\,|\,\nabla\eta_t'')_2\\
 &\quad-\epsilon^2(\nabla\eta\Delta\eta''\,|\,\nabla\eta_t'')_2+\frac{\epsilon^3}{2}(\eta_t\nabla\Delta\eta''\,|\,\nabla\Delta\eta'')_2
 +(f''\,|\,(1-\epsilon\Delta)\eta_t'')_2
 \end{aligned}\end{equation}
where
\beno\begin{aligned}
 E_{31}(t)&\underset{\text{def}}=|\eta_t''|_2^2+\epsilon|\nabla\eta_t''|_2^2
 +((1+\epsilon\eta)\nabla\eta''\,|\,\nabla\eta'')_2+2\epsilon((1+\epsilon\eta)\Delta\eta''\,|\,\Delta\eta'')_2\\
 &\quad+\epsilon^2((1+\epsilon\eta)\nabla\Delta\eta''\,|\,\nabla\Delta\eta'')_2.
\end{aligned}\eeno

By \eqref{ansatz a1} and \eqref{ansatz a}, we have
 \begin{equation}\label{lte8a}
 E_{31}(t)\sim|\eta_t''|_2^2+\epsilon|\nabla\eta_t''|_2^2+|\nabla\eta''|_2^2+\epsilon|\nabla^2\eta''|_2^2+\epsilon^2|\nabla^3\eta''|_2^2.
 \end{equation}

Now, we estimate the second term on the l.h.s of \eqref{lte7a}. Integrating by parts, we have
\begin{equation}\label{lte9a}\begin{aligned}
&(\frac{2\epsilon \vv v}{1+\epsilon\eta}\cdot\nabla\eta_t''\,|\,(1-\epsilon\Delta)\eta_t'')_2=-\epsilon (\nabla\cdot\Bigl(\frac{\vv v}{1+\epsilon\eta}\Bigr)\eta_t''\,|\,\eta_t'')_2\\
&\quad+2\epsilon^2(\bigl(\nabla\eta_t''\cdot\nabla\bigr)\Bigl(\frac{\vv v}{1+\epsilon\eta}\Bigr)\,|\,\nabla\eta_t'')_2-\epsilon^2(\nabla\cdot\Bigl(\frac{\vv v}{1+\epsilon\eta}\Bigr)\nabla\eta_t''\,|\,\nabla\eta_t'')_2,
\end{aligned}\end{equation}
which along with \eqref{ansatz a1}, \eqref{ansatz a}
%\begin{equation}\label{lte9a}
%|(\frac{2\epsilon \vv v}{1+\epsilon\eta}\cdot\nabla\eta_t\,|\,(1-\epsilon\Delta)\eta_t)_2|
%\lesssim\epsilon\bigl(|\nabla\eta|_\infty+|\nabla\vv v|_\infty\bigr)\bigl(|\eta_t|_2^2+\epsilon|\nabla\eta_t|_2^2\bigr).
%\end{equation}
and \eqref{lte7a} implies that
 \begin{equation}\label{lte10a}\begin{aligned}
& \frac{1}{2}\frac{d}{dt}E_{31}(t)\lesssim\epsilon\bigl(|\eta_t|_\infty+|\nabla\eta|_\infty+|\nabla\vv v|_\infty+\epsilon^{\frac{1}{2}}|\nabla^2\eta|_\infty\bigr)
\bigl(|\nabla\eta''|_2^2+\epsilon|\nabla^2\eta''|_2^2\\
&\quad+\epsilon^2|\nabla^3\eta''|_2^2+|\eta_t''|_2^2+\epsilon|\nabla\eta_t''|_2^2\bigr)
 +|f''|_2|\eta_t''|_2+\epsilon|\nabla f''|_2|\nabla\eta_t''|_2.
 \end{aligned}\end{equation}

Taking the $L^2$ inner product of the second of \eqref{quasilinear 2aa} with $\vv v_t''$ yields
 \begin{equation}\label{lte11a}\begin{aligned}
 &\frac{1}{2}\frac{d}{dt}E_{32}(t)+\epsilon\bigl((\vv v\cdot\nabla)\Bigl(\frac{\vv v_t''}{1+\epsilon\eta}\Bigr)\,|\,\frac{\vv v_t''}{1+\epsilon\eta}\bigr)_2
 +\bigl(\frac{\epsilon\vv v}{(1+\epsilon\eta)^2}(\nabla\cdot\vv v_t'')\,|\,\vv v_t''\bigr)_2\\
 &\quad
 =-\frac{1}{2}(\partial_t\bigl(\frac{1}{1+\epsilon\eta}\bigr)\vv v_t''\,|\,\vv v_t'')_2
 +(\vv g''\,|\,\vv v_t'')_2,
\end{aligned}\end{equation}
where
\beno
 E_{32}(t)\underset{\text{def}}=(\frac{\vv v_t''}{1+\epsilon\eta}\,|\,\vv v_t'')_2+|\nabla\cdot\vv v''|_2^2+\epsilon|\nabla(\nabla\cdot\vv v'')|_2^2.
\eeno

Since $\curl(\frac{\vv v''}{1+\epsilon\eta})=0$, we have
\beno
\curl\vv v''=(1+\epsilon\eta)\bigl({v''}^1\partial_2(\frac{1}{1+\epsilon\eta})-{v''}^2\partial_1(\frac{1}{1+\epsilon\eta})\bigr)
=\frac{\epsilon({v''}^2\partial_1\eta-{v''}^1\partial_2\eta)}{1+\epsilon\eta},
\eeno
which along with \eqref{ansatz a1} and \eqref{ansatz a} shows that
\begin{equation}\label{curl}\begin{aligned}
&|\curl\vv v''|_2\lesssim\epsilon|\vv v''|_2,\\
&|\nabla(\curl\vv v'')|_2\lesssim\epsilon(|\vv v''|_2+|\nabla\vv v''|_2)|\eta|_{H^3}
\lesssim\epsilon(|\vv v''|_2+|\nabla\vv v''|_2),
\end{aligned}\end{equation}
where for the second inequality, we used the fact that $|\nabla\eta|_\infty\lesssim|\nabla\eta|_{H^2}$ and the following estimate
\beno
|\vv v''\nabla^2\eta|_2\lesssim|\vv v''|_4|\nabla^2\eta|_4\stackrel{\text{Sobolev}}{\lesssim}|\vv v''|_{\dot{H}^{\frac{1}{2}}}|\nabla^2\eta|_{\dot{H}^{\frac{1}{2}}}\stackrel{\text{interpolation}}{\lesssim}(|\vv v''|_2+|\nabla\vv v''|_2)|\nabla^2\eta|_{H^1}.
\eeno
Then by virtue of div-curl lemma, we obtain
 \begin{equation}\label{lte12a}\begin{aligned}
E_{32}(t)\sim|\vv v_t''|_2^2+|\nabla\vv v''|_2^2+\epsilon|\nabla^2\vv v''|_2^2+O(\epsilon|\vv v''|_2^2).
\end{aligned}\end{equation}
Similar to \eqref{lte9a}, integration by parts on the second term on the l.h.s of \eqref{lte11a} leads to
\begin{equation}\label{lte13a}
\epsilon\bigl((\vv v\cdot\nabla)\Bigl(\frac{\vv v_t''}{1+\epsilon\eta}\Bigr)\,|\,\frac{\vv v_t''}{1+\epsilon\eta}\bigr)_2=-\frac{\epsilon}{2}\bigl(\nabla\cdot\vv v\frac{\vv v_t''}{1+\epsilon\eta}\,|\,\frac{\vv v_t''}{1+\epsilon\eta}\bigr)_2.
\end{equation}

For the third term on the l.h.s of \eqref{lte11a}, by integration by parts, we have
\beno\begin{aligned}
&\bigl(\frac{\epsilon\vv v}{(1+\epsilon\eta)^2}(\nabla\cdot\vv v_t'')\,|\,\vv v_t''\bigr)_2
=\epsilon\sum_{i,j=1,2}\bigl(\frac{v^i}{(1+\epsilon\eta)^2}(\partial_j{v_t''}^j)\,|\,{v_t''}^i\bigr)_2\\
&=-\epsilon\sum_{i,j=1,2}\bigl(\partial_j\bigl(\frac{v^i}{(1+\epsilon\eta)^2}\bigr){v_t''}^j\,|\,{v_t''}^i\bigr)_2
-\epsilon\sum_{i,j=1,2}\bigl(\frac{v^i}{(1+\epsilon\eta)^2}{v_t''}^j\,|\,\partial_j{v_t''}^i\bigr)_2.
\end{aligned}\eeno
To estimate the second term on the right hand side of the above equality, we first obtain by using $\curl(\frac{\vv v''}{1+\epsilon\eta})=0$ that
\beno\begin{aligned}
\partial_jv''^i_t&
=\Bigl(\partial_i\bigl(\frac{v''^j}{1+\epsilon\eta}\bigr)\cdot(1+\epsilon\eta)+\frac{\epsilon v''^i\partial_j\eta}{1+\epsilon\eta}\Bigr)_t=\partial_iv''^j_t+\epsilon\Bigl(\frac{v''^i\partial_j\eta-v''^j\partial_i\eta}{1+\epsilon\eta}\Bigr)_t.
\end{aligned}\eeno
By integration by parts, we have
\beno\begin{aligned}
&\ \ -\epsilon\bigl(\frac{v^i}{(1+\epsilon\eta)^2}{v_t''}^j\,|\,\partial_j{v_t''}^i\bigr)_2\\
&
=\frac{\epsilon}{2}\bigl(\partial_i\bigl(\frac{v^i}{(1+\epsilon\eta)^2}\bigr){v_t''}^j\,|\,{v_t''}^j\bigr)_2
-\epsilon^2\bigl(\frac{v^i}{(1+\epsilon\eta)^2}{v_t''}^j\,|\,\Bigl(\frac{v''^i\partial_j\eta-v''^j\partial_i\eta}{1+\epsilon\eta}\Bigr)_t\bigr)_2.
\end{aligned}\eeno
Then we obtain
\beno\begin{aligned}
&\bigl(\frac{\epsilon\vv v}{(1+\epsilon\eta)^2}(\nabla\cdot\vv v_t'')\,|\,\vv v_t''\bigr)_2
=\sum_{i,j=1,2}\Bigl(-\epsilon\bigl(\partial_j\bigl(\frac{v^i}{(1+\epsilon\eta)^2}\bigr){v_t''}^j\,|\,{v_t''}^i\bigr)_2\\
&\quad
+\frac{\epsilon}{2}\bigl(\partial_i\bigl(\frac{v^i}{(1+\epsilon\eta)^2}\bigr){v_t''}^j\,|\,{v_t''}^j\bigr)_2
-\epsilon^2\bigl(\frac{v^i}{(1+\epsilon\eta)^2}{v_t''}^j\,|\,\Bigl(\frac{v''^i\partial_j\eta-v''^j\partial_i\eta}{1+\epsilon\eta}\Bigr)_t\bigr)_2\Bigr),
\end{aligned}\eeno
which along with \eqref{ansatz a1}, \eqref{ansatz a}, \eqref{lte11a} and \eqref{lte13a} implies
%We can also bound the first term on the r.h.s of \eqref{lte11a} as follows
%\beno
%|-\frac{1}{2}(\partial_t\bigl(\frac{1}{1+\epsilon\eta}\bigr)\vv v_t\,|\,\vv v_t)_2|\lesssim\epsilon|\eta_t|_\infty|\vv v_t|_2^2,
%\eeno
%which along with \eqref{lte11} and \eqref{lte13} gives rise to
\begin{equation}\label{lte14a}\begin{aligned}
\frac{1}{2}\frac{d}{dt}E_{32}(t)\lesssim&\epsilon\bigl(|\eta_t|_\infty+\epsilon^{\frac{1}{2}}|\nabla\eta_t|_4+|\nabla\vv v|_\infty+|\nabla\eta|_\infty\bigr)\\
&\times\bigl(|\vv v_t''|_2^2+\epsilon|\nabla\vv v''|_2^2+\epsilon^2|\vv v''|_2^2\bigr)+|\vv g''|_2|\vv v_t''|_2.
\end{aligned}\end{equation}

\smallskip

Now, we define $E_3(t)\underset{\text{def}}=E_{31}(t)+E_{32}(t)$. Then \eqref{lte8a} and \eqref{lte12a} yields
\begin{equation}\label{lte16a}\begin{aligned}
E_3(t)&\sim|\nabla\eta''|_2^2+\epsilon|\nabla^2\eta''|_2^2+\epsilon^2|\nabla^3\eta''|_2^2+|\eta_t''|_2^2+\epsilon|\nabla\eta_t''|_2^2\\
&\quad
+|\nabla\vv v''|_2^2+\epsilon|\nabla^2\vv v''|_2^2+|\vv v_t''|_2^2+O(\epsilon|\vv v''|_2^2).
\end{aligned}\end{equation}
%where $|\cdot|_{X^s_{\epsilon^k}}^2=|\cdot|_{H^s}^2+\epsilon^k|\cdot|_{H^{s+k}}^2$.

Combining estimates \eqref{lte10a} and \eqref{lte14a},  using \eqref{lte16a}, we obtain
\begin{equation}\label{third order estimate a}\begin{aligned}
\frac{1}{2}&\frac{d}{dt}E_3(t)\lesssim
\epsilon\bigl(|\eta_t|_\infty+\epsilon^{\frac{1}{2}}|\nabla\eta_t|_4+|\nabla\eta|_\infty+|\nabla^2\eta|_3+|\nabla\vv v|_\infty\bigr)E_3(t)\\
&\quad +|f''|_2|\eta_t''|_2+\epsilon|\nabla f''|_2|\nabla\eta_t''|_2+|\vv g''|_2|\vv v_t''|_2.
\end{aligned}\end{equation}

\smallskip

 {\it Step 3.3. Estimates for \eqref{quasilinear 1a} and\eqref{quasilinear 2a}.} Since \eqref{quasilinear 1a} and \eqref{quasilinear 2a} have the same form as \eqref{quasilinear 2aa}, we have similar estimates as \eqref{third order estimate a} only with $(\eta'',\vv v'')$ being replaced  by $(\eta,\vv v)$ and $(\eta',\vv v')$ respectively.

 \smallskip

In order to get the total estimate for system \eqref{quasilinear 1a}, \eqref{quasilinear 2a} and \eqref{quasilinear 2aa}, we have to estimate the source terms $|f|_2+\epsilon^{\frac{1}{2}}|\nabla f|_2+|\vv g|_2$, $|f'|_2+\epsilon^{\frac{1}{2}}|\nabla f'|_2+|\vv g'|_2$ and $|f''|_2+\epsilon^{\frac{1}{2}}|\nabla f''|_2+|\vv g''|_2$. Thanks to the expressions of $f$, $\vv g$, $f'$, $\vv g'$  and $f''$, $\vv g''$, using \eqref{ansatz a1} and \eqref{ansatz a}, after  tedious but elementary calculations, we obtain
\begin{equation}\label{lte22a}\begin{aligned}
&|f|_2+\epsilon^{\frac{1}{2}}|\nabla f|_2+|\vv g|_2+|f'|_2+\epsilon^{\frac{1}{2}}|\nabla f'|_2+|\vv g'|_2\\
&\quad
+|f''|_2+\epsilon^{\frac{1}{2}}|\nabla f''|_2+|\vv g''|_2\lesssim\epsilon\mathcal{E}(t),
\end{aligned}\end{equation}
where
\begin{equation}\label{total energy for 2D}\begin{aligned}
\mathcal{E}(t)=&|\eta|_{X^3_{\epsilon^4}}^2+|\eta_t|_{X^2_{\epsilon^3}}^2+|\eta_{tt}|_{X^1_{\epsilon^2}}^2
+|\eta_{ttt}|_{X^0_{\epsilon}}^2
+|\vv v|_{X^3_{\epsilon^3}}^2\\
&\quad+|\vv v_t|_{X^2_{\epsilon^2}}^2+|\vv v_{tt}|_{X^1_{\epsilon}}^2+|\vv v_{ttt}|_2^2.
\end{aligned}\end{equation}

In the process of derivation of \eqref{lte22a}, we  used the fact that $\eta'=\eta_t,\, \eta''=\eta_{tt},\, \vv v'=\vv v_t,\, \vv v''=\vv v_{tt}$ and used the H\"older inequalities, Sobolev inequalities and interpolation inequalities frequently. We shall not show the details here.

{\bf Step 4. The final estimate  on \eqref{reduction form 1 for 2D}.} Before closing the {\it a priori estimates}, we first define the energy functional associated to the quasilinear system \eqref{reduction form 1 for 2D}-\eqref{quasilinear 1a}-\eqref{quasilinear 2a}-\eqref{quasilinear 2aa} as
\begin{equation}\label{functional 1a}
E(t)\underset{\text{def}}=E_0(t)+E_1(t)+E_2(t)+E_3(t),
\end{equation}
and $E_1(t),\, E_2(t)$ are defined in the same way as $E_3(t)$  with $(\eta'',\vv v'')$ being replaced by $(\eta,\vv v)$ and $(\eta',\vv v')$ respectively.
Notice that $\eta'=\eta_t,\, \eta''=\eta_{tt}$ and $\vv v'=\vv v_t,\, \vv v''=\vv v_{tt}$. Then \eqref{lte6a} and \eqref{lte16a} yield
\begin{equation}\label{functional 2a}\begin{aligned}
E(t)\sim&|\eta|_{X^1_{\epsilon^2}}^2+|\eta_t|_{X^1_{\epsilon^2}}^2+|\eta_{tt}|_{X^1_{\epsilon^2}}^2
+|\eta_{ttt}|_{X^0_{\epsilon}}^2
+|\vv v|_{X^1_{\epsilon}}^2\\
&\quad+|\vv v_t|_{X^1_{\epsilon}}^2+|\vv v_{tt}|_{X^1_{\epsilon}}^2+|\vv v_{ttt}|_2^2.
\end{aligned}\end{equation}

With the definitions \eqref{functional 1a} and \eqref{total energy for 2D}, using the interpolation inequality \eqref{int1} and the inequalities that $|u|_{L^\infty(\R^2)}\lesssim|u|_{H^2(\R^2)}$ and $|u|_{L^4(\R^2)}\lesssim|u|_{L^2(\R^2)}^{\frac{1}{2}}|\nabla u|_{L^2(\R^2)}^{\frac{1}{2}}$, the energy estimates \eqref{lowest estimate for 2D}, \eqref{third order estimate a} and \eqref{lte22a} give rise to
\begin{equation}\label{energy estimate 1 for 2D}
\frac{1}{2}\frac{d}{dt}E(t)\lesssim\epsilon\mathcal{E}(t)^{\frac{3}{2}},
\end{equation}
where $\mathcal{E}(t)$ is the total energy functional to \eqref{reduction form 1 for 2D} which is defined  in \eqref{total energy for 2D}.

To finish the proof, we have to show that
\begin{equation}\label{sim for 2D}
\mathcal{E}(t)\sim E(t).
\end{equation}
Indeed, thanks to \eqref{functional 2a} and \eqref{total energy for 2D}, we have
\beno
\mathcal{E}(t)\sim E(t)+|\eta|_{X^3_{\epsilon^4}}^2+|\eta_t|_{X^2_{\epsilon^3}}^2+|\vv v|_{X^3_{\epsilon^3}}^2+|\vv v_t|_{X^2_{\epsilon^2}}^2.
\eeno
Then we only need to show that
\beno
|\eta|_{X^3_{\epsilon^4}}^2+|\eta_t|_{X^2_{\epsilon^3}}^2+|\vv v|_{X^3_{\epsilon^3}}^2+|\vv v_t|_{X^2_{\epsilon^2}}^2\lesssim E(t).
\eeno
That is to say, we shall recover the regularity in space through the  regularity in time.
More precisely, \eqref{reduction form 1 for 2D} yields
\begin{equation}\label{transfer for 2D}
\nabla\cdot\vv v=-\eta_t,\quad (1-\epsilon\Delta)\nabla\eta=-\frac{\vv v_t}{1+\epsilon\eta}-\frac{\epsilon}{1+\epsilon\eta}\nabla\cdot\Bigl(\frac{\vv v}{1+\epsilon\eta}\otimes\vv v\Bigr).
\end{equation}

To control $|\vv v_t|_{X_{\epsilon^2}^2}$, we first have
\beno\begin{aligned}
&\quad|\vv v_t|_{X_{\epsilon^2}^2}^2=|\vv v_t|_{H^2}^2+\epsilon^2|\vv v_t|_{H^{2+2}}^2\\
&\lesssim|\vv v_t|_{H^1}^2+|\nabla(\nabla\cdot\vv v_t)|_2^2+|\nabla(\curl\vv v_t)|_2^2+\epsilon^2|\nabla^3(\nabla\cdot\vv v_t)|_2^2
+\epsilon^2|\nabla^3(\curl\vv v_t)|_2^2.
\end{aligned}\eeno
Since $\curl\bigl(\frac{\vv v_t}{1+\epsilon\eta}\bigr)=0$, similar derivation as \eqref{curl} leads to
\beno
\curl\vv v_t
=\frac{\epsilon(v_t^2\partial_1\eta-v_t^1\partial_2\eta)}{1+\epsilon\eta},
\eeno
and \beno\begin{aligned}
&|\nabla(\curl\vv v_t)|_2\lesssim\epsilon(|\vv v_t|_2+|\nabla\vv v_t|_2)|\eta|_{H^3}
\lesssim\epsilon|\vv v_t|_{H^1},\\
&|\nabla^3(\curl\vv v_t)|_2\lesssim\epsilon^{\frac{1}{2}}(|\vv v_t|_2+|\nabla^2\vv v_t|_2)|\eta|_{X^3_\epsilon}
\lesssim\epsilon^{\frac{1}{2}}|\vv v_t|_{H^2},
\end{aligned}\eeno
Then we have
\beno
|\vv v_t|_{X_{\epsilon^2}^2}^2\lesssim|\vv v_t|_{H^1}^2+|\nabla(\nabla\cdot\vv v_t)|_2^2+\epsilon^2|\nabla^3(\nabla\cdot\vv v_t)|_2^2
+\epsilon^3|\vv v_t|_{H^2}^2,
\eeno
which along with the fact that $\epsilon$ is small enough implies
\beno
|\vv v_t|_{X_{\epsilon^2}^2}^2\lesssim|\vv v_t|_{H^1}^2+|\nabla(\nabla\cdot\vv v_t)|_2^2+\epsilon^2|\nabla^3(\nabla\cdot\vv v_t)|_2^2.
\eeno
Now using the first equation of \eqref{transfer for 2D}, we obtain that
\begin{equation}\label{improve 1 for 2D}
|\vv v_t|_{X_{\epsilon^2}^2}^2\lesssim |\vv v_t|_{H^1}^2+|\nabla\eta_{tt}|_2^2+\epsilon^2|\nabla^3\eta_{tt}|_2^2
\lesssim E(t).
\end{equation}
Similarly, we obtain
\begin{equation}\label{improve 1a for 2D}
|\vv v|_{X_{\epsilon^2}^2}^2\lesssim |\vv v|_{H^1}^2+|\nabla\eta_{t}|_2^2+\epsilon^2|\nabla^3\eta_{t}|_2^2
\lesssim E(t).
\end{equation}

 While the second equation of \eqref{transfer for 2D}, \eqref{ansatz a}, \eqref{improve 1 for 2D} and \eqref{improve 1a for 2D} imply
\begin{equation}\label{improve 2 for 2D}\begin{aligned}
&|\eta_t|_{X_{\epsilon^3}^2}^2\sim|\eta_t|_{H^1}^2+|\nabla\bigl[(1-\epsilon\Delta)\nabla\eta_t\bigr]|_2^2
+\epsilon|\nabla^2\bigl[(1-\epsilon\Delta)\nabla\eta_t\bigr]|_2^2\\
&\lesssim |\eta_t|_{X_{\epsilon^2}^1}^2+|\vv v_{tt}|_{X_{\epsilon}^1}^2+|\vv v_t|_{X_{\epsilon^2}^2}^2+|\vv v|_{X_{\epsilon^2}^2}^2
+|\eta|_{X_{\epsilon^2}^1}^2
\lesssim E(t).
\end{aligned}
\end{equation}

To bound  $|\vv v|_{X_{\epsilon^3}^3}$, we first have
\beno\begin{aligned}
&\quad|\vv v|_{X_{\epsilon^3}^3}^2\sim|\vv v|_{H^2}^2+|\nabla^3\vv v|_2^2+\epsilon^3|\vv v|_{H^{3+3}}^2\\
&\lesssim|\vv v|_{H^2}^2+|\nabla^2(\nabla\cdot\vv v)|_2^2+|\nabla^2(\curl\vv v)|_2^2+\epsilon^3|\nabla^5(\nabla\cdot\vv v)|_2^2
+\epsilon^3|\nabla^5(\curl\vv v)|_2^2.
\end{aligned}\eeno
Similar derivation as \eqref{curl} yields
\beno\begin{aligned}
&|\nabla^2(\curl\vv v)|_2\lesssim\epsilon|\vv v|_{H^2},\\
&\epsilon^{\frac{1}{2}}|\nabla^5(\curl\vv v_t)|_2^2\lesssim|\vv v|_{X^3_{\epsilon^2}}|\eta|_{X^3_{\epsilon^3}}(1+|\eta|_{X^3_{\epsilon}})
\lesssim|\vv v|_{X^3_{\epsilon^2}}.
\end{aligned}\eeno
Then we obtain
\beno\begin{aligned}
&|\vv v|_{X_{\epsilon^3}^3}^2\lesssim|\vv v|_{H^2}^2+|\nabla^2(\nabla\cdot\vv v)|_2^2+\epsilon^3|\nabla^5(\nabla\cdot\vv v)|_2^2
+\epsilon^2|\vv v|_{X^3_{\epsilon^2}},
\end{aligned}\eeno
which gives rise to
\beno
|\vv v|_{X_{\epsilon^3}^3}^2\lesssim|\vv v|_{H^2}^2+|\nabla^2(\nabla\cdot\vv v)|_2^2+\epsilon^3|\nabla^5(\nabla\cdot\vv v)|_2^2.
\eeno
Then using the first equation of \eqref{sim for 2D}, \eqref{improve 2 for 2D} and \eqref{functional 2a}, we obtain
\begin{equation}\label{improve 3 for 2D}
|\vv v|_{X_{\epsilon^3}^3}^2\lesssim E(t).
\end{equation}

For $|\eta|_{X_{\epsilon^4}^3}^2$, similar to the derivation of \eqref{improve 2 for 2D}, by using the second equation of \eqref{sim for 2D},
\eqref{functional 2a}, \eqref{improve 1 for 2D}, \eqref{improve 2 for 2D} and \eqref{improve 3 for 2D}, we finally obtain that
\begin{equation}\label{improve 4 for 2D}\begin{aligned}
&|\eta|_{X_{\epsilon^4}^3}^2\sim|\eta|_{H^1}^2+|\nabla^2\bigl[(1-\epsilon\Delta)\nabla\eta\bigr]|_2^2
+\epsilon^2|\nabla^4\bigl[(1-\epsilon\Delta)\nabla\eta\bigr]|_2^2\lesssim E(t).
\end{aligned}
\end{equation}

 Due to \eqref{sim for 2D} and \eqref{energy estimate 1 for 2D}, we have
\begin{equation}\label{energy estimate 2 for 2D}
\frac{1}{2}\frac{d}{dt}E(t)\lesssim\epsilon E(t)^{\frac{3}{2}}.
\end{equation}

{\bf Step 5. Initial data for the quasilinear system and final estimate.}
In this step, we have to derive the regularity for the initial data to the quasilinear system through the system \eqref{reduction form 1 for 2D} and the regularity for initial data  $(\eta_0,\vv v_0)$. The first equation of \eqref{reduction form 1 for 2D} shows that
\beno
|\eta'|_{t=0}|_{X^2_{\epsilon^3}}=|\eta_t|_{t=0}|_{X^2_{\epsilon^3}}=|\nabla\cdot\vv v_0|_{X^2_{\epsilon^3}}\lesssim|\vv v_0|_{X^3_{\epsilon^3}},
\eeno
while the second equation of \eqref{reduction form 1 for 2D} shows that
\beno\begin{aligned}
&\quad|\vv v'|_{t=0}|_{X^2_{\epsilon^2}}=|\vv v_t|_{t=0}|_{X^2_{\epsilon^2}}\\
&\lesssim|(1+\epsilon\eta_0)(1-\epsilon\Delta)\nabla\eta_0|_{X^2_{\epsilon^2}}
+\epsilon|\nabla\cdot\Bigl(\frac{\vv v_0}{1+\epsilon\eta_0}\otimes\vv v_0\Bigr)|_{X^2_{\epsilon^2}}\\
&\lesssim|\eta_0|_{X^3_{\epsilon^4}}+|\vv v_0|_{X^3_{\epsilon^3}},
\end{aligned}\eeno
where we assume that $|\eta_0|_{X^3_{\epsilon^4}}+|\vv v_0|_{X^3_{\epsilon^3}}\leq C$ and $\epsilon\leq\epsilon_0$ with $\epsilon_0$ small enough.

Similarly, thanks to \eqref{quasilinear 2a}, we can obtain the upper bound of $|\eta'_t|_{t=0}|_{X^1_{\epsilon^2}}+|\vv v'_t|_{t=0}|_{X^1_\epsilon}$ ( or $|\eta_{tt}|_{t=0}|_{X^1_{\epsilon^2}}+|\vv v_{tt}|_{t=0}|_{X^1_\epsilon}$). While by \eqref{quasilinear 2aa}, we can also derive the upper bound for $|\eta''_t|_{t=0}|_{X^0_{\epsilon}}+|\vv v''_t|_{t=0}|_2$ (or $|\eta_{ttt}|_{t=0}|_{X^0_{\epsilon}}+|\vv v_{ttt}|_{t=0}|_2$).
Then we finally obtain that
\begin{equation}\label{regularity for initial data for 2D}
E(0)\sim\mathcal{E}(0)\lesssim|\eta_0|_{X^3_{\epsilon^4}}^2+|\vv v_0|_{X^3_{\epsilon^3}}^2.
\end{equation}

\medskip

{\bf Step 6. Existence and uniqueness. } The estimates  \eqref{energy estimate 2 for 2D} and \eqref{regularity for initial data for 2D} are crucial to prove  the existence of  $T>0$ independent of $\epsilon$ such that \eqref{reduction form 1 for 2D} has a unique solution $(\eta,\vv v)$ on a  time interval $[0,T/\epsilon]$ with initial data $(\eta_0,\vv v_0)\in X^3_{\epsilon^4}\times X^3_{\epsilon^3}$  satisfying moreover, by \eqref{energy estimate 2 for 2D} and \eqref{sim for 2D} the estimate
\begin{equation}\label{total energy estimate for 2D}
\sup_{t\in [0,T/\epsilon]}\mathcal{E}(t)\lesssim|\eta_0|_{X^3_{\epsilon^4}}^2+|\vv v_0|_{X^3_{\epsilon^3}}^2.
\end{equation}

The proof of the existence and uniqueness is postponed to Section 5.

\medskip

Notice that $\vv v=(1+\epsilon\eta)\vv u$. Then we have  obtained the long time estimate of solutions to  the original Boussinesq system \eqref{abcd}-\eqref{rme2} with $a=b=d=0, c=-1$ together to  the energy estimate \eqref{lte2a}.
\end{proof}

\section{Existence proof of Theorems \ref{long time existence for case c=-1} and \ref{long time existence for case c=-1 2D}}
In this section, we shall complete the proof of existence and uniqueness of solutions to the transformed systems \eqref{reduction form 1} and \eqref{reduction form 1 for 2D} so that we could complete the proofs to Theorems \ref{long time existence for case c=-1} and \ref{long time existence for case c=-1 2D}. In order to construct the approximate solutions to \eqref{reduction form 1} and \eqref{reduction form 1 for 2D}, we introduce the mollifier operator $\mathcal{J}_\delta$ as follows (see \cite{Lannes2}):
\beno
\widehat{\mathcal{J}_\delta f}(\xi)=\varphi(\delta\xi)\hat{f}(\xi),\quad\forall\xi\in\R^d,\quad\forall f\in L^2(\R^d),
\eeno
where $\varphi\in C_0^\infty(\R^d)$ and $\varphi(0)=1$. Then using Fourier transform, we  obtain the following properties for $\mathcal{J}_\delta$:
\begin{lemma}\label{lemma for mollifier}
For any $s,s'\in\R$ and $1\leq p\leq \infty$, there hold:

(i) $|\mathcal{J}_\delta f|_{H^{s'}}\leq C_{s,s',\delta}|f|_{H^{s}}$;

(ii)$|\mathcal{J}_\delta f|_p\leq C|f|_p$;

(iii) $|\mathcal{J}_\delta f-f|_{H^{s-1}}\leq C\delta|f|_{H^{s}}$;

 (iv) $|\mathcal{J}_\delta f-f|_{H^{s}}\rightarrow 0$ as $\delta\rightarrow 0$;

 (v) $[\mathcal{J}_\delta,a] f|_{H^{s}}\leq C|a|_{H^{t_0+1}}|f|_{H^{s-1}}$, for any $t_0\geq\frac{d}{2}$ and $-t_0<s\leq t_0+1$;\\
%
% (vi)$[\mathcal{J}_\delta,a] f|_{H^{s}}\leq  C\delta\bigl(|\partial_xa|_{H^s}|f|_{H^{t_0}}+|\partial_xa|_{H^{t_0}}|f|_{H^s}\bigr)$, for any $t_0\geq\frac{d}{2}$ and $s\geq 0$; \\
where $C$ is an universal constant independent of $\delta$ and $C_{s,s',\delta}$ is a constant depending on $s,s',\delta$.
\end{lemma}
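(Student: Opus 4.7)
The plan is to work systematically on the Fourier side, exploiting three basic features of the multiplier symbol $\varphi(\delta\xi)$: it is uniformly bounded (since $\varphi\in L^\infty$); it is supported in $\{|\xi|\leq R/\delta\}$ for some $R$ depending only on $\varphi$; and, by Taylor expansion around the origin where $\varphi(0)=1$, it satisfies $|\varphi(\delta\xi)-1|\leq C\delta|\xi|$ uniformly in $\delta$ (combining the mean value theorem inside the support of $\varphi(\delta\cdot)$ with the trivial bound $|\varphi(\delta\xi)-1|\leq 1\leq \delta|\xi|/R$ outside).

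Properties (i), (iii) and (iv) then follow directly from Plancherel. For (i), on the Fourier support of $\mathcal{J}_\delta f$ one has $\langle\xi\rangle^{s'-s}\leq C_{s,s',\delta}$ (the constant blowing up as $\delta\to 0$ when $s'>s$), which gives the claim. For (iii), the bound $|\varphi(\delta\xi)-1|^2\leq C\delta^2|\xi|^2$ converts one power of $\delta$ into one power of $|\xi|$ and produces $|\mathcal{J}_\delta f-f|_{H^{s-1}}^2\leq C\delta^2|f|_{H^s}^2$. For (iv), dominated convergence applies since $|\varphi(\delta\xi)-1|^2\langle\xi\rangle^{2s}|\hat f(\xi)|^2$ tends to zero pointwise and is majorized by $C\langle\xi\rangle^{2s}|\hat f|^2\in L^1$. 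For (ii), I would rewrite $\mathcal{J}_\delta f=K_\delta\ast f$ with $K_\delta(x)=\delta^{-d}(\mathcal{F}^{-1}\varphi)(x/\delta)$; since $\mathcal{F}^{-1}\varphi\in\mathcal{S}(\R^d)$, the $L^1$ norm $|K_\delta|_1=|\mathcal{F}^{-1}\varphi|_1$ is $\delta$-independent, and Young's inequality delivers the uniform $L^p$ bound.

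The main obstacle is (v), the commutator estimate, because here the implicit constant must be uniform in $\delta$ while simultaneously matching a Kato--Ponce type gain-loss balance. My plan is to start from the kernel representation
\[
[\mathcal{J}_\delta,a]f(x)=\int_{\R^d}K_\delta(x-y)\,\bigl(a(y)-a(x)\bigr)\,f(y)\,dy
\]
and exploit the Taylor identity $a(y)-a(x)=-(x-y)\cdot\int_0^1\nabla a(x+t(y-x))\,dt$ to transfer one derivative from $a$ onto the kernel, whereupon the factor $|x-y|$ is absorbed by the Schwartz decay of $K_\delta$ (whose relevant seminorms are controlled by those of $\mathcal{F}^{-1}\varphi$, hence $\delta$-independent). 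Assembling the resulting pieces via the standard tame product/commutator machinery --- exactly as in the proof of Lemma \ref{L1} --- one sees that the gain of $t_0+1$ derivatives on $a$ balances the loss of one derivative on $f$, yielding the stated bound. An alternative route is Bony's paraproduct decomposition $af=T_af+T_fa+R(a,f)$ together with commutator estimates for $[\mathcal{J}_\delta,T_a]$ in each piece; this again produces uniform-in-$\delta$ bounds because $\{\mathcal{J}_\delta\}_{\delta\in(0,1)}$ is a uniformly bounded family of Fourier multipliers on the relevant symbol classes.
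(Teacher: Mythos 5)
Your handling of (i)--(iv) is correct and coincides with the paper's (which disposes of (i), (iii), (iv) ``directly by Fourier analysis'' and gives exactly your Young's-inequality argument for (ii)). For (v) the paper takes a different and much shorter route: it does not prove the commutator estimate from scratch but invokes Theorems 3 and 6 of \cite{Lannes1} on commutators $[\sigma(D),a]$ with zeroth-order Fourier multipliers, and the only thing it actually checks is that the relevant symbol seminorms of $\varphi(\delta\cdot)$ are bounded uniformly in $\delta$: since $\partial_\xi^\beta\bigl(\varphi(\delta\xi)\bigr)=\delta^{|\beta|}(\partial^\beta\varphi)(\delta\xi)$ is supported in $|\xi|\lesssim 1/\delta$, one has $\langle\xi\rangle^{|\beta|}\,\delta^{|\beta|}\,|(\partial^\beta\varphi)(\delta\xi)|\leq C$ independently of $\delta$. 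That verification is the real content of (v), and it is precisely your closing remark that $\{\mathcal{J}_\delta\}_{\delta}$ is a uniformly bounded family in the relevant symbol class.

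Your primary self-contained route to (v), however, has a gap. The kernel representation plus the Taylor identity for $a(y)-a(x)$ transfers one derivative onto $a$, but absorbing the factor $|x-y|$ into the kernel yields, at $s=0$, only $|[\mathcal{J}_\delta,a]f|_2\lesssim |\,|z|K_\delta(z)|_1\,|\nabla a|_\infty|f|_2=C\delta\,|\nabla a|_\infty|f|_2$, since $|z|K_\delta(z)$ has $L^1$ norm of order $\delta$. A factor $\delta$ is not a gain of one derivative on $f$: for $f$ oscillating at frequency $N\gg 1/\delta$ one has $|f|_{H^{-1}}\sim N^{-1}|f|_2\ll\delta|f|_2$, so the stated bound $C|a|_{H^{t_0+1}}|f|_{H^{s-1}}$ does not follow from the Taylor step alone. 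The missing mechanism is frequency-localized: one needs $|\varphi(\delta\xi)-\varphi(\delta\eta)|\lesssim|\xi-\eta|\,\langle\eta\rangle^{-1}$ uniformly in $\delta$ (mean value theorem where $|\eta|\lesssim 1/\delta$, and the vanishing of $\varphi(\delta\eta)$ together with $|\xi-\eta|\gtrsim|\eta|$ where $|\eta|\gg 1/\delta$), which is what actually converts $|f|_{H^s}$ into $|f|_{H^{s-1}}$ in the tame product estimate. Your alternative paraproduct route would supply this, as does simply quoting \cite{Lannes1} as the paper does; note also that the paper's displayed inequality in its proof of (v) carries $|f|_{H^s}$ on the right, which appears to be a typo for the $|f|_{H^{s-1}}$ of the statement, the gain being genuinely used later (e.g.\ in \eqref{estimate 3}).
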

\begin{proof}
The statements (i), (iii) and (iv) are verified directly by Fourier analysis. For (ii), denoting by $\breve{\varphi}(\cdot)$ is the inverse Fourier transform of $\varphi$. Then we have
\beno
\mathcal{J}_\delta f=\delta^{-d}\breve{\varphi}(\frac{\cdot}{d})*f.
\eeno
Notice that $\delta^{-d}|\breve{\varphi}(\frac{\cdot}{d})|_1\leq C$. Then (ii) follows by Young inequality.

The statement (v) is a consequence of Theorems 3 and 6 in \cite{Lannes1}. Indeed, since $\mathcal{J}_\delta$ is a zeroth order Fourier multiplier,  by
\cite{Lannes1}, we have
\beno
[\mathcal{J}_\delta,a] f|_{H^{s}}\lesssim C(\varphi(\delta\cdot))|a|_{H^{t_0+1}}|f|_{H^s},
\eeno
where
\beno
C(\varphi(\delta\cdot))=\sup_{|\beta|\leq2+d+[\frac{d}{2}]}\sup_{|\xi|\geq\frac{1}{4}}\langle\xi\rangle^{|\beta|}|\partial_\xi^\beta\varphi(\delta\xi)|
+\sup_{|\xi|\leq 1}|\varphi(\delta\xi)|\leq C.
\eeno
%
%For the statement (vi), we have
%\beno\begin{aligned}
%&|\widehat{[\mathcal{J}_\delta,a] f}(\xi)|=|\int_{\R^d}\bigl(\varphi(\delta\xi)-\varphi(\delta(\xi-\zeta))\bigr)\widehat{a}(\zeta)\widehat{f}(\xi-\zeta)d\zeta|\\
%&\leq C\delta\int_{\R^d}|\zeta\widehat{a}(\zeta)|\cdot|\widehat{f}(\xi-\zeta)|d\zeta
%= C\delta\int_{\R^d}|\widehat{\partial_xa}(\zeta)|\cdot|\widehat{f}(\xi-\zeta)|d\zeta.
%\end{aligned}\eeno
%Then by virtue of the inequality that $\langle\xi\rangle\leq \langle\zeta\rangle+\langle\xi-\zeta\rangle$, we have for $s\geq0$,
%\beno\begin{aligned}
%&|\langle\xi\rangle^s\widehat{[\mathcal{J}_\delta,a] f}(\xi)|_2
%\leq C\delta|\int_{\R^d}|\langle\zeta\rangle^s\widehat{\partial_xa}(\zeta)|\cdot|\widehat{f}(\xi-\zeta)|d\zeta|_2\\
%&\quad
%+C\delta|\int_{\R^d}|\widehat{\partial_xa}(\zeta)|\cdot\langle\xi-\zeta\rangle^s|\widehat{f}(\xi-\zeta)|d\zeta|_2
%\end{aligned}\eeno
%which along with Young inequality implies that
%\beno\begin{aligned}
%&|\langle\xi\rangle^s\widehat{[\mathcal{J}_\delta,a] f}(\xi)|_2
%\leq C\delta\bigl(|\widehat{\Lambda^s\partial_xa}(\zeta)|_2\cdot|\widehat{f}(\zeta)|_1
%+|\widehat{\partial_xa}(\zeta)|_1\cdot|\widehat{\Lambda^sf}(\zeta)|_2\bigr)\\
%&\leq C\delta\bigl(|\partial_xa|_{H^s}|f|_{H^{t_0}}+|\partial_xa|_{H^{t_0}}|f|_{H^s}\bigr).
%\end{aligned}\eeno
Thus, the lemma is proved.
\end{proof}

 We only give the details of the existence proof to Theorem \ref{long time existence for case c=-1}}. The existence proof of Theorem \ref{long time existence for case c=-1 2D} follows a similar line.

Now, we divide the proof into several steps.

{\bf Step 1. Construction of the approximate solutions to \eqref{reduction form 1}.} We construct an approximate solution sequence $\{(\eta^\delta,\, v^\delta)\}_{\delta>0}$ satisfying the following regularizing system
\begin{equation}\label{approximate system}
\left\{\begin{aligned}
&\eta_t^\delta+\mathcal{J}_\delta v^\delta_x=0 \\
&v_t^\delta+(1+\epsilon\mathcal{J}_\delta\eta^\delta)(1-\epsilon\partial_x^2)\mathcal{J}_\delta\eta^\delta_x
    +\epsilon\mathcal{J}_\delta^2\Bigl(\frac{|\mathcal{J}_\delta^2 v^\delta|^2}{1+\epsilon\mathcal{J}_\delta\eta^\delta}\Bigr)_x=0,
 \end{aligned}\right.
    \end{equation}
associated with the initial data $\eta^\delta|_{t=0}=\eta_0$ and $v^\delta|_{t=0}=v_0$.

Denoting by $V^\delta=(\eta^\delta,\ v^\delta)$, then \eqref{approximate system} can be reduced to the following ODE in the Banach space $H^{m+1}\times H^m$ with $m\geq 0$:
\begin{equation}\label{ODE form for approximate system}
\frac{d}{dt}V^\delta(t)=F_\delta(V^\delta),\quad V^\delta(0)=V^\delta_0\underset{\text{def}}{=}(\eta_0,v_0),
\end{equation}
where $F_\delta(V^\delta)=(F_\delta^1(V^\delta),F_\delta^2(V^\delta))$ with
\beno\begin{aligned}
&F_\delta^1(V^\delta)=-\mathcal{J}_\delta v^\delta_x,\\
&F_\delta^2(V^\delta)=-(1+\epsilon\mathcal{J}_\delta\eta^\delta)(1-\epsilon\partial_x^2)\mathcal{J}_\delta\eta^\delta_x
    -\epsilon\mathcal{J}_\delta^2\Bigl(\frac{|\mathcal{J}_\delta^2 v^\delta|^2}{1+\epsilon\mathcal{J}_\delta\eta^\delta}\Bigr)_x.
\end{aligned}\eeno

For any $V^\delta_1$, $V^\delta_2$, by virtue of the properties of $\mathcal{J}_\delta$ in Lemma \ref{lemma for mollifier}, we have
\beno\begin{aligned}
|F_\delta^1(V^\delta_1)-F_\delta^1(V^\delta_2)|_{H^{m+1}}=|\mathcal{J}_\delta\partial_x(v^\delta_1-v^\delta_2)|_{H^{m+1}}\leq C_{\delta,m}
|v^\delta_1-v^\delta_2|_{H^m}.
\end{aligned}\eeno
Similarly, by Lemma \ref{lemma for mollifier} and the product estimate, we have
\beno\begin{aligned}
&|F_\delta^2(V^\delta_1)-F_\delta^2(V^\delta_2)|_{H^m}\leq C_{\delta,m}(|\mathcal{J}_\delta \eta^\delta_1|_{H^m},|\mathcal{J}_\delta \eta^\delta_2|_{H^m}, |\mathcal{J}_\delta v^\delta_1|_{H^m}, |\mathcal{J}_\delta v^\delta_2|_{H^m})\\
&\qquad\times\bigl(|\eta^\delta_1-\eta^\delta_2|_{H^m}+|v^\delta_1-v^\delta_2|_{H^m}\bigr)\\
&\leq C_{\delta,m}(|V_1^\delta|_2,|V_2^\delta|_2)|V_1^\delta-V_2^\delta|_{H^{m+1}\times H^m},
\end{aligned}\eeno
where $C_{\delta,m}(\lambda_1,\lambda_2,\cdots)$ is a constant depending on $\delta, m$ and $\lambda_1, \lambda_2, \cdots$. Then we have
\beno
|F_\delta(V^\delta_1)-F_\delta(V^\delta_2)|_{H^{m+1}\times H^m}\leq C_{\delta,m}(|V_1^\delta|_2,|V_2^\delta|_2)|V_1^\delta-V_2^\delta|_{H^{m+1}\times H^m}
\eeno
so that $F_\delta(\cdot)$ is locally Lipschitz continuous on any open set
\beno
\mathcal{O}_M=\{V\in H^{m+1}\times H^m(\R)\,|\, |V|_{H^{m+1}\times H^m}\leq M\}.
\eeno
Thus, Picard (Cauchy-Lipschitz) existence theorem implies that, given any initial data $V_0\in H^{m+1}\times H^m(\R)$, there exists a unique solution $V^\delta\in C^1([0,T_\delta); \mathcal{O}_M\cap (H^{m+1}\times H^m))$ for some $T_\delta>0$, with any integer $m\geq 0$.

Going back to the regularizing system \eqref{approximate system}, since $V^\delta=(\eta^\delta,v^\delta)\in  C^1([0,T_\delta); H^{m+1}\times H^m)$, by virtue of the properties to $\mathcal{J}_\delta$, we have
\beno
\partial_tV^\delta\in C^1([0,T_\delta); H^{m+1}\times H^m),
\eeno
which implies
\beno
V^\delta\in C^2([0,T_\delta); H^{m+1}\times H^m).
\eeno
Moreover, we could obtain that
\beno
V^\delta\in C^k([0,T_\delta); H^{m+1}\times H^m),\quad\text{for any}\quad k\in\N.
\eeno
Thus, we could apply $\partial_t$ many times to \eqref{approximate system}.

\medskip

{\bf Step 2. Uniform energy estimates on the approximate solutions on some time interval $[0, T/\epsilon)$.} In this step, we shall prove that there exists a uniform existence time interval $[0, T/\epsilon)$ with $T$ being independent of $\delta$ and $\epsilon$. To do so, we have to derive the uniform energy estimates for the approximate solutions $V^\delta$.

{\it Step 2.1. The reduction equations.}
Motivated by the a priori energy estimates for $\eqref{reduction form 1}$, we  apply $\partial_t$ to \eqref{approximate system}. Similar derivation as \eqref{quasilinear 1}, we obtain
\begin{equation}\label{quasilinear for app}\left\{\begin{aligned}
&\eta^\delta_{tt}-\mathcal{J}_\delta\partial_x\bigl((1+\epsilon\mathcal{J}_\delta\eta^\delta)\partial_x\mathcal{J}_\delta\eta^\delta\bigr)
+\epsilon\mathcal{J}_\delta\partial_x\bigl((1+\epsilon\mathcal{J}_\delta\eta^\delta)\partial_x^3\mathcal{J}_\delta\eta^\delta\bigr)\\
&\quad+2\epsilon\mathcal{J}_\delta^2\Bigl(\frac{\mathcal{J}_\delta^2v^\delta}{1+\epsilon\mathcal{J}_\delta\eta^\delta}
\cdot\partial_x\mathcal{J}_\delta^2\eta^\delta_t\Bigr)=f^\delta,\\
&\frac{1}{1+\epsilon\mathcal{J}_\delta\eta^\delta}v^\delta_{tt}-\partial_x^2\mathcal{J}_\delta^2v^\delta+\epsilon \partial_x^4\mathcal{J}_\delta^2v^\delta\\
&\quad+\frac{2\epsilon}{1+\epsilon\mathcal{J}_\delta\eta^\delta}\mathcal{J}_\delta^2\Bigl(
\mathcal{J}_\delta^2v^\delta\cdot\partial_x\mathcal{J}_\delta^2\bigl(\frac{v^\delta_t}{1+\epsilon\mathcal{J}_\delta\eta^\delta}
\bigr)\Bigr)=g^\delta,
\end{aligned}\right.\end{equation}
where
\beno\begin{aligned}
f^\delta&\underset{\text{def}}=\epsilon\mathcal{J}_\delta^3\Bigl[\Bigl(\frac{|\mathcal{J}_\delta^2 v^\delta|^2}{1+\epsilon\mathcal{J}_\delta\eta^\delta}\Bigr)_{xx}-\frac{2\mathcal{J}_\delta^2v^\delta}{1+\epsilon\mathcal{J}_\delta\eta^\delta}
\cdot\mathcal{J}_\delta^2v^\delta_{xx}\Bigr]\\
&\quad-2\epsilon\mathcal{J}_\delta^2\bigl([\mathcal{J}_\delta,\frac{\mathcal{J}_\delta^2v^\delta}{1+\epsilon\mathcal{J}_\delta\eta^\delta}]
\partial_x\mathcal{J}_\delta\eta^\delta_t\bigr),
\end{aligned}\eeno
\beno\begin{aligned}
g^\delta&\underset{\text{def}}=-\frac{\epsilon\mathcal{J}_\delta\eta^\delta_t}{1+\epsilon\mathcal{J}_\delta\eta^\delta}
(1-\epsilon\partial_x^2)\partial_x\mathcal{J}_\delta\eta^\delta-\frac{\epsilon}{1+\epsilon\mathcal{J}_\delta\eta^\delta}\mathcal{J}_\delta^2\partial_x
\Bigl(|\mathcal{J}_\delta^2v^\delta|^2\bigl(\frac{1}{1+\epsilon\mathcal{J}_\delta\eta^\delta}\bigr)_t\Bigr)\\
&\quad-\frac{2\epsilon}{1+\epsilon\mathcal{J}_\delta\eta^\delta}\mathcal{J}_\delta^2
\Bigl(\frac{\mathcal{J}_\delta^2v^\delta_x}{1+\epsilon\mathcal{J}_\delta\eta^\delta}
\cdot\mathcal{J}_\delta^2v^\delta_t-\mathcal{J}_\delta^2v^\delta\cdot\mathcal{J}_\delta^2v^\delta_t
\bigl(\frac{1}{1+\epsilon\mathcal{J}_\delta\eta^\delta}\bigr)_x\Bigr)\\
&\quad+\frac{2\epsilon}{1+\epsilon\mathcal{J}_\delta\eta^\delta}
\mathcal{J}_\delta^2\Bigl(\mathcal{J}_\delta^2v^\delta\cdot[\partial_x\mathcal{J}_\delta^2,\frac{1}{1+\epsilon\mathcal{J}_\delta\eta^\delta}]
v^\delta_t\Bigr).
\end{aligned}\eeno

Similarly, applying $\partial_t$ to \eqref{quasilinear for app}, denoting by $\eta^{'\delta}=\partial_t\eta^{\delta},\, v^{'\delta}=\partial_t v^{\delta}$, we obtain
\begin{equation}\label{quasilinear for app 1}\left\{\begin{aligned}
&\eta'^\delta_{tt}-\mathcal{J}_\delta\partial_x\bigl((1+\epsilon\mathcal{J}_\delta\eta^\delta)\partial_x\mathcal{J}_\delta\eta'^\delta\bigr)
+\epsilon\mathcal{J}_\delta\partial_x\bigl((1+\epsilon\mathcal{J}_\delta\eta^\delta)\partial_x^3\mathcal{J}_\delta\eta'^\delta\bigr)\\
&\quad+2\epsilon\mathcal{J}_\delta^2\Bigl(\frac{\mathcal{J}_\delta^2v^\delta}{1+\epsilon\mathcal{J}_\delta\eta^\delta}
\cdot\partial_x\mathcal{J}_\delta^2\eta'^\delta_t\Bigr)=f'^\delta,\\
&\frac{1}{1+\epsilon\mathcal{J}_\delta\eta^\delta}v'^\delta_{tt}-\partial_x^2\mathcal{J}_\delta^2v'^\delta+\epsilon \partial_x^4\mathcal{J}_\delta^2v'^\delta\\
&\quad+\frac{2\epsilon}{1+\epsilon\mathcal{J}_\delta\eta^\delta}\mathcal{J}_\delta^2\Bigl(
\mathcal{J}_\delta^2v^\delta\cdot\partial_x\mathcal{J}_\delta^2\bigl(\frac{v'^\delta_t}{1+\epsilon\mathcal{J}_\delta\eta^\delta}
\bigr)\Bigr)=g'^\delta,
\end{aligned}\right.\end{equation}
where
\beno\begin{aligned}
f'^\delta&\underset{\text{def}}=\partial_tf^\delta+\epsilon\mathcal{J}_\delta\partial_x\bigl(\mathcal{J}_\delta\eta^\delta_t
\cdot\partial_x\mathcal{J}_\delta\eta^\delta\bigr)-\epsilon^2\mathcal{J}_\delta\partial_x\bigl(\mathcal{J}_\delta\eta^\delta_t
\cdot\partial_x^3\mathcal{J}_\delta\eta^\delta\bigr)\\
&\quad-2\epsilon\mathcal{J}_\delta^2\Bigl(\partial_t\bigl(\frac{\mathcal{J}_\delta^2v^\delta}
{1+\epsilon\mathcal{J}_\delta\eta^\delta}\bigr)
\cdot\partial_x\mathcal{J}_\delta^2\eta^\delta_t\Bigr),
\end{aligned}\eeno
\beno\begin{aligned}
g'^\delta&\underset{\text{def}}=\partial_tg^\delta-\partial_t\bigl(\frac{1}{1+\epsilon\mathcal{J}_\delta\eta^\delta}\bigr)v^\delta_{tt}
-2\epsilon\partial_t\bigl(\frac{1}{1+\epsilon\mathcal{J}_\delta\eta^\delta}\bigr)\mathcal{J}_\delta^2
\Bigl(
\mathcal{J}_\delta^2v^\delta\cdot\partial_x\mathcal{J}_\delta^2\bigl(\frac{v^\delta_t}{1+\epsilon\mathcal{J}_\delta\eta^\delta}
\bigr)\Bigr)\\
&\quad
-\frac{2\epsilon}{1+\epsilon\mathcal{J}_\delta\eta^\delta}\mathcal{J}_\delta^2
\Bigl(
\mathcal{J}_\delta^2v^\delta_t\cdot\partial_x\mathcal{J}_\delta^2\bigl(\frac{v^\delta_t}{1+\epsilon\mathcal{J}_\delta\eta^\delta}
\bigr)\Bigr)\\
&\quad+\frac{2\epsilon^2}{1+\epsilon\mathcal{J}_\delta\eta^\delta}\mathcal{J}_\delta^2
\Bigl(
\mathcal{J}_\delta^2v^\delta\cdot\partial_x\mathcal{J}_\delta^2\bigl(\frac{\mathcal{J}_\delta\eta^\delta_tv^\delta_t}
{(1+\epsilon\mathcal{J}_\delta\eta^\delta)^2}
\bigr)\Bigr).
\end{aligned}\eeno

{\it Step 2.2. Definitions of the energy functionals.} In this step, we always assume that
\begin{equation}\label{ansatz for app}
1+\epsilon\mathcal{J}_\delta\eta^\delta>H>0.
\end{equation}
This assumption is a consequence of the initial assumption $1+\epsilon\eta_0>H>0$ together with the smallness of $\epsilon$ and the following uniform energy estimates.

In order to derive the uniform energy estimates for approximate solutions, similar to the a priori energy estimates, we first introduce the energy functionals
$E^\delta(t)$ and $\mathcal{E}^\delta(t)$ in the similar way as $E(t)$ and $\mathcal{E}(t)$ in \eqref{functional 1} and \eqref{functional 3}. We define
\beno\begin{aligned}
E^\delta(t)&=E_0^\delta(t)+E_1^\delta(t)+E_2^\delta(t)\\
&=E_0^\delta(t)+\bigl(E_{11}^\delta(t)+E_{12}^\delta(t)\bigr)+\bigl(E_{21}^\delta(t)+E_{22}^\delta(t)\bigr)
\end{aligned}\eeno
with
\beno\begin{aligned}
&E_0^\delta(t)=\sum_{k=0}^2 E_{0k}^\delta\underset{\text{def}}{=}\sum_{k=0}^2\Bigl(|\partial^k\eta^\delta|_2^2+\epsilon|\partial^k\eta^\delta_x|_2^2
+(\frac{1}{1+\epsilon\mathcal{J}_\delta\eta^\delta}\partial^kv^\delta
\,|\,\partial^kv^\delta)_2\Bigr),
\end{aligned}\eeno
\beno\begin{aligned}
&E_{11}^\delta(t)=|\eta^\delta_t|_2^2+\epsilon|\eta^\delta_{tx}|_2^2
+((1+\epsilon\mathcal{J}_\delta\eta^\delta)\mathcal{J}_\delta\eta^\delta_x\,|\,\mathcal{J}_\delta\eta^\delta_x)_2\\
&\qquad+2\epsilon((1+\epsilon\mathcal{J}_\delta\eta^\delta)\mathcal{J}_\delta\eta^\delta_{xx}\,|\,\mathcal{J}_\delta\eta^\delta_{xx})_2
+\epsilon^2((1+\epsilon\mathcal{J}_\delta\eta^\delta)\mathcal{J}_\delta\eta^\delta_{xxx}\,|\,\mathcal{J}_\delta\eta^\delta_{xxx})_2,\\
&E_{12}^\delta(t)=(\frac{v^\delta_t}{1+\epsilon\mathcal{J}_\delta\eta^\delta}\,|\,v^\delta_t)_2+|\mathcal{J}_\delta v^\delta_x|_2^2
+\epsilon|\mathcal{J}_\delta v^\delta_{xx}|_2^2,
\end{aligned}\eeno
\beno\begin{aligned}
&E_{21}^\delta(t)=|\eta^\delta_{tt}|_2^2+\epsilon|\eta^\delta_{ttx}|_2^2
+((1+\epsilon\mathcal{J}_\delta\eta^\delta)\mathcal{J}_\delta\eta^\delta_{tx}\,|\,\mathcal{J}_\delta\eta^\delta_{tx})_2\\
&\qquad+2\epsilon((1+\epsilon\mathcal{J}_\delta\eta^\delta)\mathcal{J}_\delta\eta^\delta_{txx}\,|\,\mathcal{J}_\delta\eta^\delta_{txx})_2
+\epsilon^2((1+\epsilon\mathcal{J}_\delta\eta^\delta)\mathcal{J}_\delta\eta^\delta_{txxx}\,|\,\mathcal{J}_\delta\eta^\delta_{txxx})_2,\\
&E_{22}^\delta(t)=(\frac{v^\delta_{tt}}{1+\epsilon\mathcal{J}_\delta\eta^\delta}\,|\,v^\delta_{tt})_2+|\mathcal{J}_\delta v^\delta_{tx}|_2^2
+\epsilon|\mathcal{J}_\delta v^\delta_{txx}|_2^2.
\end{aligned}
\eeno
We remark that we used $(\eta^\delta_t,\,v^\delta_t)$ to replace $(\eta'^\delta\,v'^\delta)$ when we defined $E_{21}^\delta(t)$ and $E_{22}^\delta(t)$. Using \eqref{ansatz for app} and the properties of $\mathcal{J}_\delta$ in Lemma \ref{lemma for mollifier}, we have
\beno\begin{aligned}
E^\delta(t)\sim\widetilde{E^\delta}(t)&\underset{\text{def}}{=}|\eta^\delta|_{X^2_\epsilon}^2
+|\eta^\delta_t|_{X^0_\epsilon}^2+|\mathcal{J}_\delta\eta^\delta_{tx}|_{X^1_{\epsilon^2}}^2+|\eta^\delta_{tt}|_{X^0_\epsilon}^2\\
&\qquad
+|v^\delta|_{H^2}^2+|v^\delta_t|_2^2
+|\mathcal{J}_\delta v^\delta_{tx}|_{X^0_{\epsilon}}^2+|v^\delta_{tt}|_2^2.
\end{aligned}\eeno
 We also define the full energy functional as follows
\begin{equation}\label{expression for energy functional}\begin{aligned}
\mathcal{E}^\delta(t)&=\widetilde{E^\delta}(t)+|\eta^\delta_{tx}|_2^2+|v^\delta_{tx}|_2^2+\epsilon^3|\mathcal{J}_\delta^2\eta^\delta_{xxxxx}|_2^2
+\epsilon^2|\mathcal{J}_\delta^2v^\delta_{xxxx}|_2^2,\\
&\sim|\eta^\delta|_{X^2_\epsilon}^2+|\eta^\delta_t|_{H^1}^2
+|\mathcal{J}_\delta\eta^\delta_{tx}|_{X^0_{\epsilon^2}}^2+|\eta^\delta_{tt}|_{X^0_\epsilon}^2
+\epsilon^3|\mathcal{J}_\delta^2\eta^\delta_{xxxxx}|_2^2\\
&\quad
+|v^\delta|_{H^2}^2+|v^\delta_t|_{H^1}^2
+|\mathcal{J}_\delta v^\delta_{tx}|_{X^0_{\epsilon}}^2+|v^\delta_{tt}|_2^2
+\epsilon^2|\mathcal{J}_\delta^2v^\delta_{xxxx}|_2^2.
\end{aligned}\end{equation}
We remark that the mollifier  for the highest order derivatives of $\eta^\delta$ and $v^\delta$ is $\mathcal{J}_\delta^2$ not $\mathcal{J}_\delta$.

Now, we prove
\begin{equation}\label{equivalent}
\mathcal{E}^\delta(t)\sim\widetilde{E^\delta}(t)\sim E^\delta(t).
\end{equation}
To obtain \eqref{equivalent}, we only need to control $|\eta^\delta_{tx}|_2^2$, $|v^\delta_{tx}|_2^2$, $\epsilon^3|\mathcal{J}_\delta^2\eta^\delta_{xxxxx}|_2^2$,
$\epsilon^2|\mathcal{J}_\delta^2v^\delta_{xxxx}|_2^2$ by  $\widetilde{E^\delta}(t)$.

In what follows, we always assume that on the existence time interval
\begin{equation}\label{ansatz b}
\mathcal{E}^\delta(t)\leq C(\mathcal{E}^\delta(0))\mathcal{E}^\delta(0),
\end{equation}
where $C(\mathcal{E}^\delta(0))$ is a constant depending on $\mathcal{E}^\delta(0)$
 and in what follows, we shall use $C(\lambda_1,\lambda_2,\cdots)$ to denote  constants depending on $\lambda_1,\lambda_2,\cdots$.

Firstly, thanks to  \eqref{approximate system}, we have
\beno
|\eta^\delta_{tx}|_2=|\mathcal{J}_\delta v^\delta_{xx}|_2\leq  |v^\delta_{xx}|_2,
\eeno
and
\beno\begin{aligned}
&|v^\delta_{tx}|_2\lesssim|\Bigl((1+\epsilon\mathcal{J}_\delta\eta^\delta)(1-\epsilon\partial_x^2)\mathcal{J}_\delta\eta^\delta_x\Bigr)_x|_2
    +\epsilon|\mathcal{J}_\delta^2\Bigl(\frac{|\mathcal{J}_\delta^2 v^\delta|^2}{1+\epsilon\mathcal{J}_\delta\eta^\delta}\Bigr)_{xx}|_2\\
&\leq C|\mathcal{J}_\delta\eta^\delta_{xx}|_2+C\epsilon|\mathcal{J}_\delta\eta^\delta_{xxxx}|_2
+C\epsilon|\mathcal{J}_\delta\eta^\delta|_\infty\bigl(|\mathcal{J}_\delta\eta^\delta_x|_2+\epsilon|\mathcal{J}_\delta\eta^\delta_{xxxx}|_2\bigr)\\
&\qquad+ C(|\mathcal{J}_\delta^2v^\delta|_{W^{1,\infty}},|\mathcal{J}_\delta\eta^\delta_x|_\infty)\bigl(|\mathcal{J}_\delta^2v^\delta_x|_2
+|\mathcal{J}_\delta^2v^\delta_{xx}|_2+|\mathcal{J}_\delta\eta^\delta_x|_2+|\mathcal{J}_\delta\eta^\delta_{xx}|_2\bigr)\\
&\leq  C(|\mathcal{J}_\delta^2v^\delta|_{W^{1,\infty}},|\mathcal{J}_\delta\eta^\delta|_{W^{1,\infty}})
\bigl(|\eta^\delta|_{X^2_\epsilon}+|v^\delta|_{H^2}+\epsilon|\mathcal{J}_\delta\eta^\delta_{xxxx}|_2\bigr)\\
&\leq  C(|v^\delta|_{H^2},|\eta^\delta|_{H^2})
\bigl(|\eta^\delta|_{X^2_\epsilon}+|v^\delta|_{H^2}+\epsilon|\mathcal{J}_\delta\eta^\delta_{xxxx}|_2\bigr).
\end{aligned}\eeno
For the last term $\epsilon|\mathcal{J}_\delta\eta^\delta_{xxxx}|_2$ in the above inequality, we infer by Plancherel theorem that
\begin{equation}\label{interpolation}
\epsilon|\mathcal{J}_\delta\eta^\delta_{xxxx}|_2
\lesssim\epsilon|\eta^\delta_{xxx}|_2^{\frac{1}{2}}|\mathcal{J}_\delta^2\eta^\delta_{xxxxx}|_2^{\frac{1}{2}}
\lesssim\epsilon^{\frac{1}{2}}|\eta^\delta_{xxx}|_2+\epsilon^{\frac{3}{2}}|\mathcal{J}_\delta^2\eta^\delta_{xxxxx}|_2.
\end{equation}
Then we obtain
\begin{equation}\label{control 7}
|v^\delta_{tx}|_2\leq C(|v^\delta|_{H^2},|\eta^\delta|_{H^2})
\bigl(|\eta^\delta|_{X^2_\epsilon}+|v^\delta|_{H^2}+\epsilon^{\frac{3}{2}}|\mathcal{J}_\delta^2\eta^\delta_{xxxxx}|_2\bigr).
\end{equation}
Thus, we only need to control $\epsilon^3|\mathcal{J}_\delta^2\eta^\delta_{xxxxx}|_2^2
$ and $\epsilon^2|\mathcal{J}_\delta^2v^\delta_{xxxx}|_2^2$ by  $\widetilde{E^\delta}(t)$.

By virtue of \eqref{approximate system}, we have
\beno
\mathcal{J}_\delta v^\delta_x=-\eta^\delta_t,\quad
(1-\epsilon\partial_x^2)\mathcal{J}_\delta\eta^\delta_x=-\frac{v^\delta_t}{1+\epsilon\mathcal{J}_\delta\eta^\delta}
-\frac{\epsilon}{1+\epsilon\mathcal{J}_\delta\eta^\delta}\mathcal{J}_\delta^2
\Bigl(\frac{|\mathcal{J}_\delta^2v^\delta|^2}{1+\epsilon\mathcal{J}_\delta\eta^\delta}\Bigr)_x.
\eeno
Then we have
\begin{equation}\label{control 1}
\epsilon|\mathcal{J}_\delta^2 v^\delta_{xxxx}|_2=\epsilon|\mathcal{J}_\delta\eta^\delta_{txxx}|_2\leq C\bigl(\widetilde{E^\delta}(t)\bigr)^{\frac{1}{2}},
\end{equation}
and
\begin{equation}\label{control 5}\begin{aligned}
&\epsilon^{\frac{3}{2}}|\mathcal{J}_\delta^2\eta^\delta_{xxxxx}|_2\leq\epsilon^{\frac{1}{2}}|\mathcal{J}_\delta^2\eta^\delta_{xxx}|_2
+C\epsilon^{\frac{1}{2}}|\mathcal{J}_\delta\Bigl(\frac{v^\delta_t}{1+\epsilon\mathcal{J}_\delta\eta^\delta}\Bigr)_{xx}|_2\\
&\qquad
+C\epsilon^{\frac{3}{2}}|\mathcal{J}_\delta\Bigl(\frac{1}{1+\epsilon\mathcal{J}_\delta\eta^\delta}\mathcal{J}_\delta^2
\Bigl(\frac{|\mathcal{J}_\delta^2v^\delta|^2}{1+\epsilon\mathcal{J}_\delta\eta^\delta}\Bigr)_x\Bigr)_{xx}|_2\\
&\quad\underset{\text{def}}{=}\epsilon^{\frac{1}{2}}|\mathcal{J}_\delta^2\eta^\delta_{xxx}|_2+C(A_1+ A_2).
\end{aligned}\end{equation}
For $A_1$, we obtain
\beno\begin{aligned}
A_1&\lesssim\epsilon^{\frac{1}{2}}|\mathcal{J}_\delta v^\delta_{txx}|_2+\epsilon^{\frac{1}{2}}|[\partial_x^2\mathcal{J}_\delta,\frac{1}{1+\epsilon\mathcal{J}_\delta\eta^\delta}]v^\delta_t|_2.
\end{aligned}\eeno
Since
\beno
[\partial_x^2\mathcal{J}_\delta, a]f=[\mathcal{J}_\delta, a]\partial_x^2f+\mathcal{J}_\delta\bigl(\partial_x^2af+\partial_xa\partial_xf\bigr),
\eeno
using  Lemma \ref{lemma for mollifier} and H\"older inequality, we obtain
\beno\begin{aligned}
&[\partial_x^2\mathcal{J}_\delta,\frac{1}{1+\epsilon\mathcal{J}_\delta\eta^\delta}]v^\delta_t|_2
\leq C|\frac{\epsilon\mathcal{J}_\delta\eta^\delta}{1+\epsilon\mathcal{J}_\delta\eta^\delta}|_{H^2}|v^\delta_{tx}|_2\\
&\quad+|\bigl(\frac{1}{1+\epsilon\mathcal{J}_\delta\eta^\delta}\bigr)_{xx}|_2|v^\delta_t|_\infty
+|\bigl(\frac{1}{1+\epsilon\mathcal{J}_\delta\eta^\delta}\bigr)_{x}|_\infty|v^\delta_{tx}|_2\\
&\leq C(|\mathcal{J}_\delta\eta^\delta|_\infty)\epsilon|\mathcal{J}_\delta\eta^\delta|_{H^2}(|v^\delta_t|_2+|v^\delta_{tx}|_2).
\end{aligned}\eeno
Then we have
\begin{equation}\label{control 2}
|A_1|\leq C(|\mathcal{J}_\delta\eta^\delta|_{H^2})(|\mathcal{J}_\delta v^\delta_{tx}|_{X^0_\epsilon}+|v^\delta_t|_2+\epsilon^{\frac{3}{2}}|v^\delta_{tx}|_2).
\end{equation}
By virtue of \eqref{control 7} and \eqref{control 2}, we have
\begin{equation}\label{control 3}
|A_1|\leq C(|v^\delta|_{H^2},|\eta^\delta|_{H^2})\widetilde{E^\delta}(t)
+\epsilon^{\frac{3}{2}}C(|v^\delta|_{H^2},|\eta^\delta|_{H^2})\cdot\epsilon^{\frac{3}{2}}|\mathcal{J}_\delta^2\eta^\delta_{xxxxx}|_2.
\end{equation}

For $A_2$, by the product estimates and interpolation estimates \eqref{int1}, we have
\beno\begin{aligned}
|A_2|&\leq \epsilon^{\frac{3}{2}}C(|\mathcal{J}_\delta^2v^\delta|_{W^{1,\infty}},|\mathcal{J}_\delta\eta^\delta|_{W^{1,\infty}})
\bigl(\epsilon|\mathcal{J}_\delta\eta^\delta_x|_{H^2}+|\mathcal{J}_\delta^2v^\delta_x|_{H^2}\bigr)\\
&\leq C(|v^\delta|_{H^2},|\eta^\delta|_{H^2})\bigl(|\eta^\delta|_{X^2_\epsilon}+|v^\delta|_{H^2}+\epsilon|\mathcal{J}_\delta^2v^\delta_{xxxx}|_2\bigr),
\end{aligned}\eeno
which along with \eqref{control 1} implies that
\begin{equation}\label{control 4}
|A_2|\leq C(|v^\delta|_{H^2},|\eta^\delta|_{H^2})\widetilde{E^\delta}(t).
\end{equation}

Thanks to \eqref{control 5}, \eqref{control 3} and \eqref{control 4}, we obtain
\beno
\epsilon^{\frac{3}{2}}|\mathcal{J}_\delta^2\eta^\delta_{xxxxx}|_2\leq C(|v^\delta|_{H^2},|\eta^\delta|_{H^2})\widetilde{E^\delta}(t)
+\epsilon^{\frac{3}{2}}C(|v^\delta|_{H^2},|\eta^\delta|_{H^2})\cdot\epsilon^{\frac{3}{2}}|\mathcal{J}_\delta^2\eta^\delta_{xxxxx}|_2.
\eeno
By virtue of \eqref{ansatz b}, for $\epsilon$ sufficiently small (depending on $\mathcal{E}^\delta(0)$), we have
\begin{equation}\label{control 6}
\epsilon^{\frac{3}{2}}|\mathcal{J}_\delta^2\eta^\delta_{xxxxx}|_2\leq C(\mathcal{E}^\delta(0))\widetilde{E^\delta}(t).
\end{equation}

Thus, combining \eqref{control 1} and \eqref{control 6}, we obtain the equivalence \eqref{equivalent}.

\smallskip

{\it Step 2.3. Uniform energy estimates for $V^\delta=(\eta^\delta,\,v^\delta)$.}

Motivated by the a priori energy estimates \eqref{energy estimate 1} for \eqref{reduction form 1}, we obtain
\begin{equation}\label{estimate for app 1}
\frac{d}{dt} E^\delta(t)\leq C(\mathcal{E}^\delta(t))\epsilon\mathcal{E}^\delta(t)^{\frac{3}{2}},
\end{equation}
where $C(\mathcal{E}^\delta(t))$ is a constant only depending on $\mathcal{E}^\delta(t)$. The derivation of \eqref{estimate for app 1} is a little different from \eqref{energy estimate 1}.

\smallskip

{\it (i) Estimates for $E_0^\delta(t)$.}
We first derive the estimates for $E_0^\delta(t)$. As usual, we have
\beno\begin{aligned}
&\frac{1}{2}\frac{d}{dt}E_{0k}^\delta=(\partial^k\eta^\delta_t\,|\,\partial^k\eta^\delta)_2
+\epsilon(\partial^{k+1}\eta^\delta_{t}\,|\,\partial^{k+1}\eta^\delta)_2\\
&\qquad+(\frac{1}{1+\epsilon\mathcal{J}_\delta\eta^\delta}\partial^kv^\delta_t
\,|\,\partial^kv^\delta)_2+\frac{1}{2}(\bigl(\frac{1}{1+\epsilon\mathcal{J}_\delta\eta^\delta}\bigr)_t\partial^kv^\delta
\,|\,\partial^kv^\delta)_2.
\end{aligned}\eeno
Using the equations in \eqref{approximate system}, we obtain
\beno\begin{aligned}
&\frac{1}{2}\frac{d}{dt}E_{0k}^\delta=([\partial^k,\frac{1}{1+\epsilon\mathcal{J}_\delta\eta^\delta}]v^\delta_t\,|\,\partial^kv^\delta)_2
-\epsilon(\partial^k\Bigl(\frac{1}{1+\epsilon\mathcal{J}_\delta\eta^\delta}\mathcal{J}_\delta^2
\Bigl(\frac{|\mathcal{J}_\delta^2v^\delta|^2}{1+\epsilon\mathcal{J}_\delta\eta^\delta}\Bigr)_x\Bigr)
\,|\,\partial^kv^\delta)_2\\
&\qquad\qquad+\frac{1}{2}(\bigl(\frac{1}{1+\epsilon\mathcal{J}_\delta\eta^\delta}\bigr)_t\partial^kv^\delta
\,|\,\partial^kv^\delta)_2\underset{\text{def}}{=}B_1^k+B_2^k+B_3^k.
\end{aligned}\eeno

For $B_1^k$, we have $B_1^0=0$ and
\beno\begin{aligned}
&|B_1^1|+|B_1^2|\leq\sum_{k=1}^2|[\partial^k,\frac{1}{1+\epsilon\mathcal{J}_\delta\eta^\delta}]v^\delta_t|_2|\partial^kv^\delta|_2\\
&\leq C\epsilon(1+|\mathcal{J}_\delta\eta^\delta_x|_\infty)\bigl(|\mathcal{J}_\delta\eta^\delta_x|_\infty|v^\delta_t|_{H^1}+
|\mathcal{J}_\delta\eta^\delta_x|_{H^1}|v^\delta_t|_\infty\bigr)|v^\delta|_{H^2}\\
&\leq C(|\eta^\delta|_{H^2})\epsilon|\eta^\delta|_{H^2}|v^\delta_t|_{H^1}|v^\delta|_{H^2}.
\end{aligned}\eeno

For $B_3^k$, we have
\beno
\sum_{k=0}^2|B_3^k|\leq C\epsilon|\mathcal{J}_\delta\eta^\delta_t|_\infty|v^\delta|_{H^2}^2\leq C\epsilon|\eta^\delta_t|_{H^1}|v^\delta|_{H^2}^2.
\eeno

For $B_2^k$, we have

\beno\begin{aligned}
&B_2^k=-\epsilon([\partial^k,\frac{1}{1+\epsilon\mathcal{J}_\delta\eta^\delta}]\mathcal{J}_\delta^2
\Bigl(\frac{|\mathcal{J}_\delta^2v^\delta|^2}{1+\epsilon\mathcal{J}_\delta\eta^\delta}\Bigr)_x
\,|\,\partial^kv^\delta)_2\\
&\qquad
-\epsilon(\partial^k
\Bigl(\frac{|\mathcal{J}_\delta^2v^\delta|^2}{1+\epsilon\mathcal{J}_\delta\eta^\delta}\Bigr)_x
\,|\,\mathcal{J}_\delta^2\Bigl(\frac{\partial^kv^\delta}{1+\epsilon\mathcal{J}_\delta\eta^\delta}\Bigr))_2
\underset{\text{def}}{=}B_{21}^k+B_{22}^k
\end{aligned}\eeno
Similar to $B_1^k$, we have $B_{21}^0=0$ and
\beno\begin{aligned}
|B_{21}^1|+|B_{21}^2|&\leq C(|\eta^\delta|_{H^2})\epsilon|\eta^\delta|_{H^2}|\mathcal{J}_\delta^2
\Bigl(\frac{|\mathcal{J}_\delta^2v^\delta|^2}{1+\epsilon\mathcal{J}_\delta\eta^\delta}\Bigr)_x|_{H^1}|v^\delta|_{H^2}\\
%&\leq C(|\eta^\delta|_{H^2})\epsilon\bigl(|\mathcal{J}_\delta^2v^\delta|_{W^{1,\infty}}|\mathcal{J}_\delta^2v^\delta|_{H^1}
%+|\mathcal{J}_\delta^2v^\delta|_{\infty}^2|\mathcal{J}_\delta\eta^\delta_x|_{H^1}\bigr)|v^\delta|_{H^2}\\
&\leq C(|\eta^\delta|_{H^2},|v^\delta|_{H^2})\epsilon\bigl(|\eta^\delta|_{H^2}^2+|v^\delta|_{H^2}^2\bigr)|v^\delta|_{H^2}.
\end{aligned}\eeno
For $B_{22}^k$, we have
\beno\begin{aligned}
B_{22}^k=&\epsilon(\partial^k
\Bigl(\frac{|\mathcal{J}_\delta^2v^\delta|^2}{1+\epsilon\mathcal{J}_\delta\eta^\delta}\Bigr)
\,|\,\partial_x\Bigl([\mathcal{J}_\delta^2,\frac{1}{1+\epsilon\mathcal{J}_\delta\eta^\delta}]\partial^kv^\delta\Bigr))_2\\
&
+\epsilon^2(\partial^k
\Bigl(\frac{|\mathcal{J}_\delta^2v^\delta|^2\mathcal{J}_\delta\eta^\delta_x}{(1+\epsilon\mathcal{J}_\delta\eta^\delta)^2}\Bigr)
\,|\,\frac{\mathcal{J}_\delta^2\partial^kv^\delta}{1+\epsilon\mathcal{J}_\delta\eta^\delta})_2\\
&
-2\epsilon([\partial^k,\frac{\mathcal{J}_\delta^2v^\delta}{1+\epsilon\mathcal{J}_\delta\eta^\delta}]
\mathcal{J}_\delta^2v^\delta_x
\,|\,\frac{\mathcal{J}_\delta^2\partial^kv^\delta}{1+\epsilon\mathcal{J}_\delta\eta^\delta})_2\\
&-2\epsilon(\frac{\mathcal{J}_\delta^2v^\delta}{1+\epsilon\mathcal{J}_\delta\eta^\delta}\cdot\partial_x
\mathcal{J}_\delta^2\partial^kv^\delta
\,|\,\frac{\mathcal{J}_\delta^2\partial^kv^\delta}{1+\epsilon\mathcal{J}_\delta\eta^\delta})_2.
\end{aligned}\eeno
Notice that the last term of the above equality equals
\beno
\epsilon(\partial_x\Bigl(\frac{\mathcal{J}_\delta^2v^\delta}{1+\epsilon\mathcal{J}_\delta\eta^\delta}\Bigr)\cdot
\mathcal{J}_\delta^2\partial^kv^\delta
\,|\,\frac{\mathcal{J}_\delta^2\partial^kv^\delta}{1+\epsilon\mathcal{J}_\delta\eta^\delta})_2
\eeno
Then, proceeding  as for the previous terms, using Lemma \ref{lemma for mollifier} and product estimates, we have
\beno\begin{aligned}
&\sum_{k=0}^2|B_{22}^k|\leq C(|\eta^\delta|_{X^2_\epsilon},|v^\delta|_{H^2})\epsilon\bigl(|\eta^\delta|_{H^2}^2+|v^\delta|_{H^2}^2\bigr)|v^\delta|_{H^2}.
\end{aligned}\eeno

Combining all the above estimates, we obtain
\begin{equation}\label{estimate for app 0}
\frac{d}{dt}E_0^\delta(t)\leq C(|\eta^\delta|_{X^2_\epsilon},|v^\delta|_{H^2})\epsilon\bigl(\mathcal{E}^\delta(t)\bigr)^{\frac{3}{2}}.
\end{equation}

\smallskip

{\it (ii) Estimates for $E_2^\delta(t)$.}
Now, we derive the energy estimates for \eqref{quasilinear for app 1}. For the second equation of \eqref{quasilinear for app 1}, taking $L^2$ inner product with $v'^\delta_t$ yields
\beno\begin{aligned}
&\frac{1}{2}\frac{d}{dt}E_{22}^\delta+\underbrace{2\epsilon\bigl(
\mathcal{J}_\delta^2v^\delta\cdot\partial_x\mathcal{J}_\delta^2\bigl(\frac{v'^\delta_t}{1+\epsilon\mathcal{J}_\delta\eta^\delta}
\bigr)\,|\,\mathcal{J}_\delta^2\bigl(\frac{v'^\delta_t}{1+\epsilon\mathcal{J}_\delta\eta^\delta}
\bigr)\bigr)_2}_{I}\\
&\quad=\frac{1}{2}(\partial_t\bigl(\frac{1}{1+\epsilon\mathcal{J}_\delta\eta^\delta}\bigr)v'^\delta_{t}\,|\,v'^\delta_{t})_2+(g'^\delta\,|\,v'^\delta_t)_2
\end{aligned}\eeno
By integration by parts, we have
\beno
I=-\epsilon\bigl(
\mathcal{J}_\delta^2v^\delta_x\cdot\mathcal{J}_\delta^2\bigl(\frac{v'^\delta_t}{1+\epsilon\mathcal{J}_\delta\eta^\delta}
\bigr)\,|\,\mathcal{J}_\delta^2\bigl(\frac{v'^\delta_t}{1+\epsilon\mathcal{J}_\delta\eta^\delta}
\bigr)\bigr)_2.
\eeno
Then we obtain
\begin{equation}\label{estimate for app 2}\begin{aligned}
\frac{d}{dt}E_{22}^\delta&\leq C\epsilon\bigl(|\mathcal{J}_\delta\eta^\delta_t|_\infty+|\mathcal{J}_\delta^2v^\delta_x|_\infty\bigr)
\bigl(|v'^\delta_t|_2^2+|\mathcal{J}_\delta^2\bigl(\frac{v'^\delta_t}{1+\epsilon\mathcal{J}_\delta\eta^\delta}\bigr)|_2^2\bigr)
+|g'^\delta|_2|v'^\delta_t|_2\\
&\leq C\epsilon\bigl(|\eta^\delta_t|_{H^1}+|v^\delta|_{H^2}\bigr)|v'^\delta_t|_2^2
+|g'^\delta|_2|v'^\delta_t|_2.
\end{aligned}\end{equation}
%where $C(V^\delta)$ is a constant only depending on $|\mathcal{J}_\delta\eta^\delta(\cdot,t)|_{W^{1,\infty}}$,  $|\mathcal{J}_\delta^2v^\delta(\cdot,t)|_{W^{1,\infty}}$, $|\mathcal{J}_\delta\eta^\delta_t(\cdot,t)|_{\infty}$ and $|\mathcal{J}_\delta^2v^\delta_t(\cdot,t)|_{\infty}$.

For the first equation of \eqref{quasilinear for app 1}, taking $L^2$ inner product by $(1-\epsilon\partial_x^2)\eta'^\delta_t$ results
\beno\begin{aligned}
&\frac{1}{2}\frac{d}{dt}E_{21}^\delta+\underbrace{2\epsilon\bigl(\frac{
\mathcal{J}_\delta^2v^\delta}{1+\epsilon\mathcal{J}_\delta\eta^\delta}\cdot\partial_x\mathcal{J}_\delta^2\eta'^\delta_t\,|\,
(1-\epsilon\partial_x^2)\mathcal{J}_\delta^2\eta'^\delta_t\bigr)_2}_{II}\\
&\quad=\frac{\epsilon}{2}(\mathcal{J}_\delta\eta^\delta_t\mathcal{J}_\delta\eta'^\delta_x\,|\,\mathcal{J}_\delta\eta'^\delta_x)_2
+\epsilon^2(\mathcal{J}_\delta\eta^\delta_t\mathcal{J}_\delta\eta'^\delta_{xx}\,|\,\mathcal{J}_\delta\eta'^\delta_{xx})_2
+\epsilon^2(\mathcal{J}_\delta\eta^\delta_{xx}\mathcal{J}_\delta\eta'^\delta_x\,|\,\mathcal{J}_\delta\eta'^\delta_{tx})_2\\
&\qquad
+\frac{\epsilon^3}{2}(\mathcal{J}_\delta\eta^\delta_t\mathcal{J}_\delta\eta'^\delta_{xxx}\,|\,\mathcal{J}_\delta\eta'^\delta_{xxx})_2
+(f'^\delta\,|\,(1-\epsilon\partial_x^2)\eta'^\delta_t)_2.
\end{aligned}\eeno
By integration by parts, we have
\beno\begin{aligned}
II&=-\epsilon\bigl(\partial_x\bigl(\frac{
\mathcal{J}_\delta^2v^\delta}{1+\epsilon\mathcal{J}_\delta\eta^\delta}\bigr)\cdot\mathcal{J}_\delta^2\eta'^\delta_t\,|\,
(1-\epsilon\partial_x^2)\mathcal{J}_\delta^2\eta'^\delta_t\bigr)_2\\
&\quad
+\epsilon^2\bigl(\partial_x\bigl(\frac{
\mathcal{J}_\delta^2v^\delta}{1+\epsilon\mathcal{J}_\delta\eta^\delta}\bigr)\cdot\mathcal{J}_\delta^2\eta'^\delta_{tx}\,|\,
(1-\epsilon\partial_x^2)\mathcal{J}_\delta^2\eta'^\delta_{tx}\bigr)_2\\
&\leq C\epsilon\bigl(|\mathcal{J}_\delta^2v^\delta_x|_{\infty}
+\epsilon|\mathcal{J}_\delta^2v^\delta|_{\infty}|\mathcal{J}_\delta\eta^\delta_x|_{\infty}\bigr)
\bigl(|\mathcal{J}_\delta^2\eta'^\delta_t|_2^2+\epsilon|\mathcal{J}_\delta^2\eta'^\delta_{tx}|_2^2\bigr).
\end{aligned}\eeno
Then we get
\begin{equation}\label{estimate for app 3}\begin{aligned}
&\frac{d}{dt}E_{21}^\delta\leq C\epsilon\bigl(|\mathcal{J}_\delta^2v^\delta_x|_{\infty}
+\epsilon|\mathcal{J}_\delta^2v^\delta|_{\infty}|\mathcal{J}_\delta\eta^\delta_x|_{\infty}
+|\mathcal{J}_\delta\eta^\delta_t|_\infty+\epsilon^{\frac{1}{2}}|\mathcal{J}_\delta\eta'^\delta_{x}|_\infty\bigr)\\
&\quad\times
\bigl(|\mathcal{J}_\delta^2\eta'^\delta_t|_{X^0_\epsilon}^2+|\mathcal{J}_\delta\eta'^\delta_x|_{X^0_{\epsilon^2}}^2
+\epsilon|\mathcal{J}_\delta\eta^\delta_{xx}|_2^2\bigr)
+|f'^\delta|_2|\eta'^\delta_t|_2+\epsilon|f'^\delta_x|_2|\eta'^\delta_{tx}|_2.
\end{aligned}\end{equation}

Due to \eqref{estimate for app 2} and \eqref{estimate for app 3}, noticing that $\eta'^\delta=\eta_t^\delta$, we obtain
\begin{equation}\label{estimate for app 4}\begin{aligned}
\frac{d}{dt}E_2^\delta&\leq C(|\eta^\delta|_{H^2})\epsilon\bigl(|v^\delta|_{H^2}
+|\eta^\delta_t|_{H^1}+|\mathcal{J}_\delta\eta^\delta_{tx}|_{X^0_{\epsilon^2}}\bigr)\\
&\quad\times
\bigl(|\mathcal{J}_\delta^2\eta'^\delta_t|_{X^0_\epsilon}^2+|\mathcal{J}_\delta\eta'^\delta_x|_{X^0_{\epsilon^2}}^2
+|\mathcal{J}_\delta\eta^\delta_x|_{X^0_{\epsilon^2}}^2+|v'^\delta_t|_2^2\bigr)\\
&\quad
+|f'^\delta|_2|\eta'^\delta_t|_2+\epsilon|f'^\delta_x|_2|\eta'^\delta_{tx}|_2+|g'^\delta|_2|v'^\delta_t|_2.
\end{aligned}\end{equation}

Noticing that $\eta'^\delta=\eta^\delta_t$ and $v'^\delta=v^\delta_t$. Then \eqref{estimate for app 4} implies that
\begin{equation}\label{estimate for app 6}\begin{aligned}
\frac{d}{dt}E_2^\delta&\leq C(\mathcal{E}^\delta(t))\epsilon\bigl(\mathcal{E}^\delta(t)\bigr)^{\frac{3}{2}}
+|f'^\delta|_2|\eta^\delta_{tt}|_2+\epsilon|f'^\delta_x|_2|\eta^\delta_{ttx}|_2+|g'^\delta|_2|v^\delta_{tt}|_2.
\end{aligned}\end{equation}

\smallskip

{\it (iii) Estimates for $E_1^\delta$.}
Similarly as $E_2^\delta$, we also obtain
\begin{equation}\label{estimate for app 7a}\begin{aligned}
\frac{d}{dt}E_1^\delta&\leq C(\mathcal{E}^\delta(t))\epsilon\bigl(\mathcal{E}^\delta(t)\bigr)^{\frac{3}{2}}
+|f^\delta|_2|\eta^\delta_{t}|_2+\epsilon|f^\delta_x|_2|\eta^\delta_{tx}|_2+|g^\delta|_2|v^\delta_{t}|_2.
\end{aligned}\end{equation}

\smallskip

{\it (iv) Estimates for the source terms.}
To achieve \eqref{estimate for app 1}, it remains to derive the bound on $f^\delta$, $g^\delta$, $f'^\delta$ and $g'^\delta$. Similarly as in the derivation of \eqref{lte22}, using the properties of $\mathcal{J}_\delta$ in Lemma \ref{lemma for mollifier}, we obtain
\begin{equation}\label{estimate 2}\begin{aligned}
&|f^\delta|_2+\epsilon^{\frac{1}{2}}|f^\delta_x|_2+|g^\delta|_2+|f'^\delta|_2+\epsilon^{\frac{1}{2}}|f'^\delta_x|_2+|g'^\delta|_2\\
&\lesssim  C(\mathcal{E}^\delta(t))\epsilon\bigl(|\eta^\delta|_{X^2_\epsilon}+|v^\delta|_{H^2}
+|\eta^\delta_t|_{H^1}+|v^\delta_t|_{H^1}
+|\mathcal{J}_\delta\eta^\delta_{tx}|_{X^0_{\epsilon^2}}+|\mathcal{J}_\delta^2v^\delta_{tx}|_{X^0_\epsilon}\\
&\quad
+\epsilon|\mathcal{J}_\delta\eta^\delta_{xxxx}|_2+\epsilon^{\frac{1}{2}}|\mathcal{J}_\delta^2v^\delta_{xxx}|_2\bigr)
\bigl(|\eta^\delta|_{X^2_\epsilon}
+\epsilon|\mathcal{J}_\delta\eta^\delta_{xxxx}|_2+\epsilon^{\frac{3}{2}}|\mathcal{J}_\delta^2\eta^\delta_{xxxxx}|_2\\
&\quad+|\eta^\delta_t|_{H^1}+|\mathcal{J}_\delta\eta^\delta_{tx}|_{X^0_{\epsilon^2}}+|\eta^\delta_{tt}|_{X^0_\epsilon}
 +|v^\delta|_{H^2}
+|v^\delta_t|_{H^1}+|\mathcal{J}_\delta^2v^\delta_{tx}|_{X^0_{\epsilon}}
 +|v^\delta_{tt}|_2\bigr).
\end{aligned}\end{equation}
To verify \eqref{estimate 2}, we first need to check the estimates on the terms involving the commutators in $f^\delta$ and $g^\delta$. Using (v) of Lemma \ref{lemma for mollifier} with $t_0=1$, for the commutator term in $f^\delta$, we have
\begin{equation}\label{estimate 3}\begin{aligned}
&|\epsilon\mathcal{J}_\delta^2\bigl([\mathcal{J}_\delta,\frac{\mathcal{J}_\delta^2v^\delta}{1+\epsilon\mathcal{J}_\delta\eta^\delta}]
\partial_x\mathcal{J}_\delta\eta^\delta_t\bigr)|_{X^0_\epsilon}\leq C\epsilon|\frac{\mathcal{J}_\delta^2v^\delta}{1+\epsilon\mathcal{J}_\delta\eta^\delta}|_{H^2}
|\mathcal{J}_\delta\eta^\delta_t|_{X^0_\epsilon}\\
&\leq C(|\eta^\delta|_{H^2},|v^\delta|_{H^2})\epsilon\bigl(|\eta^\delta|_{H^2}+|v^\delta|_{H^2}\bigr)
|\eta^\delta_t|_{H^1}.
\end{aligned}\end{equation}
For the commutator term in $f'^\delta$, we have
\beno\begin{aligned}
&|\epsilon\mathcal{J}_\delta^2\partial_t\bigl([\mathcal{J}_\delta,\frac{\mathcal{J}_\delta^2v^\delta}{1+\epsilon\mathcal{J}_\delta\eta^\delta}]
\partial_x\mathcal{J}_\delta\eta^\delta_t\bigr)|_{X^0_\epsilon}\\
&\leq\epsilon|[\mathcal{J}_\delta,\partial_t\bigl(\frac{\mathcal{J}_\delta^2v^\delta}{1+\epsilon\mathcal{J}_\delta\eta^\delta}\bigr)]
\partial_x\mathcal{J}_\delta\eta^\delta_t\bigr)|_{X^0_\epsilon}
+\epsilon|[\mathcal{J}_\delta,\frac{\mathcal{J}_\delta^2v^\delta}{1+\epsilon\mathcal{J}_\delta\eta^\delta}]
\partial_x\mathcal{J}_\delta\eta^\delta_{tt}\bigr)|_{X^0_\epsilon}\\
&\underset{\text{def}}{=}III_1+III_2.
\end{aligned}\eeno
For $III_1$, by using the product estimates, we have
\beno\begin{aligned}
III_1&\leq C(|\mathcal{J}_\delta\eta^\delta_x|_\infty,|\mathcal{J}_\delta^2v^\delta|_\infty)
\epsilon\bigl(|\mathcal{J}_\delta\eta^\delta_t|_\infty+|\mathcal{J}_\delta^2v^\delta_t|_\infty
+\epsilon^{\frac{1}{2}}|\mathcal{J}_\delta\eta^\delta_{tx}|_\infty\\
&\qquad
+\epsilon^{\frac{1}{2}}|\mathcal{J}_\delta^2v^\delta_{tx}|_\infty\bigr)
|\mathcal{J}_\delta\eta^\delta_{tx}|_{X^0_\epsilon}.
\end{aligned}\eeno
For $III_2$, by similar derivation as \eqref{estimate 3}, we have
\beno
III_2\leq C(|\eta^\delta|_{H^2},|v^\delta|_{H^2})\epsilon\bigl(|\eta^\delta|_{H^2}+|v^\delta|_{H^2}\bigr)
|\mathcal{J}_\delta\eta^\delta_{tt}|_{X^0_\epsilon}.
\eeno
Then by  Sobolev inequality, we have
\begin{equation}\label{estimate 4}\begin{aligned}
&|\epsilon\mathcal{J}_\delta^2\partial_t\bigl([\mathcal{J}_\delta,\frac{\mathcal{J}_\delta^2v^\delta}{1+\epsilon\mathcal{J}_\delta\eta^\delta}]
\partial_x\mathcal{J}_\delta\eta^\delta_t\bigr)|_{X^0_\epsilon}
\leq  C(\mathcal{E}^\delta(t))\epsilon\bigl(|\eta^\delta|_{H^2}+|v^\delta|_{H^2}
\\
&\quad+|\eta^\delta_t|_{H^1}+|v^\delta_t|_{H^1}
+|\mathcal{J}_\delta\eta^\delta_{tx}|_{X^0_\epsilon}
+|\mathcal{J}_\delta^2v^\delta_{tx}|_{X^0_\epsilon}\bigr)
\bigl(|\mathcal{J}_\delta\eta^\delta_{tx}|_{X^0_\epsilon}+|\mathcal{J}_\delta\eta^\delta_{tt}|_{X^0_\epsilon}\bigr).
\end{aligned}\end{equation}

Similarly, for the commutator terms in $g^\delta$ and $g'^\delta$, noticing that
\beno
[\partial_x\mathcal{J}_\delta^2,a]f=\partial_x\mathcal{J}_\delta\bigl([\mathcal{J}_\delta,a]f\bigr)
+\partial_x\bigl([\mathcal{J}_\delta,a]\mathcal{J}_\delta f\bigr)+\partial_x a\mathcal{J}_\delta^2f,
\eeno
 we have
\begin{equation}\label{estimate 5}\begin{aligned}
&|\frac{2\epsilon}{1+\epsilon\mathcal{J}_\delta\eta^\delta}
\mathcal{J}_\delta^2\Bigl(\mathcal{J}_\delta^2v^\delta\cdot[\partial_x\mathcal{J}_\delta^2,\frac{1}{1+\epsilon\mathcal{J}_\delta\eta^\delta}]
v^\delta_t\Bigr)|_2\\
&\quad+|\partial_t\Bigl(\frac{2\epsilon}{1+\epsilon\mathcal{J}_\delta\eta^\delta}
\mathcal{J}_\delta^2\Bigl(\mathcal{J}_\delta^2v^\delta\cdot[\partial_x\mathcal{J}_\delta^2,\frac{1}{1+\epsilon\mathcal{J}_\delta\eta^\delta}]
v^\delta_t\Bigr)\Bigr)|_2\\
&\leq C(\mathcal{E}^\delta(t))\epsilon\bigl(|\eta^\delta|_{H^2}+|\eta^\delta_t|_{H^1}+|v^\delta_t|_{H^1}\bigr)
\bigl(|\mathcal{J}_\delta\eta^\delta_{tx}|_2+|v^\delta_t|_{H^1}+|v^\delta_{tt}|_2\bigr).
\end{aligned}\end{equation}

Another delicate term we have to check is the third terms of $f'^\delta$. Applying $\epsilon^{\frac{1}{2}}\partial_x$ to the third term, we have
\beno\begin{aligned}
&\epsilon^{\frac{5}{2}}\mathcal{J}_\delta\partial_x^2\bigl(\mathcal{J}_\delta\eta^\delta_t
\cdot\partial_x^3\mathcal{J}_\delta\eta^\delta\bigr)=\epsilon^{\frac{5}{2}}\mathcal{J}_\delta\eta^\delta_t
\cdot\partial_x^5\mathcal{J}_\delta^2\eta^\delta\\
&\quad+\epsilon^{\frac{5}{2}}[\mathcal{J}_\delta,\mathcal{J}_\delta\eta^\delta_t]
\cdot\partial_x^5\mathcal{J}_\delta\eta^\delta+\epsilon^{\frac{5}{2}}\mathcal{J}_\delta\bigl(\partial_x^2\mathcal{J}_\delta\eta^\delta_{t}
\cdot\partial_x^3\mathcal{J}_\delta\eta^\delta+2\partial_x\mathcal{J}_\delta\eta^\delta_{t}
\cdot\partial_x^4\mathcal{J}_\delta\eta^\delta\bigr).
\end{aligned}\eeno
Then by virtue of the properties of $\mathcal{J}_\delta$ in Lemma \ref{lemma for mollifier}, we have
\beno\begin{aligned}
&\epsilon^{\frac{5}{2}}|\mathcal{J}_\delta\partial_x^2\bigl(\mathcal{J}_\delta\eta^\delta_t
\cdot\partial_x^3\mathcal{J}_\delta\eta^\delta\bigr)|_2\leq C\epsilon\bigl(|\mathcal{J}_\delta\eta^\delta_t|_\infty+\epsilon|\mathcal{J}_\delta\eta^\delta_{txx}|_\infty
+\epsilon^{\frac{1}{2}}|\mathcal{J}_\delta\eta^\delta_{t}|_{H^2}\bigr)\\
&\quad\times\bigl(\epsilon^{\frac{1}{2}}|\mathcal{J}_\delta\eta^\delta_{xxx}|_2
+\epsilon|\mathcal{J}_\delta\eta^\delta_{xxxx}|_2+\epsilon^{\frac{3}{2}}|\mathcal{J}_\delta^2\eta^\delta_{xxxxx}|_2\bigr)\\
&\leq C\epsilon\bigl(|\eta^\delta_t|_{H^1}+|\mathcal{J}_\delta\eta^\delta_{tx}|_{X^0_{\epsilon^2}}\bigr)
\bigl(|\eta^\delta|_{X^2_\epsilon}
+\epsilon|\mathcal{J}_\delta\eta^\delta_{xxxx}|_2+\epsilon^{\frac{3}{2}}|\mathcal{J}_\delta^2\eta^\delta_{xxxxx}|_2\bigr).
\end{aligned}\eeno

\smallskip

{\it (v) Derivation of the energy estimates \eqref{estimate for app 1}.}
Similar to \eqref{interpolation}, by interpolation inequality, we have
\begin{equation}\label{estimate 6}\begin{aligned}
\epsilon^{\frac{1}{2}}|\mathcal{J}_\delta^2v^\delta_{xxx}|_2
\lesssim\epsilon^{\frac{1}{2}}|\mathcal{J}_\delta^2v^\delta_{xx}|_2^{\frac{1}{2}}|\mathcal{J}_\delta^2v^\delta_{xxxx}|_2^{\frac{1}{2}}
\lesssim|v^\delta_{xx}|_2+\epsilon|\mathcal{J}_\delta^2v^\delta_{xxxx}|_2.
\end{aligned}\end{equation}
With \eqref{interpolation} and \eqref{estimate 6}, we bound the righthand side terms in \eqref{estimate 2} by $C(\mathcal{E}^\delta)\epsilon\mathcal{E}^\delta(t)$. Thus, by virtue of \eqref{estimate for app 0}, \eqref{estimate for app 6}, \eqref{estimate for app 7a} and \eqref{estimate 2}, we achieve the proof of  \eqref{estimate for app 1}. Thanks to \eqref{equivalent}, under the assumptions \eqref{ansatz for app} and \eqref{ansatz b}, we have
\begin{equation}\label{estimate for app 7}
\frac{d}{dt} E^\delta(t)\leq C(\mathcal{E}^\delta(t))\epsilon\mathcal{E}^\delta(t)^{\frac{3}{2}}\leq C(\mathcal{E}^\delta(0))\epsilon E^\delta(t)^{\frac{3}{2}}.
\end{equation}

\smallskip

{\it (vi) Bound of the initial energy $\mathcal{E}^\delta(0)$.}
Proceeding as in the derivation of \eqref{regularity for initial data} in Step 5 of the proof to Theorem \ref{long time existence for case c=-1}, we  obtain
\begin{equation}\label{initial regularity for app}
\mathcal{E}^\delta(0)\leq C\bigl(|\eta^\delta|_{t=0}|_{X^2_{\epsilon^3}}^2+|v^\delta|_{t=0}|_{X^2_{\epsilon^2}}^2\bigr)
\leq C\bigl(|\eta_0|_{X^2_{\epsilon^3}}^2+|v_0|_{X^2_{\epsilon^2}}^2\bigr).
\end{equation}

With \eqref{estimate for app 7} and \eqref{initial regularity for app}, there exists  $T>0$ which  depends on $|\eta_0|_{X^2_{\epsilon^3}}+|v_0|_{X^2_{\epsilon^2}}$, such that
\begin{equation}\label{estimate for app 8}
\sup_{0\leq t\leq T/\epsilon}\mathcal{E}^\delta(t)\leq C\bigl(|\eta_0|_{X^2_{\epsilon^3}}^2+|v_0|_{X^2_{\epsilon^2}}^2\bigr).
\end{equation}

{\bf Step 3. The family of approximate solutions forms a Cauchy sequence in the  lower order space $C([0,T/\epsilon), X^0_\epsilon(\R)\times L^2(\R))$.}
For any $\delta,\delta'\in(0,\epsilon)$, we shall derive  estimates on $(\eta^\delta-\eta^{\delta'},v^\delta-v^{\delta'})$ in $C([0,T/\epsilon), X^0_\epsilon(\R)\times L^2(\R))$. Denoting by
\beno\begin{aligned}
E_0^{(\delta,\delta')}(t)&\underset{\text{def}}{=}|\eta^\delta-\eta^{\delta'}|_2^2+\epsilon|(\eta^\delta-\eta^{\delta'})_x|_2^2
+(\frac{v^\delta-v^{\delta'}}{1+\epsilon\mathcal{J}_\delta\eta^\delta}\,|\,v^\delta-v^{\delta'})_2,\\
&\sim|\eta^\delta-\eta^{\delta'}|_{X^0_\epsilon}^2+|v^\delta-v^{\delta'}|_2^2.
\end{aligned}\eeno

By \eqref{approximate system}, we deduce that
\beno
\frac{1}{2}\frac{d}{dt}E_0^{(\delta,\delta')}(t)=\sum_{i=1}^4 I_i,
\eeno
where
\beno\begin{aligned}
&I_1=-\{\bigl((\mathcal{J}_\delta v^\delta
-\mathcal{J}_{\delta'}v^{\delta'})_x\,|\,(1-\epsilon\partial_x^2)(\eta^\delta-\eta^{\delta'})\bigr)_2\\
&\qquad
+\bigl((1-\epsilon\partial_x^2)(\mathcal{J}_\delta\eta^\delta
-\mathcal{J}_{\delta'}\eta^{\delta'})_x\,|\,v^\delta-v^{\delta'}\bigr)_2\}\\
&I_2=-\bigl((\frac{1}{1+\epsilon\mathcal{J}_\delta\eta^\delta}-\frac{1}{1+\epsilon\mathcal{J}_{\delta'}\eta^{\delta'}})v^{\delta'}_t
\,|\,v^\delta-v^{\delta'}\bigr)_2\\
&I_3=-\frac{1}{2}\bigl(\partial_t\bigl(\frac{1}{1+\epsilon\mathcal{J}_\delta\eta^\delta}\bigr)(v^\delta-v^{\delta'})\,|\,v^\delta-v^{\delta'}\bigr)_2\\
&I_4=\epsilon\bigl(\mathcal{J}_{\delta'}^2\bigl(\frac{|\mathcal{J}_{\delta'}^2v^{\delta'}|^2}{1+\epsilon\mathcal{J}_{\delta'}\eta^{\delta'}}\bigr)_x
-\mathcal{J}_{\delta}^2\bigl(\frac{|\mathcal{J}_{\delta}^2v^{\delta}|^2}{1+\epsilon\mathcal{J}_{\delta}\eta^{\delta}}\bigr)_x
\,|\,v^\delta-v^{\delta'}\bigr)_2.
\end{aligned}\eeno

For $I_1$, integrating by parts, we obtain
\beno
I_1=-\bigl((1-\epsilon\partial_x^2)(\eta^\delta-\eta^{\delta'})\,|\,(\mathcal{J}_\delta-\mathcal{J}_{\delta'})v^{\delta'}_x\bigr)_2
-\bigl((\mathcal{J}_\delta-\mathcal{J}_{\delta'})\eta^{\delta'}_x\,|\,(1-\epsilon\partial_x^2)(v^\delta-v^{\delta'})\bigr)_2,
\eeno
which along with (iii) in Lemma \ref{lemma for mollifier} implies
\beno\begin{aligned}
|I_1|&\leq C\max\{\delta,\delta'\}\bigl(|\eta^\delta-\eta^{\delta'}|_{H^2}|v^{\delta'}_{x}|_{H^1}
+|\eta^{\delta'}_{x}|_{H^1}|v^\delta-v^{\delta'}|_{H^2}\bigr)\\
&\leq C\bigl(\mathcal{E}^\delta(t)+\mathcal{E}^{\delta'}(t)\bigr)\max\{\delta,\delta'\}\leq CM\max\{\delta,\delta'\},
\end{aligned}\eeno
where $M\underset{\text{def}}{=}C\bigl(|\eta_0|_{X^2_{\epsilon^3}}^2+|v_0|_{X^2_{\epsilon^2}}^2\bigr)$ the uniform bound for $\mathcal{E}^\delta(t)$ in \eqref{estimate for app 8}.

For $I_2$, we have
\beno\begin{aligned}
|I_2|&=\epsilon|\bigl(\frac{(\mathcal{J}_\delta-\mathcal{J}_{\delta'})\eta^\delta+\mathcal{J}_{\delta'}(\eta^\delta-\eta^{\delta'})}
{(1+\epsilon\mathcal{J}_\delta\eta^\delta)(1+\epsilon\mathcal{J}_{\delta'}\eta^{\delta'})}v^{\delta'}_t
\,|\,v^\delta-v^{\delta'}\bigr)_2|\\
&\leq C\epsilon\bigl(|(\mathcal{J}_\delta-\mathcal{J}_{\delta'})\eta^\delta|_2+|\mathcal{J}_{\delta'}(\eta^\delta-\eta^{\delta'})|_2\bigr)
|v^{\delta'}_t|_\infty|v^\delta-v^{\delta'}|_2\\
&\leq C\epsilon\bigl(\max\{\delta,\delta'\}|\eta^\delta|_{H^1}+|\eta^\delta-\eta^{\delta'}|_2\bigr)|v^{\delta'}_t|_{H^1}|v^\delta-v^{\delta'}|_2,
\end{aligned}\eeno
which implies
\beno\begin{aligned}
|I_2|\leq CM^{\frac{3}{2}}\max\{\delta,\delta'\}+C\epsilon M^{\frac{1}{2}}E_0^{(\delta,\delta')}(t).
\end{aligned}\eeno

Similarly, for $I_3$, we have
\beno\begin{aligned}
|I_3|\leq C\epsilon M^{\frac{1}{2}}E_0^{(\delta,\delta')}(t),
\end{aligned}\eeno
while for $I_4$, we have
\beno\begin{aligned}
|I_4|\leq C(M)\max\{\delta,\delta'\}+C(M)\epsilon E_0^{(\delta,\delta')}(t).
\end{aligned}\eeno

Thus, we have
\begin{equation}\label{estimate for app 9}
\frac{d}{dt}E_0^{(\delta,\delta')}(t)\leq C(M)\max\{\delta,\delta'\}+C(M) E_0^{(\delta,\delta')}(t).
\end{equation}
Noticing that $E_0^{(\delta,\delta')}(0)=0$, applying Gronwall inequality to \eqref{estimate for app 9} yields
\begin{equation}\label{estimate for app 10}
E_0^{(\delta,\delta')}(t)\leq C e^{C(M)t}\max\{\delta,\delta'\},
\end{equation}
which implies that
the approximate solutions $\{V^\delta=(\eta^\delta,v^\delta)\}_{\delta>0}$ form a Cauchy sequence in $C([0,T/\epsilon), X^0_\epsilon(\R)\times L^2(\R))$.

\medskip

{\bf Step 4. The limit of the approximate solutions solves \eqref{reduction form 1}.}
On the one hand, \eqref{estimate for app 10} shows that there exists a unique $V=(\eta,v)\in C([0,T/\epsilon), X^0_\epsilon(\R)\times L^2(\R))$ such that
when $\delta\rightarrow 0$,
\begin{equation}\label{limit 1}
V^\delta\rightarrow V\quad\text{in}\quad C([0,T/\epsilon), X^0_\epsilon\times L^2),
\end{equation}
and
\begin{equation}\label{limit 2}
\sup_{0\leq t\leq T/\epsilon}|V^\delta-V|_{X^0_\epsilon\times L^2}\leq C_{M,T,\epsilon}\delta,
\end{equation}
where $C_{M,T,\epsilon}$ is a constant depending on $M,\ T, \ \epsilon$.

On the other hand, by Banach-Alaoglu theorem, the uniform estimate \eqref{estimate for app 8} and (iv) of Lemma \ref{lemma for mollifier} imply that there exists a subsequence $\{V^{\delta_j}\}_{j\in\N}$ such that when $j\rightarrow\infty$,
\begin{equation}\label{limit 3}
V^{\delta_j}\rightharpoonup V,\quad\text{weakly},
\end{equation}
and the energy $\mathcal{E}(t)$ for $V$ has the same bound as in \eqref{estimate for app 8}. Moreover,
\begin{equation}\label{limit 4}\begin{aligned}
&V\in L^\infty([0,T/\epsilon);X^2_{\epsilon^3}\times X^2_{\epsilon^2})\cap Lip([0,T/\epsilon);X^1_{\epsilon^2}\times X^1_{\epsilon})\\
&V_t\in L^\infty([0,T/\epsilon);X^1_{\epsilon^2}\times X^1_{\epsilon})\cap Lip([0,T/\epsilon);X^0_{\epsilon}\times L^2).
\end{aligned}\end{equation}

Thanks to the interpolation inequality \eqref{int1}, we have for any $s\in(0,2]$,
\begin{equation}\label{limit 5}\begin{aligned}
\sup_{0\leq t\leq T/\epsilon}|V^\delta-V|_{X^{2-s}_{\epsilon^{3-s}}\times X^{2-s}_{\epsilon^{2-s}}}
&\leq C\sup_{0\leq t\leq T/\epsilon}|V^\delta-V|_{X^0_{\epsilon}\times L^2}^{\frac{s}{2}}|V^\delta-V|_{X^2_{\epsilon^3}\times X^2_{\epsilon^2}}^{1-\frac{s}{2}}\\
&\leq C_{M,T,\epsilon}\delta^{\frac{s}{2}},
\end{aligned}\end{equation}
where we used \eqref{limit 2}, \eqref{estimate for app 8} and the obtained result $\sup_{0\leq t\leq T/\epsilon}\mathcal{E}(t)\leq CM$.
By \eqref{limit 5}, we obtain that when $\delta\rightarrow 0$,
\begin{equation}\label{limit 6}
V^\delta\rightarrow V, \quad\text{in}\quad C([0,T/\epsilon);X^{2-s}_{\epsilon^{3-s}}\times X^{2-s}_{\epsilon^{2-s}}).
\end{equation}
If $s\in(0,\frac{1}{4})$, the embedding theorem shows that
\beno
C([0,T/\epsilon);X^{2-s}_{\epsilon^{3-s}}\times X^{2-s}_{\epsilon^{2-s}})\hookrightarrow C([0,T/\epsilon);C^4(\R)\times C^3(\R)).
\eeno
Then as $\delta\rightarrow 0$,
\beno
V^\delta\rightarrow V, \quad\text{in}\quad C([0,T/\epsilon);C^4(\R)\times C^3(\R)).
\eeno

Similarly, we could verify that as $\delta\rightarrow 0$,
\beno
V^\delta_t\rightarrow V_t, \quad\text{in}\quad C([0,T/\epsilon);C^2(\R)\times C^1(\R)).
\eeno

Thus, taking $\delta\rightarrow 0$ in \eqref{approximate system}, we obtain that $V=(\eta, v)$ satisfies \eqref{reduction form 1} in the classic sense.

\medskip

{\bf Step 5. Continuity in time of the solutions.} Firstly, by virtue of \eqref{limit 6}, we have
\begin{equation}\label{cont1}
\eta^\delta\rightarrow\eta\quad\text{in}\quad C([0,T/\epsilon);L^\infty(\R)).
\end{equation}
Thanks to \eqref{estimate for app 7} and \eqref{estimate for app 8}, we obtain that
\beno
\frac{d}{dt} E^\delta(t)\leq C(M),
\eeno
which implies that
\beno
E^\delta(t)-E^\delta(0)\leq C(M)t.
\eeno
Taking $\delta\rightarrow 0$ yields
\beno
E(t)-E^0\leq\limsup_{\delta\rightarrow 0}(E^\delta(t)-E^\delta(0))\leq C(M)t,
\eeno
where $E^0=E^\delta(0)|_{\delta=0}$ is determined only by $(\eta_0,v_0)$.
Then we have
\begin{equation}\label{cont2}
\limsup_{t\rightarrow 0}E(t)\leq E^0.
\end{equation}

On the other hand, thanks to \eqref{estimate for app 8} and \eqref{quasilinear for app 1}, we have
\beno
V^\delta_{ttt}\in L^\infty([0,T/\epsilon);\dot{H}^{-1}\times\dot{H}^{-1}),
\eeno
which along with \eqref{limit 4} implies
\begin{equation}\label{cont3}\begin{aligned}
&V\in C_w([0,T/\epsilon);X^2_{\epsilon^3}\times X^2_{\epsilon^2}),\quad V_t\in C_w([0,T/\epsilon);X^1_{\epsilon^2}\times X^1_{\epsilon}),\\
&V_{tt}\in C_w([0,T/\epsilon);X^0_{\epsilon}\times L^2).
\end{aligned}\end{equation}
Due to \eqref{cont1} and \eqref{cont3}, we have
\beno
 E^0\leq \liminf_{t\rightarrow 0}E(t),
\eeno
which along with \eqref{cont2} implies that
\begin{equation}\label{cont4}
\lim_{t\rightarrow 0}E(t)= E^0.
\end{equation}
Then by \eqref{cont1}, \eqref{cont3}, the definition of $E(t)$ and the arguments in Step 2.2 for the higher order derivatives in $x$, we have $V$ are strongly continuous in time $t=0$ in the corresponding functional spaces.

Consider $T_0\in (0,T/\epsilon)$ and the solution $V(\cdot,T_0)=(\eta(\cdot,T_0), v(\cdot,T_0))$. For fixed time $T_0$, $V^{T_0}\underset{\text{def}}{=}V(\cdot,T_0)\in X^2_{\epsilon^3}\times X^2_{\epsilon^2}$ and by \eqref{estimate for app 7} and \eqref{estimate for app 8}, there exists a constant $c_0$ which depends on $M\underset{\text{def}}{=}|\eta_0|_{X^2_{\epsilon^3}}^2+|v_0|_{X^2_{\epsilon^2}}^2$ such that
\begin{equation}\label{cont5}
\bigl(E(T_0)\bigr) \leq \frac{\bigl(E^0\bigr)^{\frac{1}{2}}}{1-c_0\epsilon T_0M^{\frac{1}{2}}}.
\end{equation}
Now we use $V^{T_0}$ as an initial data and construct a forward and backward in time solutions as in the above steps by solving the approximates system \eqref{approximate system}. We obtain the approximate solutions $V^\delta_{T_0}(\cdot,t)$ which also satisfy \eqref{estimate for app 7} and the limit $\widetilde{V}$ of $V^\delta_{T_0}(\cdot,t)$ also solves \eqref{reduction form 1} on some time interval $[T_0-T',T_0+T']$. By the uniqueness of the solutions, $\widetilde{V}$ must coincide with $V$ on the  time interval $[T_0-T',T_0+T']$. Similar to \eqref{cont5}, by using \eqref{estimate for app 8}, there exists a constant $c_0'$ which depends on $M\underset{\text{def}}{=}|\eta_0|_{X^2_{\epsilon^3}}^2+|v_0|_{X^2_{\epsilon^2}}^2$ such that for any $t\in[T_0-T',T_0+T']$,
\beno
\bigl(E(t)\bigr)^{\frac{1}{2}}\leq \frac{\bigl(E(T_0)\bigr)^{\frac{1}{2}}}{1-c_0'\epsilon(t-T_0)M^{\frac{1}{2}}}\leq\frac{\bigl(E^0\bigr)^{\frac{1}{2}}}{(1-c_0'\epsilon(t-T_0)M^{\frac{1}{2}})(1-c_0\epsilon T_0M^{\frac{1}{2}})}.
\eeno
Then we obtain the following restriction for $T'$
\beno
0< T'\leq\frac{1}{2c_0'M^{\frac{1}{2}}}.
\eeno
Following the same argument as the continuity in time $t=0$, we obtain that the solution is continuous in time $t=T_0$ in corresponding functional spaces.

Thus, we obtain that
\begin{equation}\label{cont6}\begin{aligned}
&V\in C([0,T/\epsilon);X^2_{\epsilon^3}\times X^2_{\epsilon^2}),\quad V_t\in C([0,T/\epsilon);X^1_{\epsilon^2}\times X^1_{\epsilon}),\\
&V_{tt}\in C([0,T/\epsilon);X^0_{\epsilon}\times L^2).
\end{aligned}\end{equation}

\medskip

Combining Steps 1 to 5, the existence proof of Theorem \ref{long time existence for case c=-1} is completed.

\section{Possible extensions}
\subsection{A fifth order Boussinesq system}
When the expansion with respect to $\epsilon$ is performed to the next order, one obtains a class of fifth order Boussinesq systems (see \cite{BCS1}). Those models should lead to an error estimate of order $O(\epsilon^3t)$ instead of $O(\epsilon^2t)$ for the usual Boussinesq systems. A rigorous proof of this fact requires in particular to establish that the fifth order Boussinesq systems are well-posed on "long" time scales and thus come the issue of long time existence for those systems. One expects of course a lifespan of at least order $1/\epsilon, $ the question, (as for the usual Boussinesq systems), being to see whether or not the dispersive terms allow to enlarge this lifespan. Due to the large number of equivalent (to the sense of consistency) systems, we will focus on a particular case (BBM-type) which is shown to be locally well-posed in \cite{BCS2}.

We first recall the fifth order Boussinesq system under study (one could obtain the following system from that stated in \cite{BCS1} by  scaling):
\begin{equation}\label{5rme1}\left\{\begin{aligned}
&(1- b\epsilon\partial_x^2+b_1\epsilon^2\partial_x^4)\eta_t+(1+a\epsilon\partial_x^2+a_1\epsilon^2\partial_x^4)u_x+\epsilon(1-b\epsilon\partial_x^2)(\eta u)_x\\
&\qquad+(a+b-\frac{1}{3})\epsilon^2(\eta u_{xx})_x=0,\\
&(1-d\epsilon\partial_x^2+d_1\epsilon^2\partial_x^4)u_t+(1+c\epsilon\partial_x^2+c_1\epsilon^2\partial_x^4)\eta_x+\epsilon(1+c\epsilon\partial_x^2)(u u_x)\\
&\qquad+\epsilon^2(\eta\eta_{xx})_x-(c+d-1)\epsilon^2u_xu_{xx}-(c+d)\epsilon^2uu_{xxx}=0.
\end{aligned}\right.\end{equation}

As an example  we shall deal with the "BBM-type" case:
\beno\begin{aligned}
&b\geq0,\quad b_1>0,\quad a<0,\quad a_1=0,\\
&d\geq0,\quad d_1>0,\quad c<0,\quad c_1=0.
\end{aligned}\eeno

We now state the existence  result in the BBM-type case.

\begin{theorem}\label{5rmT}
Let $s>\frac{3}{2}$ .
Assume that $(\eta_0,u_0)\in X^s_{\epsilon^3}(\R)$  satisfy the (non-cavitation) condition
\begin{equation}\label{5rme2}
1+\epsilon\eta_0\geq H>0,\quad H\in(0,1),
\end{equation}
Then there exists a constant $\tilde{c}_0$  such that for any $\epsilon\leq\epsilon_0=\frac{1-H}{\tilde{c}_0(| \eta_0|_{X^s_{\epsilon^3}}+| u_0|_{X^s_{\epsilon^3}})}$, there
exists $T>0$ independent of $\epsilon$, such that \eqref{5rme1}(the BBM-type case) with the initial data $(\eta_0,u_0)$ has a unique solution $(\eta,u)$ with
$(\eta, u)\in C([0,T/\epsilon];X^s_{\epsilon^3}(\R))$. Moreover,
\begin{equation}\label{5rme3}
\max_{t\in[0,T/\epsilon]} (|\eta|_{X^s_{\epsilon^3}}+|u|_{X^s_{\epsilon^3}})\leq \tilde c (|\eta_0|_{X^s_{\epsilon^3}}+| u_0|_{X^s_{\epsilon^3}}).
\end{equation}
\end{theorem}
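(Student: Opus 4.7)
The plan is to mimic the symmetrization strategy of Theorems \ref{rmT1}--\ref{rmT3}, adapted to the fifth-order BBM-type dispersion. Writing $U=(\eta,u)^T$, I would rewrite \eqref{5rme1} in the condensed form $L(\epsilon D)\partial_tU+M(U,D)U=0$, where
\[
L(\epsilon D)=\mathrm{diag}\bigl(1-b\epsilon\partial_x^2+b_1\epsilon^2\partial_x^4,\,1-d\epsilon\partial_x^2+d_1\epsilon^2\partial_x^4\bigr).
\]
Since $b,d\geq 0$, $b_1,d_1>0$ and $a,c<0$, the four scalar operators $1-b\epsilon\partial_x^2+b_1\epsilon^2\partial_x^4$, $1-d\epsilon\partial_x^2+d_1\epsilon^2\partial_x^4$, $1+a\epsilon\partial_x^2$ and $1+c\epsilon\partial_x^2$ are all strictly positive, self-adjoint, and pairwise commuting on $L^2(\R)$. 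The natural symmetrizer is then
\[
S_U(D)=\mathrm{diag}\bigl(1+c\epsilon\partial_x^2,\,(1+a\epsilon\partial_x^2)(1+\epsilon\eta)\bigr),
\]
which remains positive under the smallness ansatz analogous to \eqref{arme29}, itself a consequence of \eqref{5rme2} and the a posteriori energy bound.

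\textbf{Equivalence with $X^s_{\epsilon^3}$.} Setting $E_s(U)=\bigl(L(\epsilon D)\Lambda^sU\,|\,S_U(D)\Lambda^sU\bigr)_2$, a direct Fourier computation gives that the symbols $(1+b\epsilon\xi^2+b_1\epsilon^2\xi^4)(1-a\epsilon\xi^2)$ and $(1+d\epsilon\xi^2+d_1\epsilon^2\xi^4)(1-c\epsilon\xi^2)$ are both equivalent (with constants depending only on $a,b,b_1,c,d,d_1$) to $1+\epsilon\xi^2+\epsilon^2\xi^4+\epsilon^3\xi^6$. Consequently $E_s(U)\sim|\eta|_{X^s_{\epsilon^3}}^2+|u|_{X^s_{\epsilon^3}}^2$. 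This explains why the correct functional setting is $X^s_{\epsilon^3}$: the fourth-order BBM operator and the second-order $(1+a\epsilon\partial_x^2)$ factor jointly produce the $\epsilon^3\xi^6$ behaviour at high frequencies.

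\textbf{Energy estimate.} Computing $\tfrac{d}{dt}E_s(U)$, the linear cross-terms coming from $(1+a\epsilon\partial_x^2)u_x$ and $(1+c\epsilon\partial_x^2)\eta_x$ cancel after integration by parts, since the scalar operators commute. The remaining contributions split into commutators $[\Lambda^s,\cdot]$ controlled via Corollary \ref{dL1}, a term $(\Lambda^sU\,|\,\partial_tS_U(D)\Lambda^sU)_2$ producing a harmless factor of $\epsilon\partial_t\eta$, and symmetric hyperbolic-type terms. The most delicate nonlinearities are $\epsilon^2(\eta u_{xx})_x$, $-(c+d)\epsilon^2uu_{xxx}$, $\epsilon^2(\eta\eta_{xx})_x$ and the $-b\epsilon^2\partial_x^2(\eta u)_x$ piece, each carrying three or four space derivatives. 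These are tamed by strategic integrations by parts so that at most $\Lambda^{s+3}$ derivatives land on any single factor, combined with the interpolation inequality \eqref{int1} of Lemma \ref{lem1}, giving $\epsilon^{i/2}|f|_{H^{s+i}}\lesssim|f|_{X^s_{\epsilon^3}}$ for $0<i\leq 3$. Each such term then absorbs at least one extra power of $\epsilon$, yielding
\[
\frac{d}{dt}E_s(U)\leq C\,\epsilon\,E_s(U)^{3/2}.
\]

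\textbf{Closure and main obstacle.} Combined with the trivial bound $E_s(U_0)\lesssim|\eta_0|_{X^s_{\epsilon^3}}^2+|u_0|_{X^s_{\epsilon^3}}^2$, the ODE inequality yields maximal existence up to a time $T/\epsilon$ with $T$ independent of $\epsilon$, and the bound \eqref{5rme3}. Existence, uniqueness and continuity of the flow then follow by the mollifier scheme $\mathcal{J}_\delta$ of Section 5 (or the iterative scheme of \cite{SX}) with the same a priori estimate applied to the regularized system, and a Gronwall argument in a lower-regularity norm on differences. The main obstacle will be the careful accounting of derivatives in the $\epsilon^2$ nonlinearities, in particular $\epsilon^2uu_{xxx}$: one must use the $\epsilon^3\xi^6$-gain of $E_s$ (i.e. the hypothesis $b_1,d_1>0$) to absorb the $\Lambda^{s+3}u$ that remains after integration by parts, while avoiding the creation of any $\Lambda^{s+4}$ derivative by symmetrically shifting the second-order factors $1+a\epsilon\partial_x^2$ and $1+c\epsilon\partial_x^2$ from $S_U(D)$ onto the factor carrying fewer derivatives. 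Without this balancing, one recovers only $E_s^{3/2}$ without the crucial prefactor $\epsilon$, and the $O(1/\epsilon)$ lifespan is lost.
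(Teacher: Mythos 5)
Your overall framework is exactly the paper's: write \eqref{5rme1} as $M_0(\partial_x)U_t+M(U,\partial_x)U=0$, build a diagonal operator symmetrizer, set $E_s(U)=(M_0\Lambda^sU\,|\,S\Lambda^sU)_2$, check $E_s(U)\sim|\eta|_{X^s_{\epsilon^3}}^2+|u|_{X^s_{\epsilon^3}}^2$, and close with $\frac{d}{dt}E_s\lesssim\epsilon E_s^{3/2}$. The gap is in the symmetrizer you actually wrote down. The paper uses
\begin{equation*}
S=\mathrm{diag}\bigl(1+c\epsilon\partial_x^2+\epsilon^2\eta\partial_x^2,\ 1+\epsilon\eta+a\epsilon\partial_x^2+(a-\tfrac13)\epsilon^2\eta\partial_x^2\bigr),
\end{equation*}
whereas yours, $\mathrm{diag}\bigl(1+c\epsilon\partial_x^2,\,(1+a\epsilon\partial_x^2)(1+\epsilon\eta)\bigr)$, drops the $\epsilon^2\eta\partial_x^2$ correction in the first entry and carries coefficient $a$ rather than $a-\tfrac13$ in the second. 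These corrections are not cosmetic: they are tuned so that the top-order variable-coefficient cross terms cancel \emph{exactly}, leaving only remainders in which a derivative has fallen on the coefficient ($\partial_x\eta$ or $\partial_x^2\eta$). With your $S$ the term $\epsilon^2\partial_x(\eta\partial_x^2)$ in $m_{21}$, paired with $\Lambda^su$, leaves after integration by parts a contribution of the form $\epsilon^2\int\eta\,\partial_x^2\Lambda^s\eta\,\partial_x\Lambda^su\,dx$ with no counterpart to cancel it (the cancelling partner comes precisely from the missing $\epsilon^2\eta\partial_x^2$ in $S_{11}$ hitting $(1+a\epsilon\partial_x^2)\partial_x\Lambda^su$); a similar mismatch occurs for the $(a+b-\tfrac13)\epsilon^2\eta\partial_x^3$ and $-b\epsilon^2\partial_x^2(\eta\partial_x\cdot)$ pieces of $m_{12}$.

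This defeats your claim that ``each such term then absorbs at least one extra power of $\epsilon$.'' A term carrying three extra derivatives split between $\Lambda^s\eta$ and $\Lambda^su$ with prefactor $\epsilon^2$ satisfies, by \eqref{int1} with $k=3$, only
\begin{equation*}
\epsilon^2|\eta|_{H^{s+\alpha}}|u|_{H^{s+\beta}}\lesssim\epsilon^{1/2}\,|\eta|_{X^s_{\epsilon^3}}|u|_{X^s_{\epsilon^3}},\qquad \alpha+\beta=3,
\end{equation*}
no matter how the derivatives are distributed, since $\epsilon^2=\epsilon^{1/2}\cdot\epsilon^{\alpha/2}\cdot\epsilon^{\beta/2}$. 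So these terms must be cancelled by the symmetrizer, not estimated; otherwise you obtain $\frac{d}{dt}E_s\lesssim\epsilon^{1/2}E_s^{3/2}$ and only an $O(1/\sqrt\epsilon)$ lifespan. You correctly identify this derivative accounting as the main obstacle and even describe the right remedy in words, but the concrete $S$ you propose does not implement it; the fix is to adjust the $\epsilon^2\eta\partial_x^2$ corrections to the values above. The rest of your argument (equivalence of $E_s$ with the $X^s_{\epsilon^3}$ norm, commutator estimates via Lemma \ref{L1}, closure by the mollifier scheme) matches the paper and is sound.
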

\begin{proof}
The proof of the theorem is similar to that in the previous subsection and we only sketch it. Denoting by $U=(\eta,u)^T$, \eqref{5rme1}
(the BBM-type case) is equivalent to the following condensed system
\begin{equation}\label{5rme4}
M_0(\partial_x) U_t+M(U,\partial_x)U=0,
\end{equation}
where
\beno
M_0(\partial_x)=\text{diag}\bigl(1- b\epsilon\partial_x^2+b_1\epsilon^2\partial_x^4,1-d\epsilon\partial_x^2+d_1\epsilon^2\partial_x^4\bigr),
\eeno
\beno
M(U,\partial_x)=(m_{ij})_{i,j=1,2}\quad\text{with}
\eeno
\beno\begin{aligned}
&m_{11}=\epsilon(1-b\epsilon\partial_x^2)(u\partial_x)+(a+b-\frac{1}{3})\epsilon^2u_{xx}\partial_x,\\
&m_{12}=(1+a\epsilon\partial_x^2)\partial_x+\epsilon(1-b\epsilon\partial_x^2)(\eta\partial_x)+(a+b-\frac{1}{3})\epsilon^2\eta\partial_x^3,\\
&m_{21}=(1+c\epsilon\partial_x^2)\partial_x+\epsilon^2\partial_x(\eta\partial_x^2),\\
&m_{22}=\epsilon(1+c\epsilon\partial_x^2)(u\partial_x)-(c+d-1)\epsilon^2u_{xx}\partial_x-(c+d)\epsilon^2u\partial_x^3.
\end{aligned}\eeno

We could search the symmetrizer of both $M_0$ and $M$ as follows:
 \beno
 S=\text{diag}\bigl(1+c\epsilon\partial_x^2+\epsilon^2\eta\partial_x^2,1+\epsilon\eta+a\epsilon\partial_x^2+(a-\frac13)\epsilon^2\eta\partial_x^2\bigr).
 \eeno
We define the energy functional associated to \eqref{5rme4} as
\beno
E_s(U)=(M_0\Lambda^sU\,|\, S\Lambda^s U)_2.
\eeno
It is easy to check that under the assumption \eqref{5rme2} and the assumption that
\begin{equation}\label{5rme5}
\epsilon|\eta|_\infty+\epsilon|\partial_x\eta|_\infty\ll1,
\end{equation}
there holds
\begin{equation}\label{5rme0}
E_s(U)\sim|\eta|_{X^s_{\epsilon^3}}^2+|u|_{X^s_{\epsilon^3}}^2.
\end{equation}

As usual, we get by a standard energy estimate that
\begin{equation}\label{5rme6}\begin{aligned}
&\frac{d}{dt}E_s(U)=(M_0\Lambda^sU_t\,|\,(S+S^*)\Lambda^sU)_2+(\Lambda^sU\,|\,[M_0,S]\Lambda^sU_t)_2\\
&\qquad+(M_0\Lambda^sU\,|\,\partial_tS\Lambda^sU)_2\\
&=-(M(U,\partial_x)\Lambda^sU\,|\,(S+S^*)\Lambda^sU)_2-([\Lambda^s,M(U,\partial_x)]U\,|\,(S+S^*)\Lambda^sU)_2\\
&\qquad+(\Lambda^sU\,|\,[M_0,S]\Lambda^sU_t)_2+(M_0\Lambda^sU\,|\,\partial_tS\Lambda^sU)_2\\
&\underset{\text{def}}= I+II+III+IV.
\end{aligned}\end{equation}

{\bf Estimate for $I$.} We first compute
\beno\begin{aligned}
&\quad-(M(U,\partial_x)\Lambda^sU\,|\,S\Lambda^sU)_2\\
&=-(m_{11}\Lambda^s\eta\,|\,(1+c\epsilon\partial_x^2+\epsilon^2\eta\partial_x^2)\Lambda^s\eta)_2\\
&\quad-\{(m_{12}\Lambda^su\,|\,(1+c\epsilon\partial_x^2+\epsilon^2\eta\partial_x^2)\Lambda^s\eta)_2\\
&\quad+(m_{21}\Lambda^s\eta\,|\,(1+\epsilon\eta+a\epsilon\partial_x^2+(a-\frac13)\epsilon^2\eta\partial_x^2)\Lambda^su)_2\}\\
&\quad-(m_{22}\Lambda^su\,|\,(1+\epsilon\eta+a\epsilon\partial_x^2+(a-\frac13)\epsilon^2\eta\partial_x^2)\Lambda^su)_2\\
&\underset{\text{def}}= I_1+I_2+I_3.
%&=-(\epsilon(1-b\epsilon\partial_x^2)(u\partial_x\Lambda^s\eta)+(a+b-\frac{1}{3})\epsilon^2u_{xx}\partial_x\Lambda^s\eta\,|\,
%(1+c\epsilon\partial_x^2+\epsilon^2\eta\partial_x^2)\Lambda^s\eta)_2\\
%&\quad-\bigl\{((1+a\epsilon\partial_x^2)\partial_x\Lambda^su+\epsilon(1-b\epsilon\partial_x^2)(\eta\partial_x\Lambda^su)
%+(a+b-\frac{1}{3})\epsilon^2\eta\partial_x^3\Lambda^su
%\,|\,(1+c\epsilon\partial_x^2+\epsilon^2\eta\partial_x^2)\Lambda^s\eta)_2\\
%&\quad+((1+c\epsilon\partial_x^2)\partial_x\Lambda^s\eta+\epsilon^2\partial_x(\eta\partial_x^2\Lambda^s\eta)\,|\,
%(1+\epsilon\eta+a\epsilon\partial_x^2+(a-\frac13)\epsilon^2\eta\partial_x^2)\Lambda^su)_2\bigr\}\\
%&\quad-(\epsilon(1+c\epsilon\partial_x^2)(u\partial_x\Lambda^su)-(c+d-1)\epsilon^2u_{xx}\partial_x\Lambda^su-(c+d)\epsilon^2u\partial_x^3\Lambda^su\,|\,
%(1+\epsilon\eta+a\epsilon\partial_x^2+(a-\frac13)\epsilon^2\eta\partial_x^2)\Lambda^su)_2
\end{aligned}\eeno

For $I_1$, integrating by parts yields
\beno\begin{aligned}
&I_1=\frac{\epsilon}{2}(\partial_x\bigl(u+(a+b-\frac13)\epsilon u_{xx}\bigr)\Lambda^s\eta\,|\,\Lambda^s\eta)_2\\
&\quad+\frac{\epsilon^2}{2}(\partial_x\bigl((u+(a+b-\frac13)\epsilon u_{xx})(c+\epsilon\eta)\bigr)\partial_x\Lambda^s\eta\,|\,\partial_x\Lambda^s\eta)_2\\
&\quad-\frac{b\epsilon^2}{2}(\partial_xu\partial_x\Lambda^s\eta\,|\,\partial_x\Lambda^s\eta)_2
-\frac{b\epsilon^3}{2}(\partial_x\bigl((c+\epsilon\eta)u\bigr)\partial_x^2\Lambda^s\eta\,|\,\partial_x^2\Lambda^s\eta)_2\\
&\quad+b\epsilon^3(\partial_x^2u\partial_x\Lambda^s\eta\,|\,(c+\epsilon\eta)\partial_x^2\Lambda^s\eta)_2
+2b\epsilon^3(\partial_xu\partial_x^2\Lambda^s\eta\,|\,(c+\epsilon\eta)\partial_x^2\Lambda^s\eta)_2
\end{aligned}\eeno
which along with the interpolation inequality \eqref{int1} and the assumption \eqref{5rme5} implies that
\begin{equation}\label{5rme7}
|I_1|\lesssim\epsilon|u|_{X^s_{\epsilon^3}}|\eta|_{X^s_{\epsilon^3}}^2.
\end{equation}

For $I_2$, integrating by parts gives rise to
\beno\begin{aligned}
&I_2=(\epsilon\partial_x\eta\Lambda^su+(a-\frac13)\epsilon^2\partial_x\eta\partial_x^2\Lambda^su\,|\,
(1+c\epsilon\partial_x^2+\epsilon^2\eta\partial_x^2)\Lambda^s\eta)_2\\
&\qquad+b\epsilon^2(\partial_x^2\eta\partial_x\Lambda^su+2\partial_x\eta\partial_x^2\Lambda^su\,|\,
(1+c\epsilon\partial_x^2+\epsilon^2\eta\partial_x^2)\Lambda^s\eta)_2,
\end{aligned}\eeno
which along with \eqref{int1} and \eqref{5rme5} implies that
\begin{equation}\label{5rme8}
|I_2|\lesssim\epsilon|u|_{X^s_{\epsilon^3}}|\eta|_{X^s_{\epsilon^3}}^2.
\end{equation}

For $I_3$, we could derive  as for $I_1$ that
\begin{equation}\label{5rme9}
|I_3|\lesssim\epsilon|u|_{X^s_{\epsilon^3}}^3.
\end{equation}

Thanks to \eqref{5rme7}, \eqref{5rme8} and \eqref{5rme9}, we obtain
\beno
|(M(U,\partial_x)\Lambda^sU\,|\,S\Lambda^sU)_2|\lesssim\epsilon|u|_{X^s_{\epsilon^3}}(|\eta|_{X^s_{\epsilon^3}}^2
+|u|_{X^s_{\epsilon^3}}^2).
\eeno
The same estimate holds for term $(M(U,\partial_x)\Lambda^sU\,|\,S^*\Lambda^sU)_2$. Then we obtain
\begin{equation}\label{5rme10}
|I|\lesssim\epsilon|u|_{X^s_{\epsilon^3}}(|\eta|_{X^s_{\epsilon^3}}^2
+|u|_{X^s_{\epsilon^3}}^2).
\end{equation}

{\bf Estimate for $II$.} We first calculate that
\beno\begin{aligned}
|II|&\lesssim|[\Lambda^s,M(U,\partial_x)]U|_2|(S+S^*)\Lambda^sU|_2\\
&\lesssim|[\Lambda^s,M(U,\partial_x)]U|_2(|\eta|_{X^s_{\epsilon^3}}+|u|_{X^s_{\epsilon^3}})
\end{aligned}\eeno
provided that
\begin{equation}\label{5rme11}
\epsilon|\eta|_{X^s_{\epsilon^3}}<1.
\end{equation}
Thanks to the expression of $M(U,\partial_x)$, \eqref{int1} and Lemma \ref{L1}, we get that
\beno\begin{aligned}
&|[\Lambda^s,M(U,\partial_x)]U|_2\lesssim\epsilon|(1-b\epsilon\partial_x^2)\bigl([\Lambda^s,u]\partial_x\eta\bigr)|_2
+\epsilon^2|[\Lambda^s,u_{xx}]\partial_x\eta|_2\\
&\qquad+\epsilon^2|\partial_x\bigl([\Lambda^s,\eta]\partial_x^2\eta\bigr)|_2
+\epsilon|(1-b\epsilon\partial_x^2)\bigl([\Lambda^s,\eta]\partial_xu\bigr)|_2
+\epsilon^2|[\Lambda^s,\eta]\partial_x^3u|_2\\
&\qquad+\epsilon|(1+c\epsilon\partial_x^2)\bigl([\Lambda^s,u]\partial_xu\bigr)|_2+\epsilon^2|[\Lambda^s,u_{xx}]\partial_xu|_2
+\epsilon^2|[\Lambda^s,u]\partial_x^3u|_2\\
&\lesssim\epsilon(|\eta|_{X^s_{\epsilon^3}}^2+|u|_{X^s_{\epsilon^3}}^2).
\end{aligned}\eeno
Then we obtain that
\begin{equation}\label{5rme16}
|II|\lesssim\epsilon(|\eta|_{X^s_{\epsilon^3}}^2+|u|_{X^s_{\epsilon^3}}^2)^{\frac{3}{2}}.
\end{equation}

{\bf Estimate for $III$.} Using the expressions of $M_0$ and $S$, we have
\beno\begin{aligned}
&III=\epsilon^2(\Lambda^s\eta\,|\,(1- b\epsilon\partial_x^2+b_1\epsilon^2\partial_x^4)(\eta\partial_x^2\Lambda^s\eta_t)-\eta\partial_x^2
(1- b\epsilon\partial_x^2+b_1\epsilon^2\partial_x^4)\Lambda^s\eta_t)_2\\
&\quad+\epsilon(\Lambda^su\,|\,(1- d\epsilon\partial_x^2+d_1\epsilon^2\partial_x^4)(\eta\Lambda^su_t)-\eta
(1- d\epsilon\partial_x^2+d_1\epsilon^2\partial_x^4)\Lambda^su_t)_2\\
&\quad+(a-\frac13)\epsilon^2(\Lambda^su\,|\,(1- d\epsilon\partial_x^2+d_1\epsilon^2\partial_x^4)(\eta\partial_x^2\Lambda^su_t)-\eta\partial_x^2
(1- d\epsilon\partial_x^2+d_1\epsilon^2\partial_x^4)\Lambda^su_t)_2\\
&\underset{\text{def}}= III_1+III_2+III_3.
\end{aligned}\eeno

Now we rewrite $III_1,\ III_2,\ III_3$ as follows:
\beno\begin{aligned}
&III_1=-\epsilon^2([- b\epsilon\partial_x^2+b_1\epsilon^2\partial_x^4,\ \eta]\Lambda^s\eta\,|\, \partial_x^2\Lambda^s\eta_t)_2\\
&III_2=\epsilon(\Lambda^s u\,|\, [- d\epsilon\partial_x^2+d_1\epsilon^2\partial_x^4,\ \eta]\Lambda^s u_t)_2\\
&III_3=-(a-\frac{1}{3})\epsilon^2( [- d\epsilon\partial_x^2+d_1\epsilon^2\partial_x^4,\ \eta]\Lambda^s u\,|\,\partial_x^2\Lambda^s u_t)_2
\end{aligned}\eeno
which along with \eqref{int1} and Lemma \ref{L1}, we obtain
\begin{equation}\label{5rme14}
|III|\lesssim\epsilon(|\eta|_{X^s_{\epsilon^3}}^2+|u|_{X^s_{\epsilon^3}}^2)(|\eta_t|_{X^{s-1}_{\epsilon^4}}+|u_t|_{X^{s-1}_{\epsilon^4}}).
\end{equation}

{\bf Estimate for $IV$.} Using the expressions of $M_0$ and $S$, we get that
\beno\begin{aligned}
IV&=\epsilon^2((1- b\epsilon\partial_x^2+b_1\epsilon^2\partial_x^4)\Lambda^s\eta\,|\,\eta_t\partial_x^2\Lambda^s\eta)_2\\
&+((1- d\epsilon\partial_x^2+d_1\epsilon^2\partial_x^4)\Lambda^su\,|\,\epsilon\eta_t\Lambda^su+(a-\frac13)\epsilon^2\eta_t\partial_x^2\Lambda^su)_2,
\end{aligned}\eeno
which along with \eqref{int1} implies that
\begin{equation}\label{5rme15}\begin{aligned}
|IV|&\lesssim\epsilon(|\eta_t|_\infty+\epsilon^{\frac12}|\partial_x\eta_t|_\infty)(|\eta|_{X^s_{\epsilon^3}}^2+|u|_{X^s_{\epsilon^3}}^2),\\
&\lesssim\epsilon|\eta_t|_{X^{s-1}_{\epsilon^4}}(|\eta|_{X^s_{\epsilon^3}}^2+|u|_{X^s_{\epsilon^3}}^2).
\end{aligned}\end{equation}

Thanks to \eqref{5rme6}, \eqref{5rme10}, \eqref{5rme16}, \eqref{5rme14} and \eqref{5rme15}, we get that
\beno
\frac{d}{dt}E_s(U)\lesssim\epsilon(|\eta|_{X^s_{\epsilon^3}}+|u|_{X^s_{\epsilon^3}}+|\eta_t|_{X^{s-1}_{\epsilon^4}}
+|u_t|_{X^{s-1}_{\epsilon^4}})
(|\eta|_{X^s_{\epsilon^3}}^2+|u|_{X^s_{\epsilon^3}}^2).
\eeno
Go back to \eqref{5rme1}, we get that
\beno
|\eta_t|_{X^{s-1}_{\epsilon^4}}+|u_t|_{X^{s-1}_{\epsilon^4}}
\lesssim(|\eta|_{X^s_{\epsilon^3}}+|u|_{X^s_{\epsilon^3}})(1+\epsilon|\eta|_{X^s_{\epsilon^3}}
+\epsilon|u|_{X^s_{\epsilon^3}})\lesssim|\eta|_{X^s_{\epsilon^3}}+|u|_{X^s_{\epsilon^3}},
\eeno
provided that
\begin{equation}\label{5rme17}
\epsilon|\eta|_{X^s_{\epsilon^3}}<1.
\end{equation}

Then we get that
\beno
\frac{d}{dt}E_s(U)\lesssim\epsilon(|\eta|_{X^s_{\epsilon^3}}+|u|_{X^s_{\epsilon^3}})
(|\eta|_{X^s_{\epsilon^3}}^2+|u|_{X^s_{\epsilon^3}}^2).
\eeno
which along with \eqref{5rme0} implies that
\begin{equation}\label{5rme18}
\frac{d}{dt}E_s(U)\lesssim\epsilon E_s(U)^{\frac32}.
\end{equation}

Thus, using similar arguments as in the previous subsections, we can deduce from \eqref{5rme18} that  there exists $T>0$ independent of $\epsilon$ such that \eqref{5rme1}(the BBM-type case) has a unique solution on time interval $[0,T/\epsilon]$ with initial data $(\eta_0,u_0)$. Moreover \eqref{5rme3} holds . Theorem \ref{5rmT}
is proved.
\end{proof}

\subsection{A Boussinesq-Full dispersion system for internal waves}
A systematic derivation of asymptotic internal waves models describing waves at the interface of a two-fluids system with a rigid top is given in \cite{BLS}. We will consider here a specific regime leading to a Boussinesq-Full dispersion system for which the  long time existence result is still open. We recall first the relevant  parameters. The index $1$ stands for the upper layer and $2$ for the lower one.

$\gamma= \frac{\rho_1}{\rho_2}<1$ is the ratio of densities, $\delta=\frac{d_1}{d_2}$ the ratio of typical heights of the layers, $\lambda$ a typical wavelength, $a$ a typical amplitude of the wave.

We denote $\epsilon=\frac{a}{d_1}, \mu=\frac{d_1^2}{\lambda}, \epsilon_2=\frac{a}{d_2}=\epsilon \delta, \mu_2=\frac{d_2^2}{\lambda^2}=\frac{\mu}{\delta^2}.$

We consider here the regime where $\mu\sim \epsilon \ll 1, (\mu=\epsilon \ll1$ from now on) and $\mu_2\sim 1.$

It is shown in \cite{BLS}    that in this regime (for which one also has
$\delta^2\sim\epsilon$ and thus $\epsilon_2\sim \epsilon^{3/2}\ll 1$), and in absence of surface tension, the
two-layers system  is consistent with the
\emph{three-parameter family} of Boussinesq/FD systems
\begin{equation}\label{eqB-FD}
         \left\lbrace
                  \begin{array}{l}
                   (1-\mu b\Delta)\partial_t \zeta+
                  \frac{1}{\gamma}\nabla\cdot\big((1-\epsilon\zeta){\bf v}_\beta\big)\vspace{1mm}\\
     \indent-\frac{\sqrt{\mu}}{\gamma^2}|D|\coth(\sqrt{\mu_2}|D|)\nabla\cdot{\bf v}_\beta
+\frac{\mu}{\gamma}\Big(a-\frac{1}{\gamma^2}\coth^2(\sqrt{\mu_2}|D|)\Big)\Delta\nabla\cdot{\bf
v}_\beta = 0\vspace{1mm}\\
                (1-\mu d\Delta)\dt\mathbf{v}_\beta +(1-\gamma)\nabla\zeta-\frac{\epsilon}{2\gamma}\nabla\vert{\bf v}_\beta\vert^2
    +\mu c(1-\gamma)\Delta\nabla\zeta=0,
            \end{array}\right.
\end{equation}
where ${\bf v}_\beta=(1-\mu\beta \Delta)^{-1}{\bf v}$ and the
constants $a$, $b$, $c$ and $d$ are defined as
%\begin{theorem}\label{propBFD}
    %Let $0<c^{min}<c^{max}$, $0<\mu^{min}_2<\mu^{max}_2$,
     %and set
    $$
    a=\frac{1}{3}(1-\alpha_1-3\beta),\qquad
    b=\frac{1}{3}\alpha_1,\qquad
    c=\beta\alpha_2,\qquad
    d=\beta(1-\alpha_2),
    $$
    with $\alpha_1\geq 0$, $\beta\geq 0$ and $\alpha_2\leq 1$. Note that $a+b+c+d=\frac{1}{3}.$

It is easily checked that \eqref{eqB-FD} is linearly well posed when

$$a\leq 0, c\leq 0, b\geq 0, d\geq 0.$$

The local well-posedness of the Cauchy problem for  \eqref{eqB-FD} was considered in \cite{CTA2} in the following cases

\vspace{0.3cm}
 \begin{itemize}
\item[(1)]  $b>0,d>0, a\leq 0, c<0$;
\item[(2)]   $b>0 ,d>0, a\leq 0, c=0$;
\item[(3)]  $b=0, d>0, a\leq 0, c=0$;
\item[(4)] $b=0, d>0, a\leq 0, c<0$;
\item[(5)]  $b>0, d=0, a\leq 0, c=0$.
\end{itemize}

On the other hand, we do not know of any   {\it long time existence} results for \eqref{eqB-FD} that is existence on time scales of order $1/\epsilon.$ This issue will be considered in a subsequent paper \cite{SX3}.

\subsection{A full dispersion Boussinesq system.}
One obtains a {\it full dispersion system} when in the Boussinesq regime by keeping the original dispersion of the water waves system (see \cite{La1}, \cite{DKM}, and \cite{AMP} where interesting numerical simulations of the propagation of solitary waves are performed). \footnote{As noticed in \cite{AMP} the use of nonlocal models for shallow water waves is also suggested in \cite{Za}.}

They read, setting $\mathcal T_\epsilon =\frac{\tanh \sqrt \epsilon |D|}{\sqrt \epsilon |D|}, \;D=-i\nabla :$
\begin{equation}
    \label{FD1d}
    \left\lbrace
    \begin{array}{l}
    \eta_t+\mathcal T_\epsilon u_x+\epsilon (\eta u)_x=0 \\
     u_t+ \eta_x+\epsilon uu_x=0,
 \end{array}\right.
    \end{equation}
when $d=1$ and
\begin{equation}
    \label{FD2d}
    \left\lbrace
    \begin{array}{l}
    \eta_t+\mathcal T_\epsilon \nabla\cdot \vv u+\epsilon \nabla \cdot(\eta \vv u)=0 \\
    \vv u_t+\nabla \eta+\frac{\epsilon}{2}\nabla |\vv u|^2=0,
 \end{array}\right.
    \end{equation}
when $d=2.$

    Taking  the limit $\sqrt \epsilon |\xi| \to 0$ in $\mathcal T_\epsilon,$  \eqref {FD1d} reduces formally to

    \begin{equation}
    \label{Kaup}
    \left\lbrace
    \begin{array}{l}
    \eta_t+u_x+\frac{\epsilon}{3} u_{xxx}+\epsilon (\eta u)_x=0 \\
     u_t+ \eta_x+\epsilon uu_x=0,
 \end{array}\right.
    \end{equation}
 while in the two-dimensional case,  \eqref {FD2d} reduces in the same limit to

%that is to the Kaup system (see \cite{Ka}) which is completely integrable but linearly ill-posed since the eigenvalues of the dispersion matrix are  $\pm i\xi(1-\frac{\epsilon}{6}\xi^2)^{1/2}.$
    %The Boussinesq system \eqref{FD1d} can therefore be seen as a (well-posed) regularization of the Kaup system. Whether or not it is completely integrable is an open question.

%In the two-dimensional case,  \eqref {FD2d} reduces in the same limit to
\begin{equation}
    \label{FD2dbis}
    \left\lbrace
    \begin{array}{l}
    \eta_t+\nabla \cdot {\bf u}+\frac{\epsilon}{3}\Delta \nabla\cdot {\bf u}+\epsilon \nabla \cdot(\eta \vv u)=0 \\
    \vv u_t+\nabla \eta+\frac{\epsilon}{2}\nabla |\vv u|^2=0,
 \end{array}\right.
    \end{equation}
    that is to the (linearly ill-posed) system one gets first by expanding to first order the Dirichlet to Neumann operator with respect to $\epsilon$ in the full water wave system (see \cite{La1}).

 System  \eqref{Kaup}   is also known in the Inverse Scattering community as   the Kaup system (see \cite{Ka, Ku}). It is completely integrable though linearly ill-posed since the eigenvalues of the dispersion matrix are  $\pm i\xi(1-\frac{\epsilon}{3}\xi^2)^{1/2}.$
    The Boussinesq system \eqref{FD1d} can therefore be seen as a (well-posed) regularization of the Kaup system. Whether or not it is completely integrable is an open question.

The full dispersion Boussinesq systems have  the following Hamiltonian structure
\begin{displaymath}
\partial_t\begin{pmatrix} \eta\\{\vv u}
\end{pmatrix}+J\text{grad}\;H_\epsilon(\eta,{\vv u})=0
\end{displaymath}
where
\begin{displaymath}
J=\begin{pmatrix} 0 & \partial_{x} & \partial_{y} \\
                 \partial_{x} & 0 & 0 \\
                 \partial_{y} & 0 & 0 \end{pmatrix},
\end{displaymath}
\begin{displaymath}
H_{\epsilon}(U)=\frac12 \int_{\mathbb R^2}\big(|\mathcal T_\epsilon^{1/2} {\vv u}|^2+\eta^2+\epsilon
\eta|{\vv u}|^2\big)dxdy,
\end{displaymath}
\begin{displaymath}
U=\begin{pmatrix} \eta  \\
                {\vv u}
                  \end{pmatrix},
\end{displaymath}
when $d=2$ and
\begin{displaymath}
\partial_t\begin{pmatrix} \eta\\u
\end{pmatrix}+J\text{grad}\;H_\epsilon(\eta,u)=0
\end{displaymath}
where
\begin{displaymath}
J=\begin{pmatrix} 0&\partial_x\\
\partial_x&0
\end{pmatrix}
\end{displaymath}
and
$$H_\epsilon(\eta,u)=\frac{1}{2}\int_\R (|\mathcal T_\epsilon^{1/2}u|^2+\eta^2+\epsilon u^2\eta)dx,$$
when $d=1.$

Note that  the full dispersion Boussinesq system \eqref{FD1d} can be viewed  as the two-way propagation counterpart of   the Whitham equation (see \cite{W} and \cite{La1} for a rigorous derivation):
\begin{equation}\label{Whit}
\eta_t+\left(\mathcal T_\epsilon \right)^{1/2} u_x+\epsilon uu_x=0
\end{equation}
which displays a very rich dynamics (see \cite{EGW, KS} and the references therein).

When surface tension is taken into account, one should replace the operator $\mathcal T_\epsilon$ by $\mathcal P_\epsilon= (I+\beta\epsilon|D|^2)^{1/2}\left(\frac{\tanh(\sqrt \epsilon |D|)}{\sqrt \epsilon |D|}\right)$ where the parameter $\beta>0$ measures surface tension (see \cite{La1}), yielding a more dispersive full dispersion Boussinesq system. When $\beta>\frac{1}{3},$ this full dispersion Boussinesq system yields, taking  the limit  $\sqrt \epsilon |\xi| \to 0$ in $\mathcal P_\epsilon,$ Boussinesq systems of the class
$a<0, b=c=d=0$ for which long time well-posedness is established in Theorem 4.5.

Again we refer to a future work \cite{SX3} for the study of the Cauchy problem associated to \eqref{FD1d}, \eqref{FD2d}.
\section{Concluding remarks}

1. So far we have encounter only two possibilities for the lifespan $T_\epsilon$ of solutions to Boussinesq systems. Either $T_\epsilon=+\infty,$ for a few one-dimensional systems, or $T_\epsilon=O(1/\epsilon)$ for essentially all the admissible (linearly well-posed) systems. One may ask whether another possibility might occur. In view of  what happens in the scalar case (the {\it fractionary KdV equation}, see \cite{LPS2}) one could conjecture that there is no other possibility, at least in the one-dimensional case and when the natural no cavitation condition is imposed on the initial data. Note that no general criteria preventing blow-up in finite time seem to be known for Boussinesq systems except in the one-dimensional "BBM/BBM" system ($a=c=0, b>0, d>0$) for which it is proven in \cite{AABCW} that a uniform control on $|1+\epsilon \eta(\cdot,t)|_\infty$ prevents finite time blow-up.

2. Coming back to \eqref{abcd2}, we remark that all long time existence results in the present paper hold true for \eqref{abcd2} if one fixes $\mu>0$ and let $\epsilon$ tends to $0$.

\vspace{0.3cm}
\noindent {\bf Acknowledgments.}  The first Author acknowledges support from the ANR project GEODISP.
He thanks Vincent Duch\^{e}ne for fruitful discussions.
The third author is partially
supported  by NSF of China under grant 11201455 and by innovation grant from National Center for Mathematics and Interdisciplinary Sciences.
The three  authors would like to appreciate the hospitality and financial support from Morningside Center of Mathematics, CAS.

\bibliographystyle{amsplain}

\end{document}